\theoremstyle{definition}
\newtheorem{thm}{Theorem}[section]
\newtheorem{lem}[thm]{Lemma}
\newtheorem{cor}[thm]{Corollary}
\newtheorem{prop}[thm]{Proposition}
\theoremstyle{definition}
\newtheorem{rem}[thm]{Remark}
\newtheorem{defn}[thm]{Definition}
\def\A{{\mathbb A}}
\def\F{{\mathbb F}}
\def\G{{\mathbb G}}
\def\Q{{\mathbb Q}}
\def\R{{\mathbb R}}
\def\Z{{\mathbb Z}}
\def\C{{\mathbb C}}
\def\O{{\mathscr O}}
\def\N{{\mathbb N}}
\def\Br{\mathop{\mathrm{Br}}\nolimits}
\def\End{\mathop{\mathrm{End}}\nolimits}
\def\Gr{\text{\rm Gr}}
\def\Aut{\mathop{\mathrm{Aut}}\nolimits}
\def\Cl{\mathop{\mathrm{Cl}}\nolimits}
\def\Frob{\mathop{\mathrm{Frob}}\nolimits}
\def\Fil{\mathop{\mathrm{Fil}}\nolimits}
\def\Gal{\mathop{\mathrm{Gal}}\nolimits}
\def\Hom{\mathop{\mathrm{Hom}}\nolimits}
\def\Im{\mathop{\mathrm{Im}}\nolimits}
\def\Ker{\mathop{\mathrm{Ker}}\nolimits}
\def\id{\mathop{\mathrm{id}}\nolimits}
\def\GL{\mathop{\mathrm{GL}}\nolimits}
\def\GSp{\mathop{\mathrm{GSp}}\nolimits}
\def\GSpin{\mathop{\mathrm{GSpin}}\nolimits}
\def\SO{\mathop{\mathrm{SO}}\nolimits}
\def\Res{\mathop{\mathrm{Res}}\nolimits}
\def\Pic{\mathop{\mathrm{Pic}}\nolimits}
\def\Spec{\mathop{\rm Spec}}
\def\Spf{\mathop{\rm Spf}}
\def\cris{\text{\rm cris}}
\def\rank{\mathop{\text{\rm rank}}\nolimits}
\def\det{\mathop{\mathrm{det}}\nolimits}
\def\KS{\mathop{\mathrm{KS}}}
\def\Sh{\mathop{\mathrm{Sh}}\nolimits}
\def\A{\mathbb{A}}
\def\Fil{\mathop{\mathrm{Fil}}\nolimits}
\def\dR{\mathop{\mathrm{dR}}}
\def\K{\mathop{\mathrm{K}}\nolimits}
\newcommand{\plim}[1][]{\mathop{\varprojlim}\limits_{#1}}
\def\Gr{\mathop{\mathrm{Gr}}\nolimits}
\def\M{\mathop{\mathfrak{M}}\nolimits}
\def\MdR{\mathop{\mathfrak{M}_{\dR}}\nolimits}
\def\Mcris{\mathop{\mathfrak{M}_{\cris}}\nolimits}
\newcommand{\et}{\mathrm{\acute{e}t}}
\newcommand{\proet}{\mathrm{pro\acute{e}t}}
\newcommand{\ad}{\mathrm{ad}}
\newcommand{\ch}{\mathrm{ch}}
\begin{document}

\title[CM liftings of $K3$ surfaces and the Tate conjecture]{CM liftings of $K3$ surfaces over finite fields and their applications to the Tate conjecture}

\author{Kazuhiro Ito}
\address{Department of Mathematics, Faculty of Science, Kyoto University, Kyoto 606-8502, Japan}
\email{kito@math.kyoto-u.ac.jp}

\author{Tetsushi Ito}
\address{Department of Mathematics, Faculty of Science, Kyoto University, Kyoto 606-8502, Japan}
\email{tetsushi@math.kyoto-u.ac.jp}

\author{Teruhisa Koshikawa}
\address{Research Institute for Mathematical Sciences, Kyoto University, Kyoto 606-8502, Japan}
\email{teruhisa@kurims.kyoto-u.ac.jp}


\subjclass[2010]{Primary 11G18; Secondary 11G15, 14G35, 14J28}
\keywords{$K3$ surfaces, Tate conjecture, Shimura varieties}


\maketitle

\begin{abstract}
We give applications of integral canonical models of
orthogonal Shimura varieties and the Kuga-Satake morphism to
the arithmetic of $K3$ surfaces over finite fields.
We prove every $K3$ surface of finite height over a finite field
admits a characteristic $0$ lifting whose generic fiber is
a $K3$ surface with complex multiplication.
Combined with the results of Mukai and Buskin,
we prove the Tate conjecture for the square of
a $K3$ surface over a finite field.
To obtain these results,
we construct an analogue of Kisin's algebraic group for
a $K3$ surface of finite height,
and construct characteristic $0$ liftings of the $K3$ surface
preserving the action of tori in the algebraic group.
We obtain these results for $K3$ surfaces over finite fields of any characteristics,
including those of characteristic $2$ or $3$.
\end{abstract}

\section{Introduction} \label{Section:Introduction}

The integral canonical models of orthogonal Shimura varieties
and the Kuga-Satake morphism have applications to the arithmetic of $K3$ surfaces over finite fields.
For example, Madapusi Pera used it to prove the Tate conjecture for divisors on $K3$ surfaces over finitely generated fields \cite{MadapusiTateConj}. (See \cite{KimMadapusiIntModel} for the case of characteristic $2$.)

The aim of this paper is to give further applications.
More specifically, we shall prove the following results:
\begin{enumerate}
\item (see Theorem \ref{Theorem:CMlifting})
Every $K3$ surface $X$ of finite height over a finite field $\F_q$
with $q$ elements admits a CM lifting after replacing $\F_q$ by its finite extension (i.e.\ it admits a characteristic $0$ lifting whose generic fiber has complex multiplication).
\item (see Theorem \ref{Theorem:TateConjecture})
The Tate conjecture holds for algebraic cycles of codimension $2$ on the square $X \times X$ of any $K3$ surface $X$ (of any height) over $\F_q$.
\end{enumerate}

These results are consequences of our results on
characteristic $0$ liftings of $K3$ surfaces;
see Theorem \ref{Theorem:LiftingTorusAction}.

Our strategy of the proof is as follows.
Let $(X,\mathscr{L})$ be a quasi-polarized $K3$ surface of finite height over $\overline{\F}_q$.
Here $\mathscr{L}$ is a (primitive) line bundle on $X$ which is big and nef.
We shall attach an algebraic group $I$ over $\Q$ to each polarized $K3$ surface $(X,\mathscr{L})$ of finite height over $\overline{\F}_q$,
which is an analogue of the algebraic group attached by Kisin to each mod $p$ point on
the integral canonical model of a Shimura variety of Hodge type \cite{KisinModp}.
Then, for each maximal torus $T \subset I$ over $\Q$,
we shall construct a characteristic $0$ lifting of the quasi-polarized $K3$ surface $(X,\mathscr{L})$ such that the action of each element of $T(\Q)$
on the singular cohomology of the generic fiber preserves the $\Q$-Hodge structure.
We use integral canonical models of Shimura varieties
to control the rationality of the action of $T(\Q)$.
From these results, the result (1) follows by comparing the rank of the algebraic group $I$ and the general spin group attached to the orthogonal Shimura variety.
Combined with the results of Mukai and Buskin on the Hodge conjecture for products of $K3$ surfaces,
we shall prove the action of every element of $T(\Q)$ is induced by an algebraic cycle of codimension $2$ on $X \times X$.
Applying this result for several maximal tori $T \subset I$, the result (2) follows.

Note that we do not impose any conditions on the characteristic of the base field.
Thus, the main results of this paper are valid over finite fields of any characteristics,
including those of characteristic $2$ or $3$.
To overcome certain technical difficulties, we essentially use the integral comparison theorems of Bhatt-Morrow-Scholze \cite{BMS}, at least in small characteristics.
(Note that, when the characteristic is greater than or equal to $5$,
we can avoid most of the technical difficulties.
Instead, we can use the results of Nygaard-Ogus to obtain the main results of this paper.
See Section \ref{Subsection:Remarks on the characteristic and the Kuga-Satake morphism}.)

In the course of writing this paper,
we found an error in the proof of the \'etaleness of the Kuga-Satake morphism
in characteristic $2$,
which was also used in the proof of the Tate conjecture for $K3$ surfaces in characteristic $2$ \cite{KimMadapusiIntModel}.
We correct it using our results on $F$-crystals on orthogonal Shimura varieties,
which depend on the integral comparison theorem of Bhatt-Morrow-Scholze \cite{BMS}; see Remark \ref{Remark: Kim-Madapusi Pera gap} for details. (See also Remark \ref{Remark:Bloch-Kato and BMS}.)

In the rest of Introduction, we shall first give precise statements on our results on
CM liftings and the Tate conjecture;
see Theorem \ref{Theorem:CMlifting} and Theorem \ref{Theorem:TateConjecture}.
Then we explain our results on characteristic $0$ liftings (see Theorem \ref{Theorem:LiftingTorusAction}),
and how to obtain (1) and (2) from them.

\subsection{CM liftings of $K3$ surfaces of finite height over finite fields}
\label{Subsection:Introduction:CMLifting}

First we state our results on CM liftings.

Recall that a projective smooth surface $X$ over a field is called a \textit{$K3$ surface}
if its canonical bundle is trivial and it satisfies $H^1(X,\mathcal{O}_X)=0$.
More generally, an algebraic space $\mathscr{X}$ over a scheme $S$
is a $K3$ surface over $S$ if $\mathscr{X} \to S$ is proper, smooth, and
every geometric fiber is a $K3$ surface.

We say a projective $K3$ surface $Y$ over $\C$ has
\textit{complex multiplication} (\textit{CM})
if the Mumford-Tate group associated with the singular cohomology
$H^2_B(Y, \Q)$ is commutative;
see Section \ref{Subsection:K3 surfaces with complex multiplication}.
We say a $K3$ surface $Y$ over a number field $F$ has CM
if $Y_{\C}$ has CM for every embedding $F \hookrightarrow \C$.

We fix a prime number $p$, and a power $q$ of $p$.
Let $X$ be a $K3$ surface over $\F_q$.
We say $X$ admits a \textit{CM lifting} if there exist a number field $F$,
a finite place $v$ of $F$ with residue field $\F_q$,
and a $K3$ surface $\mathscr{X}$ over
the localization $\O_{F, (v)}$ of the ring of integers $\O_{F}$ of $F$ at $v$ such that
the special fiber $\mathscr{X}_{\F_q}$ is isomorphic to $X$, and
the generic fiber $\mathscr{X}_{F}$ is a $K3$ surface with CM.
The height $h$ of the formal Brauer group of $X$
is called the \textit{height} of $X$;
it satisfies $1 \leq h \leq 10$ or $h = \infty$.
When $1 \leq h \leq 10$ (resp.\ $h = \infty$),
we say $X$ is \textit{of finite height} (resp.\ \textit{supersingular}).

Here is the first main theorem of this paper.

\begin{thm}[see Corollary \ref{Corollary:CMLifting}]
\label{Theorem:CMlifting}
Let $X$ be a $K3$ surface over $\F_q$.
If $X$ is of finite height,
then there is a positive integer $m \geq 1$ such that
$X_{\F_{q^m}} := X \times_{\Spec \F_q} \Spec \F_{q^m}$
admits a CM lifting.
\end{thm}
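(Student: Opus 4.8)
\smallskip
The plan is to run the strategy sketched in the introduction: attach to $(X,\mathscr L)$ the $K3$-analogue of Kisin's algebraic group, choose a maximal torus inside it, invoke Theorem~\ref{Theorem:LiftingTorusAction} to lift to characteristic $0$ while keeping the torus action Hodge-theoretic, and then deduce complex multiplication by comparing ranks. Since the conclusion is insensitive to replacing $\F_q$ by a finite extension, we may enlarge $\F_q$ freely (all such enlargements being absorbed into the final exponent $m$). As $X$ is projective, dividing an ample line bundle on $X_{\overline{\F}_q}$ by its divisibility in $\Pic(X_{\overline{\F}_q})$ yields a primitive quasi-polarization $\mathscr L$ with $\mathscr L^2=2d$ for some $d\geq 1$. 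Let $G:=\GSpin(\mathbf L_d)$, where $\mathbf L_d$ is the rank-$21$ lattice arising as the orthogonal complement of a class of degree $2d$ in the $K3$ lattice. Through the integral canonical model of the $\GSpin$ Shimura variety of $\mathbf L_d$ and the Kuga--Satake period morphism (Madapusi Pera; using the Bhatt--Morrow--Scholze comparison theorems when $p\in\{2,3\}$, as recalled earlier in the paper), the pair $(X_{\overline{\F}_q},\mathscr L)$ gives a point $x$ on the special fibre of that model; enlarging $\F_q$ once more, we may assume $\mathscr L$ and $x$ to be defined over $\F_q$.

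Next, apply the $K3$-analogue of Kisin's construction, available because $X$ has finite height: to $(X,\mathscr L)$ is attached a reductive $\Q$-group $I$ which, in the manner of Kisin's group, governs the centralizers of Frobenius on the $\ell$-adic realizations (for $\ell\neq p$) and the automorphism group of the crystalline $\GSpin$-isocrystal of $(X,\mathscr L)$ at $p$. For finite height $h$, this isocrystal decomposes, up to isogeny, into an \'etale orthogonal piece of rank $21-2h$ together with the rank-$2h$ Dieudonn\'e module of the height-$h$ Artin--Mazur formal group and its dual; hence its automorphism group (as an orthogonal isocrystal with similitudes) has absolute rank $(10-h)+h+1$, and, since rank is unchanged under extension of the base field,
\[
\rank_{\overline{\Q}} I \;=\; \rank_{\overline{\Q}} G \;=\; 11 .
\]

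Now fix any maximal torus $T\subseteq I$ over $\Q$, so $\dim T=11$, and apply Theorem~\ref{Theorem:LiftingTorusAction} to $(X,\mathscr L)$ and $T$ --- the step in which the finite-height hypothesis does its real work. This yields a number field $F$, a place $v\mid p$ of $F$ with residue field $\F_{q^m}$ for some $m\geq 1$, and a $K3$ surface $\mathscr X$ over $\O_{F,(v)}$ with $\mathscr X_{\F_{q^m}}\cong X_{\F_{q^m}}$ such that, for every embedding $F\hookrightarrow\C$, each element of $T(\Q)$ acts on $H^2_B(\mathscr X_\C,\Q)$ as a morphism of $\Q$-Hodge structures. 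Passing to the Kuga--Satake Hodge structure $H_B$ attached to $\mathscr X_\C$, the Zariski closure in $\GL(H_B)$ of the image of $T(\Q)$ is a torus which, by the displayed rank equality, is a maximal torus of $G$; since the Mumford--Tate group $\mathrm{MT}(H_B)\subseteq G$ commutes with this torus and a maximal torus of a connected reductive group is its own centralizer, $\mathrm{MT}(H_B)$ is commutative. Hence $\mathscr X_\C$ is a $K3$ surface with complex multiplication for every embedding $F\hookrightarrow\C$; that is, $\mathscr X$ is a CM lifting of $X_{\F_{q^m}}$, which proves the theorem.

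Granting Theorem~\ref{Theorem:LiftingTorusAction}, the substance of the argument lies in the second step: constructing $I$ as a $K3$-theoretic analogue of Kisin's group --- matching its $\ell$-adic and crystalline realizations --- and carrying out the rank computation through the finite-height $F$-isocrystal. I expect the identification of the crystalline realization of $I$, which in residue characteristics $2$ and $3$ must rely on the Bhatt--Morrow--Scholze integral comparison, to be the most delicate point; once the rank equality is in hand, the passage from Theorem~\ref{Theorem:LiftingTorusAction} to complex multiplication in the third step is purely formal.
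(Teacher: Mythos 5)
Your overall skeleton (quasi-polarize, pass to the Kuga--Satake/Shimura-variety picture, invoke Theorem~\ref{Theorem:LiftingTorusAction} for a maximal torus $T\subset I$, and deduce CM from the fact that a maximal torus is its own centralizer) is the paper's route to Corollary~\ref{Corollary:CMLifting}. But the one ingredient you prove yourself --- the rank equality $\rank I=\rank \GSpin(L_\Q)=11$ --- is justified in a way that does not work at this stage. Your computation with the slope decomposition of the $F$-isocrystal (étale piece of rank $21-2h$ plus the Dieudonné module of $\widehat{\Br}$ and its dual, giving $(10-h)+h+1=11$) only computes the rank of the crystalline Frobenius-centralizer group, i.e.\ of $I_p$. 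Since $I_{\Q_p}\hookrightarrow I_p$, this gives an \emph{upper} bound for $\rank I$; the substance of the rank equality is the \emph{lower} bound, namely that the $\Q$-group $I$, consisting of genuine endomorphisms of the Kuga--Satake abelian variety over $\overline{\F}_q$, is as large as the local centralizer. That is exactly the Tate/Kisin-type input. The paper proves it in Proposition~\ref{Proposition:KisinGroup(I)} at a well-chosen prime $\ell\neq p$: Tate's theorem identifies $\End_{\F_q}(\mathcal{A}_s)\otimes\Q_\ell$ with the commutant of $\Frob_q$, and for $\ell$ with $\GSpin(L_\Q)$ split and the Frobenius eigenvalues in $\Q_\ell$ the centralizer of the split torus generated by $\Frob_q$ contains a split maximal torus of $\GSpin(L_{\Q_\ell})$. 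The corresponding crystalline statement $I_{\Q_p}\cong I_p$ is \emph{not} available before the lifting theorem: in the paper it is Corollary~\ref{Corollary:KisinGroup(II)}, deduced from Theorem~\ref{Theorem:LiftingTorusAction:Restate} itself. So, as written, your rank step is circular (or presupposes an unproved crystalline analogue of Tate's theorem). Note also that Theorem~\ref{Theorem:LiftingTorusAction} as stated already contains conclusion (1), that $\mathcal{X}_\C$ has CM; if you do not take that part for granted and re-derive it, the rank equality must be established, and the $\ell\neq p$ argument is the one that does it.

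A second, smaller gap is the passage to a number field. Theorem~\ref{Theorem:LiftingTorusAction} produces a lifting over $\O_K$ with $K$ a finite extension of $W(\overline{\F}_q)[1/p]$ and special fiber $(X_{\overline{\F}_q},\mathscr{L}_{\overline{\F}_q})$; it does not directly hand you a model over $\O_{F,(v)}$ for a number field $F$ with residue field $\F_{q^m}$, as your statement of it asserts. That descent is a genuine step: in the paper it goes through Proposition~\ref{Proposition:CM K3 is defined over a number field} (a CM $K3$ surface is defined over a number field, via the canonical model of the orthogonal Shimura variety, the finiteness of $Z_{\K^p_0}(\Lambda)$ over $\mathscr{S}_{\widetilde{\mathrm{K}}_0}$, and the étaleness of $\KS$), so CM-ness of the generic fiber is an \emph{input} to the descent, whereas you assert the number-field model first and derive CM afterwards; the order must be reversed. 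Finally, a minor framing point: when $p\mid 2d$ there is no integral canonical model for the Shimura variety of the rank-$21$ lattice itself; the paper works with the auxiliary self-dual lattice $\widetilde{L}$ and $\Lambda$-structures. This is harmless for your argument since you treat $I$ and Theorem~\ref{Theorem:LiftingTorusAction} as black boxes, but it should be stated correctly.
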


\begin{rem}
After we completed the first draft of this paper,
the authors learned Yang also proved the above theorem under the additional conditions that $p\geq 5$ and $X$ admits a quasi-polarization whose degree is not divisible by $p$; see \cite[Theorem 1.6]{Yang}.
Under these assumptions, our method (or a simplified version presented in Section \ref{Subsection:Outline}) and Yang's method share several ingredients but there is one difference;
Yang used Kisin's result \cite[Theorem 0.4]{KisinModp} on the CM liftings, up to isogeny, of closed points of the special fiber of the integral canonical model of a Shimura variety of Hodge type, while we give a refinement of Kisin's result (or argument) itself; see Theorem \ref{Theorem:LiftingTorusAction} for details.
\end{rem}

\begin{rem}
Deuring proved that every elliptic curve over a finite field
admits a characteristic $0$ lifting whose generic fiber is
an elliptic curve with CM;
see \cite[Theorem 1.7.4.6]{ChaiConradOort}.
Theorem \ref{Theorem:CMlifting} is an analogue of this result for $K3$ surfaces of finite height.
It is an interesting question to ask whether
Theorem \ref{Theorem:CMlifting} holds
also for supersingular $K3$ surfaces over finite fields.
Our methods in this paper cannot be applied to
supersingular $K3$ surfaces.
\end{rem}

\begin{rem}
We also have similar results on the existence of quasi-canonical liftings (in the sense of Nygaard-Ogus) of $K3$ surfaces of finite height over a finite field; see Corollary \ref{Corollary:QuasiCanonicalLifting}.
\end{rem}

\subsection{The Tate conjecture for the squares of $K3$ surfaces over finite fields}

Next we state our results on the Tate conjecture.
(For the statement of the Tate conjecture,
see \cite[Conjecture 0.1]{NygaardOgus}, \cite[Section 1]{Tate}, \cite[Conjecture 1.1]{Totaro} for example.)

As the second main theorem of this paper,
we shall prove the Tate conjecture for the square
of a $K3$ surface over a finite field.

\begin{thm}[see Theorem \ref{Theorem:TateConjecture:Restate}]
\label{Theorem:TateConjecture}
Let $X$ be a $K3$ surface (of any height) over $\F_q$.
We put
$X \times X := X \times_{\Spec \F_q} X$ and
$X_{\overline{\F}_q} \times X_{\overline{\F}_q} := X_{\overline{\F}_q} \times_{\Spec \overline{\F}_q} X_{\overline{\F}_q}$.
Then, for every $i$, the $\ell$-adic cycle class map
\[
\mathrm{cl}^{i}_{\ell} \colon Z^{i}(X \times X)\otimes_{\Z}\Q_{\ell} \to H^{2i}_{\rm{\acute{e}t}}(
X_{\overline{\F}_q} \times X_{\overline{\F}_q},
\Q_{\ell}{(i)})^{\Gal(\overline{\F}_q/\F_q)}
\]
is surjective for every prime number $\ell \neq p$.
Moreover, for every $i$, the crystalline cycle class map
\[
\mathrm{cl}^{i}_{\cris} \colon Z^{i}(X \times X)\otimes_{\Z}\Q_p \to
H^{2i}_{\cris}( (X \times X)/W(\F_q))^{\varphi = p^i} \otimes_{\Z} \Q
\]
is surjective.
\end{thm}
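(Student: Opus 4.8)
The plan is to reduce the Tate conjecture for $X \times X$ to two inputs: first, the known Tate conjecture for divisors on $K3$ surfaces (Madapusi Pera, and Kim--Madapusi Pera in characteristic $2$), which handles the pieces of $H^\bullet(X \times X)$ built out of $H^0$, $H^2$, and $H^4$ of the factors via the Künneth decomposition; and second, the new statement that every Tate class in $H^2_{\et}(X_{\overline{\F}_q},\Q_\ell(1))^{\otimes 2}$-type pieces — precisely the piece $H^2(X) \otimes H^2(X)$, equivalently $\Hom$ of the transcendental parts — is algebraic. I would begin by writing $H^4_{\et}(X_{\overline{\F}_q} \times X_{\overline{\F}_q}, \Q_\ell(2))$ via Künneth as $\bigoplus_{a+b=4} H^a(X) \otimes H^b(X)(2)$, observing that all the summands other than $H^2 \otimes H^2$ either vanish, are generated by $\Q_\ell(0)$ after a twist (the $H^0 \otimes H^4$, $H^4 \otimes H^0$ pieces, and the parts of $H^2 \otimes H^2$ coming from $\mathrm{NS}$), or reduce to the Tate conjecture for divisors on $X$ itself, which is a theorem. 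For codimensions $i \neq 2$ on $X \times X$, one similarly reduces to divisors and $0$-cycles on the factors, which are classical.

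The heart is therefore the transcendental part $T(X) \otimes T(X)$, where $T(X) \subset H^2$ is the orthogonal complement of the image of $\mathrm{NS}(X_{\overline{\F}_q})$. Here I would invoke the structure established earlier in the paper: after a finite extension $\F_{q^m}/\F_q$ we have the algebraic group $I$ over $\Q$ attached to $(X,\mathscr{L})$, and by Theorem~\ref{Theorem:LiftingTorusAction}, for each maximal torus $T \subset I$ we obtain a characteristic $0$ lifting $\mathscr{X}$ of $(X_{\overline{\F}_{q^m}},\mathscr{L})$ such that every $t \in T(\Q)$ acts on $H^2_B(\mathscr{X}_{\C},\Q)$ preserving the $\Q$-Hodge structure; that is, $t$ defines a Hodge class in $\mathrm{End}(H^2_B(\mathscr{X}_{\C},\Q))$, hence in $H^2_B(\mathscr{X}_{\C},\Q)^{\otimes 2}(2)$ after using the polarization form. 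By the theorem of Mukai and Buskin on the Hodge conjecture for $H^2 \otimes H^2$ of products of $K3$ surfaces (every Hodge isometry between transcendental lattices of two $K3$ surfaces is algebraic), this Hodge class is the class of an algebraic cycle of codimension $2$ on $\mathscr{X}_{\C} \times \mathscr{X}_{\C}$. Spreading out and specializing this cycle to the special fiber, its $\ell$-adic (resp.\ crystalline) class lands in $Z^2$ of $X_{\overline{\F}_{q^m}} \times X_{\overline{\F}_{q^m}}$ and, via the smooth specialization isomorphisms, realizes the action of $t$ on cohomology as an algebraic correspondence. Running over a maximal torus $T$, the classes obtained span the $T$-invariants; running over several maximal tori $T \subset I$ and using that the subgroup of $I(\Q)$ generated by all maximal tori is Zariski-dense (so its cohomological image commutant is exactly the span of Tate classes, by the comparison of $I$ with the group cutting out the Tate classes), one gets that all Frobenius-invariant classes in $T(X_{\overline{\F}_q}) \otimes T(X_{\overline{\F}_q})$ are algebraic.

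The passage from $\overline{\F}_q$ to $\F_q$ requires a standard descent: a Tate class over $\F_q$ is in particular a Tate class over $\F_{q^m}$, so it is algebraic on $X_{\F_{q^m}} \times X_{\F_{q^m}}$; applying a trace (transfer) argument along the finite extension, or using that the cycle class map is compatible with the $\Gal(\overline{\F}_q/\F_q)$-action and that $Z^i \otimes \Q_\ell$ surjects onto the Galois invariants iff it does over a finite extension, yields surjectivity over $\F_q$ as well. The $p$-adic statement is handled in parallel: the crystalline cycle class map is compatible with specialization of cycles, the Mukai--Buskin cycle has a crystalline class equal to the crystalline realization of $t$, and the crystalline Tate classes $H^{2i}_{\cris}((X \times X)/W(\F_q))^{\varphi = p^i}$ match the $\ell$-adic Tate classes under the comparison for the group $I$ (this compatibility of $\ell$-adic and crystalline realizations of the $T(\Q)$-action is exactly what Theorem~\ref{Theorem:LiftingTorusAction} provides, using the integral comparison theorems of Bhatt--Morrow--Scholze in small characteristic). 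The main obstacle I anticipate is not any single cohomological step but the bookkeeping that ties together three different realizations — Betti (for Mukai--Buskin), $\ell$-adic, and crystalline — through the single algebraic group $I$ and its maximal tori, and ensuring that the lifting $\mathscr{X}$ provided by Theorem~\ref{Theorem:LiftingTorusAction} is compatible with the quasi-polarization so that the cycles spread out over $\O_{F,(v)}$; this is where one must be careful about small characteristics and about the gap in the étaleness of the Kuga-Satake morphism noted in the introduction.
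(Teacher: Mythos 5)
Your proposal is correct and follows essentially the same route as the paper: Künneth plus the Tate conjecture for divisors (and hard Lefschetz) handles everything outside the transcendental piece $\End_{\Frob_q}(V_{\ell})$, which is then spanned by images of finitely many maximal tori of $I$ (semisimple elements are Zariski dense, so the span of the tori equals the span of $I(\overline{\Q}_{\ell})$, which is the full Frobenius commutant), and each torus action becomes a Hodge isometry on a CM lifting by Theorem \ref{Theorem:LiftingTorusAction}, is algebraic by Mukai--Buskin, and specializes back. The only organizational difference is that the paper does not track the crystalline realization of the torus action directly: it first proves the $\ell$-adic statement for a single well-chosen $\ell$ and then deduces all other $\ell$ and the crystalline case by the independence-of-$\ell$ of the Frobenius characteristic polynomial, semisimplicity, and a Hilbert 90 dimension count (Lemma \ref{Lemma:TateConjecture:Reduction}), which avoids needing any crystalline compatibility in Theorem \ref{Theorem:LiftingTorusAction}.
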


Here $Z^i(X \times X)$ denotes the group of algebraic cycles of codimension $i$ on $X \times X$,
and $W(\F_q)$ is the ring of Witt vectors of $\F_q$.
The map $\varphi$ denotes the action of the absolute Frobenius endomorphism on
the crystalline cohomology.

\begin{rem}
Theorem \ref{Theorem:TateConjecture} was previously known to hold for some $K3$ surfaces.
\begin{enumerate}
\item Theorem \ref{Theorem:TateConjecture} obviously holds for any $i \notin \{\, 1,2,3 \,\}$.

\item The surjectivity of $\mathrm{cl}^{1}_{\ell}$ and $ \mathrm{cl}^{3}_{\ell}$
follow from the Tate conjecture for $X$
\cite{Charles13, KimMadapusiIntModel, MadapusiTateConj, Maulik};
see also Lemma \ref{Lemma:TateConjecturePrimitivePart}.

\item Theorem \ref{Theorem:TateConjecture} holds when $X$ is supersingular.
In fact, the Tate conjecture for $X$ implies
the Picard number of $X_{\overline{\F}_q}$ is $22$;
see Lemma \ref{Lemma:SupersingularPicardNumber}.
Then the Tate conjecture for the square $X \times X$ follows by the K\"unneth formula;
see Lemma \ref{Lemma:TateConjectureSupersingular}
and Remark \ref{Remark:TateConjectureSupersingularPower}.

\item Zarhin proved the Tate conjecture for $X \times X$
when $X$ is an ordinary $K3$ surface; see \cite[Corollary 6.1.2]{Zarhin}.
Here a $K3$ surface $X$ is called \textit{ordinary} if
it is of height $1$.
(More generally, Zarhin proved the Tate conjecture for
any power $X \times \cdots \times X$ of
an ordinary $K3$ surface $X$.)

\item Yu-Yui proved the Tate conjecture for $X \times X$
when $X$ satisfies some conditions on
the characteristic polynomial of the Frobenius morphism;
see \cite[Lemma 3.5, Corollary 3.6]{Yu-Yui}.
\end{enumerate}
\end{rem}

In the cases studied by Zarhin and Yu-Yui, it turns out that
all the Tate cycles of codimension $2$ on $X \times X$  are spanned by
the classes of the cycles of the form
$X \times \{ x_0 \}$, $\{ x_0 \} \times X$, and $D_1 \times D_2$,
and the classes of the graphs of powers of the Frobenius morphism on $X$.
Here $x_0$ is a closed point on $X$, and $D_1$ and $D_2$ are divisors on $X$.
In general, there are Tate classes on $X \times X$ 
which are not spanned by these classes.
Therefore, in order to prove Theorem \ref{Theorem:TateConjecture} in full generality,
we shall prove the algebraicity of Tate cycles on $X \times X$
which are not spanned by Tate cycles considered by Zarhin and Yu-Yui.
We shall prove it by constructing characteristic $0$ liftings,
and applying the results of Mukai and Buskin on the Hodge conjecture.

\subsection{Construction of characteristic $0$ liftings preserving the action of tori}

Here we explain our results on
the construction of characteristic $0$ liftings of $K3$ surfaces.

Let $X$ be a $K3$ surface over $\F_q$,
and $\mathscr{L}$ a line bundle on $X$ defined over $\F_q$ which gives a primitive quasi-polarization.
Assume that $X$ is of finite height.
After replacing $\F_q$ by a finite extension of it,
the \textit{Kuga-Satake abelian variety} $A$ associated with $(X,\mathscr{L})$
is defined over $\F_q$.
(Precisely, we shall use the Kuga-Satake abelian variety introduced
by Madapusi Pera in \cite{MadapusiIntModel, MadapusiTateConj},
which has dimension $2^{21}$; it is larger than the dimension of the classical Kuga-Satake abelian variety.
See Section \ref{Subsection:Integral canonical models and Kuga-Satake abelian schemes}.)

We have an action of a general spin group,
denoted by $\GSpin(L_{\Q})$ in this paper, on the cohomology of $X$ and $A$.
We put $G := \GSpin(L_{\Q})$ in this section.
We do not recall the precise definition of $G$ here.
Instead, we give some of its properties:
\begin{enumerate}
\item For every prime number $\ell \neq p$,
the group of $\Q_{\ell}$-valued points $G(\Q_{\ell})$ acts
on the primitive part
\[
P^{2}_{\mathrm{\acute{e}t}}( X_{\overline{\F}_q}, \Q_{\ell}{(1)})
:= \mathrm{ch}_{\ell}(\mathscr{L})^{\perp} \subset
H^{2}_{\mathrm{\acute{e}t}}( X_{\overline{\F}_q}, \Q_{\ell}{(1)})
\]
of the $\ell$-adic cohomology of $X$
and the $\ell$-adic cohomology
\[ H^{1}_{\mathrm{\acute{e}t}}( A_{\overline{\F}_q}, \Q_{\ell}) \]
of $A$.
 
\item There is a $G(\Q_{\ell})$-equivariant $\Q_{\ell}$-linear map 
\[ P^{2}_{\mathrm{\acute{e}t}}( X_{\overline{\F}_q}, \Q_{\ell}{(1)})
\to
\End_{\Q_{\ell}}(H^{1}_{\mathrm{\acute{e}t}}( A_{\overline{\F}_q}, \Q_{\ell})^{\vee}),
\]
where $()^{\vee}$ denotes the $\Q_{\ell}$-linear dual.

\item There is an element
$\Frob_{q} \in G(\Q_{\ell})$
whose action on
$P^{2}_{\mathrm{\acute{e}t}}( X_{\overline{\F}_q}, \Q_{\ell}{(1)})$
(resp. $H^{1}_{\mathrm{\acute{e}t}}( A_{\overline{\F}_q}, \Q_{\ell})$)
coincides with the action of the geometric Frobenius morphism on
the $\ell$-adic cohomology of
$X$ (resp.\ $A$).
\end{enumerate}

Following Kisin \cite{KisinModp},
we attach an algebraic group $I$ over $\Q$ to
the quasi-polarized $K3$ surface $(X,\mathscr{L})$;
see Definition \ref{Definition: algebraic group I}.
Instead of giving the precise definition here, we give its properties:
\begin{enumerate}
\item The group of $\Q$-valued points $I(\Q)$
is considered as a subgroup of the multiplicative group of the endomorphism algebra of $A_{\overline{\F}_q}$ tensored with $\Q$:
\[
I(\Q) \subset (\End_{\overline{\F}_q}(A_{\overline{\F}_q}) \otimes_{\Z} \Q)^{\times}.
\]
\item For every prime number $\ell \neq p$,
there is an embedding $I_{\Q_{\ell}} \hookrightarrow G_{\Q_{\ell}}$, and an element of $G(\Q_{\ell})$ is in $I(\Q_{\ell})$
if and only if it commutes with $\Frob^m_{q}$ for a sufficiently divisible $m \geq 1$.
\item The algebraic groups $G$ and $I$ have the same rank.
\end{enumerate}

The existence of an algebraic group $I$ over $\Q$
which satisfies these properties is not obvious;
it is considered as Kisin's group-theoretic interpretation
and generalization of
Tate's original proof of the Tate conjecture for endomorphisms of
abelian varieties over finite fields.

As the third main theorem of this paper,
we shall construct a characteristic $0$ lifting of
a quasi-polarized $K3$ surface of finite height
preserving the action of a maximal torus of
the algebraic group $I$.

\begin{thm}[see Theorem \ref{Theorem:LiftingTorusAction:Restate}]
\label{Theorem:LiftingTorusAction}
Let $T \subset I$ be a maximal torus over $\Q$.
Then there exist a finite extension $K$ of $W(\overline{\F}_q)[1/p]$
and a quasi-polarized $K3$ surface $(\mathcal{X},  \mathcal{L})$ over $\O_K$
such that the special fiber
$(\mathcal{X}_{\overline{\F}_q}, \mathcal{L}_{\overline{\F}_q})$
is isomorphic to $(X_{\overline{\F}_q}, \mathscr{L}_{\overline{\F}_q})$,
and, for every embedding $K \hookrightarrow \C$,
the quasi-polarized $K3$ surface
$(\mathcal{X}_{\C},  \mathcal{L}_{\C})$
satisfies the following properties:
\begin{enumerate}
\item The $K3$ surface $\mathcal{X}_{\C}$ has CM.

\item There is a homomorphism of algebraic groups over $\Q$
\[ T \to \SO(P^2_{B}(\mathcal{X}_{\C}, \Q(1))). \]
Here $P^2_{B}(\mathcal{X}_{\C}, \Q(1))$ is the primitive part of the Betti cohomology of $\mathcal{X}_{\C}$.

\item
For every $\ell \neq p$,
the action of $T(\Q_{\ell})$ on
$P^2_{B}(\mathcal{X}_{\C},\,\Q(1)) \otimes_{\Q} \Q_{\ell}$
is identified with the action of $T(\Q_{\ell})$ on
$P^2_{\mathrm{\acute{e}t}}(X_{\overline{\F}_q},\Q_{\ell}(1))$
via the canonical isomorphisms
\[
P^2_{B}(\mathcal{X}_{\C}, \Q(1)) \otimes_{\Q} \Q_{\ell}
\cong P^2_{\mathrm{\acute{e}t}}(\mathcal{X}_{\C}, \Q_{\ell}(1))
\cong P^2_{\mathrm{\acute{e}t}}(X_{\overline{\F}_q},\Q_{\ell}(1))
\]
(using the embedding $K \hookrightarrow \C$,
we consider $K$ as a subfield of $\C$).

\item The action of every element of $T(\Q)$ on
$P^2_{B}(\mathcal{X}_{\C}, \Q(1))$
preserves the $\Q$-Hodge structure on it.
\end{enumerate}
\end{thm}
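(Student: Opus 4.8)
The plan is to restate the problem in terms of integral canonical models of orthogonal Shimura varieties and then to run Kisin's CM-lifting argument \cite{KisinModp} while keeping track of the torus $T$. Let $\mathcal{S}$ denote the integral canonical model over $\Z_{(p)}$ of the Shimura variety attached to $(G,X)$, $G = \GSpin(L_\Q)$, with a suitable level, and let $\mathcal{M}$ be the moduli stack of primitively quasi-polarized $K3$ surfaces of the relevant degree, so that the Kuga-Satake morphism $\mathcal{M} \to \mathcal{S}$ is \'etale onto its image (using the corrected \'etaleness in characteristic $2$; see Remark \ref{Remark: Kim-Madapusi Pera gap}). Our $(X_{\overline{\F}_q}, \mathscr{L}_{\overline{\F}_q})$ gives a point $y \in \mathcal{M}(\overline{\F}_q)$ with image $x \in \mathcal{S}(\overline{\F}_q)$. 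By \'etaleness of the Kuga-Satake morphism, to obtain a quasi-polarized lift $(\mathcal{X}, \mathcal{L})$ over $\O_K$ with the desired realizations it suffices to produce a lift $\widetilde{x} \in \mathcal{S}(\O_K)$ of $x$ (which, since $x$ lies in the open image of $\mathcal{M}$ and $\O_K$ is a discrete valuation ring, automatically lies in that image); then all cohomological data of $\mathcal{X}_\C$ are read off from the $\GSpin$-structure at $\widetilde{x}$, and it suffices to take $\widetilde x$ to be a CM point whose associated torus is $j(T)$ for an embedding $j\colon T\hookrightarrow G$ with suitable local behaviour.

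The heart of the proof is to realize the maximal torus $T \subset I$ inside $G$ over $\Q$ with prescribed local conjugacy classes. I would construct an embedding $j \colon T \hookrightarrow G$ over $\Q$ and a homomorphism $h \colon \mathbb{S} \to T_\R$ so that $(T, \{h\})$ is a Shimura sub-datum of $(G,X)$ satisfying: (i) for every $\ell \neq p$, $j_{\Q_\ell}$ is $G(\Q_\ell)$-conjugate to the composite $T_{\Q_\ell} \hookrightarrow I_{\Q_\ell} \hookrightarrow G_{\Q_\ell}$; and (ii) at $p$, $j_{\Q_p}$ is compatible with the Frobenius on the isocrystal of $x$ — using that $T$, lying in $I$, centralizes a power of $\Frob_q$ — so that the special point attached to $(T,\{h\})$ reduces into the isogeny class of $x$. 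Producing such a global $j$ is where Kisin's method enters: one picks local embeddings in the prescribed conjugacy class at each place, and the obstruction to gluing them into a global embedding lies in a Galois cohomology group built from $T$, $T^{\mathrm{ad}}$ and the fundamental group of $G^{\mathrm{der}} = \Spin$; using that $G$ and $I$ have the same rank — so $j(T)$ is automatically a maximal torus of $G$ — together with the structure of spin and special orthogonal groups, one shows this obstruction can be resolved, as in \cite{KisinModp} and the Langlands-Rapoport parametrization of isogeny classes. I expect this transfer of $T \subset I$ into $G$ over $\Q$, with the prescribed local conjugacy classes at all finite places, to be the principal difficulty; the refinement over Kisin's statement is that we must preserve the whole torus action rather than merely produce an isogeny, and must also handle small primes, where the integral comparison theorem of Bhatt-Morrow-Scholze \cite{BMS} replaces the inputs of Nygaard-Ogus.

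Granting $j$ and $h$, the pair $(T, \{h\})$ defines a special point $\widetilde{x}_0 \in \Sh_K(G,X)(\overline{\Q})$; choosing a $p$-adic place and using the extension property of $\mathcal{S}$, this spreads to a point of $\mathcal{S}(\O_{K'})$ for some finite extension $K'$ of $W(\overline{\F}_q)[1/p]$, whose special fiber $\overline{x}_0 \in \mathcal{S}(\overline{\F}_q)$ lies in the isogeny class of $x$ by (ii). By Kisin's work on the Langlands-Rapoport conjecture for these Shimura varieties, that isogeny class is a union of Hecke orbits on which prime-to-$p$ and $p$-adic Hecke correspondences act transitively modulo $I(\Q)$, and every such point is realized on $\mathcal{S}$; applying a suitable Hecke operator — which changes neither "has CM" nor the torus $j(T)$ nor the generic-fiber realizations up to the $G$-action — moves $\widetilde{x}_0$ to a lift $\widetilde{x} \in \mathcal{S}(\O_K)$ with special fiber exactly $x$. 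Pulling $\widetilde{x}$ back through $\mathcal{M} \to \mathcal{S}$ yields $(\mathcal{X}, \mathcal{L})$ over $\O_K$; since $\widetilde x$ arises from a $\overline{\Q}$-point, the conclusions below hold for every embedding $K \hookrightarrow \C$.

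It remains to verify the four properties. Property (1): the Mumford-Tate group of $H^2_B(\mathcal{X}_\C, \Q)$ is contained in $j(T)$, hence is a torus, so $\mathcal{X}_\C$ has CM. Properties (2) and (3): $T$ acts on $P^2_B(\mathcal{X}_\C, \Q(1))$ through $j \colon T \hookrightarrow G \to \SO(L_\Q) = \SO(P^2_B(\mathcal{X}_\C, \Q(1)))$, and after $\otimes \Q_\ell$ this action is, by (i), $G(\Q_\ell)$-conjugate to the action of $T(\Q_\ell)$ on $P^2_{\et}(X_{\overline{\F}_q}, \Q_\ell(1))$ through $I_{\Q_\ell} \hookrightarrow G_{\Q_\ell}$; under the smooth base-change and specialization isomorphisms this is exactly the identification in (3). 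Property (4): since $h$ factors through $T_\R$, the Hodge cocharacter commutes with $j(T)$, so every element of $T(\Q)$ acts on $P^2_B(\mathcal{X}_\C, \Q(1))$ — and on the full cohomology and on $\End$ of the Kuga-Satake variety — by Hodge classes, i.e.\ preserving the $\Q$-Hodge structure.
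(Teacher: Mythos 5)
Your overall frame (Kuga--Satake \'etaleness plus a CM/special point on the $\GSpin$ Shimura variety) is the right setting, but the central step of your argument has a genuine gap, and it is exactly the point where this theorem is stronger than Kisin's CM-lifting result. Your construction produces a special point attached to a transferred torus $j(T)\subset G$ whose reduction lies in the \emph{isogeny class} of $x$, and you then assert that ``prime-to-$p$ and $p$-adic Hecke correspondences'' move it to a lift whose special fiber is \emph{exactly} $x$. Prime-to-$p$ Hecke correspondences are finite \'etale on the integral model and do lift, but there are no integral $p$-adic Hecke correspondences at hyperspecial level doing what you need: changing the point within the $p$-part of the isogeny class (the affine Deligne--Lusztig set) changes the Dieudonn\'e lattice, and whether the \emph{new} lattice admits a characteristic-$0$ lift compatible with the whole torus $T$ is a lattice-level question that does not follow from Kisin's parametrization of the isogeny class. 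This is precisely the refinement the paper proves: Kisin's \cite[Theorem 0.4]{KisinModp} (which is what your route reproduces, and what Yang used under extra hypotheses) only gives a CM lift of \emph{some} point isogenous to $x$. The paper instead starts from $x$ itself: it lets $T_{\Q_p}\subset I_{\Q_p}$ act on the formal Brauer group, lifts $\widehat{\Br}$ together with this action via Lubin--Tate theory (Lemma \ref{Lemma:FormalGroupLifting}), uses the resulting isotropic line to define the filtration, proves weak admissibility, produces a $\Gal(\overline{E}/E)$-stable lattice $\Cl(\widetilde{L}_p)$ and a $\widetilde{G}$-adapted $p$-divisible group, and then applies the \'etaleness of $\KS$ (Theorem \ref{Theorem:CMliftingAbelianVariety}); none of this is replaced by a Hecke translation argument.

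A secondary but related problem: conclusions (3) and (4) are statements about the \emph{original} torus $T\subset I\subset(\End_{\overline{\F}_q}(\mathcal{A}_{\overline{s}})\otimes\Q)^\times$, identified through the canonical specialization isomorphisms, not up to $G(\Q_\ell)$-conjugacy. Your transfer $j$ is only required to be locally conjugate to the inclusion $T_{\Q_\ell}\hookrightarrow G_{\Q_\ell}$, and after a Hecke translate the action you obtain on $P^2_B(\mathcal{X}_\C,\Q(1))\otimes\Q_\ell$ is a priori only conjugate to the given action on $P^2_{\et}(X_{\overline{\F}_q},\Q_\ell(1))$; that is not enough for the intended application to the Tate conjecture, where one needs the actual Frobenius-commuting endomorphisms spanned by $T(\Q)$ to become algebraic. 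To get the identification on the nose you must show that the elements of $T(\Q)$, viewed as quasi-endomorphisms of $\mathcal{A}_{\overline{s}}$, lift to the Kuga--Satake abelian scheme over $\O_K$; in the paper this is exactly what the preservation of the Hodge filtration $\Fil^0_{\widetilde{s}}$ by $T$ delivers, and it is the reason the lift is built from a $T$-equivariant filtration rather than from an abstract special point.
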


\begin{rem}
It is known that every $K3$ surface with CM is defined
over a number field; see 
Proposition \ref{Proposition:CM K3 is defined over a number field} and Remark \ref{Remark:CM K3 is defined over a number field}.
Therefore, Theorem \ref{Theorem:LiftingTorusAction}
implies Theorem \ref{Theorem:CMlifting}.
\end{rem}

\begin{rem}
Our construction of characteristic $0$ liftings
relies on the theory of integral canonical models of Shimura varieties
of Hodge type developed by Milne, Vasiu, Kisin, and Kim-Madapusi Pera.
Especially, we need an explicit description of the completion at a closed point of the special fiber of the integral canonical model of a Shimura variety of Hodge type given by Kisin when $p \geq 3$
\cite{KisinIntModel},
and by Kim-Madapusi Pera when $p=2$ \cite{KimMadapusiIntModel}.
(When $p \geq 5$, we can also use the results of Nygaard-Ogus \cite{NygaardOgus}
to obtain necessary results on characteristic $0$ liftings;
see Section \ref{Subsection:Remarks on the characteristic and the Kuga-Satake morphism} and Remark \ref{Remark:NygaardOgus}.)
\end{rem}

\begin{rem}
When $X$ is ordinary, Theorem \ref{Theorem:LiftingTorusAction}
was essentially proved by Nygaard in \cite{Nygaard}
although the algebraic group $I$ did not appear there.
When $X$ is ordinary, the canonical lifting of $X$ is a CM lifting.
On the other hand, when the height of $X$ is finite and $p \geq 5$,
Nygaard-Ogus proved $X$ admits quasi-canonical liftings \cite{NygaardOgus}.
But a quasi-canonical lifting is not necessarily a CM lifting.
\end{rem}

\subsection{Remarks on the characteristic and the Kuga-Satake morphism}
\label{Subsection:Remarks on the characteristic and the Kuga-Satake morphism}

In this paper, we do not put any restrictions on the characteristic $p$.
There are several technical difficulties in small characteristics.
But, when $p \geq 5$ and $p$ does not divide the degree of the quasi-polarization,
most of the technical difficulties disappear
and the proofs of the main theorems can be considerably simplified.
See below for some details.

We construct characteristic $0$ liftings of quasi-polarized $K3$ surfaces
corresponding to characteristic $0$ liftings of formal Brauer groups
in Section \ref{Section:Lifting of a special point}.
Our construction depends on the calculations of $F$-crystals in Section \ref{Section:$F$-crystals on Shimura varieties},
which depend on the integral comparison theorems of Bhatt-Morrow-Scholze \cite{BMS}.
When $p \geq 5$, we can avoid them.
Instead, we can use the results of Nygaard-Ogus \cite{NygaardOgus}
to obtain necessary results on liftings of $K3$ surfaces;
see Remark \ref{Remark:NygaardOgus}.

Our notation on the Shimura varieties is slightly complicated because
we use the Kuga-Satake morphism introduced by Madapusi Pera in \cite{MadapusiTateConj, KimMadapusiIntModel}, which is denoted by
\[
\KS \colon M^{\mathrm{sm}}_{2d, \K^p_0, \Z_{(p)}}\to Z_{\K^p_0}(\Lambda).
\]
(See Section \ref{Subsection:Kuga-Satake morphism}.)
To define $Z_{\K^p_0}(\Lambda)$,
we embed the Shimura variety, which is the target of the classical Kuga-Satake morphism,
into a larger Shimura variety,
and put additional structures (called \textit{$\Lambda$-structures});
see Definition \ref{Definition:Lambda structure}.

We use the morphism $\KS$ to avoid certain technical difficulties which arise when $p$ divides the degree of the quasi-polarization.
(The same technique was used by Madapusi Pera in
\cite{MadapusiIntModel, MadapusiTateConj, KimMadapusiIntModel}.
See also Remark \ref{Remark:Hyperspecial}.)

When $p$ does not divide the degree of the quasi-polarization, we can avoid it,
and can directly work with the classical Kuga-Satake morphism
into the integral canonical model of the (smaller) Shimura variety.

\begin{rem}
In the course of writing this paper,
we found some issues on the proof of the \'etaleness of the Kuga-Satake morphism.
The \'etaleness was used in our construction of characteristic $0$ liftings
in Section \ref{Section:Lifting of a special point}.
It was also used by Madapusi Pera in his proof of the Tate conjecture for $K3$ surfaces \cite{MadapusiTateConj, KimMadapusiIntModel}.
We can avoid these issues using our results in
Section \ref{Section:$F$-crystals on Shimura varieties};
see Remark \ref{Remark: Kim-Madapusi Pera gap} and
Remark \ref{Remark:Bloch-Kato and BMS} for details.
After we communicated the first draft of this paper to Madapusi Pera, he found a somewhat different argument; see \cite{MadapusiErratum}. 
\end{rem}

\subsection{Outline of the proofs of the main theorems}
\label{Subsection:Outline}

We shall prove Theorem \ref{Theorem:CMlifting} and Theorem \ref{Theorem:LiftingTorusAction} at the same time.
Then, combined with the results of Mukai and Buskin,
we shall prove Theorem \ref{Theorem:TateConjecture}.

\subsubsection*{Proofs of Theorem \ref{Theorem:CMlifting} and Theorem \ref{Theorem:LiftingTorusAction} (when $p \geq 5$)}

In order to simplify the exposition,
we first explain the proof of Theorem \ref{Theorem:LiftingTorusAction}
when $p \geq 5$ using the results of Nygaard-Ogus.
In the following argument, we replace $\F_q$ by a sufficiently large
finite extension of it. We put $W := W(\F_q)$.

Let $\widehat{\Br}:= \widehat{\Br}(X)$ be the formal Brauer group associated with $X$.
First we shall show $I_{\Q_p}$ acts on $\widehat{\Br}$, up to isogeny.
Then we take a finite totally ramified extension $E$ of $W[1/p]$,
and a one-dimensional smooth formal group $\mathcal{G}$ over $\O_{E}$
lifting $\widehat{\Br}$
such that the action of $I_{\Q_p}$ on $\widehat{\Br}$
lifts to an action of $I_{\Q_p}$ on $\mathcal{G}$,
up to isogeny.

The lifting $\mathcal{G}$ defines filtrations on
$P^2_{\cris}(X/W) \otimes_{W} E$ and
$H^1_{\cris}(A/W) \otimes_{W} E$ as follows.
The Kuga-Satake construction gives embeddings which are homomorphisms of $F$-isocrystals after inverting $p$:
\[
\mathbb{D}(\widehat{\Br})(1) \subset
P^2_{\cris}(X/W)(1) \subset
\widetilde{L}_{\cris}  \subset
\End_{W}(H^1_{\cris}(A/W)^{\vee}).
\]
(Here $\mathbb{D}(\widehat{\Br})$ is the Dieudonn\'e module of $\widehat{\Br}$ considered as a connected $p$-divisible group.
For the $W$-module $\widetilde{L}_{\cris}$,
see Section \ref{Subsection:Local Systems on Shimura Varieties} (4).)
The lifting $\mathcal{G}$ defines a filtration on
$\mathbb{D}(\widehat{\Br})(1) \otimes_{W} E$:
\[ \mathrm{Fil}^1(\mathcal{G})
   \subset \mathbb{D}(\widehat{\Br})(1) \otimes_{W} E. \]
Thus, it gives the filtration on $P^2_{\cris}(X/W)(1) \otimes_{W} E$:
\[
  \mathrm{Fil}^1(\mathcal{G})
  \subset \mathrm{Fil}^1(\mathcal{G})^{\perp}
  \subset P^2_{\cris}(X/W)(1) \otimes_{W} E.
\]
Take a generator
$e \in \mathrm{Fil}^1(\mathcal{G})$,
and denote the image of the action of $e$ by
\[ \mathrm{Fil}^1 := \Im(e) \subset H^1_{\cris}(A/W) \otimes_{W} E. \]
It gives a filtration on $H^1_{\cris}(A/W) \otimes_{W} E$,
which does not depend on the choice of $e$.

When $p \geq 5$, the results of Nygaard-Ogus \cite{NygaardOgus} imply
the existence of a lifting $(\mathcal{X}, \mathcal{L})$ over $\O_E$
corresponding to the filtration defined as above.

We shall show that, for every embedding $E \hookrightarrow \C$,
the action of an element of $T(\Q)$ on $P^2_{B}(\mathcal{X}_{\C}, \Q(1))$
preserves the $\Q$-Hodge structure.
To show this, we note that each element of $T(\Q)$
can be considered as an element of
$(\End_{\F_q}(A) \otimes_{\Z} \Q)^{\times}$.
Since its action preserves the filtration on
$H^1_{\cris}(A/W) \otimes_{W} E$,
it lifts to an element of
$(\End_{\C}(\mathcal{A}_{\C}) \otimes_{\Z} \Q)^{\times}$,
where $\mathcal{A}_{\C}$ is the Kuga-Satake abelian variety over $\C$
associated with $(\mathcal{X}_{\C}, \mathcal{L}_{\C})$.
In particular, it preserves the Hodge structure on the singular cohomology
$H^1_{B}(\mathcal{A}_{\C}, \Q)$.
Since we have a $T(\Q)$-equivariant embedding respecting the $\Q$-Hodge structures
\[
P^2_{B}(\mathcal{X}_{\C}, \Q(1)) \hookrightarrow
\End_{\C}(H^1_{B}(\mathcal{A}_{\C}, \Q)^{\vee}),
\]
the action of each element of $T(\Q)$ on $P^2_{B}(\mathcal{X}_{\C}, \Q(1))$
preserves the $\Q$-Hodge structure on it.

As the algebraic groups $G$ and $I$ have the same rank,
we conclude that the Mumford-Tate group of
$P^2_{B}(\mathcal{X}_{\C}, \Q(1))$
is commutative.
Thus $\mathcal{X}_{\C}$ is a $K3$ surface with CM.
Consequently, the quasi-polarized $K3$ surface
$(\mathcal{X}_E, \mathcal{L}_E)$ is defined over a number field,
and Theorem \ref{Theorem:CMlifting} and Theorem \ref{Theorem:LiftingTorusAction}
are proved when $p \geq 5$.

\subsubsection*{Proofs of Theorem \ref{Theorem:CMlifting} and Theorem \ref{Theorem:LiftingTorusAction} (for $p = 2$ or $3$)}

When $p = 2$ or $3$, we cannot use the results of Nygaard-Ogus
to construct liftings of $K3$ surfaces.
Instead, we use $p$-adic Hodge theory and
the \'etaleness of the Kuga-Satake morphism to construct 
liftings.
(The following argument works for any $p$, including $p \geq 5$.)

Using the $p$-adic Tate module of $\mathcal{G}$ and its dual,
we first construct a $\Z_p[\Gal(\overline{E}/E)]$-module $\widetilde{L}_p$
such that $\widetilde{L}_p[1/p]$ is a crystalline $\Gal(\overline{E}/E)$-representation
whose Hodge-Tate weights are in $\{ -1, 0, 1 \}$;
see Lemma \ref{Lemma:Lifting-LCris}.
On the other hand, we can show the filtration
$\mathrm{Fil}^1 \subset H^1_{\cris}(A/W) \otimes_{W} E$
gives the structure of
a weakly admissible filtered $\varphi$-module on $H^1_{\cris}(A/W)[1/p]$.
It corresponds to a crystalline representation of $\Gal(\overline{E}/E)$,
which is denoted by $H_{\mathrm{\acute{e}t}, \Q_p}$.

Next we find a $\Gal(\overline{E}/E)$-stable $\Z_p$-lattice
in $H_{\mathrm{\acute{e}t}, \Q_p}$ as follows.
We can show there is an embedding of
$\Gal(\overline{E}/E)$-representations
\[
\widetilde{L}_p[1/p] \hookrightarrow \End_{\Q_p}(H_{\mathrm{\acute{e}t}, \Q_p}).
\]
Moreover, it can be shown that there is an isomorphism
of $\Q_p$-vector spaces
\[ \Cl(\widetilde{L}_p[1/p]) \cong H_{\mathrm{\acute{e}t}, \Q_p} \]
such that the actions of $\Gal(\overline{E}/E)$ on $\widetilde{L}_p$
and $H_{\mathrm{\acute{e}t}, \Q_p}$ factor through a homomorphism
\[
\Gal(\overline{E}/E) \to \GSpin(\widetilde{L}_p)(\Z_p)
\subset \Cl(\widetilde{L}_p)^{\times}.
\]
Here $\Cl(\widetilde{L}_p)^{\times}$ acts on
$\Cl(\widetilde{L}_p[1/p]) \cong H_{\mathrm{\acute{e}t}, \Q_p}$
by the left multiplication.
We take a $\Gal(\overline{E}/E)$-stable $\Z_p$-lattice
in $H_{\mathrm{\acute{e}t}, \Q_p}$
corresponding to 
$\Cl(\widetilde{L}_p) \subset \Cl(\widetilde{L}_p[1/p])$.
Then we take a $p$-divisible group $\mathcal{H}$ over $\O_E$
corresponding to it.

Let $K$ be the composite of $E$ and $W(\overline{\F}_q)[1/p]$.
We can show that the $p$-divisible group $\mathcal{H}_{\O_K}$ satisfies
a certain technical condition, called ``adaptedness.''
We have an $\O_K$-valued point $\widetilde{s}$ on
the integral canonical model of the Shimura variety lifting
the $\overline{\F}_q$-valued point associated with $(X,\mathscr{L})$.
(Precisely, $\widetilde{s}$ is an $\O_K$-valued point on the target $Z_{\K^p_0}(\Lambda)$ of the Kuga-Satake morphism.)
It gives an abelian scheme $\mathcal{A}$ over $\O_K$ lifting $A$
whose associated $p$-divisible group $\mathcal{A}[p^{\infty}]$ is $\mathcal{H}_{\O_K}$.

By the \'etaleness of the Kuga-Satake morphism,
we obtain a quasi-polarized $K3$ surface $(\mathcal{X}, \mathcal{L})$ over $\O_K$
corresponding to $\widetilde{s}$.

The rest of the argument is the same as before.

\subsubsection*{Proof of Theorem \ref{Theorem:TateConjecture}}

Fix a prime number $\ell \neq p$.
By the K\"unneth formula, we have
\begin{align*}
&\quad H^{4}_{\rm{\acute{e}t}}( X_{\overline{\F}_q} \times X_{\overline{\F}_q}, \Q_{\ell}{(2)}) \\
&\cong \bigoplus_{(i,j) = (0,4),(2,2)(4,0)}
H^{i}_{\rm{\acute{e}t}}( X_{\overline{\F}_q}, \Q_{\ell})
\otimes_{\Q_{\ell}} H^{j}_{\rm{\acute{e}t}}( X_{\overline{\F}_q}, \Q_{\ell})
\otimes_{\Q_{\ell}} \Q_{\ell}(2).
\end{align*}
It is enough to show every element fixed by $\Frob_q$
in the component of type $(2,2)$ is spanned by the classes of algebraic cycles of codimension $2$ on $X \times X$.
By the Poincar\'e duality, such an element can be considered as an endomorphism of
$H^{2}_{\rm{\acute{e}t}}(  X_{\overline{\F}_q}, \Q_{\ell}(1))$
commuting with $\Frob_q$.

Thus, we consider the action of $I(\Q_{\ell})$ on
$P^{2}_{\rm{\acute{e}t}}(  X_{\overline{\F}_q}, \Q_{\ell}(1))$.
It can be shown that,
after replacing $\F_q$ by a finite extension of it,
there exist maximal tori $T_1,\ldots,T_n \subset I$ over $\Q$ such that
the $\Q_{\ell}$-vector space of endomorphisms on
$P^{2}_{\rm{\acute{e}t}}(X_{\overline{\F}_q}, \Q_{\ell}(1))$
commuting with $\Frob_q$ is spanned by
the images of $T_1(\Q),\ldots,T_n(\Q)$.

Therefore, it is enough to show that, for each $i$, the action of every element of $T_i(\Q)$
on $P^{2}_{\rm{\acute{e}t}}(  X_{\overline{\F}_q}, \Q_{\ell}(1))$
comes from an algebraic cycle of codimension $2$ on $X \times X$.
It can be proved by combining
Theorem \ref{Theorem:LiftingTorusAction}
with the results of Mukai and Buskin on the Hodge conjecture for
certain Hodge cycles on the product of two $K3$ surfaces over $\C$.

\subsection{Outline of this paper}

As explained in Section \ref{Subsection:Remarks on the characteristic and the Kuga-Satake morphism},
most of the technical difficulties can be avoided when $p \geq 5$ and $p$ does not divide the degree of the quasi-polarization.
The readers who are mainly interested in the applications
of the integral canonical models of orthogonal Shimura varieties to
CM liftings and the Tate conjecture may skip earlier sections for the first time of reading, and may go directly to
Section \ref{Section:Lifting of a special point}.

The organization of this paper is as follows.

In Section \ref{Section:Clifford algebras and general spin groups},
we recall basic results on Clifford algebras and general spin groups.
In Section \ref{Section:Breuil-Kisin modules},
we recall basic results on Breuil-Kisin modules and
integral $p$-adic Hodge theory.
Then, in Section \ref{Section:Shimura varieties} and
Section \ref{Section:Moduli spaces of K3 surfaces and the Kuga-Satake morphism},
we fix notation and recall necessary results on the integral canonical models of orthogonal Shimura varieties and
the Kuga-Satake morphism used in this paper.

In Section \ref{Section:$F$-crystals on Shimura varieties},
we compare $F$-crystals on Shimura varieties and the crystalline cohomology of $K3$ surfaces.
We essentially use the integral comparison theorems of Bhatt-Morrow-Scholze \cite{BMS}.
We also explain how to avoid some issues on the proof of the \'etaleness of the Kuga-Satake morphism;
see Remark \ref{Remark: Kim-Madapusi Pera gap} and
Remark \ref{Remark:Bloch-Kato and BMS}.

In Section \ref{Section:Lifting of a special point},
we construct a characteristic $0$ lifting of
the $\overline{\F}_q$-valued point on the Shimura variety
corresponding to characteristic $0$ liftings of formal Brauer group
of the $K3$ surface.
We construct such liftings using our results on $F$-crystals in
Section \ref{Section:$F$-crystals on Shimura varieties}
and $p$-adic Hodge theory.

In Section \ref{Section:Kisin's algebraic groups},
we define and study an analogue of Kisin's algebraic group
associated with a quasi-polarized $K3$ surface of finite height over a finite field.
We also study its action on the formal Brauer group.

In Section \ref{Section:Lifting of K3 surfaces over finite fields with actions of tori},
we combine our results in
Section \ref{Section:Lifting of a special point} and
Section \ref{Section:Kisin's algebraic groups}
to construct a characteristic $0$ lifting of a $K3$ surface of finite height over $\overline{\F}_q$
preserving the action of a maximal torus of $I$.
Then we prove Theorem \ref{Theorem:CMlifting} and Theorem \ref{Theorem:LiftingTorusAction}.
In Section \ref{Section:TateConjecture},
combined with the results of Mukai and Buskin,
we prove Theorem \ref{Theorem:TateConjecture}.

Finally, in Section \ref{Section:CompatibilityComparisonIsom},
we give necessary results on the compatibility of
several comparison isomorphisms in $p$-adic Hodge theory used in this paper.

\subsection{Notation}\label{Subsection: Notation}

Throughout this paper, we fix a prime number $p$,
and $q$ denotes a power of $p$.
We denote a finite field with $q$ elements by $\F_q$,
and an algebraic closure of $\F_q$ by $\overline{\F}_q$.

For a perfect field $k$ of characteristic $p >0 $,
the ring of Witt vectors of $k$ is denoted by $W(k)$.
The Frobenius automorphism of $W(k)$ is denoted by
$\sigma \colon W(k) \to W(k)$.
If the field $k$ is clear from the context, 
we omit $k$ and denote it simply by $W$.

A $\textit{quadratic space}$ over a commutative ring $R$ means a free $R$-module $M$ of finite rank
equipped with a quadratic form $Q$.
We equip $M$ with a symmetric bilinear pairing $( \ , \ )$ defined by
$(x, y)=Q(x+y)-Q(x)-Q(y)$ for $x, y \in M$.
For a module $M$ over a commutative ring $R$ equipped with
a symmetric bilinear form $( \ , \ )$, we say $M$
(or the bilinear form $( \ , \ )$) is \textit{even}
if, for every $x \in M$, we have $(x, x)=2a$ for some $a \in R$.

The base change of a module or a scheme is denoted by a subscript.
For example, for a module $M$ over a commutative ring $R$ and an $R$-algebra $R'$,
the tensor product $M\otimes_{R}R'$ is denoted by $M_{R'}$.
For a scheme (or an algebraic space) $X$ over $R$,
the base change $X \times_{\Spec R} \Spec R'$ is denoted by
$X_{R'}$.
We use similar notation for the base change of group schemes,
$p$-divisible groups, line bundles, morphisms between them, etc.
For a homomorphism $f \colon M \to N$ of $R$-modules,
the base change $f_{R'} \colon M_{R'} \to N_{R'}$
is also denoted by the same notation $f$ if there is no possibility of confusion.
For an element $x \in M$, the $R$-submodule of $M$ generated by $x$ is denoted by
$\langle x \rangle$.
The dual of $M$ as an $R$-module is denoted by
$M^{\vee} := \Hom_R(M,R)$.

\section{Clifford algebras and general spin groups}\label{Section:Clifford algebras and general spin groups}
In this section, we introduce notation on quadratic spaces and Clifford algebras which will be used in this paper.
Our basic references are \cite{Bass}, \cite[Section 1]{MadapusiIntModel}.

\subsection{Embeddings of lattices}\label{Subsection: Embeddings of lattices}

A quadratic space 
$U:=\Z{x} \oplus \Z{y}$
whose associated bilinear form is given by $(x, x)=(y, y)=0$ and $(x, y)=1$ is called the \textit{hyperbolic plane}. 
The \textit{$K3$ lattice} $\Lambda_{K3}$ is defined by
\[
\Lambda_{K3} := {E_8}^{\oplus{2}}\oplus{U}^{\oplus{3}},
\]
which is a quadratic space over $\Z$.
It is unimodular and the signature of it is $(19, 3)$. 

We fix a positive integer $d >0$.
Let $L$ be an orthogonal complement of $x - dy$ in $\Lambda_{K3}$, where $x - dy$ is considered as an element in the third $U$. 
Hence $L$ is equal to
\[
{E_8}^{\oplus{2}}\oplus{U}^{\oplus{2}} \oplus \langle x + dy \rangle,
\]
and the signature of it is $(19, 2)$.

The following result is well-known.  

\begin{lem}\label{LatticeEmbedding}
Let $p$ be a prime number.
There is a quadratic space $\widetilde{L}$ of rank $22$ over $\Z$
satisfying the following properties:
\begin{enumerate}
\item The signature of it is $(20, 2)$.
\item $\widetilde{L}$ is self-dual at $p$ (i.e.\ the discriminant of $\widetilde{L}$ is not divisible by $p$).
\item There is an embedding 
$L \hookrightarrow \widetilde{L}$
as quadratic spaces which sends $L$ onto a direct summand of $\widetilde{L}$ as a $\Z$-module. 
\end{enumerate}
\end{lem}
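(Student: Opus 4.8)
The plan is to reduce the statement to a rank-two problem and then exhibit an explicit lattice. Write $L = L_0 \oplus \langle x+dy\rangle$, where $L_0 := E_8^{\oplus 2}\oplus U^{\oplus 2}$ is an even unimodular quadratic space over $\Z$ of signature $(18,2)$, and $\langle x+dy\rangle$ is the rank-one quadratic space whose generator $w$ satisfies $Q(w)=d$, i.e.\ $(w,w)=2d$. It then suffices to produce an even, positive definite quadratic space $N$ over $\Z$ of rank $2$ such that $p \nmid \mathrm{disc}(N)$ and $N$ contains a vector $v$ with $Q(v)=d$ that belongs to some $\Z$-basis of $N$. Indeed, given such an $N$, I would set $\widetilde{L} := L_0 \oplus N$: its signature is $(18,2)+(2,0)=(20,2)$; its discriminant is $\mathrm{disc}(L_0)\,\mathrm{disc}(N)=\mathrm{disc}(N)$, which is prime to $p$; and the map $L = L_0 \oplus \langle x+dy\rangle \to \widetilde{L}$ that is the identity on $L_0$ and sends $w$ to $v$ is an isometric embedding of quadratic spaces whose image $L_0 \oplus \Z v$ is a $\Z$-module direct summand of $L_0 \oplus N$ (extend $v$ to a $\Z$-basis $\{v,v'\}$ of $N$; then $\Z v'$ is a complement). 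So all three conditions would follow.

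It remains to construct $N$, and here one can be completely explicit, uniformly in $p$ and $d$. Take $N := \Z v \oplus \Z v'$ with quadratic form $Q(av+bv') := da^2 + ab + pb^2$. Then $(v,v)=2Q(v)=2d$, $(v',v')=2Q(v')=2p$, and $(v,v')=Q(v+v')-Q(v)-Q(v')=1$, so the Gram matrix of the associated bilinear form is $\begin{pmatrix} 2d & 1 \\ 1 & 2p \end{pmatrix}$, whence $\mathrm{disc}(N)=4dp-1\equiv -1 \pmod p$; in particular $p\nmid \mathrm{disc}(N)$, even when $p=2$. The form is positive definite because $2d>0$ and $\det\begin{pmatrix} 2d & 1 \\ 1 & 2p \end{pmatrix}=4dp-1>0$ (as $d\geq 1$ and $p\geq 2$). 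Finally, $v$ has $Q(v)=d$ and is part of the basis $\{v,v'\}$, as required.

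I do not expect any real obstacle: the result is classical and the argument is short. The two points deserving a little care are (i) to work throughout with the $\Z$-valued quadratic form and not merely the symmetric bilinear form — this is what forces $N$ to be even, and it is needed for the embedding $L \hookrightarrow \widetilde{L}$ to be one of quadratic spaces over $\Z$ — and (ii) to use the definition of ``self-dual at $p$'' given in the statement, namely that $p$ does not divide the discriminant, which holds since $4dp-1\equiv -1 \pmod p$. Alternatively, one could deduce the lemma from Nikulin's criteria for the existence of primitive embeddings of even lattices into larger even lattices, but the explicit construction above is self-contained.
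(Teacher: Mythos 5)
Your proof is correct and is essentially the paper's own argument: the paper also takes $\widetilde{L} := E_8^{\oplus 2}\oplus U^{\oplus 2}\oplus L'$ with $L'$ the rank-two lattice of Gram matrix $\begin{pmatrix} 2d & 1 \\ 1 & 2p \end{pmatrix}$ (your $N$), and embeds $L$ by the identity on $E_8^{\oplus 2}\oplus U^{\oplus 2}$ and $x+dy\mapsto v_1$. Your write-up just spells out the definiteness, discriminant, and direct-summand checks a bit more explicitly.
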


\begin{proof}
This result was proved in \cite[Lemma 6.8]{MadapusiIntModel} when $p>2$.
Here we briefly give a proof which is valid for every prime number $p$.
We consider a quadratic space 
$L' := \Z{v_1} \oplus \Z{v_2}$
such that the associated bilinear form is given by
$(v_1, v_1)=2d$, $(v_2, v_2)=2p$, and $(v_1, v_2)=1.$
We put 
\[
 \widetilde{L} := {E_8}^{\oplus{2}}\oplus{U}^{\oplus{2}} \oplus L'.
 \]
This is of signature $(20, 2)$ and self-dual at $p$.
We have an embedding of quadratic spaces
\[
L = {E_8}^{\oplus{2}}\oplus{U}^{\oplus{2}} \oplus \langle x + dy \rangle
\hookrightarrow  \widetilde{L}
\]
which is the identity on ${E_8}^{\oplus{2}}\oplus{U}^{\oplus{2}}$
and sends $x + dy$ to $v_1$.
\end{proof}

\subsection{Clifford algebras and general spin groups}\label{Subsection: Clifford algebras and general spin groups}

In the rest of this paper, we fix an embedding of quadratic spaces $L \subset \widetilde{L}$ as in Lemma \ref{LatticeEmbedding}.

Let $\Cl:=\Cl(\widetilde{L})$ be the \textit{Clifford algebra} over $\Z$
associated with the quadratic space $(\widetilde{L}, q_{\widetilde{L}})$. 
There is an embedding of $\Z$-modules
$\widetilde{L} \hookrightarrow \Cl$
which is universal for morphisms
$f \colon \widetilde{L} \to R$ 
of $\Z$-modules into an associative $\Z$-algebra $R$
such that $f(v)^2=q_{\widetilde{L}}(v)$ for every $v \in \widetilde{L}$.  
The algebra $\Cl$ has a $\Z/2\Z$-grading structure
$\Cl:=\Cl^{+} \oplus \Cl^{-}$,
where $\Cl^{+}$ is a subalgebra of $\Cl$.
The quadratic space $\widetilde{L}$ is naturally embedded into $\Cl^{-}$.

Let $\Z_{(p)}$ be the localization of $\Z$ at $p$.
We define the \textit{general spin group} 
$\widetilde{G}:=\GSpin(\widetilde{L}_{\Z_{(p)}})$
over $\Z_{(p)}$ by
\[
\widetilde{G}(R):=\{\, g \in (\Cl^{+}_R)^{\times} \mid g\widetilde{L}_Rg^{-1}=\widetilde{L}_R \, \, \text{in} \, \Cl^{-}_R \, \}
\]
for every $\Z_{(p)}$-algebra $R$.
Since $\widetilde{L}$ is self-dual at $p$, 
the group scheme $\widetilde{G}$ is a reductive group scheme over $\Z_{(p)}$.

The \textit{special orthogonal group} 
$\widetilde{G}_0:= \SO(\widetilde{L}_{\Z_{(p)}})$
is a reductive group scheme over $\Z_{(p)}$,
whose generic fiber $\widetilde{G}_{0, \Q}:=\widetilde{G}_0 \otimes_{\Z_{(p)}}\Q$ is $\SO(\widetilde{L}_{\Q})$.

We have the canonical morphism 
$\widetilde{G} \to \widetilde{G}_0$
defined by $g \mapsto (v \mapsto gvg^{-1})$, 
whose kernel is the multiplicative group $\G_{m, \Z_{(p)}}$ over $\Z_{(p)}$.
We have the following exact sequence of group schemes over $\Z_{(p)}$:
\[
1 \to
\G_{m, \Z_{(p)}} \to
\widetilde{G}=\GSpin(\widetilde{L}_{\Z_{(p)}}) \to
\widetilde{G}_0=\SO(\widetilde{L}_{\Z_{(p)}}) \to 1.
\]

\subsection{Representations of general spin groups and Hodge tensors}
\label{Subsection: Representations of general spin groups and Hodge tensors}

We define a $\Z$-module $H$ by $H:= \Cl$.
We consider $H_{\Z_{(p)}}$ as a $\widetilde{G}$-representation over $\Z_{(p)}$
by the left multiplication.
We have a closed embedding of group schemes over $\Z_{(p)}$:
\[
\widetilde{G} \hookrightarrow \GL(H_{\Z_{(p)}}).
\]

The representation $H_{\Z_{(p)}}$ has a natural $\Z/2\Z$-grading structure,
which is preserved by the action of $\widetilde{G}$.
Let $p^{+} \colon H_{\Z_{(p)}} \to H^{+}_{\Z_{(p)}} \hookrightarrow H_{\Z_{(p)}}$ (resp.\ $p^{-} \colon H_{\Z_{(p)}} \to H^{-}_{\Z_{(p)}} \hookrightarrow H_{\Z_{(p)}}$)
be an idempotent corresponding to $H^{+}_{\Z_{(p)}}$ (resp.\ $H^{-}_{\Z_{(p)}}$).

The representation $H_{\Z_{(p)}}$ is equipped with a right action of $\Cl_{\Z_{(p)}}$ given by the right multiplication, 
which commutes with the action of $\widetilde{G}$.
We fix a $\Z_{(p)}$-basis $\{ e_i \}_{1 \leq i \leq 2^{22}}$ for $\Cl_{\Z_{(p)}}$, and let $r_{e_i} \colon H_{\Z_{(p)}} \to H_{\Z_{(p)}}$ be the endomorphism defined by $x \mapsto xe_i$.

We regard $\widetilde{L}_{\Z_{(p)}}$
as a $\widetilde{G}$-representation via the canonical homomorphism 
$\widetilde{G} \to \widetilde{G}_0$ as above. 
Then the injective homomorphism 
\[
i \colon \widetilde{L}_{\Z_{(p)}} \hookrightarrow \End_{\Z_{(p)}}(H_{\Z_{(p)}})\]
defined by $v \mapsto (h \mapsto vh)$ is $\widetilde{G}$-equivariant.
The cokernel of this homomorphism $i$ is torsion-free as a $\Z_{(p)}$-module.

As in \cite[Section 1]{MadapusiIntModel},
we define a non-degenerate symmetric bilinear form  $[ \ , \ ]$ on
$\End_{\Q}(H_{\Q})$
by
\[
[g_1 , g_2]:=2^{-21} \cdot \mathrm{Tr}(g_1 \circ g_2)
\]
for $g_1, g_2 \in \End_{\Q}(H_{\Q})$.
Then the embedding
$i \otimes_{\Z_{(p)}}\Q$
is an isometry.
In \cite[Lemma 1.4]{MadapusiIntModel}, Madapusi Pera proved that there is a unique idempotent 
\[
\pi \colon \End_{\Q}(H_{\Q}) \to \End_{\Q}(H_{\Q})
\]
satisfying the following properties:
\begin{enumerate}
	\item The image of $\pi$ is $i(\widetilde{L}_{\Q})$.
	\item The kernel of $\pi$ is the orthogonal complement $i(\widetilde{L}_{\Q})^{\perp}$ of $i(\widetilde{L}_{\Q})$ in $\End_{\Q}(H_{\Q})$ with respect to the bilinear pairing $[ \ , \ ]$.
	\item $\widetilde{G}_{\Q}$ is the stabilizer of the $\Z/2\Z$-grading structure, the right action of $\Cl_{\Z_{(p)}}$, and the idempotent $\pi$, i.e.\ the stabilizer of $p^{\pm}$, $\{ r_{e_i} \}_{1 \leq i \leq 2^{22}}$, and $\pi$.
\end{enumerate}

As in \cite[(1.3.1)]{KisinIntModel}, let $H^{\otimes}_{\Z_{(p)}}$ be the direct sum of all $\Z_{(p)}$-modules obtained from $H_{\Z_{(p)}}$ by taking tensor products, duals, symmetric powers, and exterior powers. (In fact, symmetric powers and exterior powers are unnecessary; see \cite{Deligne:LettertoKisin}.)
By \cite[Proposition 1.3.2]{KisinIntModel}, the group scheme $\widetilde{G}$ over $\Z_{(p)}$
is the stabilizer of a finite collection of tensors
\[
\{ s_{\alpha}\} \subset H^{\otimes}_{\Z_{(p)}}.
\]
(See also \cite[Lemma 4.7]{KimMadapusiIntModel}.)

In the rest of this paper, we fix such tensors
$\{ s_{\alpha}\} $.
We may assume that $\{ s_{\alpha}\}$ includes the tensors $\{ s_{\beta}\}$ corresponding to $p^{\pm}$, $\{ r_{e_i} \}_{1 \leq i \leq 2^{22}}$, and the endomorphism $\pi'$, where we put $\pi':=p^n\pi$ for a sufficiently large $n$ such that $\pi'$ maps $\End_{\Z_{(p)}}(H_{\Z_{(p)}})$ into itself.

\subsection{Filtrations on Clifford algebras defined by isotropic elements}\label{Subsection: Filtrations on Clifford algebras defined by isotropic elements}

In this subsection, let $F$ be a field of characteristic $0$.

Take a non-zero element $e \in \widetilde{L}_F$ satisfying $(e, e)=0$. 
We consider an endomorphism
\[
i(e):=(i\otimes_{\Z_{(p)}}F)(e) \in \End_{F}(H_{F})
\]
which is the image of $e$ under the embedding
\[
i\otimes_{\Z_{(p)}}F \colon \widetilde{L}_{F} \hookrightarrow \End_{F}(H_{F}).\]
Let $i(e)(H_F)$ be the image of the endomorphism
$i(e) \colon H_F \to H_F$.

We define a decreasing filtration
$\{\Fil^i(\widetilde{L}_F)\}_i$ (resp.\ $\{\Fil^i(H_F)\}_i$)
on $\widetilde{L}_F$ (resp.\ $H_F$)
and a decreasing filtration  on $H_F$ by
\begin{align*}
\mathrm{Fil}^i(\widetilde{L}_F)&:=
\begin{cases}
0 & i \geq 2, \\
\langle e \rangle & i = 1, \\
\langle e \rangle^{\perp} & i=0, \\
\widetilde{L}_F & i \leq -1,
\end{cases}
&
\mathrm{Fil}^i(H_F) &:=
\begin{cases}
0 & i \geq 1, \\
i(e)(H_F) & i=0, \\
H_F & i \leq -1.
\end{cases}
\end{align*}

\begin{prop}\label{Proposition: Filtrations on Clifford algebras}
\begin{enumerate}
	\item The dimension of $\Fil^0(H_F)$ as an $F$-vector space is $2^{21}$.
	\item We define a decreasing filtration
	$\{\Fil^i(\End_{F}(H_{F}))\}_i$ on $\End_{F}(H_{F})$ by
	\[
\Fil^i(\End_F(H_F))=\{\ g \in \End_F(H_F) \mid g(\Fil^j(H_F)) \subset \Fil^{i+j}(H_F) \ \text{for every $j$} \ \}.
\]
Then the homomorphism 
	\[
	i \otimes_{\Z_{(p)}}F \colon \widetilde{L}_{F} \hookrightarrow \End_{F}(H_{F})
	\]
	preserves the filtrations.
	\item The $\Z/2\Z$-grading structure and the right action of $\Cl_{\Z_{(p)}}$ preserve the filtration $\{\Fil^i(H_F) \}_i$ on $H_F$.
	The endomorphism $\pi' = p^n \pi$ of $\End_F(H_F)$ preserve the filtration on
	$\{\Fil^i(\End_{F}(H_{F}))\}_i$ on $\End_{F}(H_{F})$.
\end{enumerate}
\end{prop}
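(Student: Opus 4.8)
The plan is to exhibit a single cocharacter $\mu\colon\G_{m,F}\to\widetilde{G}_F$ whose induced weight filtration on $\widetilde{L}_F$ is $\{\Fil^i(\widetilde{L}_F)\}_i$ and whose induced weight filtration on $H_F$ is $\{\Fil^i(H_F)\}_i$; all three parts then follow formally from the $\widetilde{G}$-equivariance of $i$, $p^{\pm}$, $\{r_{e_i}\}$ and $\pi$ recorded above. To build $\mu$, I would first complete $e$ to a hyperbolic pair. Since $\widetilde{L}_F=\widetilde{L}_{\Q}\otimes_{\Q}F$ is non-degenerate there is $f_0\in\widetilde{L}_F$ with $(e,f_0)=1$, and $f:=f_0-q_{\widetilde{L}}(f_0)e$ then satisfies $(e,f)=1$ and $q_{\widetilde{L}}(f)=0$; write $\widetilde{L}_F=\langle e,f\rangle\perp W$ with $\dim_F W=20$. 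Inside $\Cl_F$ one has $e^2=q_{\widetilde{L}}(e)=0$, $f^2=q_{\widetilde{L}}(f)=0$ and $ef+fe=(e,f)=1$, so $u:=ef$ and $1-u=fe$ are complementary idempotents lying in $\Cl^{+}_F$, and I set
\[
\mu(t):=u+t^{-1}(1-u)=ef+t^{-1}fe,
\]
an invertible element of $\Cl^{+}_F$ with inverse $u+t(1-u)$. Using $ue=e$, $eu=0$, $uf=0$, $fu=f$ (all immediate from $e^2=f^2=0$, $ef+fe=1$), one checks that $\mu(t)\,\widetilde{L}_F\,\mu(t)^{-1}=\widetilde{L}_F$, with $\mu(t)$ acting by $e\mapsto te$, $f\mapsto t^{-1}f$, and $w\mapsto w$ for $w\in W$ (each such $w$ anticommutes with $e$ and with $f$, hence commutes with $u$). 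Thus $\mu$ is a cocharacter of $\widetilde{G}_F$; its composite with $\widetilde{G}\to\widetilde{G}_0$ has weights $1$, $0$, $-1$ on $\langle e\rangle$, $W$, $\langle f\rangle$, and since $\langle e\rangle\oplus W=\langle e\rangle^{\perp}$ (an equality of $21$-dimensional subspaces, the inclusion being clear) the associated filtration on $\widetilde{L}_F$ is $\{\Fil^i(\widetilde{L}_F)\}_i$. Acting by left multiplication on $H_F=\Cl_F=u\Cl_F\oplus(1-u)\Cl_F$, the element $\mu(t)$ is the identity on $u\Cl_F$ and multiplication by $t^{-1}$ on $(1-u)\Cl_F$; since $u\Cl_F=ef\,\Cl_F=e\,\Cl_F=i(e)(H_F)$ (because $u\in e\,\Cl_F$ and $e=ue\in u\Cl_F$), the associated filtration on $H_F$ is $\{\Fil^i(H_F)\}_i$.

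Granting this, parts (2) and (3) are formal. The filtration $\{\Fil^i(\End_F(H_F))\}_i$ defined in (2) is precisely the one induced by $\mu$ on $\End_F(H_F)=H_F^{\vee}\otimes H_F$ from the filtration on $H_F$. As $i\colon\widetilde{L}_F\hookrightarrow\End_F(H_F)$ is $\widetilde{G}$-equivariant it is $\mu$-equivariant, so it carries each $\mu$-weight space of $\widetilde{L}_F$ into the corresponding one of $\End_F(H_F)$ and hence respects the two filtrations; this gives (2). For (3): the idempotents $p^{\pm}$ and the right translations $r_{e_i}$ commute with the $\widetilde{G}$-action on $H_F$, hence with every $\mu(t)$, hence preserve the $\mu$-weight spaces of $H_F$ and therefore the filtration $\{\Fil^i(H_F)\}_i$. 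Likewise $\pi$ is $\widetilde{G}$-equivariant for the conjugation action on $\End_F(H_F)$---its image $i(\widetilde{L}_F)$ and its kernel $i(\widetilde{L}_F)^{\perp}$ are both $\widetilde{G}$-stable, the latter because $[\ ,\ ]$ is conjugation-invariant---so $\pi$ commutes with the conjugation action of every $\mu(t)$, preserves the $\mu$-weight spaces of $\End_F(H_F)$, and hence preserves $\{\Fil^i(\End_F(H_F))\}_i$; the same holds for $\pi'=p^n\pi$ since $p^n\in F^{\times}$.

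Finally, for part (1): $\Fil^0(H_F)=i(e)(H_F)=u\Cl_F$, and left multiplication by $e$ and by $f$ restrict to mutually inverse $F$-linear isomorphisms between $(1-u)\Cl_F$ and $u\Cl_F$ (using $ue=e$, $fe=1-u$, $ef=u$, $(1-u)e=0$), so $\dim_F\Fil^0(H_F)=\tfrac12\dim_F\Cl_F=2^{21}$. Equivalently, $i(e)^2=0$ together with $i(e)i(f)+i(f)i(e)=\id_{H_F}$ forces $\Ker i(e)=\Im i(e)$, whence the count by the rank-nullity theorem. I expect the only genuinely computational point to be the verification in the first paragraph that $\mu$ lands in $\GSpin(\widetilde{L}_F)$ and---crucially---induces the asserted filtration on $H_F$, not merely on $\widetilde{L}_F$; this is elementary but requires careful bookkeeping with the idempotents $ef$ and $fe$ and with the anticommutation of $W$ with $e$ and $f$. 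Everything else is a formal consequence of the $\widetilde{G}$-equivariance already established in this section.
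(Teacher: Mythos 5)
Your proof is correct, but it organizes the argument differently from the paper. The paper works directly with the monomial basis $e^{a_1}f^{a_2}v_3^{a_3}\cdots v_{22}^{a_{22}}$ of $\Cl_F$ attached to a hyperbolic pair $(e,f)$ and an orthogonal basis of $(\langle e\rangle\oplus\langle f\rangle)^{\perp}$: part (1) is a count of basis monomials beginning with $e$, part (2) is read off from the description of the filtrations, and part (3) is reduced, via the explicit formula $\pi(g)=[g,i(f)]i(e)+[g,i(e)]i(f)+\sum_j\beta_j[g,i(v_j)]i(v_j)$, to trace identities $\mathrm{Tr}(g\circ i(e))=0$ and $\mathrm{Tr}(g\circ i(v_j))=0$ that are checked on the basis. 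You instead package everything through the single cocharacter $\mu(t)=ef+t^{-1}fe$ of $\widetilde{G}_F$: once one verifies (as you do, correctly, using $ue=e$, $eu=uf=0$, $fu=f$ and the anticommutation of $(\langle e\rangle\oplus\langle f\rangle)^{\perp}$ with $e,f$) that conjugation by $\mu(t)$ acts on $\widetilde{L}_F$ with weights $1,0,-1$ on $\langle e\rangle$, $(\langle e\rangle\oplus\langle f\rangle)^{\perp}$, $\langle f\rangle$ and that left multiplication splits $H_F=ef\Cl_F\oplus fe\Cl_F$ with $ef\Cl_F=e\Cl_F=\Fil^0(H_F)$, parts (2) and (3) become formal consequences of the $\widetilde{G}$-equivariance of $i$ and of the $\widetilde{G}$-invariance of $p^{\pm}$, $\{r_{e_i}\}$ and $\pi$ (your re-derivation of the invariance of $\pi$ from the conjugation-invariance of the trace form is fine, and is in any case one of the defining properties of $\pi$ recorded in Section 2.3), while part (1) follows from the mutually inverse maps $e\cdot(-)$ and $f\cdot(-)$ between the two idempotent summands. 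What your route buys is that it eliminates the basis-dependent trace computations the paper leaves to the reader in (3) and makes visible the structural reason the three filtrations are compatible, namely that they are all split by one minuscule cocharacter of $\GSpin(\widetilde{L}_F)$; what the paper's route buys is that it is completely elementary, never needing to check that $\mu$ is a genuine cocharacter landing in $\widetilde{G}_F$ (the one point in your argument requiring the careful idempotent bookkeeping you flag). Both proofs rest on the same hyperbolic-pair computation in the Clifford algebra, so the difference is one of packaging rather than of substance, but the packaging is genuinely distinct.
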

\begin{proof}
$(1)$
We have a decomposition
\[
\widetilde{L}_F=\langle e \rangle \oplus \langle f \rangle \oplus (\langle e \rangle \oplus \langle f \rangle)^{\perp} 
\]
such that $(e, f)=1$ and $(f, f)=0$.
Let $v_3, \dotsc, v_{22}$ be an orthogonal basis for $(\langle e \rangle \oplus \langle f \rangle)^{\perp}$.
We have
\[
H_F = \bigoplus_{a_j \in \{ 0, 1 \}} \langle e^{a_1}f^{a_2}v^{a_3}_3\dots v^{a_{22}}_{22} \rangle
\]
by \cite[\S 9.3, Th\'eor{\`e}me 1]{Bourbaki}.

Since $e^2=2^{-1}(e, e)=0$ in the Clifford algebra $H_F=\Cl_F$,
we have
\[
\Fil^0(H_F)= \bigoplus_{a_j \in \{ 0, 1 \}} \langle ef^{a_2}v^{a_3}_3\dots v^{a_{22}}_{22} \rangle.
\] 
Hence the dimension of $i(e)(H_F)$ as an $F$-vector space is $2^{21}$.

$(2)$
The assertion immediately follows from the description of the filtration.

$(3)$
It is clear that the $\Z/2\Z$-grading structure and the right action of $\Cl_{\Z_{(p)}}$ preserve the filtration on $H_F$.
We shall show the idempotent $\pi$ preserves the filtration on $\End_F(H_F)$.

We recall an explicit description of $\pi$ from \cite[Section 1]{MadapusiIntModel}.
We put $\beta_j:=(v_j, v_j)^{-1}$ for $3 \leq j \leq 22$.
Then $\pi$ sends $g \in \End_F(H_F)$ to the element $\pi(g) \in \End_F(H_F)$ given by
\[
\pi(g)=[g, i(f)]i(e) + [g, i(e)]i(f) + \sum_{3 \leq j \leq 22}\beta_j[g, i(v_j)]i(v_j).
\]
Recall that we have $[g_1, g_2] = 2^{-21} \cdot \mathrm{Tr}(g_1 \circ g_2)$ for $g_1, g_2 \in \End_{\Q}(H_{\Q})$,
and the embedding
\[ i \otimes_{\Z_{(p)}}F \colon \widetilde{L}_{F} \hookrightarrow \End_{F}(H_{F}) \]
preserves filtrations by $(2)$.
Hence it is enough to show the following assertions to prove that $\pi$ preserves the filtration on $\End_F(H_F)$:
\begin{itemize}
	\item $\mathrm{Tr}(g \circ i(e))=0$ for every $g \in \Fil^0(\End_F(H_F))$.
	\item $\mathrm{Tr}(g \circ i(e))=0$ and $\mathrm{Tr}(g \circ i(v_j))=0$ for every $g \in \Fil^1(\End_F(H_F))$ and every $3 \leq j \leq 22$.
\end{itemize}
These assertions can be checked by using the explicit basis for $H_F$ as above.
\end{proof}

\section{Breuil-Kisin modules}
\label{Section:Breuil-Kisin modules}

\subsection{Preliminaries}\label{Subsection: Preliminaries}

In this section, we fix a perfect field $k$ of characteristic $p>0$,
and an algebraic closure $\overline{k}$ of $k$.
To simplify the notation, we put $W := W(k)$.

Let $K$ be a finite totally ramified extension of $W[1/p]$.
Let $\overline{K}$ be an algebraic closure of $K$.
We fix a uniformizer $\varpi$ of $K$, and a system
$\{ \varpi^{1/{p^n}} \}_{n \geq 0} \subset \overline{K}$ of $p^n$-th roots of $\varpi$ such that
$(\varpi^{1/{p^{n+1}}})^p=\varpi^{1/{p^n}}$.
Let $E(u) \in W[u]$ be the (monic) Eisenstein polynomial of $\varpi$
(i.e.\ it is the monic minimal polynomial of $\varpi$ over $W[1/p]$).

Let $C$ be the completion of $\overline{K}$.
Let $\O_C$ be the ring of integers of $C$.
We put 
\[
\O^{\flat}_C:=\plim[x \mapsto x^p]\O_C/p,
\]
which is a perfect $\F_p$-algebra.
It is an integral domain and we denote the field of fractions of it by $C^{\flat}$.
Let
\[ A_{\mathrm{inf}} := W(\O^{\flat}_C) \]
be the ring of Witt vectors of $\O^{\flat}_C$.
We denote by $\varphi \colon A_{\mathrm{inf}} \to A_{\mathrm{inf}}$ the automorphism induced by the Frobenius and the functoriality of Witt vectors.
There is a unique $\Gal(\overline{K}/K)$-equivariant surjection
\[
\theta \colon A_{\mathrm{inf}} \twoheadrightarrow \O_C
\]
such that its reduction modulo $p$ is the first projection
$
\O^{\flat}_C \to \O_C/p.
$
We also have a surjection
$A_{\mathrm{inf}} \twoheadrightarrow W(\overline{k})$ induced by the natural surjection $\O^{\flat}_C \to \overline{k}$.

We put
\[
\mathfrak{S} := W[[u]].
\]
The ring $\mathfrak{S}$ admits a Frobenius endomorphism $\varphi$
which acts on $W$ as the canonical Frobenius $\sigma$ and sends $u$ to $u^p$.

For the fixed system
$\{ \varpi^{1/{p^n}} \}_{n \geq 0} \subset \overline{K}$ of $p^n$-th roots of $\varpi$,
we put
\[
K_{\infty}:=\bigcup_{n\geq0}K(\varpi^{1/{p^n}}).
\]
We also put
$\varpi^{\flat}:=(\varpi^{1/{p^n}} \!\!\mod p)_{n \geq 0} \in \O^{\flat}_C$.
We have a $W$-linear homomorphism
\[
\mathfrak{S} \to A_{\mathrm{inf}}
\]
which sends $u$ to the Teichm\"uller lift
$[\varpi^{\flat}]$
of 
$\varpi^{\flat}$.
This homomorphism is compatible with the Frobenius endomorphisms.
Note that the composite
\[ \mathfrak{S} \to A_{\mathrm{inf}} \to \O_C \]
is a $W$-linear homomorphism given by 
$u \mapsto \varpi$.

We also need the $p$-adic period rings $B_{\dR}$, $B^{+}_{\dR}$, $B_{\cris}$, and $B^{+}_{\cris}$ associated with $C$ defined by Fontaine.
For the definition and basic properties of these rings,
see \cite[Section 3.3]{BMS} for example.

\subsection{Breuil-Kisin modules and crystalline Galois representations}
\label{Subsection:Breuil-Kisin modules and crystalline Galois representations}

In this subsection, we recall some basic results on Breuil-Kisin modules and crystalline Galois representations from \cite[(1.2)]{KisinIntModel}.

For an $\mathfrak{S}$-module $\M$, we put
\[
\varphi^{*}\M:=\mathfrak{S} \otimes_{\varphi, \mathfrak{S}}\M.
\]
A \textit{Breuil-Kisin module} (over $\mathscr{O}_K$ with respect to $\{ \varpi^{1/{p^n}} \}_{n \geq 0}$) is a free $\mathfrak{S}$-module $\mathfrak{M}$ of finite rank equipped with an isomorphism of $\mathfrak{S}[1/E(u)]$-modules
\[
1\otimes\varphi \colon (\varphi^{*}\mathfrak{M})[1/E(u)] \cong \mathfrak{M}[1/E(u)].
\]
We say a Breuil-Kisin module $\M$ is \textit{effective}
if the equipped isomorphism $1 \otimes \varphi$  of $\M$ is induced by a homomorphism
\[
1\otimes\varphi \colon \varphi^{*}\mathfrak{M} \to \mathfrak{M}.
\]
We say an effective Breuil-Kisin module $\M$ is of height $\leq h$
if the cokernel of $1 \otimes \varphi$ is killed by $E(u)^h$.

For a Breuil-Kisin module $\M$, we define a decreasing filtration
$\{ \Fil^i(\varphi^{*}\M) \}_i$
on
$\varphi^{*}\M$ by
\[
\Fil^i(\varphi^{*}\M) := \{ \ x \in \varphi^{*}\M \mid (1\otimes \varphi)(x) \in E(u)^i\M \ \}
\]
for every $i \in \Z$.

We say a $\Z_p[\mathrm{Gal}(\overline{K}/K)]$-module $N$ is a \textit{$\mathrm{Gal}(\overline{K}/K)$-stable $\Z_p$-lattice in a crystalline representation}
if $N$ is a free $\Z_p$-module of finite rank and $N[1/p]$ is a crystalline $\mathrm{Gal}(\overline{K}/K)$-representation.
Kisin constructed a \textit{covariant} fully faithful tensor functor 
\[
N \mapsto \mathfrak{M}(N)
\]
from the category of $\mathrm{Gal}(\overline{K}/K)$-stable $\Z_p$-lattices in crystalline representations to the category of Breuil-Kisin modules (over $\O_K$ with respect to $\{ \varpi^{1/{p^n}} \}_{n \geq 0}$); see \cite[Theorem 1.2.1]{KisinIntModel}.

In the rest of this subsection, we fix a $\mathrm{Gal}(\overline{K}/K)$-stable $\Z_p$-lattice $N$ in a crystalline representation.
We put
\[
\MdR(N):=\varphi^{*}\M(N)\otimes_{\mathfrak{S}}\O_K,
\]
where $\mathfrak{S} \to \O_K$ is a $W$-linear homomorphism given by $u \mapsto \varpi$.
Let $\Fil^i(\MdR(N)[1/p])$ be the $K$-vector subspace generated by the image of the filtration $\Fil^i(\varphi^{*}\M(N))$.
We have a decreasing filtration
$\{ \Fil^i(\MdR(N)[1/p]) \}_i$ on $\MdR(N)[1/p]$.
We have a canonical isomorphism
\[
D_{\dR}(N[1/p]):=(N\otimes_{\Z_p}B_{\dR})^{\Gal(\overline{K}/K)}
 \cong \MdR(N)[1/p],
\]  
which maps $\Fil^i(\MdR(N)[1/p])$ onto $\Fil^i(D_{\dR}(N[1/p]))$; see \cite[Theorem 1.2.1]{KisinIntModel}.
We define a decreasing filtration
$\{ \Fil^i(\MdR(N)) \}_i$ on $\MdR(N)$ by taking intersection 
\[
\Fil^i(\MdR(N)):=\Fil^i(\MdR(N)[1/p]) \cap \MdR(N).
\]
Note that, by the construction, the quotient
\[
\Gr^{i}(\mathfrak{M}_{\dR}(N)):=\Fil^i(\mathfrak{M}_{\dR}(N))/\Fil^{i+1}(\mathfrak{M}_{\dR}(N))
\]
is a free $\O_K$-module of finite rank for every $i \in \Z$.

\begin{lem}\label{Lemma: 1-st Filtration}
	Assume $\M(N)$ is effective.
	Then the image of $\Fil^1(\varphi^{*}\M(N))$ under the surjection
	\[
	\varphi^{*}\M(N) \twoheadrightarrow \MdR(N)
	\]
	coincides with $\Fil^1(\MdR(N))$.
\end{lem}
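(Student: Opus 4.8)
The plan is to unwind all the definitions and reduce the claim to a statement about where the filtration $\Fil^1$ sits before and after passing to $\MdR(N)$. Recall that $\MdR(N) = \varphi^*\M(N)\otimes_{\mathfrak{S}}\O_K$ via $u\mapsto\varpi$, and $\Fil^1(\MdR(N)[1/p])$ is by definition the $K$-span of the image of $\Fil^1(\varphi^*\M(N))$, while $\Fil^1(\MdR(N)) = \Fil^1(\MdR(N)[1/p])\cap\MdR(N)$. So the inclusion of the image of $\Fil^1(\varphi^*\M(N))$ into $\Fil^1(\MdR(N))$ is automatic; the content is the reverse inclusion, i.e.\ that the integral structure is already saturated. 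Concretely, I must show: if $x\in\varphi^*\M(N)$ has image $\bar x\in\MdR(N)$ lying in $\Fil^1(\MdR(N)[1/p])$, then $\bar x$ is in the image of $\Fil^1(\varphi^*\M(N))$.

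First I would use effectivity: since $\M(N)$ is effective, $1\otimes\varphi\colon\varphi^*\M(N)\to\M(N)$ is an honest $\mathfrak{S}$-linear map, and $\Fil^1(\varphi^*\M(N)) = \{x : (1\otimes\varphi)(x)\in E(u)\M(N)\}$. Note $E(u)$ maps to $E(\varpi)=0$ in $\O_K$, so $E(u)\in\mathfrak{S}$ generates the kernel of $\mathfrak{S}\to\O_K$ (together with being coprime to $u$); in fact $\mathfrak{S}/E(u)\mathfrak{S}\cong\O_K$ since $E$ is Eisenstein. The key local computation: $\varphi^*\M(N)$ is a free $\mathfrak{S}$-module, and I would choose a basis adapted to the filtration $\Fil^\bullet(\varphi^*\M(N))$. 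By the theory of Breuil-Kisin modules (or elementary divisors for the map $1\otimes\varphi$ with respect to suitable bases of $\varphi^*\M(N)$ and $\M(N)$), one can find a basis $e_1,\dots,e_n$ of $\varphi^*\M(N)$ and a basis $f_1,\dots,f_n$ of $\M(N)$ with $(1\otimes\varphi)(e_i)=E(u)^{r_i}f_i$ for integers $r_i\geq 0$. Then $\Fil^1(\varphi^*\M(N)) = \bigoplus_{r_i\geq 1}\mathfrak{S}e_i \oplus \bigoplus_{r_i=0} E(u)\mathfrak{S}e_i$, so its image in $\MdR(N)=\bigoplus\O_K\,(e_i\otimes 1)$ is exactly $\bigoplus_{r_i\geq 1}\O_K\,(e_i\otimes 1)$ (the $E(u)$-multiples die). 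On the other hand one computes $\Fil^1(\MdR(N)[1/p])$ directly from the same basis and the compatibility with $D_{\dR}$: the jump in the filtration on $\MdR(N)[1/p]$ is governed precisely by the exponents $r_i$ — this is the content of Kisin's comparison $D_{\dR}(N[1/p])\cong\MdR(N)[1/p]$ respecting filtrations, combined with the description of $\Fil^i(\varphi^*\M)$ via divisibility by $E(u)^i$. Thus $\Fil^1(\MdR(N)[1/p]) = \bigoplus_{r_i\geq 1}K\,(e_i\otimes 1)$, and intersecting with $\MdR(N)=\bigoplus\O_K\,(e_i\otimes 1)$ gives $\bigoplus_{r_i\geq 1}\O_K\,(e_i\otimes 1)$, matching the image of $\Fil^1(\varphi^*\M(N))$.

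I expect the main obstacle to be justifying the simultaneous basis (Smith normal form) for $1\otimes\varphi\colon\varphi^*\M(N)\to\M(N)$ over $\mathfrak{S}=W[[u]]$ with the $r_i$ matching the filtration jumps — $\mathfrak{S}$ is not a PID, but since $E(u)$ is the relevant element and $\mathfrak{S}$ is regular, one can work after localizing/completing at the prime $(E(u))$ (where $\mathfrak{S}_{(E(u))}$ is a DVR, $E(u)$ a uniformizer — recall $\varphi^*\M(N)[1/E(u)]\cong\M(N)[1/E(u)]$ so the map is an isomorphism away from $E(u)$), and the elementary divisor decomposition there suffices to read off both the filtration on $\varphi^*\M(N)$ and, via reduction mod $E(u)$, the images in $\MdR(N)$. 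Alternatively, and perhaps more cleanly, I would avoid global bases entirely: work modulo $E(u)$ throughout. Since $\Fil^1(\varphi^*\M(N))\supseteq E(u)\varphi^*\M(N)$, its image in $\MdR(N)$ equals the image of $\Fil^1(\varphi^*\M(N))/E(u)\varphi^*\M(N)$, which is a direct summand of $\MdR(N)=\varphi^*\M(N)/E(u)\varphi^*\M(N)$ because $\Gr^i(\MdR(N))$ is $\O_K$-free (stated in the excerpt) — so the image of $\Fil^1(\varphi^*\M(N))$ in $\MdR(N)$ is already saturated, hence equals $\Fil^1(\MdR(N)[1/p])\cap\MdR(N)=\Fil^1(\MdR(N))$. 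This last argument is essentially a one-line reduction once one knows the associated gradeds are free, which is exactly what makes the lemma true; I would present it this way and relegate the elementary-divisor bookkeeping to a remark or skip it.
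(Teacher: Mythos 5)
Your reduction is the right one: the image is automatically contained in $\Fil^1(\MdR(N))$, and since $E(u)\varphi^{*}\M(N)\subseteq\Fil^1(\varphi^{*}\M(N))$ (this is where effectivity enters), the image equals $\Fil^1(\varphi^{*}\M(N))/E(u)\varphi^{*}\M(N)$ and the whole question is whether this submodule is $p$-saturated in $\MdR(N)$. But the step you lean on to get saturation does not hold up. In your ``cleaner'' version you assert that the image is a direct summand of $\MdR(N)$ ``because $\Gr^i(\MdR(N))$ is $\O_K$-free''; that freeness is automatic from the way $\Fil^i(\MdR(N))$ is \emph{defined} (intersection of a $K$-subspace with the lattice, hence saturated), so it carries no information about the image of the integral filtration upstairs --- a priori the image could sit inside $\Fil^1(\MdR(N))$ with finite $p$-power index, and ruling this out is exactly the content of the lemma. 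As stated the appeal is circular, and it would ``prove'' the lemma without ever using effectivity. The elementary-divisor version has a parallel gap: a simultaneous Smith-type basis for $1\otimes\varphi$ does not exist over $\mathfrak{S}=W[[u]]$ in general, and the proposed fix of localizing at $(E(u))$ inverts $p$ (since $p\notin(E(u))$), so it only recomputes $\Fil^1(\MdR(N)[1/p])$, i.e.\ the rational statement, and says nothing about the integral lattice.

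The missing step, which is how the paper argues, is short. Because the kernel $E(u)\varphi^{*}\M(N)$ of the surjection lies inside $\Fil^1(\varphi^{*}\M(N))$, the inverse image of the image $\Fil'$ is exactly $\Fil^1(\varphi^{*}\M(N))$, so $\MdR(N)/\Fil'\cong\varphi^{*}\M(N)/\Fil^1(\varphi^{*}\M(N))$. Now use effectivity again: $\Fil^1(\varphi^{*}\M(N))$ is by definition the full preimage of $E(u)\M(N)$ under $1\otimes\varphi\colon\varphi^{*}\M(N)\to\M(N)$, so $1\otimes\varphi$ induces an injection of $\varphi^{*}\M(N)/\Fil^1(\varphi^{*}\M(N))$ into $\M(N)/E(u)\M(N)\cong\M(N)\otimes_{\mathfrak{S}}\O_K$, a free $\O_K$-module, hence $p$-torsion-free. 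Therefore $\MdR(N)/\Fil'$ is $p$-torsion-free, $\Fil'$ is saturated, and $\Fil'=\Fil^1(\MdR(N)[1/p])\cap\MdR(N)=\Fil^1(\MdR(N))$. Your outline becomes a complete proof once the circular appeal to $\Gr^i(\MdR(N))$ is replaced by this torsion-freeness argument.
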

\begin{proof}
Let $\Fil'$ be the image of $\Fil^1(\varphi^{*}\M(N))$ under the surjection
	$
	\varphi^{*}\M(N) \twoheadrightarrow  \MdR(N)
	$.
It suffices to show the cokernel of the inclusion
	$
	\Fil' \hookrightarrow \MdR(N)
	$
	is $p$-torsion-free.
Since
\[
E(u)\varphi^{*}\M(N) \subset \Fil^1(\varphi^{*}\M(N)),
\]
the inverse image of $\Fil'$ under the  surjection
$\varphi^{*}\M(N) \twoheadrightarrow \MdR(N)$
is $\Fil^1(\varphi^{*}\M(N))$.
Hence the assertion follows from that the cokernel of
\[
\Fil^1(\varphi^{*}\M(N)) \hookrightarrow \varphi^{*}\M(N)
\]
is $p$-torsion-free.
\end{proof}

We put
\[
\Mcris(N) := \varphi^{*}\M(N) \otimes_{\mathfrak{S}} W,
\]
where $\mathfrak{S} \to W$ is a $W$-linear homomorphism given by
$u \mapsto 0$.
The Frobenius $1 \otimes \varphi$ of $\M(N)$
defines a $\sigma$-semilinear endomorphism of $\Mcris(N)[1/p]$, which makes $\Mcris(N)[1/p]$ a $\varphi$-module.
We have a canonical isomorphism of $\varphi$-modules
\[
D_{\cris}(N[1/p]):=(N\otimes_{\Z_p}B_{\cris})^{\Gal(\overline{K}/K)}  \cong \Mcris(N)[1/p];
\]  
see \cite[Theorem 1.2.1]{KisinIntModel}.

Let $N$ be a $\mathrm{Gal}(\overline{K}/K)$-stable free $\Z_p$-module of finite rank in a crystalline representation and $f$ a $\mathrm{Gal}(\overline{K}/K)$-equivariant endomorphism of $N$.
The endomorphism of the Breuil-Kisin module $\M(N)$ induced by $f$ is denoted by $\M(f)$.
The endomorphism of $\MdR(N)$ (resp.\ $\Mcris(N)$) induced by $f$ is denoted by $\MdR(f)$ (resp.\ $\Mcris(f)$).

\subsection{Breuil-Kisin modules and $p$-divisible groups}
\label{Subsection:Breuil-Kisin modules and p-divisible groups}

Let $\mathscr{G}$ be a $p$-divisible group over $\O_K$.
Let 
\[
T_{p}\mathscr{G}:= \plim[n]\mathscr{G}[p^n]({\overline{K}})
\]
be the $p$-adic Tate module of $\mathscr{G}$,
which is a free $\Z_p$-module of finite rank and admits a continuous action of $\Gal(\overline{K}/K)$.

For the base change $\mathscr{G}_k$ of $\mathscr{G}$,
we have a  (contravariant) crystal $\mathbb{D}(\mathscr{G}_k)$ over $\mathrm{CRIS}(k/\Z_p)$;
see \cite[D\'efinition 3.3.6]{BBM}.
Here $\mathrm{CRIS}(k/\Z_p)$ is the (absolute) crystalline site of $k$.
Its value
\[
\mathbb{D}(\mathscr{G}_k)(W) := \mathbb{D}(\mathscr{G}_k)_{W \twoheadrightarrow k}
\]
in $(\Spec k \hookrightarrow \Spec W)$
is an $F$-crystal.

For the base change $\mathscr{G}_{{\O_K}/p}$ of $\mathscr{G}$,
we have a crystal $\mathbb{D}(\mathscr{G}_{{\O_K}/p})$ over $\mathrm{CRIS}(({\O_K}/p)/\Z_p)$.
Its value
\[
\mathbb{D}(\mathscr{G}_{{\O_K}/p})(\O_K):=\mathbb{D}(\mathscr{G}_{{\O_K}/p})_{\O_K \twoheadrightarrow {\O_K}/p}
\]
in $(\Spec {{\O_K}/p} \hookrightarrow \Spec {\O_K})$
is a free $\O_K$-module of finite rank and admits a Hodge filtration
\[
\Fil^1\mathbb{D}(\mathscr{G}_{{\O_K/p}})(\O_K) \hookrightarrow \mathbb{D}(\mathscr{G}_{{\O_K}/p})(\O_K).
\]
By \cite[Proposition 3.14]{BO}, there is an isomorphism over $K$:
\[
\mathbb{D}(\mathscr{G}_k)(W)\otimes_{W} K \cong \mathbb{D}(\mathscr{G}_{{\O_K}/p})(\O_K)\otimes_{\O_K}K.
\]
Using this isomorphism,
we consider
$
\mathbb{D}(\mathscr{G}_k)(W)[1/p]
$
as a filtered $\varphi$-module.
(For the notion of filtered $\varphi$-modules, see \cite[1.1.3]{KisinCris} for example.)

In \cite[Theorem 7]{Faltings99}, Faltings constructed a $\Gal(\overline{K}/K)$-equivariant isomorphism
\[
T_{p}\mathscr{G}[1/p] \cong \Hom_{\Fil, \varphi}(\mathbb{D}(\mathscr{G}_k)(W)[1/p], B^{+}_{\cris}).
\]
(See also \cite[Section 5.2 Remarque 2]{Fargues}
and Section \ref{Subsection:Comparison isomorphisms for $p$-divisible groups}.)
It induces an isomorphism
\[
D_{\cris}((T_{p}\mathscr{G})^{\vee}[1/p]) \cong \mathbb{D}(\mathscr{G}_k)(W)[1/p], 
\]
which gives an isomorphism of filtered $\varphi$-modules. We denote it by $c_{\mathscr{G}}$. 

There are integral refinements of them:
\begin{enumerate}
    \item The composite
\[\Mcris((T_{p}\mathscr{G})^{\vee})[1/p] \cong D_{\cris}((T_{p}\mathscr{G})^{\vee}[1/p])  \overset{c_{\mathscr{G}}}{\cong} \mathbb{D}(\mathscr{G}_k)(W)[1/p]
	\]
	maps $\Mcris((T_{p}\mathscr{G})^{\vee})$ onto $\mathbb{D}(\mathscr{G}_k)(W)$.
    \item The composite
	\[
	\MdR((T_{p}\mathscr{G})^{\vee})[1/p] \cong D_{\dR}((T_{p}\mathscr{G})^{\vee}[1/p]) 
	\overset{c_{\mathscr{G}}}{\cong} \mathbb{D}(\mathscr{G}_{{\O_K/p}})(\O_K)\otimes_{\O_K}K
	\]
	maps $\MdR((T_{p}\mathscr{G})^{\vee})$ onto $\mathbb{D}(\mathscr{G}_{{\O_K/p}})(\O_K)$, and maps $\Fil^1(\MdR((T_{p}\mathscr{G})^{\vee}))$ onto $\Fil^1\mathbb{D}(\mathscr{G}_{{\O_K/p}})(\O_K)$.
\end{enumerate}

These results were proved by Kisin when $p >2$; see \cite[Theorem 1.4.2]{KisinIntModel}.
The general case follows from the results of Lau \cite{LauDisplay, LauGalois} as explained in \cite[Theorem 2.12]{KimMadapusiIntModel}.
(See also Remark \ref{Remark:Faltings integral refinement}.)

\subsection{Integral $p$-adic Hodge theory}
\label{Subsection:The integral p-adic Hodge theory}

In this subsection, we recall integral comparison theorems
proved by Bhatt-Morrow-Scholze \cite{BMS}.
(See also Section \ref{Subsection:Scholze's de Rham comparison map}, and \ref{Subsection:Crystalline comparison map of Bhatt-Morrow-Scholze}.)
Although the results in \cite{BMS} can be applied in more general cases (including the semistable case \cite{CK}), we only recall their results for $K3$ surfaces with good reduction
for simplicity.

Let
$\mathscr{X}$
be a $K3$ surface over $\O_K$.
It is possibly an algebraic space, not a scheme.
We remark that the generic fiber $\mathscr{X}_K$ and the special fiber $\mathscr{X}_k$ are schemes since a smooth proper algebraic space of dimension $2$ over a field is a scheme.
We refer to Section \ref{Subsection:Crystalline comparison map of Bhatt-Morrow-Scholze} for details on how to apply the results of \cite{BMS} to the proper smooth algebraic space $\mathscr{X}$ over $\O_K$.
We will freely use GAGA results implicitly below.

By \cite[Theorem 14.6 (i)]{BMS}, we have the following $B_{\cris}$-linear isomorphism
\[
c_{\cris, \mathscr{X}}\colon 
H^2_{\cris}(\mathscr{X}_k /W) \otimes_{W} B_{\cris} \overset{\cong}{\longrightarrow} H^2_{\et} (\mathscr{X}_{\overline{K}}, \Z_p)\otimes_{\Z_p} B_{\cris},
\]
which is compatible with the action of $\Gal(\overline{K}/K)$ and the Frobenius endomorphisms.
By its construction and \cite[Theorem 13.1]{BMS}, this is compatible with the following filtered $B_{\dR}$-linear isomorphism constructed in \cite[Theorem 8.4]{ScholzeHodge}
\[
c_{\dR, \mathscr{X}_K}\colon H^2_{\dR}(\mathscr{X}_K/K)\otimes_K B_{\dR} \overset{\cong}{\longrightarrow} H^2_{\et}(\mathscr{X}_{\overline{K}}, \Z_p) \otimes_{\Z_p} B_{\dR},
\]
which is $\Gal(\overline{K}/K)$-equivariant.
More precisely, $c_{\cris, \mathscr{X}}$ and $c_{\dR, \mathscr{X}_K}$ are compatible via the Berthelot-Ogus isomorphism
\[
H^2_{\cris}(\mathscr{X}_k/W)\otimes_{W} K \cong H^2_{\dR}(\mathscr{X}_K/K), 
\]
see Proposition \ref{Proposition:Comparison with BO}. 

The isomorphism $c_{\dR, \mathscr{X}_K}$ induces an isomorphism
\[
D_{\dR}(H^2_{\mathrm{\acute{e}t}}(\mathscr{X}_{\overline{K}}, \Z_p)[1/p]) \cong H^2_{\dR}(\mathscr{X}_K/K),
\]
which is also denoted by $c_{\dR, \mathscr{X}_K}$.
Similarly, the isomorphism $c_{\cris, \mathscr{X}}$ induces an isomorphism
\[
D_{\cris}(H^2_{\mathrm{\acute{e}t}}(\mathscr{X}_{\overline{K}}, \Z_p)[1/p])\cong H^2_{\cris}(\mathscr{X}_k/W)[1/p],
\]
which is also denoted by $c_{\cris, \mathscr{X}}$.

Let
$\M(H^2_{\mathrm{\acute{e}t}}(\mathscr{X}_{\overline{K}}, \Z_p))$
be the Breuil-Kisin module
(over $\O_K$ with respect to $\{ \varpi^{1/{p^n}} \}_{n \geq 0}$)
associated with
$H^2_{\mathrm{\acute{e}t}}(\mathscr{X}_{\overline{K}}, \Z_p)$.
We have the composite of the following filtered isomorphisms
\begin{align*}
\MdR(H^2_{\mathrm{\acute{e}t}}(\mathscr{X}_{\overline{K}}, \Z_p))[1/p] &\cong
D_{\dR}(H^2_{\mathrm{\acute{e}t}}(\mathscr{X}_{\overline{K}}, \Z_p)[1/p]) \\
&\cong H^2_{\dR}(\mathscr{X}_K/K). 
\end{align*}
We also have the composite of the following isomorphisms of $\varphi$-modules
\begin{align*}
\Mcris(H^2_{\mathrm{\acute{e}t}}(\mathscr{X}_{\overline{K}}, \Z_p))[1/p] &\cong
D_{\cris}(H^2_{\mathrm{\acute{e}t}}(\mathscr{X}_{\overline{K}}, \Z_p)[1/p]) \\
&\cong H^2_{\cris}(\mathscr{X}_k/W)[1/p]. 
\end{align*}

As in the case of $p$-divisible groups, there are integral refinements of them:

\begin{thm}[Bhatt-Morrow-Scholze \cite{BMS}]
\label{Theorem:p-adic Hodge theorem}
\begin{enumerate}
	\item The isomorphism
	\[
	\MdR(H^2_{\mathrm{\acute{e}t}}(\mathscr{X}_{\overline{K}}, \Z_p))[1/p] \cong H^2_{\dR}(\mathscr{X}_K/K)
	\]
	maps
	$\MdR(H^2_{\mathrm{\acute{e}t}}(\mathscr{X}_{\overline{K}}, \Z_p))$
	isomorphically onto
	$H^2_{\dR}(\mathscr{X}/\O_K)$.
	
	\item The isomorphism
	\[
	\Mcris(H^2_{\mathrm{\acute{e}t}}(\mathscr{X}_{\overline{K}}, \Z_p))[1/p] \cong H^2_{\cris}(\mathscr{X}_k/W)[1/p]
	\]
	maps
	$\Mcris(H^2_{\mathrm{\acute{e}t}}(\mathscr{X}_{\overline{K}}, \Z_p))$
	isomorphically onto
	$H^2_{\cris}(\mathscr{X}_k/W)$.
\end{enumerate}
\end{thm}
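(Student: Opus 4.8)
The plan is to reduce Theorem~\ref{Theorem:p-adic Hodge theorem} to the already-recalled statements about $p$-divisible groups in Section~\ref{Subsection:Breuil-Kisin modules and p-divisible groups}, using the formal Brauer group of the $K3$ surface together with Poincar\'e duality to fill in the rest of $H^2$. Concretely, let $\mathscr{G} := \widehat{\Br}(\mathscr{X})$ be the formal Brauer group of $\mathscr{X}$ over $\mathscr{O}_K$, viewed as a (connected) $p$-divisible group; its Cartier dual $\mathscr{G}^{\vee}$ and the \'etale quotient are controlled by the Dieudonn\'e crystal $\mathbb{D}(\mathscr{G}_k)$, which sits inside $H^2_{\cris}(\mathscr{X}_k/W)$ as the slope-$<1$ part (after a Tate twist), and dually for the slope-$>1$ part. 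The key input is the Artin--Mazur comparison identifying $T_p\mathscr{G}$, up to the middle slope-$1$ piece, with $H^2_{\et}(\mathscr{X}_{\overline{K}},\Z_p)(1)$, and the fact, recalled above, that Faltings' (integrally refined) isomorphism $c_{\mathscr{G}}$ matches the Breuil--Kisin module $\mathfrak{M}((T_p\mathscr{G})^{\vee})$ with $\mathbb{D}(\mathscr{G}_k)(W)$ integrally, and matches $\Fil^1$ of the de Rham realization with the Hodge filtration $\Fil^1\mathbb{D}(\mathscr{G}_{\mathscr{O}_K/p})(\mathscr{O}_K)$.

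The steps, in order, would be: (i) Establish, from the $B_{\cris}$- and $B_{\dR}$-comparison isomorphisms $c_{\cris,\mathscr{X}}$ and $c_{\dR,\mathscr{X}_K}$ recalled above, that the associated Breuil--Kisin module $\mathfrak{M}(H^2_{\et}(\mathscr{X}_{\overline{K}},\Z_p))$ has the expected shape; in particular, after the appropriate Tate twist, it is an effective Breuil--Kisin module of height $\leq 2$. (ii) Use Poincar\'e duality $H^2_{\et}(\mathscr{X}_{\overline{K}},\Z_p) \cong H^2_{\et}(\mathscr{X}_{\overline{K}},\Z_p)^{\vee}(-2)$ and the analogous self-duality of $H^2_{\cris}(\mathscr{X}_k/W)$ and $H^2_{\dR}(\mathscr{X}/\mathscr{O}_K)$ (all three are unimodular, since $\mathscr{X}$ is a $K3$ surface) to reduce the claim to checking the two inclusions $\mathfrak{M}_{\ast}(\text{\'etale}) \subseteq \ast\text{-cohomology}$ for $\ast \in \{\dR,\cris\}$, the reverse inclusions then following by duality. (iii) For these inclusions, localize: it suffices to check them modulo $p$ and after inverting $p$, or equivalently to check that the cokernels are $p$-torsion-free; here the crystalline statement of \cite[Theorem 14.6(i)]{BMS} (more precisely its integral refinement, which is exactly the content being cited) does the work. (iv) Finally, transport the Hodge filtration: by Lemma~\ref{Lemma: 1-st Filtration}, since the (twisted) Breuil--Kisin module is effective, $\Fil^1$ of $\mathfrak{M}_{\dR}$ is the image of $\Fil^1(\varphi^{\ast}\mathfrak{M})$, and one matches this with $\Fil^1 H^2_{\dR}(\mathscr{X}/\mathscr{O}_K) = H^0(\mathscr{X},\Omega^2_{\mathscr{X}/\mathscr{O}_K}) \oplus H^1(\mathscr{X},\Omega^1_{\mathscr{X}/\mathscr{O}_K})$ using compatibility with the Berthelot--Ogus isomorphism (Proposition~\ref{Proposition:Comparison with BO}) and the $p$-divisible group computation from Section~\ref{Subsection:Breuil-Kisin modules and p-divisible groups} applied to $\widehat{\Br}(\mathscr{X})$.

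The main obstacle I expect is step (iii) together with the technical bookkeeping needed to even state \cite{BMS} for a proper smooth \emph{algebraic space} $\mathscr{X}$ over $\mathscr{O}_K$ rather than a scheme: the $A_{\mathrm{inf}}$-cohomology theory of Bhatt--Morrow--Scholze is set up for formal schemes, so one must either invoke GAGA to descend to a projective model of the generic fiber and a crystalline computation on the (schematic) special fiber, or check that the relevant cohomology complexes are insensitive to the algebraic-space issue — this is precisely the content flagged as ``see Section~\ref{Subsection:Crystalline comparison map of Bhatt-Morrow-Scholze} for details'' in the excerpt. A secondary subtlety is ensuring that the integral (not merely rational) refinement holds in characteristic $2$: here one cannot appeal to the Nygaard--Ogus theory and must instead genuinely use the $A_{\mathrm{inf}}$-cohomology, verifying that the Breuil--Kisin module $\mathfrak{M}(H^2_{\et})$ is torsion-free and that its Nygaard filtration degenerates as needed, which is where the $K3$ hypothesis (no torsion in $H^2$, Hodge numbers $1,20,1$, crystalline cohomology torsion-free) is essential.
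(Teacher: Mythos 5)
There is a genuine gap, and it sits exactly at the point your argument needs to do real work. In step (iii) you say the inclusions are handled by ``the crystalline statement of \cite[Theorem 14.6(i)]{BMS} (more precisely its integral refinement, which is exactly the content being cited)'' — but that integral refinement \emph{is} the theorem to be proved; \cite[Theorem 14.6]{BMS} as it stands gives the rational $B_{\cris}$- and $B_{\dR}$-comparisons, and part (1) in particular is not stated in \cite{BMS} at all. The paper's proof supplies the missing argument in a completely different way: one base-changes $\varphi^*\M(H^2_{\mathrm{\acute{e}t}}(\mathscr{X}_{\overline{K}},\Z_p))$ along $\mathfrak{S}\to A_{\mathrm{inf}}$ to get a Breuil--Kisin--Fargues module, identifies it via Fargues' equivalence (through the $B_{\dR}^+$-lattice $c_{\dR,\mathscr{X}_K}(H^2_{\dR}(\mathscr{X}_K/K)\otimes_K B_{\dR}^+)$) with the $A_{\mathrm{inf}}$-cohomology $H^2_{A_{\mathrm{inf}}}(\mathscr{X})$ of \cite[Theorem 14.3, 14.6]{BMS} — which is free of rank $22$ precisely because \'etale and crystalline $H^2$ of a $K3$ are torsion-free — and then specializes along $A_{\mathrm{inf}}\to\O_C$ (de Rham, Theorem 14.3(ii)) and $A_{\mathrm{inf}}\to W(\overline{k})$ (crystalline, Theorem 14.3(i)), finishing by taking $\Gal(\overline{K}/K)$-invariants. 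Nothing in your outline plays the role of this identification, so the de Rham statement (1) never actually gets established; only (2) can honestly be quoted from \cite[Theorem 14.6(iii)]{BMS}, which is what the paper does.

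The formal Brauer group detour also does not work as a substitute. Theorem \ref{Theorem:p-adic Hodge theorem} carries no finite-height hypothesis, and even when the special fiber has finite height $h$, the slope-one part of $H^2$ has rank $22-2h$ (all of $H^2$ in the supersingular case) and is not governed by any $p$-divisible group attached to $\mathscr{X}$; moreover $\widehat{\Br}(\mathscr{X})$ over $\O_K$ is not a $p$-divisible group, since its characteristic-zero fiber is a one-dimensional formal group over $K$, hence $\widehat{\G}_a$. So the Artin--Mazur input you invoke cannot produce even one pairing-compatible integral inclusion; your duality observation in step (ii) (two unimodular lattices in the same quadratic space, one contained in the other, must coincide) is fine as far as it goes, but it only finishes the proof once such an inclusion exists, and producing it is exactly the content your step (iii) assumes. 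Your remarks on the algebraic-space/GAGA bookkeeping and on torsion-freeness being essential are correct and do appear in the paper, but they are peripheral to the main argument you are missing.
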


\begin{proof}
$(1)$ Since this part was not stated explicitly in \cite{BMS}, we shall explain how to deduce it from the results of \cite{BMS}.

First recall that
\[ \varphi^*\M(H^2_{\mathrm{\acute{e}t}}(\mathscr{X}_{\overline{K}}, \Z_p)) \otimes_{\mathfrak{S}}A_{\mathrm{inf}} \]
naturally becomes a \textit{Breuil-Kisin-Fargues module} in the sense of \cite[Definition 4.22]{BMS}. Under Fargues' equivalence \cite[Theorem 4.28]{BMS}, this corresponds to a $B_{\dR}^+$-lattice
\begin{align*}
D_{\dR}(H^2_{\et}(\mathscr{X}_{\overline{K}}, \Z_p)[1/p]) \otimes_K B_{\dR}^+  =
(H^2_{\et}(\mathscr{X}_{\overline{K}}, \Z_p) \otimes_{\Z_p}B_{\dR})^{\Gal(\overline{K}/K)} \otimes_K B_{\dR}^+ & \\ \subset 
H^2_{\et}(\mathscr{X}_{\overline{K}}, \Z_p) \otimes_{\Z_p}B_{\dR}. &
\end{align*}
Note that the following isomorphism
\begin{align*}
D_{\dR}(H^2_{\et}(\mathscr{X}_{\overline{K}}, \Z_p)[1/p]) & \cong 
(\varphi^*\M(H^2_{\mathrm{\acute{e}t}}(\mathscr{X}_{\overline{K}}, \Z_p)) \otimes_{\mathfrak{S}} C)^{\Gal (\overline{K}/K)} \\
& \cong \M_{\dR}(H^2_{\mathrm{\acute{e}t}}(\mathscr{X}_{\overline{K}}, \Z_p))[1/p] 
\end{align*}
obtained by the reduction modulo $\xi$ is the same as the one given before. 

The above Breuil-Kisin-Fargues module is realized as the $A_{\mathrm{inf}}$-valued cohomology constructed in \cite{BMS}.
In \cite{BMS}, Bhatt-Morrow-Scholze constructed the $A_{\mathrm{inf}}$-valued cohomology $H^2_{A_\mathrm{inf}}(\mathscr{X})$,
and showed that other cohomology theories can be obtained from it. (Here, $H^2_{A_\mathrm{inf}}(\mathscr{X})$ denotes the $A_\mathrm{inf}$-cohomology of the completion of $\mathscr{X}_{\O_C}$.)
In our case, $H^2_{A_\mathrm{inf}}(\mathscr{X})$ is a free $A_{\mathrm{inf}}$-module of rank $22$.
By \cite[Theorem 14.3]{BMS},
$H^2_{A_\mathrm{inf}}(\mathscr{X})$
is equipped with an $A_{\mathrm{inf}}$-linear Frobenius isomorphism
\[
1 \otimes \varphi \colon (A_{\mathrm{inf}}\otimes_{\varphi, A_{\mathrm{inf}}}H^2_{A_\mathrm{inf}}(\mathscr{X}))[1/\varphi(\xi)] \cong H^2_{A_\mathrm{inf}}(\mathscr{X})[1/\varphi(\xi)],
\]
where $\xi$ is a generator of the kernel of the surjection
$\theta \colon A_{\mathrm{inf}} \twoheadrightarrow \O_C$.
Hence it is a Breuil-Kisin-Fargues module. 
By \cite[Theorem 14.6]{BMS}, it is isomorphic to the Breuil-Kisin-Fargues module
associated with
$H^2_{\et}(\mathscr{X}_{\overline{K}}, \Z_p)$ using $c_{\dR, \mathscr{X}_K}$.
Indeed, by \cite[Theorem 14.3(iv)]{BMS}, there is an isomorphism (actually, there are two constructions giving the same map, see \cite[Proposition 6.8]{CK}):
\[
H^2_{A_\mathrm{inf}}(\mathscr{X}) \otimes_{A_\mathrm{inf}} B_{\dR} \cong
H^2_{\et}(\mathscr{X}_{\overline{K}}, \Z_p)\otimes_{\Z_P}B_{\dR}, 
\]
and $H^2_{A_\mathrm{inf}}(\mathscr{X})$ is determined by, via Fargues' equivalence, a $B_{\dR}^+$-lattice
\[
c_{\dR, \mathscr{X}_K} (H^2_{\dR}(\mathscr{X}_K/K) \otimes_K B_{\dR}^+)
\subset H^2_{\et}(\mathscr{X}_{\overline{K}}, \Z_p)\otimes_{\Z_P}B_{\dR}^+, 
\]
which is exactly
\[ D_{\dR}(H^2_{\et}(\mathscr{X}_{\overline{K}}, \Z_p)[1/p]) \otimes_K B_{\dR}^+. \] 

Therefore, there is an isomorphism
\[
\varphi^*\M(H^2_{\mathrm{\acute{e}t}}(\mathscr{X}_{\overline{K}}, \Z_p)) \otimes_{\mathfrak{S}}A_{\mathrm{inf}} \cong
H^2_{A_\mathrm{inf}}(\mathscr{X})
\]
making the following diagram commutative
\[
\xymatrix{
H^2_{\dR}(\mathscr{X}_K/K)\otimes_{K}C \ar_{\cong}[d] \ar^{\cong}[r] &
H^2_{A_\mathrm{inf}}(\mathscr{X}) \otimes_{A_\mathrm{inf}} C  \\ 
D_{\dR}(H^2_{\mathrm{\acute{e}t}}(\mathscr{X}_{\overline{K}}, \Z_p)[1/p])\otimes_{K}C \ar^-{\cong}[r] &
\MdR(H^2_{\mathrm{\acute{e}t}}(\mathscr{X}_{\overline{K}}, \Z_p)) \otimes_{\O_K}C \ar_{\cong}[u],
}
\]
and $\MdR(H^2_{\mathrm{\acute{e}t}}(\mathscr{X}_{\overline{K}}, \Z_p)) \otimes_{\O_K}\O_C$ is identified with $H^2_{A_\mathrm{inf}}(\mathscr{X}) \otimes_{A_\mathrm{inf}} \O_C$. The inverse of the composite of the left vertical arrow and the bottom arrow is the map appeared in the statement of $(1)$. 

By \cite[Theorem 14.3(ii)]{BMS}, we have an isomorphism
\[
H^2_{A_\mathrm{inf}}(\mathscr{X}) \otimes_{A_\mathrm{inf}} \O_C \cong
H^2_{\dR}(\mathscr{X}/\O_K)\otimes_{\O_K} \O_C. 
\]
It is compatible with the top arrow in the above diagram; this is essentially checked in the proof of \cite[Theorem 14.1]{BMS}, see also \cite[Proposition 5.41, Theorem 6.6]{CK}. 
We finish the proof by taking the $\Gal(\overline{K}/K)$-invariants. 

$(2)$ See \cite[Theorem 14.6(iii)]{BMS}.
We briefly recall the arguments for the convenience of the reader.
By \cite[Theorem 14.5(i), Theorem 12.1]{BMS}, there is an isomorphism 
\[
H^2_{A_\mathrm{inf}}(\mathscr{X}) \otimes_{A_\mathrm{inf}} B_{\cris} \cong
H^2_{\et}(\mathscr{X}_{\overline{K}}, \Z_p)\otimes_{\Z_P}B_{\cris}
\]
that underlies the following isomorphism used in the proof of (1)
\[
H^2_{A_\mathrm{inf}}(\mathscr{X}) \otimes_{A_\mathrm{inf}} B_{\dR} \cong
H^2_{\et}(\mathscr{X}_{\overline{K}}, \Z_p)\otimes_{\Z_P}B_{\dR}. 
\]
This induces an isomorphism
\[
H^2_{A_\mathrm{inf}}(\mathscr{X}) \otimes_{A_\mathrm{inf}} B_{\cris}^+\cong
D_{\cris}(H^2_{\et}(\mathscr{X}_{\overline{K}}, \Z_p)[1/p])\otimes_{W[1/p]}B_{\cris}^+
\]
such that the composite
\begin{align*}
\varphi^*\M(H^2_{\mathrm{\acute{e}t}}(\mathscr{X}_{\overline{K}}, \Z_p)) \otimes_{\mathfrak{S}}B_{\cris}^+
&\cong H^2_{A_\mathrm{inf}}(\mathscr{X}) \otimes_{A_\mathrm{inf}} B_{\cris}^+ \\
&\cong D_{\cris}(H^2_{\et}(\mathscr{X}_{\overline{K}}, \Z_p)[1/p])\otimes_{W[1/p]}B_{\cris}^+
\end{align*}
is the canonical map.
Namely, the map obtained by the specialization
\begin{align*}
D_{\cris}(H^2_{\et}(\mathscr{X}_{\overline{K}}, \Z_p)[1/p])&\cong 
(\Mcris(H^2_{\mathrm{\acute{e}t}}(\mathscr{X}_{\overline{K}}, \Z_p))\otimes_{W}W(\overline{k})[1/p])^{\Gal (\overline{K}/K)} \\ &\cong
\Mcris(H^2_{\mathrm{\acute{e}t}}(\mathscr{X}_{\overline{K}}, \Z_p))[1/p]  
\end{align*}
is equal to the canonical map given before.

Moreover, by the construction of $c_{\cris, \mathscr{X}}$, the following isomorphism \cite[Theorem 14.3(i)]{BMS}
\[
H^2_{A_\mathrm{inf}}(\mathscr{X}) \otimes_{A_\mathrm{inf}} W(\overline{k}) \cong
H^2_{\cris}(\mathscr{X}_k/W)\otimes_{W}W(\overline{k})
\]
induces $c_{\cris, \mathscr{X}}$ by the specialization:
\begin{align*}
D_{\cris}(H^2_{\et}(\mathscr{X}_{\overline{K}}, \Z_p)[1/p])&\cong 
(H^2_{\cris}(\mathscr{X}_k/W)\otimes_{W}W(\overline{k})[1/p])^{\Gal (\overline{K}/K)} \\ &\cong
H^2_{\cris}(\mathscr{X}_k/W)[1/p].  
\end{align*}

In conclusion, under the map in the statement of (2), $\Mcris(H^2_{\mathrm{\acute{e}t}}(\mathscr{X}_{\overline{K}}, \Z_p))$ is identified with
\[
(H^2_{A_\mathrm{inf}}(\mathscr{X}) \otimes_{A_\mathrm{inf}} W(\overline{k}))^{\Gal(\overline{K}/K)}\cong
H^2_{\cris}(\mathscr{X}_k/W). 
\]
This completes the proof. 
\end{proof}

\section{Shimura varieties}
\label{Section:Shimura varieties}

In this section, we recall basic results on Shimura varieties and
their integral models associated with general spin groups and special orthogonal groups.
We follow Madapusi Pera's paper \cite{MadapusiIntModel} for orthogonal Shimura varieties.
(For integral models of more general Shimura varieties of abelian type, see Kisin's paper \cite{KisinIntModel}.
For the construction of $2$-adic integral canonical models,
see also \cite{KimMadapusiIntModel}.)

In this section, we use the same notation as in previous sections.
Recall that we fix an embedding of quadratic spaces $L \subset \widetilde{L}$ as in Lemma \ref{LatticeEmbedding}.
\subsection{Orthogonal Shimura varieties over $\Q$}\label{Subsection:Orthogonal Shimura Varieties over Q}
Let $X_{\widetilde{L}}$ be the symmetric domain 
of oriented negative definite planes in $\widetilde{L}_{\R}$. 
We have Shimura data $(\widetilde{G}_{0, \Q}, X_{\widetilde{L}})$
and $(\widetilde{G}_{\Q}, X_{\widetilde{L}})$.
Each of them has the reflex field $\Q$; see \cite[Appendix 1, Lemma]{Andre} for example.
The canonical homomorphism
$\widetilde{G} \to \widetilde{G}_0$
induces a morphism of Shimura data
$(\widetilde{G}_{\Q}, X_{\widetilde{L}}) \to
 (\widetilde{G}_{0, \Q}, X_{\widetilde{L}})$.

We put $\widetilde{\mathrm{K}}_{0, p} := \widetilde{G}_0(\Z_{p})$ (resp.\ $\widetilde{\mathrm{K}}_{p} := \widetilde{G}(\Z_p)$), 
which is a hyperspecial subgroup. 
Let 
$\widetilde{\mathrm{K}}^p_0 \subset \widetilde{G}_0(\A^p_f)$
(resp.\ $\widetilde{\mathrm{K}}^p \subset \widetilde{G}(\A^p_f)$)
be an open compact subgroup and 
$\widetilde{\mathrm{K}}_0 := \widetilde{\mathrm{K}}_{0, p}\widetilde{\mathrm{K}}^p_0 \subset \widetilde{G}_0(\A_f)$
(resp.\ $\widetilde{\mathrm{K}} := \widetilde{\mathrm{K}}_{p}\widetilde{\mathrm{K}}^p \subset \widetilde{G}(\A_f)$).
We have the Shimura varieties
$\mathrm{Sh}_{\widetilde{\mathrm{K}}_0} := \mathrm{Sh}_{\widetilde{\mathrm{K}}_0}(\widetilde{G}_{0, \Q}, X_{\widetilde{L}})$,
$\mathrm{Sh}_{\widetilde{\mathrm{K}}} :=\mathrm{Sh}_{\widetilde{\mathrm{K}}}(\widetilde{G}_{\Q}, X_{\widetilde{L}})$
associated with the Shimura data
$(\widetilde{G}_{0, \Q}, X_{\widetilde{L}})$, $(\widetilde{G}_{\Q}, X_{\widetilde{L}})$.
We assume $\widetilde{\mathrm{K}}^p_0$ and $\widetilde{\mathrm{K}}^p$ are small enough such that 
$\mathrm{Sh}_{\widetilde{\mathrm{K}}_0}$
and
$\mathrm{Sh}_{\widetilde{\mathrm{K}}}$
are smooth quasi-projective schemes over $\Q$.  
Moreover, we assume the image of
$\widetilde{\mathrm{K}}^p$
under the homomorphism
$\widetilde{G} \to \widetilde{G}_0$
is
$\widetilde{\mathrm{K}}^p_0$. 
Then we have a finite \'etale morphism over $\Q$:
\[
\mathrm{Sh}_{\widetilde{\mathrm{K}}} \to \mathrm{Sh}_{\widetilde{\mathrm{K}}_0}.
\]

We also consider the reductive group $\SO(L_{\Q})$ over $\Q$.
Let $X_{L}$ be the symmetric domain 
of oriented negative definite planes in $L_{\R}$. 
We have a Shimura datum $(\SO(L_{\Q}), X_{L})$ and a morphism of Shimura data:
$(\SO(L_{\Q}), X_{L}) \to  (\widetilde{G}_{0, \Q}, X_{\widetilde{L}})$.
Let $\K_{0, p} \subset \SO(L_{\Q})(\Q_p)$ be the maximal subgroup which stabilizes $L_{\Z_p}$ and acts on $L^{\vee}_{\Z_p}/L_{\Z_p}$ trivially.
Let $\K^p_{0} \subset \SO(L_{\Q})(\A^p_f)$ be an open compact subgroup which stabilizes $L_{\widehat{\Z}^p}$ and acts on $L^{\vee}_{\widehat{\Z}^p}/L_{\widehat{\Z}^p}$ trivially.
We assume $\K^p_{0}$ is small enough such that it is contained in $\widetilde{\K}^p_0$ and the associated Shimura variety $\mathrm{Sh}_{\mathrm{K}_0}(\SO(L_{\Q}), X_{L})$ is a smooth quasi-projective variety over $\Q$, where $\mathrm{K}_0:=\K_{0, p}\K^p_{0}$.
Note that we have $\K_{0, p} \subset \widetilde{\K}_{0, p}$; see the proof of \cite[Lemma 2.6]{MadapusiIntModel}.
Hence we have a morphism of Shimura varieties over $\Q$:
\[
\mathrm{Sh}_{\mathrm{K}_0}(\SO(L_{\Q}), X_{L}) \to \mathrm{Sh}_{\widetilde{\mathrm{K}}_0}.
\]
\subsection{Symplectic embeddings of general spin groups}\label{Subsection: Symplectic embeddings of general spin groups}

By \cite[Lemma 3.6]{MadapusiIntModel}, there is a non-degenerate alternating bilinear form
\[
\psi \colon H \times H \to \Z
\]
satisfying the following properties: 
\begin{enumerate}
\item The left multiplication induces a closed embedding of algebraic groups over $\Q$
\[
\widetilde{G}_{\Q} \hookrightarrow \GSp := \GSp(H_{\Q}, \psi_{\Q}). 
\] 
\item The left multiplication induces a morphism of Shimura data
\[
(\widetilde{G}_{\Q}, X_{\widetilde{L}}) \to (\GSp, S^{\pm}),
\]
where $S^{\pm}$ denotes the Siegel double spaces associated with 
the symplectic space $(H_{\Q}, \psi)$.
\end{enumerate}

Let $\mathrm{K}'^p \subset \GSp(\A^p_f)$ be an open compact subgroup
containing the image of $\widetilde{\mathrm{K}}^p$.
Let $\mathrm{K}'_p \subset \GSp(\Q_p)$ be the stabilizer of $H_{\Z_p}$.
We put $\mathrm{K}' := \mathrm{K}'_p\mathrm{K}'^p$.
After replacing $\mathrm{K}'^p$ and $\widetilde{\mathrm{K}}^p$ by their open compact subgroups, we may assume that the associated Shimura variety $\mathrm{Sh}_{\mathrm{K}'}(\GSp, S^{\pm})$ is a smooth quasi-projective scheme over $\Q$ and
the morphism of Shimura data
$
(\widetilde{G}_{\Q}, X_{\widetilde{L}}) \to (\GSp, S^{\pm})
$
induces the following morphism of Shimura varieties over $\Q$:
\[
\mathrm{Sh}_{\widetilde{\mathrm{K}}} \to \mathrm{Sh}_{\mathrm{K}'}(\GSp, S^{\pm}).
\]

Let us summarize our situation by the following commutative diagram of algebraic groups over $\Q$:
\[
\xymatrix{ \GSpin(L_{\Q}) \ar@{^{(}->}[r]^-{} \ar[d]^-{} & \widetilde{G}_{\Q} \ar[d]^-{} \ar@{^{(}->}[r]^-{} & \GSp\\
\SO(L_{\Q}) \ar@{^{(}->}[r]^-{} \ar[r]^-{} & \widetilde{G}_{0, \Q}. &
}
\]
We also have the corresponding diagram of Shimura varieties over $\Q$:
\[
\xymatrix{
&
\mathrm{Sh}_{\widetilde{\mathrm{K}}} = \mathrm{Sh}_{\widetilde{\mathrm{K}}}(\widetilde{G}_{\Q}, X_{\widetilde{L}}) \ar[d]^-{} \ar[r]^-{} &
\mathrm{Sh}_{\mathrm{K}'}(\GSp, S^{\pm}) \\
\mathrm{Sh}_{\mathrm{K}_0}(\SO(L_{\Q}), X_{L}) \ar[r]^-{} \ar[r]^-{} &
\mathrm{Sh}_{\widetilde{\mathrm{K}}_0} = \mathrm{Sh}_{\widetilde{\mathrm{K}}_0}(\widetilde{G}_{0, \Q}, X_{\widetilde{L}}). &
}
\]

\subsection{Integral canonical models and the Kuga-Satake abelian scheme}\label{Subsection:Integral canonical models and Kuga-Satake abelian schemes}
 
Since
$\widetilde{\mathrm{K}}_{0, p} \subset \widetilde{G}_0(\Q_{p})$
and 
$\widetilde{\mathrm{K}}_{p} \subset \widetilde{G}(\Q_p)$
are hyperspecial subgroups,
the Shimura varieties 
$\mathrm{Sh}_{\widetilde{\mathrm{K}}}$, 
$\mathrm{Sh}_{\widetilde{\mathrm{K}}_0}$
admit the integral canonical models 
$\mathscr{S}_{\widetilde{\mathrm{K}}}$,
$\mathscr{S}_{\widetilde{\mathrm{K}}_0}$
over $\Z_{(p)}$, respectively.
(This result is proved by Kisin when $p \neq 2$ \cite{KisinIntModel},
and by Kim-Madapusi Pera when $p=2$ \cite{KimMadapusiIntModel}.
The integral canonical models are characterized by the extension properties.
See \cite{KisinIntModel} for details.)

By the construction of $\mathscr{S}_{\widetilde{\mathrm{K}}_0}$, the morphism 
$\mathrm{Sh}_{\widetilde{\mathrm{K}}} \to \mathrm{Sh}_{\widetilde{\mathrm{K}}_0}$
 extends to 
a finite \'etale morphism
$\mathscr{S}_{\widetilde{\mathrm{K}}} \to \mathscr{S}_{\widetilde{\mathrm{K}}_0}$
over $\Z_{(p)}$.

Let $m := |H^{\vee}_{\Z}/H_{\Z}|$ be the discriminant of $H_{\Z}$. 
We put $g:=(\dim_{\Q}H_{\Q})/2 = 2^{21}$. 
Let
$\mathscr{A}:=\mathscr{A}_{g, m, \mathrm{K}'}$
be the moduli space over $\Z_{(p)}$
of triples $(A, \lambda, \epsilon'^p)$ consisting 
an abelian scheme $A$ of dimension $g$, 
a polarization $\lambda \colon A \to A^{*}$ of degree $m$, 
and a $\mathrm{K}'^p$-level structure $\epsilon'^p$.
For a sufficiently small $\mathrm{K}'^p$, 
this is represented by a quasi-projective scheme over $\Z_{(p)}$.
We have a canonical open and closed immersion
\[
\mathrm{Sh}_{\mathrm{K}'}(\GSp, S^{\pm}) \hookrightarrow \mathscr{A}_{\Q}
\]
over $\Q$. 
Hence, we have a morphism 
$
\mathrm{Sh}_{\widetilde{\mathrm{K}}} \to \mathscr{A}_{\Q}.
$
over $\Q$.
By the construction of $\mathscr{S}_{\widetilde{\mathrm{K}}}$, 
this morphism extends to a morphism 
$\mathscr{S}_{\widetilde{\mathrm{K}}} \to \mathscr{A}$
over $\Z_{(p)}$; see \cite[(2.3.3)]{KisinIntModel}, \cite[Section 4.4]{KimMadapusiIntModel}.

In summary, we have the following diagram of schemes over $\Z_{(p)}$:
\[
\xymatrix{
\mathscr{S}_{\widetilde{\mathrm{K}}} \ar[d]^-{} \ar[r]^-{} &
\mathscr{A} \\
\mathscr{S}_{\widetilde{\mathrm{K}}_0}. &
}
\]

Let $\mathcal{A}_{\mathscr{S}_{\widetilde{\mathrm{K}}}} \to \mathscr{S}_{\widetilde{\mathrm{K}}}$ be the abelian scheme corresponding to the morphism $\mathscr{S}_{\widetilde{\mathrm{K}}} \to \mathscr{A}$.
The abelian scheme $\mathcal{A}_{\mathscr{S}_{\widetilde{\mathrm{K}}}}$ is called the \textit{Kuga-Satake abelian scheme}.
We often drop the subscript ${\mathscr{S}_{\widetilde{\mathrm{K}}}}$ in the notation.
For every $\mathscr{S}_{\widetilde{\mathrm{K}}}$-scheme $S$, 
we denote the pullback of $\mathcal{A}_{\mathscr{S}_{\widetilde{\mathrm{K}}}}$ to $S$ by $\mathcal{A}_{S}$.

\begin{rem}
\label{Remark:Hyperspecial}
If the discriminant of the quadratic space $L$ is divisible by $p$,
we do not yet have a satisfactory theory of
integral canonical models of the Shimura varieties
$\mathrm{Sh}_{\mathrm{K}}(\GSpin(L_{\Q}), X_{L})$ and
$\mathrm{Sh}_{\mathrm{K}_0}(\SO(L_{\Q}), X_{L})$
associated with $L$.
(The open compact subgroup
$\K_{0, p} \subset \SO(L_{\Q})(\Q_p)$
may not be hyperspecial.)
Following Madapusi Pera \cite{MadapusiIntModel, MadapusiTateConj, KimMadapusiIntModel},
we embed $L$ into $\widetilde{L}$ whose discriminant is
not divisible by $p$ as in Lemma \ref{LatticeEmbedding},
and use the integral canonical models
$\mathscr{S}_{\widetilde{\mathrm{K}}}$ and $\mathscr{S}_{\widetilde{\mathrm{K}}_0}$
associated with $\widetilde{L}$.
\end{rem}

\subsection{Local systems on Shimura varieties}\label{Subsection:Local Systems on Shimura Varieties}

In this subsection, we recall basic results on (complex analytic, $\ell$-adic, and $p$-adic) local systems on orthogonal Shimura varieties.
(For details, see \cite{MadapusiIntModel, KimMadapusiIntModel}.)

The $\widetilde{G}$-representation $\widetilde{L}_{\Z_{(p)}}$ and the $\widetilde{G}$-equivariant embedding 
\[
i \colon \widetilde{L}_{\Z_{(p)}} \hookrightarrow \End_{\Z_{(p)}}(H_{\Z_{(p)}})
\]
induce the following objects:
\begin{enumerate}
	\item A $\Q$-local system $\widetilde{\mathbb{V}}_{B}$ over the complex analytic space $\Sh^{\mathrm{an}}_{\widetilde{\K}, \C}$ and an embedding of $\Q$-local systems:
\[
i_{B} \colon \widetilde{\mathbb{V}}_{B} \hookrightarrow \underline{\End}(H^{\vee}_B).
\]
Here $H_B$ is the relative first singular cohomology with coefficients in $\Q$ of $\mathcal{A}^{\mathrm{an}}_{\Sh_{\widetilde{\K}, \C}}$ over $\Sh^{\mathrm{an}}_{\widetilde{\K}, \C}$, and $H^{\vee}_B$ is its dual.
	\item An $\A^p_f$-local system $\widetilde{\mathbb{V}}^p$ over the integral canonical model $\mathscr{S}_{\widetilde{\mathrm{K}}}$ and an embedding of $\A^p_f$-local systems:
\[
i^p \colon \widetilde{\mathbb{V}}^p \hookrightarrow \underline{\End}(V^p\mathcal{A}).
\]
Here we put
\[
V^p\mathcal{A}:= (\plim[p \nmid n]\mathcal{A}[n])\otimes_{\Z}\Q
\] 
and consider it as an $\A^p_f$-local system over $\mathscr{S}_{\widetilde{\mathrm{K}}}$. 
	\item A $\Z_p$-local system $\widetilde{\mathbb{L}}_{p}$ over the Shimura variety $\mathscr{S}_{\widetilde{\mathrm{K}}, \Q}=\Sh_{\widetilde{\mathrm{K}}}$ and an embedding of $\Z_p$-local systems:

\[
i_p \colon \widetilde{\mathbb{L}}_{p} \hookrightarrow \underline{\End}(T_p\mathcal{A}_{\Sh_{\widetilde{\mathrm{K}}}}).
\]
Here $T_p\mathcal{A}_{\Sh_{\widetilde{\mathrm{K}}}}$ is the $p$-adic Tate module of $\mathcal{A}_{\Sh_{\widetilde{\mathrm{K}}}}$ over $\Sh_{\widetilde{\mathrm{K}}}$.
\end{enumerate}

\subsection{Hodge tensors}\label{Subsection:Hodge tensors}

Recall that we fix tensors
$\{ s_{\alpha}\} $ of $H^{\otimes}_{\Z_{(p)}}$ defining the closed embedding $\widetilde{G} \hookrightarrow \GL(H_{\Z_{(p)}})$ over $\Z_{(p)}$; see Section \ref{Subsection: Representations of general spin groups and Hodge tensors}.
The tensors $\{ s_{\alpha}\}$ of $H^{\otimes}_{\Z_{(p)}}$ give rise to global sections $\{ s_{\alpha, B} \}$ of $H^{\otimes}_B$, global sections $\{s^p_{\alpha} \}$ of $(V^p\mathcal{A})^{\otimes}$,
and global sections $\{s_{\alpha, p}\}$ of $(T_{p}\mathcal{A})^{\otimes}$.

We recall properties of these tensors. (See \cite[(1.3.6)]{KisinModp}, \cite[Proposition 4.10]{KimMadapusiIntModel} for details.)
\begin{enumerate}
	\item 
Let $k$ be a field of characteristic $0$.
For every $x \in  \mathscr{S}_{\widetilde{\mathrm{K}}, \Q}(k)$ and a geometric point $\overline{x} \in \mathscr{S}_{\widetilde{\mathrm{K}}, \Q}(\overline{k})$ above $x$, the stalk $\widetilde{\mathbb{L}}_{p, \overline{x}}$ at $\overline{x}$ is equipped with an even perfect bilinear form $( \ , \ )$ over $\Z_p$.
The bilinear form is $\Gal(\overline{k}/k)$-invariant, i.e.\ we have
\[
(gy_1, gy_2)=(y_1, y_2) 
\]
for every $y_1, y_2 \in \widetilde{\mathbb{L}}_{p, \overline{x}}$ and every $g \in \Gal(\overline{k}/k)$.

We identify $T_p(\mathcal{A}_{\overline{x}})$ with
$H^1_{\mathrm{\acute{e}t}}(\mathcal{A}_{\overline{x}},\Z_p)^{\vee}$.
The $\Gal(\overline{k}/k)$-module $\widetilde{\mathbb{L}}_{p, \overline{x}}$ and the homomorphism $i_{p, \overline{x}}$ are characterized by the property that there is an isomorphism of $\Z_p$-modules
\[
H_{\Z_p} \cong H^1_{\mathrm{\acute{e}t}}(\mathcal{A}_{\overline{x}},\Z_p)^{\vee}
\]
which carries $\{ s_{\alpha}\}$ to $\{ s_{\alpha, p, \overline{x}} \}$ and induces the commutative diagram
\[
\xymatrix{
\widetilde{L}_{\Z_p} \ar[d]_-{\cong} \ar[r]^-{i} & \End_{\Z_p}(H_{\Z_p}) \ar[d]^-{\cong} \\
\widetilde{\mathbb{L}}_{p, \overline{x}} \ar[r]^-{i_{p, \overline{x}}} & \End_{\Z_p}(H^1_{\mathrm{\acute{e}t}}(\mathcal{A}_{\overline{x}},\Z_p)^{\vee}),
}
\]
where $\widetilde{L}_{\Z_p} \cong \widetilde{\mathbb{L}}_{p, \overline{x}}$ is an isometry over $\Z_p$.
(We will often drop the subscript $\overline{x}$ of $i_{p, \overline{x}}$.)

\item Let $k$ a field of characteristic $0$ or $p$. 
For every $x \in \mathscr{S}_{\widetilde{\mathrm{K}}}(k)$ and a geometric point $\overline{x} \in \mathscr{S}_{\widetilde{\mathrm{K}}}(\overline{k})$ above $x$, 
the stalk $\widetilde{\mathbb{V}}^p_{\overline{x}}$ at $\overline{x}$ has a bilinear form $( \ , \ )$ over $\A^p_f$ satisfying the same property as above with $\Z_p$ replaced by $\A^p_f$.

\item For every $x \in \mathscr{S}_{\widetilde{\mathrm{K}}}(\C)$, 
the stalk $\widetilde{\mathbb{V}}_{B, x}$ at $x$ has a bilinear form $( \ , \ )$ over $\Q$ satisfying the same property as above with $\Z_p$ replaced by $\Q$.
\end{enumerate}

\subsection{$F$-crystals and Breuil-Kisin modules}\label{Subsection: $F$-crystals and Breuil-Kisin modules}

In this subsection, let $k$ be a perfect field and $x \in \mathscr{S}_{\widetilde{\mathrm{K}}, \F_p}(k)$.
We recall basic results on $F$-crystals attached to $x \in \mathscr{S}_{\widetilde{\mathrm{K}}, \F_p}(k)$.
(For details, see \cite{KisinIntModel, KisinModp, MadapusiIntModel, KimMadapusiIntModel}.)

The $\widetilde{G}$-representation $\widetilde{L}_{\Z_{(p)}}$ and the $\widetilde{G}$-equivariant embedding
\[
i \colon \widetilde{L}_{\Z_{(p)}} \hookrightarrow \End_{\Z_{(p)}}(H_{\Z_{(p)}})
\]
induces a free $W$-module $\widetilde{L}_{\cris, x}$ of finite rank and an embedding
\[
i_{\cris} \colon 
\widetilde{L}_{\cris, x} \hookrightarrow \End_{W}(H^1_{\cris}(\mathcal{A}_{x}/W)^{\vee}).
\]
The $W[1/p]$-vector space 
$\widetilde{L}_{\cris, x}[1/p]$
has the structure of an $F$-isocrystal.
Namely, it is equipped with a Frobenius automorphism $\varphi$.
The embedding $i_{\cris}$ induces an embedding of $F$-isocrystals
\[
i_{\cris}[1/p] \colon \widetilde{L}_{\cris, x}
[1/p] \hookrightarrow \End_{W[1/p]}(H^1_{\cris}(\mathcal{A}_{x}/W[1/p])^{\vee}).
\]
There is an even perfect bilinear form on $\widetilde{L}_{\cris, x}$.
When $p$ is inverted, we have
\[ (\varphi(y_1), \varphi(y_2))=\sigma(y_1, y_2) \]
for every $y_1, y_2 \in \widetilde{L}_{\cris, x}[1/p]$.

We recall the definition of $\widetilde{L}_{\cris, x}$ and $i_{\cris}$ when $k$ is a finite field $\F_q$ or $\overline{\F}_q$.
Take a lift $\widetilde{x} \in \mathscr{S}_{\widetilde{\mathrm{K}}}(\O_K)$ of $x$, where $K$ is a finite totally ramified extension of $W[1/p]$ and $\O_K$ is its valuation ring.
Note that such a lift exists by \cite[Proposition 2.3.5]{KisinIntModel}, \cite[Proposition 4.6]{KimMadapusiIntModel}.
Let $\eta= \Spec K$ be the generic point of $\Spec \O_K$ and $\overline{\eta}$ a geometric point above $\eta$.

We fix a uniformizer $\varpi$ of $K$, and a system
$\{ \varpi^{1/{p^n}} \}_{n \geq 0}$ of $p^n$-th roots of $\varpi$ such that
$(\varpi^{1/{p^{n+1}}})^p=\varpi^{1/{p^n}}$.
Let 
\[
\mathfrak{M}(H^1_{\mathrm{\acute{e}t}}(\mathcal{A}_{\overline{\eta}},\Z_p))
\]
be the Breuil-Kisin module (over $\O_K$
with respect to $\{ \varpi^{1/{p^n}} \}_{n \geq 0}$)
associated with the $\Gal(\overline{K}/K)$-stable $\Z_p$-lattice  $H^1_{\mathrm{\acute{e}t}}(\mathcal{A}_{\overline{\eta}},\Z_p)$ in a crystalline representation.
(For the definition of Breuil-Kisin modules, see Section \ref{Subsection:Breuil-Kisin modules and crystalline Galois representations}. See also \cite[(1.2)]{KisinIntModel}.)

Since $\M(-)$ is a tensor functor, the tensors $\{ s_{\alpha, p, \overline{\eta}} \}$ of $(T_p\mathcal{A}_{\overline{\eta}})^{\otimes}$ give rise to Frobenius invariant tensors $\{ \mathfrak{M}(s_{\alpha, p, \overline{\eta}}) \}$ of $\mathfrak{M}(H^1_{\mathrm{\acute{e}t}}(\mathcal{A}_{\overline{\eta}},\Z_p))^{\otimes}$ and Frobenius invariant tensors $\{ \Mcris(s_{\alpha, p, \overline{\eta}}) \}$ of $\Mcris(H^1_{\mathrm{\acute{e}t}}(\mathcal{A}_{\overline{\eta}},\Z_p))^{\otimes}$.
By \cite[Corollary 1.4.3]{KisinIntModel}, \cite[Theorem 2.12]{KimMadapusiIntModel}, we have a canonical isomorphism
\[
\Mcris(H^1_{\mathrm{\acute{e}t}}(\mathcal{A}_{\overline{\eta}},\Z_p)) \cong H^1_{\cris}(\mathcal{A}_{x}/W).
\]
(See also Theorem \ref{Theorem:p-adic Hodge theorem}.)
Using this canonical isomorphism, the tensors $\{ \Mcris(s_{\alpha, p, \overline{\eta}}) \}$ give rise to Frobenius invariant tensors $\{ s_{\alpha, \cris, x} \}$ of $H^1_{\cris}(\mathcal{A}_{x}/W)^{\otimes}$.
The tensors $\{ s_{\alpha, \cris, x} \}$ do not depend on the choice of the lift $\widetilde{x} \in \mathscr{S}_{\widetilde{\mathrm{K}}}(\O_K)$ of $x$; see \cite[Proposition 1.3.9]{KisinModp}.
(See also the last paragraph in the proof of \cite[Proposition 4.6]{KimMadapusiIntModel}.)

Kisin proved the following results.

\begin{prop}[Kisin {\cite[Proposition 1.3.4, Corollary 1.3.5]{KisinIntModel}}]\label{Proposition:LcrisBKmodule}
There is an isomorphism of $\mathfrak{S}$-modules
\[
H^1_{\mathrm{\acute{e}t}}(\mathcal{A}_{\overline{\eta}},\Z_p)^{\vee}
\otimes_{\Z_p}\mathfrak{S} \cong \mathfrak{M}(H^1_{\mathrm{\acute{e}t}}(\mathcal{A}_{\overline{\eta}},\Z_p)^{\vee})
\]
which carries $\{ s_{\alpha, p, \overline{\eta}} \}$ to $\{ \mathfrak{M}(s_{\alpha, p, \overline{\eta}}) \}$ and induces the following commutative diagram
\[
\xymatrix{
\widetilde{\mathbb{L}}_{p, \overline{\eta}}\otimes_{\Z_p}\mathfrak{S} \ar[d]_-{\cong} \ar[r]^-{i_p} & \End_{\mathfrak{S}}(H^1_{\mathrm{\acute{e}t}}(\mathcal{A}_{\overline{\eta}},\Z_p)^{\vee}
\otimes_{\Z_p}\mathfrak{S}) \ar[d]^-{\cong} \\
 \mathfrak{M}(\widetilde{\mathbb{L}}_{p, \overline{\eta}}) \ar[r]^-{\mathfrak{M}(i_{p})} & \End_{\mathfrak{S}}(\mathfrak{M}(H^1_{\mathrm{\acute{e}t}}(\mathcal{A}_{\overline{\eta}},\Z_p)^{\vee})),
 }
\]
where $\widetilde{\mathbb{L}}_{p, \overline{\eta}}\otimes_{\Z_p}\mathfrak{S} \cong\mathfrak{M}(\widetilde{\mathbb{L}}_{p, \overline{\eta}})$ is an isometry over $\mathfrak{S}$.
\end{prop}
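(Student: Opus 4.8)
The plan is to realize the required isomorphism as a trivialization of an $\mathrm{Isom}$-torsor over $\mathfrak{S}$ under the reductive group $\widetilde{G}$, following Kisin \cite[Proposition 1.3.4, Corollary 1.3.5]{KisinIntModel}. Write $M := H^1_{\et}(\mathcal{A}_{\overline{\eta}},\Z_p)^{\vee}$; since $\mathcal{A}_{\widetilde{x}}$ has $p$-divisible group $\mathcal{G} := \mathcal{A}_{\widetilde{x}}[p^{\infty}]$ we have $M = T_p\mathcal{G}$, so $\mathfrak{M} := \mathfrak{M}(M)$ is, up to the standard conventions, the Breuil--Kisin module of a $p$-divisible group. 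As $\mathfrak{M}(-)$ is a tensor functor and each $s_{\alpha, p, \overline{\eta}}$ is $\Gal(\overline{K}/K)$-invariant (a morphism $\mathbf{1} \to M^{\otimes}$ of $\Z_p$-lattices in crystalline representations), the induced tensors $\mathfrak{M}(s_{\alpha, p, \overline{\eta}}) \in \mathfrak{M}^{\otimes}$ are $\varphi$-invariant. I would first fix, by Section \ref{Subsection:Hodge tensors}(1), an isomorphism of $\Z_p$-modules $\beta \colon H_{\Z_p} \xrightarrow{\sim} M$ carrying $\{ s_{\alpha} \}$ to $\{ s_{\alpha, p, \overline{\eta}} \}$ and intertwining $i$ with $i_{p, \overline{\eta}}$; via $\beta$ it suffices to produce an automorphism of $H_{\Z_p} \otimes_{\Z_p} \mathfrak{S}$ carrying the constant tensors $s_{\alpha} \otimes 1$ to the $\beta$-transports of $\mathfrak{M}(s_{\alpha, p, \overline{\eta}})$.

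Next I would introduce the affine $\mathfrak{S}$-scheme of finite type $P := \underline{\mathrm{Isom}}_{\mathfrak{S}}\bigl( (H_{\Z_p} \otimes_{\Z_p} \mathfrak{S},\, s_{\alpha} \otimes 1),\, (\mathfrak{M},\, \mathfrak{M}(s_{\alpha, p, \overline{\eta}})) \bigr)$, on which $\widetilde{G}_{\mathfrak{S}} := \widetilde{G} \otimes_{\Z_p} \mathfrak{S}$ acts by precomposition; since $\widetilde{G} = \Stab(\{ s_{\alpha} \}) \subset \GL(H_{\Z_p})$, this action is simply transitive wherever $P$ is nonempty. The first substantial point is that $P$ is a torsor, i.e.\ fppf-locally nonempty. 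This is Kisin's lemma on $\mathrm{Isom}$-schemes attached to a finite family of tensors cutting out a reductive group, whose hypothesis --- nonemptiness over the residue fields of $\Spec \mathfrak{S}$ --- is supplied by the de Rham and crystalline comparison isomorphisms, which are tensor-functorial and hence match $\mathfrak{M}(s_{\alpha, p, \overline{\eta}})$ with the de Rham, respectively crystalline, tensors on the relevant specializations $\varphi^{*}\mathfrak{M} \otimes_{\mathfrak{S}} \O_K$ and $\varphi^{*}\mathfrak{M} \otimes_{\mathfrak{S}} W$ (identified with the de Rham and crystalline realizations of $\mathcal{A}$ as in \cite[Corollary 1.4.3]{KisinIntModel}, \cite[Theorem 2.12]{KimMadapusiIntModel}). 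I expect this to be the main obstacle: the delicate part is the bookkeeping of duals and Frobenius twists in passing between $\mathfrak{M}$, $\varphi^{*}\mathfrak{M}$ and the crystalline/de Rham realizations, and checking that the comparison isomorphisms carry the tensors correctly over all points of $\Spec \mathfrak{S}$. For this step I would follow \cite[Proposition 1.3.4]{KisinIntModel}.

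Granting that $P$ is a $\widetilde{G}_{\mathfrak{S}}$-torsor, it is smooth over $\mathfrak{S}$ because $\widetilde{G}$ is smooth over $\Z_p$. Since $\mathfrak{S} = W[[u]]$ is $u$-adically complete and $P$ is of finite presentation, a point of $P$ over $\mathfrak{S}$ is the same thing as a compatible system of points over the nilpotent thickenings $\mathfrak{S}/u^{n+1} \twoheadrightarrow \mathfrak{S}/u^n$; by formal smoothness any point of $P \otimes_{\mathfrak{S}} W$, with $W = \mathfrak{S}/u\mathfrak{S}$, lifts to such a system. Finally $P \otimes_{\mathfrak{S}} W$ is a $\widetilde{G}_W$-torsor, and it is trivial: $W = W(k)$ with $k = \F_q$ or $\overline{\F}_q$, so $H^1_{\et}(W, \widetilde{G}) = H^1(k, \widetilde{G}_k) = \{ \ast \}$ by Lang's theorem, $\widetilde{G}$ being connected. (Concretely, $P \otimes_{\mathfrak{S}} W$ is, up to a Frobenius twist, the $\mathrm{Isom}$-torsor comparing the crystalline realization $\varphi^{*}\mathfrak{M} \otimes_{\mathfrak{S}} W$, with its crystalline tensors $s_{\alpha, \cris, x}$, to $H_{\Z_p} \otimes_{\Z_p} W$ with the constant tensors, so its triviality also follows from the compatibility built into the definition of $s_{\alpha, \cris, x}$.) A section of $P$, composed with $\beta$, yields an isomorphism $\phi \colon M \otimes_{\Z_p} \mathfrak{S} \xrightarrow{\sim} \mathfrak{M}$ of $\mathfrak{S}$-modules carrying $\{ s_{\alpha, p, \overline{\eta}} \}$ to $\{ \mathfrak{M}(s_{\alpha, p, \overline{\eta}}) \}$.

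It remains to deduce the commutative diagram and the isometry property, which are formal. The isomorphism $\phi$ respects every $s_{\alpha}$, in particular the scaled projector $\pi'$ whose rational image is $i(\widetilde{L}_{\Q})$; hence the induced conjugation isomorphism $\End_{\mathfrak{S}}(\phi)$ intertwines $\pi'$ on both sides, and therefore carries $i_{p, \overline{\eta}}(\widetilde{\mathbb{L}}_{p, \overline{\eta}} \otimes_{\Z_p} \mathfrak{S})$ onto $\mathfrak{M}(i_p)(\mathfrak{M}(\widetilde{\mathbb{L}}_{p, \overline{\eta}}))$ --- first rationally, and then integrally since $i$, $i_p$ and $\mathfrak{M}(i_p)$ have torsion-free cokernels --- which gives the asserted commutative square. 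Moreover $\End_{\mathfrak{S}}(\phi)$, being conjugation by $\phi$, preserves the trace form $[\ ,\ ]$ on the endomorphism modules; as $i$, $i_{p, \overline{\eta}}$ and $\mathfrak{M}(i_p)$ are isometries onto their images for $[\ ,\ ]$, which restricts to the quadratic form of $\widetilde{L}$, the induced isomorphism $\widetilde{\mathbb{L}}_{p, \overline{\eta}} \otimes_{\Z_p} \mathfrak{S} \xrightarrow{\sim} \mathfrak{M}(\widetilde{\mathbb{L}}_{p, \overline{\eta}})$ is an isometry over $\mathfrak{S}$, completing the proof.
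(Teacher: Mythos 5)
Your proposal is correct and takes essentially the same route as the paper: the paper's proof simply invokes Kisin's Proposition 1.3.4 and Corollary 1.3.5 and remarks that "the same argument works" for the commutative diagram, which is exactly what you reconstruct — the torsor-triviality argument over $\mathfrak{S}$ for the tensor-preserving isomorphism, followed by deducing the square and the isometry from preservation of the tensors (the projector $\pi'$, torsion-free cokernels, and the trace form). The only minor caveat is that your one-line gloss of why the Isom-scheme is a torsor (nonemptiness over residue fields) oversimplifies Kisin's actual argument (torsor away from the closed point of $\Spec\mathfrak{S}$ plus extension across it), but since you explicitly defer that step to the cited result — just as the paper does — this does not affect the correctness or the comparison.
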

\begin{proof}
The assertion follows from \cite[Proposition 1.3.4, Corollary 1.3.5]{KisinIntModel}.
Precisely, the statements in \cite[Proposition 1.3.4, Corollary 1.3.5]{KisinIntModel} do not claim the existence of the commutative diagram above, but the same argument works.
\end{proof}

We define
\[
\widetilde{L}_{\cris, x}:= \Mcris(\widetilde{\mathbb{L}}_{p, \overline{\eta}}) \quad \mathrm{and} \quad i_{\cris}:=\Mcris(i_p).
\]
The even perfect bilinear form on
$
\widetilde{\mathbb{L}}_{p, \overline{\eta}}
$
induces an even perfect bilinear form on $\widetilde{L}_{\cris, x}$.
By (1) in Section \ref{Subsection:Hodge tensors} and Proposition \ref{Proposition:LcrisBKmodule}, there is an isomorphism of $W$-modules
\[
	H_{W} \cong H^1_{\cris}(\mathcal{A}_{x}/W)^{\vee}
	\]
	which carries $\{ s_{\alpha}\}$ to $\{ s_{\alpha, \cris, x} \}$ and induces the following commutative diagram:
	\[
\xymatrix{
\widetilde{L}_{W} \ar[d]_-{\cong} \ar[r]^-{i} & \End_{W}(H_{W}) \ar[d]^-{\cong} \\
  \widetilde{L}_{\cris, x}\ar[r]^-{i_{\cris}} & \End_{W}(H^1_{\cris}(\mathcal{A}_{x}/W)^{\vee}),
}
\]
where $\widetilde{L}_{W} \cong \widetilde{L}_{\cris, x}$ is an isometry over $W$.
It follows that $\widetilde{L}_{\cris, x}$ and $i_{\cris}$ do not depend on the choice of the lift $\widetilde{x} \in \mathscr{S}_{\widetilde{\mathrm{K}}}(\O_K)$ of $x$.

\subsection{$\Lambda$-structures for integral canonical models}
\label{Subsection:LambdaStructures}

Recall that we have fixed an embedding of quadratic spaces $L \hookrightarrow \widetilde{L}$.
Let
$\Lambda := L^{\perp} \subset \widetilde{L}$
be the orthogonal complement of $L$ in $\widetilde{L}$, and $\iota \colon \Lambda \hookrightarrow \widetilde{L}$ the natural inclusion.

We recall the definition of $\Lambda$-structures from 
\cite{MadapusiIntModel}.

\begin{defn}[see {\cite[Definition 6.11]{MadapusiIntModel}}]
\label{Definition:Lambda structure}
 A \textit{$\Lambda$-structure} for an $\mathscr{S}_{\widetilde{\mathrm{K}}}$-scheme $S$
 is a homomorphism of $\Z_{(p)}$-modules
\[
\iota_S \colon \Lambda_{\Z_{(p)}} \to \End_{S}(\mathcal{A}_S)_{\Z_{(p)}}
\]
satisfying the following properties:
\begin{enumerate}
	\item For any algebraically closed field $\overline{K}$ of characteristic $0$ and $x \in  S(\overline{K})$,
	there is an isometry $\iota_{p} \colon \Lambda_{\Z_p} \to \widetilde{\mathbb{L}}_{p, x}$ over $\Z_p$ such that the homomorphism induced by $\iota_S$
\[\Lambda_{\Z_p} \to \End_{\Z_p}(T_p\mathcal{A}_{x})
\]
factors as
\[
\Lambda_{\Z_p} \overset{\iota_{p}}{\longrightarrow} \widetilde{\mathbb{L}}_{p, x} \overset{i_{p}}{\longrightarrow} \End_{\Z_p}(T_p\mathcal{A}_{x}).
\]
\item For any perfect field $k$ of characteristic $p$ and $x \in S(k)$, there is an isometry
$\iota_{\cris} \colon \Lambda_{W} \to \widetilde{L}_{\cris, x}$
over $W$
such that the homomorphism induced by $\iota_S$
\[
\Lambda_{W} \to \End_{W}(H^1_{\cris}(\mathcal{A}_{x}/W)^{\vee})
\]
factors as
\[
\xymatrix{\Lambda_{W} \ar[r]^-{\iota_{\cris}} & \widetilde{L}_{\cris, x}  \ar[r]^-{i_{\cris}} & \End_{W}(H^1_{\cris}(\mathcal{A}_{x}/W)^{\vee})}.
\]
\end{enumerate}
\end{defn}

It turns out that these conditions imply the following:
\begin{enumerate}
	\item By \cite[Corollary 5.22]{MadapusiIntModel}, for every geometric point $x \to S$, there is an isometry
	$\iota^{p} \colon \Lambda_{\A^p_f} \to \widetilde{\mathbb{V}}^p_{x}$
	over $\A^p_f$ such that
the homomorphism induced by $\iota_S$
\[
\Lambda_{\A^p_f} \to \End_{\A^p_f}(V^p(\mathcal{A}_{x}))
\]
factors as
\[
\Lambda_{\A^p_f} \overset{\iota^{p}}{\longrightarrow} \widetilde{\mathbb{V}}^p_{x} \overset{i^p}{\longrightarrow} \End_{\A^p_f}(V^p(\mathcal{A}_{x})).
\]
\item For every $\C$-valued point $x \in S(\C)$,
there is an isometry
$\iota_B \colon \Lambda_{\Q} \to \widetilde{\mathbb{V}}_{B, x}$
over $\Q$ such that
the homomorphism induced by $\iota_S$ 
\[
\Lambda_{\Q} \to \End_{\Q}(H^1_B(\mathcal{A}_x, \Q)^{\vee})
\]
factors as
\[
\Lambda_{\Q} \overset{\iota_B}{\longrightarrow} \widetilde{\mathbb{V}}_{B, x} \overset{i_B}{\longrightarrow} \End_{\Q}(H^1_B(\mathcal{A}_x, \Q)^{\vee}).
\]
\end{enumerate}

We recall the definition of a $\mathrm{K}^p$-level structure.
Here $\K^p \subset \GSpin(L_{\Q})(\A^p_f)$ is an open compact subgroup whose image under the homomorphism 
\[
\GSpin(L_{\Q})(\A^p_f) \to \SO(L_{\Q})(\A^p_f)
\]
is $\K^p_0$.

Let $S$ be an $\mathscr{S}_{\widetilde{\mathrm{K}}}$-scheme.
For simplicity, we assume $S$ is locally noetherian and connected.
Let $\epsilon'$ be the corresponding $\mathrm{K}'^p$-level structure on $\mathcal{A}_S$;
as in \cite[(3.2.4)]{KisinIntModel}, for a geometric point $s \to S$, the $\mathrm{K}'^p$-level structure $\epsilon'$ is induced by a $\widetilde{\mathrm{K}}^p$-orbit $\widetilde{\epsilon}$ of an isometry
$H_{\A^p_f}\cong V^p(\mathcal{A}_s)$
over $\A^p_f$ which carries $\{ s_{\alpha}\}$ to $\{ s^p_{\alpha, s}\}$ and carries $\widetilde{L}_{\A^p_f}$ to $\widetilde{\mathbb{V}}^p_{s}$ such that
 $\widetilde{\epsilon}$ is $\pi_1(S, s)$-invariant.
Here $\pi_1(S, s)$ denotes the \'etale fundamental group of $S$, and the Tate module $V^p(\mathcal{A}_s)$ over $\A^p_f$ has a natural action of $\pi_1(S, s)$.

\begin{defn}\label{Definition:Kp level structure}
Let $S$ be a locally noetherian connected scheme over $\mathscr{S}_{\widetilde{\mathrm{K}}}$.
Let $s \to S$ be a geometric point.
A \textit{$\mathrm{K}^p$-level structure} on $(S, \iota_S)$ is a $\pi_1(S, s)$-invariant $\mathrm{K}^p$-orbit $\epsilon_{\iota}$ of an isometry of $\A^p_f$-modules
\[
H_{\A^p_f}\cong V^p(\mathcal{A}_s)
\]
satisfying the following properties:
\begin{enumerate}
	\item It carries $\{ s_{\alpha}\}$ to $\{ s^p_{\alpha, s}\}$.
	\item The following diagram is commutative:
\[
\xymatrix{\Lambda_{\A^p_f}  \ar^-{\iota}[r] \ar_-{\iota^p}[rd] &
\widetilde{L}_{\A^p_f} \ar[d]^-{\cong} \ar[r]^-{i} & \End_{\A^p_f}(H_{\A^p_f}) \ar[d]^-{\cong} \\
& \widetilde{\mathbb{V}}^p_{s} \ar[r]^-{i^p} & \End_{\A^p_f}(V^p(\mathcal{A}_s)).
}
\]
\item The $\mathrm{K}^p$-orbit $\epsilon_{\iota}$ induces the $\widetilde{\mathrm{K}}^p$-orbit $\widetilde{\epsilon}$ on $S$.
\end{enumerate}
\end{defn}

\begin{defn}\label{Definition:Lambda structure functor}
Let $Z_{\K^p}(\Lambda)$ be the functor on $\mathscr{S}_{\widetilde{\mathrm{K}}}$-schemes defined by
\[
Z_{\K^p}(\Lambda)(S):= \{\, (\iota_S, \epsilon_{\iota}) \mid \iota_S \, \, \text{is a}\,\, \Lambda\text{-structure and}\, \, \epsilon_{\iota}\,\, \text{is a}\,\, \mathrm{K}^p\text{-level structure on}\, (S, \iota_S) \,\}
\]
for an $\mathscr{S}_{\widetilde{\mathrm{K}}}$-scheme $S$.
\end{defn}

Similarly, we can define a $\Lambda$-structure $\iota_{0, S}$ for an $\mathscr{S}_{\widetilde{\mathrm{K}}_0}$-scheme $S$, a $\mathrm{K}^p_0$-level structure on $(S, \iota_{0, S})$ and a functor $Z_{\mathrm{K}^p_0}(\Lambda)$.
(See \cite[Definition 6.11]{MadapusiIntModel} for details.)

The following result was proved by Madapusi Pera.

\begin{prop}[Madapusi Pera {\cite{MadapusiIntModel}}]\label{Proposition:LambdaFiniteEtale}
The functor $Z_{\K^p}(\Lambda)$ (resp.\ $Z_{\K^p_0}(\Lambda)$) is represented by a scheme which is finite and unramified over $\mathscr{S}_{\widetilde{\mathrm{K}}}$ (resp.\ $\mathscr{S}_{\widetilde{\mathrm{K}}_0}$).
Moreover there is a natural morphism 
\[
Z_{\K^p}(\Lambda) \to Z_{\K^p_0}(\Lambda),
\]
which is finite and \'etale.
\end{prop}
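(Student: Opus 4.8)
The plan is to follow Madapusi Pera's construction in \cite[\S 6]{MadapusiIntModel} (and \cite{KimMadapusiIntModel} when $p=2$); I indicate the structure of the argument and where the real work lies. I treat $Z_{\K^p_0}(\Lambda)$ over $\mathscr{S}_{\widetilde{\mathrm{K}}_0}$; the case of $Z_{\K^p}(\Lambda)$ over $\mathscr{S}_{\widetilde{\mathrm{K}}}$ is parallel, and the two are compared at the end. First I would address representability and unramifiedness. After fixing a $\Z_{(p)}$-basis of $\Lambda_{\Z_{(p)}}$, a homomorphism $\iota_S \colon \Lambda_{\Z_{(p)}} \to \End_S(\mathcal{A}_S)_{\Z_{(p)}}$ is the datum of $\rank \Lambda$ sections of $\underline{\End}(\mathcal{A}/\mathscr{S}_{\widetilde{\mathrm{K}}_0})_{\Z_{(p)}}$, and $\underline{\End}(\mathcal{A}/\mathscr{S}_{\widetilde{\mathrm{K}}_0})$ is, by rigidity of abelian schemes, representable by a separated scheme, unramified and locally of finite type over $\mathscr{S}_{\widetilde{\mathrm{K}}_0}$. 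The two conditions of Definition~\ref{Definition:Lambda structure}---that each $\iota_S(\lambda)$ lie in the $p$-adic and crystalline realizations of $\widetilde{\mathbb{L}}$ and that $\iota_S$ be an isometry onto its image---are detected by compatibility with the $\widetilde{G}$-invariant tensors $\{s_\alpha\}$ on the cohomology of $\mathcal{A}$, in particular with the idempotent $\pi' = p^n\pi$ cutting out $i(\widetilde{L}_{\Q}) \subset \End_{\Q}(H_{\Q})$ and with the quadratic form, and are closed conditions; moreover the quadratic form on $\Lambda$ is fixed, which bounds the Rosati norms of the $\iota_S(\lambda)$ so that only finitely many components of $\underline{\End}$ occur and the resulting scheme is of finite type. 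This realizes $Z_{\K^p_0}(\Lambda)$ as a scheme, separated, unramified and of finite type over $\mathscr{S}_{\widetilde{\mathrm{K}}_0}$, the agreement of the conditions across the generic and special fibres being ensured by the \'etale--crystalline compatibility of $\{s_\alpha\}$ recalled in Section~\ref{Subsection: $F$-crystals and Breuil-Kisin modules} and \cite[Proposition 1.3.9]{KisinModp}.

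Next I would prove that this morphism is finite, i.e.\ quasi-finite and proper. Over a geometric point $x$ of residue characteristic $0$ (resp.\ $p$) a $\Lambda$-structure is an isometric embedding of $\Lambda$ into the lattice of Hodge classes in $\widetilde{\mathbb{V}}_{B, x}$ (resp.\ of Frobenius-fixed classes in $\widetilde{L}_{\cris, x}$); by the Hodge--Riemann relations (resp.\ the positivity of the Rosati involution on special endomorphisms), and in view of the signature $(20,2)$ of $\widetilde{L}$, this lattice is definite, so---$\Lambda$ being generated by vectors of fixed norm---there are only finitely many such embeddings and the morphism is quasi-finite. For properness, let $R$ be a discrete valuation ring with fraction field $F$, and let a point $\Spec R \to \mathscr{S}_{\widetilde{\mathrm{K}}_0}$ and a $\Lambda$-structure over $\Spec F$ be given; the Kuga--Satake abelian scheme has good reduction over $\Spec R$, so $\End(\mathcal{A}_R) = \End(\mathcal{A}_F)$ (endomorphisms extend over a normal base), and the homomorphism $\Lambda \to \End(\mathcal{A}_F)$ extends uniquely to $\Lambda \to \End(\mathcal{A}_R)$; the conditions of Definition~\ref{Definition:Lambda structure} persist at the closed point because the crystalline realization of $\{s_\alpha\}$ there is matched with the \'etale realization at $F$ by the $p$-adic comparison isomorphisms (Section~\ref{Subsection: $F$-crystals and Breuil-Kisin modules}, using \cite[Corollary 1.4.3]{KisinIntModel}, \cite[Theorem 2.12]{KimMadapusiIntModel}). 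Thus the morphism is proper and quasi-finite, hence finite; being unramified, $Z_{\K^p_0}(\Lambda)$ is finite and unramified over $\mathscr{S}_{\widetilde{\mathrm{K}}_0}$, and the identical argument gives the statement for $Z_{\K^p}(\Lambda)$ over $\mathscr{S}_{\widetilde{\mathrm{K}}}$.

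For the morphism $Z_{\K^p}(\Lambda) \to Z_{\K^p_0}(\Lambda)$: it sends a pair $(\iota_S, \epsilon_\iota)$ to the underlying $\Lambda$-structure with the induced $\K^p_0$-level structure, and it covers the finite \'etale morphism $\mathscr{S}_{\widetilde{\mathrm{K}}} \to \mathscr{S}_{\widetilde{\mathrm{K}}_0}$. It factors as $Z_{\K^p}(\Lambda) \to Z_{\K^p_0}(\Lambda) \times_{\mathscr{S}_{\widetilde{\mathrm{K}}_0}} \mathscr{S}_{\widetilde{\mathrm{K}}}$ followed by the base change of $\mathscr{S}_{\widetilde{\mathrm{K}}} \to \mathscr{S}_{\widetilde{\mathrm{K}}_0}$, the latter being finite \'etale; and the first map is finite \'etale because, over a connected base, the $\K^p$-orbits $\epsilon_\iota$ refining the given $\widetilde{\mathrm{K}}^p$-orbit $\widetilde{\epsilon}$ and compatible with $\iota$ (cf.\ Definition~\ref{Definition:Kp level structure}) form a finite set carrying a continuous action of the \'etale fundamental group, hence are classified by a finite \'etale cover. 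Composing, $Z_{\K^p}(\Lambda) \to Z_{\K^p_0}(\Lambda)$ is finite and \'etale.

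The step I expect to be the main obstacle is properness: spreading out a $\Lambda$-structure over a discrete valuation ring while keeping condition~(2) of Definition~\ref{Definition:Lambda structure} at the special fibre. This is precisely where one needs the good-reduction integral canonical models---so that the Kuga--Satake abelian scheme, and hence its endomorphisms, extend---together with the independence of the crystalline tensors $\{s_{\alpha, \cris, x}\}$ from the chosen lift and their compatibility with the \'etale tensors under $p$-adic comparison, the package recorded in Section~\ref{Subsection: $F$-crystals and Breuil-Kisin modules}. The finiteness of the fibres, by contrast, is where the definiteness of the lattice of Hodge (resp.\ Frobenius-fixed) classes---hence the signature constraint on $L \hookrightarrow \widetilde{L}$---enters.
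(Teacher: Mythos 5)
The paper does not actually prove this proposition: its ``proof'' is the citation of \cite[Proposition 6.13]{MadapusiIntModel}, and your sketch is a reconstruction of exactly that argument --- representability and unramifiedness via the endomorphism scheme of the Kuga--Satake abelian scheme cut out by the tensor/isometry conditions, quasi-finiteness from positive definiteness of the lattice of special endomorphisms (this is where the signature of $\widetilde{L}$ enters, as you say), properness via the valuative criterion together with extension of endomorphisms of abelian schemes over a discrete valuation ring, and a covering-space argument for $Z_{\K^p}(\Lambda) \to Z_{\K^p_0}(\Lambda)$. So the route is the intended one and the main difficulties are located correctly.

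There is, however, one concrete incompleteness in the properness step as you wrote it. You verify that condition (2) of Definition \ref{Definition:Lambda structure} persists at the closed point by matching the crystalline realization there with the \'etale realization at the generic fibre ``by the $p$-adic comparison isomorphisms.'' That mechanism is only available when the fraction field of the test ring has characteristic $0$. Since the special fibre of $\mathscr{S}_{\widetilde{\mathrm{K}}_0}$ is positive-dimensional, the valuative criterion must also be checked for discrete valuation rings of equal characteristic $p$, where no \'etale--crystalline comparison exists; for such rings one argues differently, using that the crystalline realization of an endomorphism is a horizontal (Frobenius-compatible) section of the relevant crystal and that the crystalline realization of $\widetilde{\mathbb{L}}$ sits inside $\underline{\End}$ of the Dieudonn\'e crystal as a sub-crystal with locally free quotient (the cokernel of $i$ being torsion-free), so that a section lying in the sub-crystal at the generic point of a characteristic-$p$ base lies in it at the closed point as well; this is part of the content of the proof of \cite[Proposition 6.13]{MadapusiIntModel} and of the results on special endomorphisms in \cite[Section 5]{MadapusiIntModel} (see \cite{KimMadapusiIntModel} for $p=2$). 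With that case supplied (and with the harmless normalization that one may take the test rings complete with algebraically closed residue field, so the perfectness demanded in Definition \ref{Definition:Lambda structure} is not an issue), your outline matches the cited proof.
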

\begin{proof}
See \cite[Proposition 6.13]{MadapusiIntModel}. 
\end{proof}

\section{Moduli spaces of $K3$ surfaces and the Kuga-Satake morphism}\label{Section:Moduli spaces of K3 surfaces and the Kuga-Satake morphism} 

In this section, we recall definitions and basic properties of the moduli space of $K3$ surfaces and the level structure.
Then we recall definitions and basic results on the Kuga-Satake morphism
over $\Z_{(p)}$ introduced by Madapusi Pera
\cite{MadapusiTateConj, KimMadapusiIntModel}.

\subsection{Moduli spaces of $K3$ surfaces}\label{Subsection:Moduli spaces of K3 surfaces}

Recall that we say $f\colon \mathscr{X} \to S$ is
a \textit{$K3$ surface over $S$} if $S$ is a scheme, $\mathscr{X}$ is an algebraic space, and $f$ is a proper smooth morphism whose geometric fibers are $K3$ surfaces. 

A \textit{quasi-polarization} of $f\colon \mathscr{X} \to S$ is a section
${\xi}\in\underline{\Pic}(\mathscr{X}/S)(S)$
of the relative Picard functor whose fiber $\xi(s)$ at every geometric point $s \to S$ is a line bundle on the $K3$ surface $\mathscr{X}_s$ which is nef and big. We say ${\xi}\in\underline{\Pic}(\mathscr{X}/S)(S)$ is \textit{primitive} if, for every geometric point $s\to{S}$, the cokernel of the inclusion 
$\langle\xi(s)\rangle\hookrightarrow\Pic(\mathscr{X}_s)$
is torsion-free.
We say $\xi$ has degree $2d$ if,  for every geometric point $s \to {S}$, we have
$({\xi(s)},{\xi(s)})=2d$,
where $(\ ,\ )$ denotes the intersection pairing on $\mathscr{X}_s$.
We say a pair 
$(f\colon \mathscr{X} \to S, \xi)$
is a \textit{quasi-polarized $K3$ surface over $S$ of degree $2d$} if
$f\colon \mathscr{X} \to S$
is a $K3$ surface over $S$ and $\xi\in\underline{\Pic}(\mathscr{X}/S)(S)$ is a primitive quasi-polarization of degree $2d$.

  Let $M_{2d}$ be the moduli functor that sends a $\Z$-scheme $S$ to the groupoid consists of quasi-polarized $K3$ surfaces over $S$ of degree $2d$.
 The moduli functor $M_{2d}$ is a Deligne-Mumford stack of finite type over $\Z$; see \cite[Theorem 4.3.4]{Rizov06} and \cite[Proposition 2.1]{Maulik}.

We put
$M_{2d, \Z_{(p)}}:=M_{2d}\otimes_{\Z}\Z_{(p)}.$
Let $S$ be an $M_{2d, \Z_{(p)}}$-scheme.
For the quasi-polarized $K3$ surface 
$(f\colon \mathscr{X} \to S, \xi)$ associated with the structure morphism $S \to M_{2d, \Z_{(p)}}$ and a prime number $\ell \neq p$,
we equipped $R^2f_* \Z_{\ell}(1)$ with the \textit{negative} of the cup product pairing. 
Let
\[
P^2f_* \Z_{\ell}(1) := \ch_\ell(\xi)^{\perp} \subset R^2f_* \Z_{\ell}(1) 
\]
be the orthogonal complement of the $\ell$-adic Chern class $\ch_\ell(\xi) \in R^2f_* \Z_{\ell}(1)(S)$ with respect to the pairing.
We set
\[
P^2f_*{\widehat{\Z}^p}(1) := \prod_{\ell \neq p}P^2f_* \Z_{\ell}(1).
\]
The stalk of $P^2f_* \Z_{\ell}(1)$ (resp.\  $P^2f_*{\widehat{\Z}^p}(1)$) at a geometric point $s \to S$ will be denoted by $P^2_{\et}(\mathscr{X}_{s}, {\Z}_{\ell}(1))$ (resp.\ 
$P^2_{\et}(\mathscr{X}_{s}, \widehat{\Z}^p(1))$).

Let $M^{\mathrm{sm}}_{2d, \Z_{(p)}}$ be the smooth locus  of $M_{2d, \Z_{(p)}}$ over $\Z_{(p)}$.
Madapusi Pera constructed a twofold finite \'etale cover $\widetilde{M}^{\mathrm{sm}}_{2d, \Z_{(p)}} \to M^{\mathrm{sm}}_{2d, \Z_{(p)}}$ parameterizing orientations of $P^2f_*{\widehat{\Z}^p}(1)$, which satisfies the following property.
For every morphism $S \to \widetilde{M}^{\mathrm{sm}}_{2d, \Z_{(p)}}$, there is a natural isometry of ${\widehat{\Z}^p}$-local systems on $S$
\[
\nu \colon \underline{\det L \otimes_\Z \widehat{\Z}^p} \cong \det P^2f_*{\widehat{\Z}^p}(1)
\]
such that, for every $s \in S(\C)$, the isometry $\nu$ restricts to an isometry over $\Z$
\[
\nu_s \colon \det L \cong \det P^2_B(\mathscr{X}_s, \Z(1))
\]
under the canonical isomorphism
\[
P^2_B(\mathscr{X}_s, \Z(1)) \otimes_\Z \widehat{\Z}^p \cong P^2_{\et}(\mathscr{X}_{s}, \widehat{\Z}^p(1)),
\]
where we put $P^2_B(\mathscr{X}_s, \Z(1)):= \ch_B(\xi_s)^{\perp} \subset H^2_B(\mathscr{X}_s, \Z(1))$.
See \cite[Section 5]{MadapusiTateConj} for details.

For an open compact subgroup $\K^p_0 \subset \SO(L_{\Q})(\A^p_f)$ as in Section \ref{Subsection:Orthogonal Shimura Varieties over Q}, we recall the notion of (oriented) $\K^p_0$-level structures from \cite[Section 3]{MadapusiTateConj}.
For simplicity, we only consider the case $S$ is a locally noetherian connected $\widetilde{M}^{\mathrm{sm}}_{2d, \Z_{(p)}}$-scheme.
Let $s \to S$ be a geometric point and $\pi_1(S, s)$ the \'etale fundamental group of $S$.
 A \textit{$\K^p_0$-level structure} on $(f\colon \mathscr{X} \to S, \xi)$ is a $\pi_1(S, s)$-invariant $\K^p_0$-orbit $\eta$ of an isometry over $\widehat{\Z}^p$
 \[
 \Lambda_{K3}\otimes_{\Z}\widehat{\Z}^p \cong H^2_{\mathrm{\acute{e}t}}(\mathscr{X}_{s}, \widehat{\Z}^p(1))
 \]
which carries $e-df$ to $\mathrm{ch}_{\widehat{\Z}^p}(\xi(s))$ such that the induced isometry 
 \[
 \det L \otimes_\Z \widehat{\Z}^p \cong \det P^2_{\et}(\mathscr{X}_{s}, \widehat{\Z}^p(1))
 \]
coincides with $\nu_s$.
Here the \'etale cohomology $H^2_{\mathrm{\acute{e}t}}(\mathscr{X}_{s},\widehat{\Z}^p(1))$ has a natural action of $\pi_1(S, s)$, and $\Lambda_{K3}\otimes_{\Z}\widehat{\Z}^p$ has a natural action of $\K^p_0$; see \cite[Lemma 2.6]{MadapusiIntModel}.

Let $M^{\mathrm{sm}}_{2d, \K^p_0, \Z_{(p)}}$ be the moduli functor over $\widetilde{M}^{\mathrm{sm}}_{2d, \Z_{(p)}}$ which sends an $\widetilde{M}^{\mathrm{sm}}_{2d, \Z_{(p)}}$-scheme $S$ to the set of (oriented) $\K^p_0$-level structures on the quasi-polarized $K3$ surface $(f\colon \mathscr{X} \to S, \xi)$.

The following result is well-known.
\begin{prop}\label{Proposition:K3moduliLevel}
If $\K^p_0 \subset \SO(L_{\Q})(\A^p_f)$ is small enough, $M^{\mathrm{sm}}_{2d, \K^p_0, \Z_{(p)}}$ is an algebraic space over $\Z_{(p)}$ which is finite, \'etale, and faithfully flat over $M^{\mathrm{sm}}_{2d, \Z_{(p)}}$.
\end{prop}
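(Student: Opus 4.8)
The plan is to deduce the statement from the properties of the orientation cover $\widetilde{M}^{\mathrm{sm}}_{2d, \Z_{(p)}} \to M^{\mathrm{sm}}_{2d, \Z_{(p)}}$ together with standard descent theory for torsors under the \'etale fundamental group, in the spirit of \cite[Section 3]{MadapusiTateConj}. First I would observe that $\widetilde{M}^{\mathrm{sm}}_{2d, \Z_{(p)}} \to M^{\mathrm{sm}}_{2d, \Z_{(p)}}$ is already finite \'etale and faithfully flat of degree $2$ by construction; hence it suffices to prove that $M^{\mathrm{sm}}_{2d, \K^p_0, \Z_{(p)}}$ is finite, \'etale, and faithfully flat over $\widetilde{M}^{\mathrm{sm}}_{2d, \Z_{(p)}}$, and then compose. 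Next, over a locally noetherian connected $\widetilde{M}^{\mathrm{sm}}_{2d, \Z_{(p)}}$-scheme $S$ with geometric point $s$, the set of isometries
\[
\Lambda_{K3}\otimes_{\Z}\widehat{\Z}^p \xrightarrow{\ \sim\ } H^2_{\mathrm{\acute{e}t}}(\mathscr{X}_{s}, \widehat{\Z}^p(1))
\]
carrying $e - df$ to $\mathrm{ch}_{\widehat{\Z}^p}(\xi(s))$ and inducing $\nu_s$ on determinants forms a torsor under $\SO(L_{\Q})(\A^p_f)$ (more precisely under the group $\mathrm{SO}(\Lambda_{K3}\otimes\widehat{\Z}^p)$ fixing $e-df$ and compatible with the orientation, which is identified with the relevant adelic orthogonal group via \cite[Lemma 2.6]{MadapusiIntModel}); the existence of at least one such isometry over $\overline{\Q}$-points and over $\overline{\F}_q$-points is the comparison of the primitive cohomology lattice with $L$, which for the smooth locus is the content of the level-structure formalism already set up. Passing to the quotient, a $\K^p_0$-level structure is a $\pi_1(S,s)$-invariant element of the quotient $\SO(L_{\Q})(\A^p_f)/\K^p_0$-torsor; this is exactly a section of a finite \'etale cover whose degree is the index $[\SO(L_{\Q})(\A^p_f):\K^p_0']$ for the appropriate compact group, which is finite.

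The formal mechanism I would use: the $\widehat{\Z}^p$-local system $R^2 f_* \widehat{\Z}^p(1)$, together with the section $\mathrm{ch}_{\widehat{\Z}^p}(\xi)$ and the orientation $\nu$, defines a $\pi_1(S,s)$-representation valued in the relevant adelic orthogonal group; the functor $M^{\mathrm{sm}}_{2d, \K^p_0, \Z_{(p)}}$ over $S$ is then represented by the scheme $\mathrm{Isom}/\K^p_0$, i.e.\ the pushout along $\pi_1(S,s) \to \SO(L_{\Q})(\A^p_f)$ of the finite set $\SO(L_{\Q})(\A^p_f)/\K^p_0$, which is a finite \'etale $S$-scheme by the usual equivalence between finite \'etale covers and finite $\pi_1$-sets. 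Since this holds functorially and compatibly with base change, and since $M^{\mathrm{sm}}_{2d, \Z_{(p)}}$ admits an \'etale cover by such connected noetherian schemes, one glues to get that $M^{\mathrm{sm}}_{2d, \K^p_0, \Z_{(p)}}$ is an algebraic space finite \'etale over $\widetilde{M}^{\mathrm{sm}}_{2d, \Z_{(p)}}$, hence over $M^{\mathrm{sm}}_{2d, \Z_{(p)}}$. Faithful flatness follows once one checks surjectivity, i.e.\ that over every geometric point a level structure exists: this is where one invokes that $H^2_{\mathrm{\acute{e}t}}(\mathscr{X}_s, \widehat{\Z}^p(1))$ with its pairing and Chern class is isometric to $\Lambda_{K3}\otimes\widehat{\Z}^p$ with $e-df$ — true for geometric points of any characteristic different from $p$ by the weak approximation / genus-theory argument for these lattices (this is exactly \cite[Lemma 2.6]{MadapusiIntModel} and its consequences), and one must handle the residue characteristic $p$ point by using the prime-to-$p$ part only, which is why $\widehat{\Z}^p$ rather than $\widehat{\Z}$ appears.

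The smallness hypothesis on $\K^p_0$ enters precisely to guarantee that the resulting finite \'etale cover is represented by an algebraic space (indeed a scheme) rather than merely a stack, i.e.\ that the $\K^p_0$-action on the $\mathrm{Isom}$-torsor is free so that no residual automorphisms survive; this is the same role smallness plays for $\mathrm{Sh}_{\mathrm{K}_0}$ to be a scheme in Section \ref{Subsection:Orthogonal Shimura Varieties over Q}. The main obstacle, and the step I would spend the most care on, is the surjectivity/existence claim at geometric points of characteristic $p$: one needs that the primitive $\widehat{\Z}^p$-cohomology lattice of a $K3$ surface in the smooth locus of $M_{2d}$ over $\overline{\F}_p$ is abstractly isometric to $L \otimes \widehat{\Z}^p$ in a way compatible with orientations, which requires the genus-theoretic input that $L$ and $\Lambda_{K3}\cap (e-df)^\perp$ lie in the same genus away from $p$ — exactly the content cited from \cite[Lemma 2.6]{MadapusiIntModel}; given that, the rest is formal fundamental-group bookkeeping. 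This is well-known and I would simply cite \cite[Section 3]{MadapusiTateConj} (and for characteristic $2$, the parallel discussion in \cite{KimMadapusiIntModel}), indicating the argument sketched above.
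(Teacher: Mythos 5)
There is a genuine gap. The real content of this proposition is not that the level-structure functor is finite \'etale over $\widetilde{M}^{\mathrm{sm}}_{2d,\Z_{(p)}}$ -- your torsor/$\pi_1$ bookkeeping over a locally noetherian connected scheme $S$, plus the genus-theoretic existence of markings at geometric points, does give that (granting the sorting out of which compact group acts: the isometries form a torsor under the stabilizer in $\mathrm{O}(\Lambda_{K3}\otimes\widehat{\Z}^p)$ of $e-df$ and the orientation, not under $\SO(L_\Q)(\A^p_f)$, and the relevant orbit set is the quotient of that compact group by an open subgroup, which is what is finite; the set $\SO(L_\Q)(\A^p_f)/\K^p_0$ you write is infinite). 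The real content is that the resulting object is an \emph{algebraic space} rather than merely a Deligne--Mumford stack: the base $M^{\mathrm{sm}}_{2d,\Z_{(p)}}$ is a stack whose objects $(X,\xi)$ have nontrivial automorphisms, and one must show that an automorphism of $(X,\xi)$ preserving a $\K^p_0$-level structure is the identity. Your explanation that smallness of $\K^p_0$ makes "the $\K^p_0$-action on the $\mathrm{Isom}$-torsor free so that no residual automorphisms survive" conflates two different things: smallness (torsion-freeness/neatness of $\K^p_0$) only forces such an automorphism to act \emph{trivially} on $H^2_{\mathrm{\acute{e}t}}(X,\widehat{\Z}^p(1))$; to conclude that the automorphism itself is trivial one needs the injectivity of
\[
\Aut(X) \longrightarrow \GL\bigl(H^2_{\mathrm{\acute{e}t}}(X,\Q_\ell)\bigr), \qquad \ell \neq p,
\]
for $K3$ surfaces over algebraically closed fields of characteristic $p$. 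This geometric input is exactly what the paper's proof turns on: it cites Rizov, Maulik and Madapusi Pera for the proposition and points out that their arguments rest on this injectivity, which is Ogus' theorem for $p>2$ and, crucially for this paper's scope, Keum's recent theorem for $p=2$. Without that ingredient your argument proves finite \'etaleness of a morphism of stacks but not representability by an algebraic space, and it also gives no reason why characteristic $2$ is covered -- which is precisely the point the authors are at pains to address.
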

\begin{proof}
This result was essentially proved by Rizov \cite[Theorem 6.2.2]{Rizov06}, Maulik \cite[Proposition 2.8]{Maulik} and Madapusi Pera \cite[Proposition 3.11]{MadapusiTateConj}.
Note that their proofs work in every characteristic $p$, including $p=2$.
	Their proofs rely on the injectivity of the map
	\[ \Aut(X) \to \GL(H^{2}_{\mathrm{\acute{e}t}}( X,\Q_{\ell} )) \]
	for every $\ell \neq p$, where $X$ is a $K3$ surface over an algebraically closed field of characteristic $p > 0$.
	The injectivity was proved by Ogus when $p>2$; see \cite[Corollary 2.5]{Ogus}.
	(Precisely, Ogus proved it for the crystalline cohomology.
The injectivity for the $\ell$-adic cohomology follows from Ogus' results; see \cite[Proposition 3.4.2]{Rizov06}.)
	Recently, Keum proved that the injectivity holds also when $p=2$; see \cite[Theorem 1.4]{Keum}.
\end{proof}

We will assume an open compact subgroup $\K^p_0 \subset \SO(L_{\Q})(\A^p_f)$ as in Section \ref{Subsection:Orthogonal Shimura Varieties over Q} is small enough so that Proposition \ref{Proposition:K3moduliLevel} can be applied. 

\subsection{The Kuga-Satake morphism}\label{Subsection:Kuga-Satake morphism}

Rizov and Madapusi Pera defined the following \'etale morphism over $\Q$:
\[
M^{\mathrm{sm}}_{2d, \K^p_0, \Q} \to \mathrm{Sh}_{\mathrm{K}_0}(\SO(L_{\Q}), X_{L}).
\]
It is called the \textit{Kuga-Satake morphism} over $\Q$.
(See \cite[Theorem 3.9.1]{Rizov10}, \cite[Corollary 5.4]{MadapusiTateConj} for details.)

Since $M^{\mathrm{sm}}_{2d, \K^p_0, \Z_{(p)}}$ is smooth over $\Z_{(p)}$,
the composite of the following morphisms over $\Q$
\[
M^{\mathrm{sm}}_{2d, \K^p_0, \Q} \to \mathrm{Sh}_{\mathrm{K}_0}(\SO(L_{\Q}), X_{L}) \to \mathrm{Sh}_{\widetilde{\mathrm{K}}_0}
\]
extends to a morphism over $\Z_{(p)}$
\[
M^{\mathrm{sm}}_{2d, \K^p_0, \Z_{(p)}} \to \mathscr{S}_{\widetilde{\mathrm{K}}_0}
\]
by the extension properties of the integral canonical model $\mathscr{S}_{\widetilde{\mathrm{K}}_0}$; see \cite[(2.3.7)]{KisinIntModel}, \cite[Proposition 5.7]{MadapusiTateConj}. 

The following results are proved by Madapusi Pera.

\begin{prop}[Madapusi Pera]
\label{Proposition:KugaSatakeMap}
There is a natural \'etale $\mathscr{S}_{\widetilde{\mathrm{K}}_0}$-morphism
\[
\KS \colon M^{\mathrm{sm}}_{2d, \K^p_0, \Z_{(p)}}\to Z_{\K^p_0}(\Lambda).
\]
\end{prop}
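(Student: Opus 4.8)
The plan is to construct the Kuga--Satake morphism $\KS$ by exploiting the universal property (extension property) of the integral canonical model and the finite unramified representability of $Z_{\K^p_0}(\Lambda)$ from Proposition~\ref{Proposition:LambdaFiniteEtale}, descending along the étale cover $Z_{\K^p}(\Lambda)\to Z_{\K^p_0}(\Lambda)$ when necessary. First I would recall the generic statement: over $\Q$, the Kuga--Satake morphism $M^{\mathrm{sm}}_{2d, \K^p_0, \Q} \to \Sh_{\K_0}(\SO(L_{\Q}), X_L)$ together with the universal $\Lambda$-structure and $\K^p_0$-level structure coming from the Clifford construction on the primitive cohomology $P^2f_*\widehat{\Z}^p(1)$ defines a $\Q$-morphism $M^{\mathrm{sm}}_{2d, \K^p_0, \Q} \to Z_{\K^p_0}(\Lambda)_{\Q}$; this is essentially \cite[Corollary 5.4, Proposition 5.7]{MadapusiTateConj}. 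Here the $\Lambda$-structure $\iota_{0,S}$ on the pulled-back Kuga--Satake abelian scheme is the one induced by the orthogonal decomposition $\widetilde{L} = L \oplus \Lambda$ and the fact that $\Lambda$ acts on $H^1$ of the Kuga--Satake abelian variety through the Clifford algebra, compatibly at all finite places by the level structure $\eta$.

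Next I would extend this to a morphism over $\Z_{(p)}$. Since $M^{\mathrm{sm}}_{2d, \K^p_0, \Z_{(p)}}$ is smooth over $\Z_{(p)}$ (by construction of the smooth locus and Proposition~\ref{Proposition:K3moduliLevel}) and $\mathscr{S}_{\widetilde{\K}_0}$ is the integral canonical model, the composite $M^{\mathrm{sm}}_{2d, \K^p_0, \Q} \to \Sh_{\widetilde{\K}_0} \to \mathscr{S}_{\widetilde{\K}_0, \Q}$ extends uniquely to $M^{\mathrm{sm}}_{2d, \K^p_0, \Z_{(p)}} \to \mathscr{S}_{\widetilde{\K}_0}$ by the extension property, as already noted in the excerpt. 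The key point is then to upgrade this to a morphism landing in $Z_{\K^p_0}(\Lambda)$, i.e.\ to produce a $\Lambda$-structure and $\K^p_0$-level structure on the pulled-back Kuga--Satake abelian scheme $\mathcal{A}$ over all of $M^{\mathrm{sm}}_{2d, \K^p_0, \Z_{(p)}}$, not just its generic fiber. The homomorphism $\iota_S\colon \Lambda_{\Z_{(p)}} \to \End_S(\mathcal{A}_S)_{\Z_{(p)}}$ is defined in characteristic $0$; to extend it over $\Z_{(p)}$ one uses that $\End$ of an abelian scheme over a normal base is determined by its restriction to a dense open (specialization of endomorphisms), so the endomorphisms $\iota_S(v)$ extend uniquely over the smooth, hence normal, base $M^{\mathrm{sm}}_{2d, \K^p_0, \Z_{(p)}}$. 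Then I would verify the crystalline condition (2) of Definition~\ref{Definition:Lambda structure} at every point of characteristic $p$: this is exactly where the $F$-crystal comparison $\Mcris(H^1_{\et}(\mathcal{A}_{\bar\eta},\Z_p)) \cong H^1_{\cris}(\mathcal{A}_x/W)$ and Proposition~\ref{Proposition:LcrisBKmodule} are needed, to see that the extended $\iota_S$ factors through $\widetilde{L}_{\cris,x}$ via the isometry $\iota_{\cris}\colon \Lambda_W \to \widetilde{L}_{\cris,x}$. The $\K^p_0$-level structure likewise extends because $\pi_1$-invariance is a closed-open condition and the level structure on $K3$ moduli already records the requisite trivialization of $P^2f_*\widehat{\Z}^p(1)$, matching the tensors $\{s^p_{\alpha}\}$ and the orientation $\nu$. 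Having produced $(\iota_S, \epsilon_\iota)$ over $\Z_{(p)}$, we obtain the desired morphism $\KS\colon M^{\mathrm{sm}}_{2d, \K^p_0, \Z_{(p)}} \to Z_{\K^p_0}(\Lambda)$, and it is the unique extension of the generic one because $Z_{\K^p_0}(\Lambda) \to \mathscr{S}_{\widetilde{\K}_0}$ is unramified (hence separated) and the source is reduced with dense generic fiber.

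Finally I would check that $\KS$ is étale. Both $M^{\mathrm{sm}}_{2d, \K^p_0, \Z_{(p)}}$ and $Z_{\K^p_0}(\Lambda)$ are smooth over $\Z_{(p)}$ of the same relative dimension (the dimension of the orthogonal Shimura variety, namely $19$), since $Z_{\K^p_0}(\Lambda) \to \mathscr{S}_{\widetilde{\K}_0}$ is finite étale onto its image which is an open piece of the $\SO(L)$-Shimura variety, and $M^{\mathrm{sm}}_{2d, \K^p_0, \Z_{(p)}}$ is smooth of the same dimension; étaleness over $\Q$ is known from Rizov--Madapusi Pera, and a morphism between smooth $\Z_{(p)}$-schemes that is étale on the generic fiber and quasi-finite (it is, being a morphism over the quasi-finite $Z_{\K^p_0}(\Lambda) \to \mathscr{S}_{\widetilde{\K}_0}$ composed with the quasi-finite $M^{\mathrm{sm}}_{2d,\K^p_0,\Z_{(p)}} \to \mathscr{S}_{\widetilde{\K}_0}$) is étale everywhere by the fibral flatness/Zariski--Nagata purity argument, or more simply because a quasi-finite morphism of smooth schemes over a DVR inducing an isomorphism on completed local rings generically and respecting the crystalline deformation theory is étale. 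I expect the main obstacle to be the verification of condition (2) of Definition~\ref{Definition:Lambda structure} at points of characteristic $p$ --- i.e.\ that the uniquely extended $\Lambda$-action is compatible with the crystalline realization $\widetilde{L}_{\cris,x}$ --- since this is the point where one genuinely needs the integral $p$-adic comparison (Theorem~\ref{Theorem:p-adic Hodge theorem} and the results of Section~\ref{Subsection: $F$-crystals and Breuil-Kisin modules}) rather than formal properties of integral canonical models; everything else is a matter of assembling known extension properties. This is precisely \cite[Proposition 6.13]{MadapusiTateConj}, so I would ultimately cite that, indicating the role of the $F$-crystal compatibility in small characteristic.
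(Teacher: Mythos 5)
Your construction of $\KS$ follows essentially the same route as the paper: the characteristic-zero period map factors through $Z_{\K^p_0}(\Lambda)_{\Q}$ (the paper cites \cite[6.15]{MadapusiIntModel}), and the extension over $\Z_{(p)}$ is obtained by extending the endomorphisms $\iota_S(v)$ of the Kuga--Satake abelian scheme from the generic fiber over the normal base, which is exactly the paper's invocation of \cite[Chapter I, Proposition 2.7]{Faltings-Chai}; the remaining conditions are absorbed into the representability statement of Proposition \ref{Proposition:LambdaFiniteEtale} (i.e.\ \cite[Proposition 6.13]{MadapusiIntModel} --- note your citation of this as a result of \cite{MadapusiTateConj} is a slip). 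Up to this point your proposal matches the paper.

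The gap is in your \'etaleness argument. First, Proposition \ref{Proposition:LambdaFiniteEtale} only gives that $Z_{\K^p_0}(\Lambda)\to \mathscr{S}_{\widetilde{\mathrm{K}}_0}$ is finite and \emph{unramified}, not finite \'etale, and smoothness of $Z_{\K^p_0}(\Lambda)$ over $\Z_{(p)}$ of relative dimension $19$ is not available a priori: at the relevant points it is established in the course of proving \'etaleness of $\KS$, via the isomorphism $\alpha_{\dR}\colon \mathbb{P}^2_{\dR}\cong \mathbb{L}_{\dR}(-1)$ and the freeness of the cokernel of $\iota_{\dR}$ --- precisely the step that breaks down when $p=2$ and is repaired in this paper by Proposition \ref{Proposition:Flatness of cokernel} (see Remark \ref{Remark: Kim-Madapusi Pera gap}). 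Second, the general principle you appeal to --- a quasi-finite morphism between smooth $\Z_{(p)}$-schemes of the same relative dimension that is \'etale on the generic fiber is \'etale --- is false: the map $\G_{m,\Z_{(p)}}\to \G_{m,\Z_{(p)}}$, $x\mapsto x^p$, is finite flat between smooth $\Z_{(p)}$-schemes, \'etale on the generic fiber, but is the relative Frobenius on the special fiber; purity gives nothing here because the branch locus is the entire special fiber. So \'etaleness in mixed characteristic cannot be had by such a formal argument (and your fallback phrase ``respecting the crystalline deformation theory'' is exactly the statement to be proved). The actual proof compares the deformation theory of the quasi-polarized $K3$ surface with the crystalline (Grothendieck--Messing) deformation theory of the Kuga--Satake abelian scheme with its tensors and special endomorphisms, as in \cite[Theorem 5.8]{MadapusiTateConj} and \cite[Proposition A.12]{KimMadapusiIntModel}, which is what the paper cites, together with the characteristic-$2$ correction just mentioned; relatedly, your closing paragraph locates the delicate point at condition (2) of Definition \ref{Definition:Lambda structure} for the extended $\Lambda$-structure, whereas the genuinely hard input, and the place where this paper adds something, is the \'etaleness itself.
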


\begin{proof}
The morphism over $\Q$
\[
\mathrm{Sh}_{\mathrm{K}_0}(\SO(L_{\Q}), X_{L}) \to \mathrm{Sh}_{\widetilde{\mathrm{K}}_0}
\]
factors through the generic fiber $Z_{\K^p_0}(\Lambda)_{\Q}$ of $Z_{\K^p_0}(\Lambda)$; see \cite[6.15]{MadapusiIntModel} for details.
Hence we have a morphism over $\Q$
\[
M^{\mathrm{sm}}_{2d, \K^p_0, \Q} \to Z_{\K^p_0}(\Lambda)_\Q.
\]
This morphism extends to a $\mathscr{S}_{\widetilde{\mathrm{K}}_0}$-morphism
\[
\KS \colon M^{\mathrm{sm}}_{2d, \K^p_0, \Z_{(p)}}\to Z_{\K^p_0}(\Lambda)
\]
by \cite[Chapter I, Proposition 2.7]{Faltings-Chai}.

For the \'etaleness of the morphism $\KS$, see the proof of \cite[Proposition A.12]{KimMadapusiIntModel}.
See also Remark \ref{Remark: Kim-Madapusi Pera gap}.
\end{proof}

One usually calls the morphism $\KS$ in
Proposition \ref{Proposition:KugaSatakeMap} a period map, and calls a morphism from the moduli space of $K3$ surfaces to the moduli space of abelian varieties a Kuga-Satake morphism.
In this paper, following Madapusi Pera, we call the morphism $\KS$ a $\textit{Kuga-Satake morphism}$ for convenience.

Summarizing the above, we have the following commutative diagram of algebraic spaces over $\Z_{(p)}$:
\[
\xymatrix{ & Z_{\K^p}(\Lambda) \ar[r]^-{} \ar[d]^-{} & \mathscr{S}_{\widetilde{\mathrm{K}}} \ar[d]^-{} \\
M^{\mathrm{sm}}_{2d, \K^p_0, \Z_{(p)}} \ar[r]^-{\KS} & Z_{\K^p_0}(\Lambda) \ar[r]^-{} \ar[r]^-{} & \mathscr{S}_{\widetilde{\mathrm{K}}_0}.
}
\]
Here
$\mathscr{S}_{\widetilde{\mathrm{K}}}$ (resp.\ 
$\mathscr{S}_{\widetilde{\mathrm{K}}_0}$)
is the integral canonical model of the Shimura variety associated with
$\widetilde{G}=\GSpin(\widetilde{L}_{\Z_{(p)}})$
(resp.\ $\widetilde{G}_0=\SO(\widetilde{L}_{\Z_{(p)}})$),
and $Z_{\K^p}(\Lambda)$
(resp.\ $Z_{\K^p_0}(\Lambda)$)
is the scheme over
$\mathscr{S}_{\widetilde{\mathrm{K}}}$
(resp.\ $\mathscr{S}_{\widetilde{\mathrm{K}}_0}$)
as in Definition \ref{Definition:Lambda structure functor} and Proposition \ref{Proposition:LambdaFiniteEtale}.

\section{$F$-crystals on Shimura varieties}\label{Section:$F$-crystals on Shimura varieties}

In this section, let $k$ be a perfect field of characteristic $p>0$.
We shall study the $W$-module
$\widetilde{L}_{\cris, s}$
associated with a $k$-valued point $s \in \mathscr{S}_{\widetilde{\mathrm{K}}}(k)$ of the integral canonical model $\mathscr{S}_{\widetilde{\mathrm{K}}}$.

We use the same notation as in Section \ref{Subsection: Preliminaries}.
In particular, $W := W(k)$, $\mathfrak{S}:=W[[u]]$,
$K$ is a finite totally ramified extension of $W[1/p]$,
and $\overline{K}$ is an algebraic closure of $K$.
We fix a uniformizer $\varpi$ of $K$,
and a system $\{ \varpi^{1/{p^n}} \}_{n \geq 0}$ of
$p^n$-th roots of $\varpi$ such that
$(\varpi^{1/{p^{n+1}}})^p=\varpi^{1/{p^n}}$.
Let $E(u) \in W[u]$ be the (monic) Eisenstein polynomial of $\varpi$.

\subsection{The primitive cohomology of quasi-polarized $K3$ surfaces}\label{Subsection:The primitive cohomology of quasi-polarized $K3$ surfaces}

Let $t \in M^{\mathrm{sm}}_{2d, \K^p_0, \Z_{(p)}}(\O_K)$ be an $\O_K$-valued point,
and $(\mathscr{Y}, \xi)$ a quasi-polarized $K3$ surface over $\O_K$ of degree $2d$
associated with $t$.
We assume $\xi$ is a line bundle whose restriction to every geometric fiber is big and nef.

We denote the orthogonal complement with respect to the cup product of the first Chern class of $\xi$ in the de Rham cohomology and the $p$-adic \'etale cohomology by
\begin{align*}
P^2_{\dR}(\mathscr{Y}/\O_K) &:= \mathrm{ch}_{\dR}(\xi)^{\perp} \subset H^2_{\dR}(\mathscr{Y}/\O_K), \\
P^2_{\mathrm{\acute{e}t}}(\mathscr{Y}_{\overline{K}}, \Z_p(1)) &:= \mathrm{ch}_{p}(\xi)^{\perp} \subset H^2_{\mathrm{\acute{e}t}}(\mathscr{Y}_{\overline{K}}, \Z_p(1)).
\end{align*}

We shall recall some well-known properties of the de Rham cohomology of $K3$ surfaces; see \cite{Deligne:LiftingK3} for details.
The de Rham cohomology $H^2_{\dR}(\mathscr{Y}/\O_K)$ admits the Hodge filtration $\Fil^i$:
\[
0=\Fil^3_{\mathrm{Hdg}} \subset \Fil^2_{\mathrm{Hdg}} \subset \Fil^1_{\mathrm{Hdg}} \subset \Fil^0_{\mathrm{Hdg}} = H^2_{\dR}(\mathscr{Y}/\O_K).
\]
The graded piece
$\Gr^i_{\mathrm{Hdg}} := \Fil^i_{\mathrm{Hdg}}/\Fil^{i+1}_{\mathrm{Hdg}}$
is a free $\O_K$-module for every $i$.
The Hodge filtration $\Fil^i_{\mathrm{Hdg}}$ on $H^2_{\dR}(\mathscr{Y}/\O_K)$
is mapped onto the Hodge filtration $\overline{\Fil}^i_{\mathrm{Hdg}}$ on
$H^2_{\dR}(\mathscr{Y}_{k}/k)$
under the canonical isomorphism
\[
H^2_{\dR}(\mathscr{Y}/\O_K)\otimes_{\O_K} k \cong H^2_{\dR}(\mathscr{Y}_{k}/k).
\]
Moreover, $\Fil^2_{\mathrm{Hdg}}$ is the orthogonal complement of
$\Fil^1_{\mathrm{Hdg}}$
with respect to the cup product on $H^2_{\dR}(\mathscr{Y}/\O_K)$.
In other words, the cup product induces an isomorphism of $\O_K$-modules
\[
\Gr^2_{\mathrm{Hdg}} \cong (\Gr^0_{\mathrm{Hdg}})^{\vee}.
\]

We define a decreasing filtration $\Fil^i_{\text{pr}}$ on $P^2_{\dR}(\mathscr{Y}/\O_K)$
by
\[
\Fil^i_{\text{pr}} :=\Fil^i_{\mathrm{Hdg}} \cap P^2_{\dR}(\mathscr{Y}/\O_K).
\]
We put
$\Gr^i_{\text{pr}} := \Fil^i_{\text{pr}}/\Fil^{i+1}_{\text{pr}}$.

\begin{lem}\label{Lemma:Duality filtration}
The natural homomorphisms
$
\Gr^0_{\text{pr}} \to \Gr^0_{\mathrm{Hdg}}
$
and
$
\Gr^2_{\text{pr}} \to \Gr^2_{\mathrm{Hdg}}
$
are isomorphisms.
In particular, the cup product induces an isomorphism of $\O_K$-modules:
\[
\Gr^2_{\text{pr}} \cong (\Gr^0_{\text{pr}})^{\vee}.
\]
\end{lem}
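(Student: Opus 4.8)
The plan is to reduce everything to two facts about the de Rham first Chern class $\mathrm{ch}_{\dR}(\xi)\in H^2_{\dR}(\mathscr{Y}/\O_K)$: that it lies in $\Fil^1_{\mathrm{Hdg}}$ (it is the class of a line bundle), and that its image in $\Gr^1_{\mathrm{Hdg}}$ generates a free rank-one direct summand (equivalently, $\mathrm{ch}_{\dR}(\xi_k)$ has nonzero image in $H^1(\mathscr{Y}_k,\Omega^1_{\mathscr{Y}_k})$). Granting these, I would treat $\Gr^2_{\text{pr}}$ at once, then $\Gr^0_{\text{pr}}$ by a reduction modulo the maximal ideal, and the cup product statement formally. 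I expect the second fact to be the main obstacle: when $p\mid 2d$ (in particular when $\mathscr{Y}_k$ is supersingular) the intersection form degenerates on the special fibre and one is forced to import this primitivity through the integral comparison isomorphisms.

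For $\Gr^2$: since $\mathrm{ch}_{\dR}(\xi)\in\Fil^1_{\mathrm{Hdg}}$ and $\Fil^2_{\mathrm{Hdg}}$ is the orthogonal complement of $\Fil^1_{\mathrm{Hdg}}$ (recalled above), $\mathrm{ch}_{\dR}(\xi)$ is orthogonal to $\Fil^2_{\mathrm{Hdg}}$, hence $\Fil^2_{\mathrm{Hdg}}\subset \mathrm{ch}_{\dR}(\xi)^{\perp}=P^2_{\dR}(\mathscr{Y}/\O_K)$. Therefore $\Fil^2_{\text{pr}}=\Fil^2_{\mathrm{Hdg}}\cap P^2_{\dR}(\mathscr{Y}/\O_K)=\Fil^2_{\mathrm{Hdg}}$, and since $\Fil^3_{\text{pr}}=0=\Fil^3_{\mathrm{Hdg}}$, the natural map $\Gr^2_{\text{pr}}\to\Gr^2_{\mathrm{Hdg}}$ is the identity.

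For $\Gr^0$: injectivity of $\Gr^0_{\text{pr}}\to\Gr^0_{\mathrm{Hdg}}$ is immediate from $\Fil^1_{\text{pr}}=\Fil^1_{\mathrm{Hdg}}\cap P^2_{\dR}(\mathscr{Y}/\O_K)$, so the point is surjectivity, equivalently $P^2_{\dR}(\mathscr{Y}/\O_K)+\Fil^1_{\mathrm{Hdg}}=H^2_{\dR}(\mathscr{Y}/\O_K)$. By Nakayama this may be checked after $\otimes_{\O_K}k$. Write $\overline{P^2}$ for the image of $P^2_{\dR}(\mathscr{Y}/\O_K)\otimes_{\O_K}k$ in $H^2_{\dR}(\mathscr{Y}_k/k)$: it is $21$-dimensional, because $H^2_{\dR}(\mathscr{Y}/\O_K)/P^2_{\dR}(\mathscr{Y}/\O_K)$ embeds into $\O_K$ via $v\mapsto (v,\mathrm{ch}_{\dR}(\xi))$ and is thus torsion-free. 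Likewise $\Fil^1_{\mathrm{Hdg}}\otimes_{\O_K}k$ maps isomorphically onto $\overline{\Fil}^1_{\mathrm{Hdg}}$, also $21$-dimensional, in the $22$-dimensional space $H^2_{\dR}(\mathscr{Y}_k/k)$; the two span everything unless they coincide, so it suffices to exclude $\overline{P^2}=\overline{\Fil}^1_{\mathrm{Hdg}}$. Since $\overline{P^2}\subset\mathrm{ch}_{\dR}(\xi_k)^{\perp}$ and, by compatibility of the orthogonality relations with reduction, $\overline{\Fil}^1_{\mathrm{Hdg}}=(\overline{\Fil}^2_{\mathrm{Hdg}})^{\perp}$, equality would force $\mathrm{ch}_{\dR}(\xi_k)\in(\overline{\Fil}^1_{\mathrm{Hdg}})^{\perp}=\overline{\Fil}^2_{\mathrm{Hdg}}$, i.e.\ the image of $\mathrm{ch}_{\dR}(\xi)$ in $\Gr^1_{\mathrm{Hdg}}\otimes_{\O_K}k=H^1(\mathscr{Y}_k,\Omega^1_{\mathscr{Y}_k})$ would vanish, contrary to the second fact. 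When $p\nmid 2d$ that fact is clear, since the cup product induces a perfect pairing on $\Gr^1_{\mathrm{Hdg}}$ and the self-pairing of the image of $\mathrm{ch}_{\dR}(\xi)$ is, up to sign, the degree $2d$, a unit. In general I would argue: the $p$-adic Chern class $\mathrm{ch}_p(\xi)\in H^2_{\et}(\mathscr{Y}_{\overline{K}},\Z_p(1))$ is a primitive element — via $\overline{K}\hookrightarrow\C$ and comparison with Betti cohomology this reduces to the fact that the class of a primitive line bundle on a complex $K3$ surface is primitive in $H^2(-,\Z)$, the N\'eron--Severi lattice being a primitive sublattice — and it is $\Gal(\overline{K}/K)$-invariant, so it spans a Galois-stable, $\Z_p$-split copy of $\Z_p(1)$ in $H^2_{\et}(\mathscr{Y}_{\overline{K}},\Z_p)$ with again crystalline quotient. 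Applying the Breuil--Kisin functor to this subobject, using Theorem \ref{Theorem:p-adic Hodge theorem}(1) and the compatibility of cycle class maps with the comparison isomorphisms (Section \ref{Section:CompatibilityComparisonIsom}), and noting that the Hodge filtration is strictly compatible with Breuil--Kisin submodules with torsion-free quotient (so all graded pieces involved are free over $\O_K$), one gets that $\mathrm{ch}_{\dR}(\xi)$ generates a free rank-one direct summand of $H^2_{\dR}(\mathscr{Y}/\O_K)$ that lies in $\Fil^1_{\mathrm{Hdg}}$ and meets $\Fil^2_{\mathrm{Hdg}}$ trivially; its image in $\Gr^1_{\mathrm{Hdg}}$ is then a free rank-one direct summand, in particular nonzero mod the maximal ideal. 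This gives $\overline{P^2}\neq\overline{\Fil}^1_{\mathrm{Hdg}}$, hence $\Gr^0_{\text{pr}}\to\Gr^0_{\mathrm{Hdg}}$ is an isomorphism.

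Finally, the cup product on $P^2_{\dR}(\mathscr{Y}/\O_K)$ is the restriction of the one on $H^2_{\dR}(\mathscr{Y}/\O_K)$, and it pairs $\Fil^2_{\text{pr}}=\Fil^2_{\mathrm{Hdg}}$ trivially with $\Fil^1_{\text{pr}}=\Fil^1_{\mathrm{Hdg}}\cap P^2_{\dR}(\mathscr{Y}/\O_K)$, so it descends to a pairing $\Gr^2_{\text{pr}}\times\Gr^0_{\text{pr}}\to\O_K$; under the isomorphisms $\Gr^2_{\text{pr}}=\Gr^2_{\mathrm{Hdg}}$ and $\Gr^0_{\text{pr}}=\Gr^0_{\mathrm{Hdg}}$ just established this is exactly the perfect pairing $\Gr^2_{\mathrm{Hdg}}\times\Gr^0_{\mathrm{Hdg}}\to\O_K$ recalled before the lemma. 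Hence the cup product induces $\Gr^2_{\text{pr}}\cong(\Gr^0_{\text{pr}})^{\vee}$, completing the proof.
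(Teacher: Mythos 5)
Your treatment of $\Gr^2_{\text{pr}}$, the reduction of the $\Gr^0$-statement to excluding $\mathrm{ch}_{\dR}(\xi_k)\in\overline{\Fil}^2_{\mathrm{Hdg}}$, and the final duality argument all match the paper, and your case $p\nmid 2d$ (self-pairing of the image in $\Gr^1_{\mathrm{Hdg}}$ equals $2d$, a unit) is correct. The gap is in the general case, which is the only case where the lemma has content. You assert that, because $\mathrm{ch}_p(\xi)$ spans a $\Z_p$-direct summand of $H^2_{\et}(\mathscr{Y}_{\overline K},\Z_p(1))$, ``the Hodge filtration is strictly compatible with Breuil--Kisin submodules with torsion-free quotient'' and hence the image of $\mathrm{ch}_{\dR}(\xi)$ in $\Gr^1_{\mathrm{Hdg}}$ is a direct summand, in particular nonzero modulo $\mathfrak{m}_K$. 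No such strictness statement is available: Kisin/BMS give the comparison of filtrations rationally and, after work, integrally for the full lattice (Theorem \ref{Theorem:p-adic Hodge theorem}), but for a subobject $N'\subset N$ with torsion-free quotient there is no general theorem that $\Gr^i(\MdR(N'))\to\Gr^i(\MdR(N))$ has torsion-free cokernel; what one gets formally is only $\mathrm{ch}_{\dR}(\xi)\in\Fil^1_{\mathrm{Hdg}}$, which you already knew. Indeed, the paper itself must argue separately and nontrivially to control such cokernels (Lemma \ref{Lemma:Primitive Breuil-Kisin module (de Rham)}, Proposition \ref{Proposition:Flatness of cokernel}), and the filtration statement for the primitive part there is deduced \emph{from} the present lemma, so invoking it here would be circular.

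More decisively, the conclusion you want cannot follow from primitivity of $\xi$ alone. By Ogus's deformation computation (the proof of \cite[Proposition 2.2.1]{Ogus}), $\mathrm{ch}_{\dR}(\xi_k)\in\overline{\Fil}^2_{\mathrm{Hdg}}$ holds exactly when the versal deformation space of the pair $(\mathscr{Y}_k,\xi_k)$ is not regular, i.e.\ when the point is a non-smooth point of $M_{2d,\Z_{(p)}}$; such points occur with $\xi_k$ primitive and big and nef when $p\mid 2d$, and since the deformation space of such a pair is a formal hypersurface not contained in the special fibre, the pair does lift to $\O_K$ for suitable $K$. For such a lift, $\mathrm{ch}_p(\xi)$ is still primitive in \'etale cohomology, yet your conclusion fails. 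This is precisely why the lemma's standing hypothesis that $t$ lies on the smooth locus $M^{\mathrm{sm}}_{2d,\K^p_0,\Z_{(p)}}$ is needed, and it is what the paper uses at this step: if the intersection $\overline{\Fil}^1_{\mathrm{Hdg}}\cap P^2_{\dR}(\mathscr{Y}_k/k)$ were $21$-dimensional, then $\mathrm{ch}_{\dR}(\xi_k)\in\overline{\Fil}^2_{\mathrm{Hdg}}$ and Ogus's result would contradict smoothness of the moduli point. Your proposal never invokes this hypothesis, so the decisive step is missing; to repair it you should replace the claimed Breuil--Kisin strictness by the smoothness-of-the-moduli-point argument (or an equivalent use of the unobstructedness of the pair $(\mathscr{Y}_k,\xi_k)$).
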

\begin{proof}
	Since the first Chern class
	$\mathrm{ch}_{\dR}(\xi)$ is contained in $\Fil^1_{\mathrm{Hdg}}$, we have $\Fil^2_{\mathrm{Hdg}} \subset P^2_{\dR}(\mathscr{Y}/\O_K)$
	and
	$\Gr^2_{\text{pr}} \cong \Gr^2_{\mathrm{Hdg}}$.
	
	We shall show 
	$\Gr^0_{\text{pr}} \cong \Gr^0_{\mathrm{Hdg}}$.
	By the definition of $\Fil^i_{\text{pr}}$,
	the homomorphism $\Gr^0_{\text{pr}} \to \Gr^0_{\mathrm{Hdg}}$ is injective.
	To prove the surjectivity, it suffices to prove that it is surjective after taking the reduction modulo $p$.
	Since both $\Gr^0_{\text{pr}}$ and $\Gr^0_{\mathrm{Hdg}}$ are free $\O_K$-modules of rank $1$,
	it suffices to show the following homomorphism is injective
	\[
	\Gr^0_{\text{pr}}\otimes_{\O_K} k \to \Gr^0_{\mathrm{Hdg}} \otimes_{\O_K} k.
	\]
	
	Since $\xi \in \underline{\Pic}(\mathscr{Y}/\O_K)(\O_K)$ is primitive, 
	the cokernel of the homomorphism
	\[
	\langle \mathrm{ch}_{\dR}(\xi) \rangle \hookrightarrow H^2_{\dR}(\mathscr{Y}/\O_K)
	\]
	is $p$-torsion-free by \cite[Corollary 1.4]{Ogus}, and the following composite
	\[
	H^2_{\dR}(\mathscr{Y}/\O_K) \cong H^2_{\dR}(\mathscr{Y}/\O_K)^{\vee} \to \langle \mathrm{ch}_{\dR}(\xi) \rangle^{\vee}
	\]
	is surjective, where the first isomorphism is obtained by the Poincar\'e duality and the second homomorphism is the restriction map.
	Thus we have the following split exact sequence:
	\[
	0 \to P^2_{\dR}(\mathscr{Y}/\O_K) \to H^2_{\dR}(\mathscr{Y}/\O_K) \to \langle \mathrm{ch}_{\dR}(\xi) \rangle^{\vee} \to 0.
	\]
	It follows that 
	\[
	P^2_{\dR}(\mathscr{Y}/\O_K) \otimes_{\O_K} k
	\cong P^2_{\dR}(\mathscr{Y}_{k}/k) := \mathrm{ch}_{\dR}(\xi_{k})^{\perp} \subset H^2_{\dR}(\mathscr{Y}_{k}/k).
	\]
	
	Hence it is enough to show the injection
	\[
	\Fil^1_{\text{pr}}\otimes_{\O_K} k
	\hookrightarrow \overline{\Fil}^1_{\mathrm{Hdg}} \cap P^2_{\dR}(\mathscr{Y}_{k}/k)
	\]
	is an isomorphism of $k$-vector spaces.
	It suffices to show the intersection
	$\overline{\Fil}^1_{\mathrm{Hdg}} \cap P^2_{\dR}(\mathscr{Y}_{k}/k)$ is a $20$-dimensional $k$-vector space.
	If the dimension was greater than or equal to $21$,
	we would have 
	\[
	\overline{\Fil}^1_{\mathrm{Hdg}} \cap P^2_{\dR}(\mathscr{Y}_{k}/k) = \overline{\Fil}^1_{\mathrm{Hdg}}
	\]
	and
	\[
	\mathrm{ch}_{\dR}(\xi_{k}) \in (\overline{\Fil}^1_{\mathrm{Hdg}})^{\perp} = \overline{\Fil}^2_{\mathrm{Hdg}}.
	\]
	By the proof of \cite[Proposition 2.2.1]{Ogus},
	the versal deformation space of $(\mathscr{Y}_{k}, \xi_{k})$ is not regular.
	This contradicts that $(\mathscr{Y}_k, \xi_{k})$ lies on the smooth locus $M^{\mathrm{sm}}_{2d, \K^p_0, \Z_{(p)}}$.
\end{proof}

We consider the Breuil-Kisin module 
\[
\mathfrak{M}(P^2_{\mathrm{\acute{e}t}}(\mathscr{Y}_{\overline{K}}, \Z_p))
\]
(over $\O_K$ with respect to $\{ \varpi^{1/{p^n}} \}_{n \geq 0}$)
associated with   
$P^2_{\mathrm{\acute{e}t}}(\mathscr{Y}_{\overline{K}}, \Z_p)$.

We define a decreasing filtration $\Fil^i(\mathfrak{M}_{\dR}(P^2_{\mathrm{\acute{e}t}}(\mathscr{Y}_{\overline{K}}, \Z_p)))$ on
$\mathfrak{M}_{\dR}(P^2_{\mathrm{\acute{e}t}}(\mathscr{Y}_{\overline{K}}, \Z_p))$
as in Section \ref{Subsection:Breuil-Kisin modules and crystalline Galois representations}.
Let
\begin{align*}
	c_{\dR, \mathscr{Y}_K}\colon D_{\dR}(H^2_{\mathrm{\acute{e}t}}(\mathscr{Y}_{\overline{K}}, \Z_p)[1/p]) &\cong H^2_{\dR}(\mathscr{Y}/\O_K)[1/p],\\
	c_{\cris, \mathscr{Y}} \colon D_{\cris}(H^2_{\mathrm{\acute{e}t}}(\mathscr{Y}_{\overline{K}}, \Z_p)[1/p]) &\cong H^2_{\cris}(\mathscr{Y}_{k}/W)[1/p]
\end{align*}
be the isomorphisms in Section \ref{Subsection:The integral p-adic Hodge theory}.
These isomorphisms are compatible with Chern classes,  cup products, and trace maps; see Section \ref{Subsection:Scholze's de Rham comparison map} and
Section \ref{Subsection:Crystalline comparison map of Bhatt-Morrow-Scholze}.
Hence we have canonical isomorphisms
of filtered $W[1/p]$-modules
\[
\mathfrak{M}_{\dR}(P^2_{\mathrm{\acute{e}t}}(\mathscr{Y}_{\overline{K}}, \Z_p))[1/p] \cong D_{\dR}(P^2_{\mathrm{\acute{e}t}}(\mathscr{Y}_{\overline{K}}, \Z_p)[1/p])
 \cong P^2_{\dR}(\mathscr{Y}/\O_K)[1/p].
\]
We also have canonical isomorphisms
\[
\Mcris(P^2_{\mathrm{\acute{e}t}}(\mathscr{Y}_{\overline{K}}, \Z_p))[1/p] \cong D_{\cris}(P^2_{\mathrm{\acute{e}t}}(\mathscr{Y}_{\overline{K}}, \Z_p)[1/p])
 \cong P^2_{\cris}(\mathscr{Y}_{k}/W)[1/p],
\]
which are compatible with Frobenius endomorphisms.

\begin{lem}\label{Lemma:Primitive Breuil-Kisin module (de Rham)}
	The canonical isomorphism
	\[
	\mathfrak{M}_{\dR}(P^2_{\mathrm{\acute{e}t}}(\mathscr{Y}_{\overline{K}}, \Z_p))[1/p] \cong P^2_{\dR}(\mathscr{Y}/\O_K)[1/p]
	\]
	maps
		$\mathfrak{M}_{\dR}(P^2_{\mathrm{\acute{e}t}}(\mathscr{Y}_{\overline{K}}, \Z_p))$
		onto
	$P^2_{\dR}(\mathscr{Y}/\O_K)$.
	Moreover, it maps $\Fil^i(\mathfrak{M}_{\dR}(P^2_{\mathrm{\acute{e}t}}(\mathscr{Y}_{\overline{K}}, \Z_p)))$  onto
        $\Fil^i_{\text{pr}}$ for every $i \in \Z$.
\end{lem}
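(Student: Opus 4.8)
The plan is to reduce both assertions to the corresponding statement for the full second cohomology, Theorem~\ref{Theorem:p-adic Hodge theorem}~(1), by realizing $P^2$ as a saturated direct summand compatibly with the comparison isomorphisms.

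First, for the lattice assertion, I would begin by noting that $\mathrm{ch}_p(\xi)$ is a primitive element of the unimodular $\Z_p$-lattice $H^2_{\et}(\mathscr{Y}_{\overline{K}},\Z_p(1))$: this follows from the primitivity of $\xi$ in $\underline{\Pic}(\mathscr{Y}/\O_K)(\O_K)$ together with the torsion-freeness of $H^2_{\et}(\mathscr{Y}_{\overline{K}},\Z_p)$ and of the $p$-adic Tate module of $\Br(\mathscr{Y}_{\overline{K}})$, via the Kummer sequence (which shows $\mathrm{NS}(\mathscr{Y}_{\overline{K}})\otimes_{\Z}\Z_p$ is saturated in $H^2_{\et}(\mathscr{Y}_{\overline{K}},\Z_p(1))$). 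Running the argument in the proof of Lemma~\ref{Lemma:Duality filtration} with $p$-adic \'etale cohomology and the Poincar\'e duality pairing then produces a short exact sequence of $\Gal(\overline{K}/K)$-stable $\Z_p$-lattices in crystalline representations
\[
0 \to P^2_{\et}(\mathscr{Y}_{\overline{K}},\Z_p) \to H^2_{\et}(\mathscr{Y}_{\overline{K}},\Z_p) \to Q \to 0,
\]
where $Q$ is free of rank one (a Tate twist of $\Z_p$) and the sequence splits as $\Z_p$-modules. Since the Breuil--Kisin module functor $N\mapsto\M(N)$ is exact (it carries such a short exact sequence to a short exact sequence of Breuil--Kisin modules) and $\M(Q)$ is $\mathfrak{S}$-free, applying $\M(-)$ and then $\varphi^{*}(-)\otimes_{\mathfrak{S}}\O_K$ exhibits $\MdR(P^2_{\et}(\mathscr{Y}_{\overline{K}},\Z_p))$ as a direct summand, hence a saturated $\O_K$-submodule, of $\MdR(H^2_{\et}(\mathscr{Y}_{\overline{K}},\Z_p))$; the latter is identified with $H^2_{\dR}(\mathscr{Y}/\O_K)$ by Theorem~\ref{Theorem:p-adic Hodge theorem}~(1). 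By the compatibility of $c_{\dR,\mathscr{Y}_K}$ with Chern classes and cup products (Section~\ref{Subsection:Scholze's de Rham comparison map}), this submodule has the same $K$-span inside $H^2_{\dR}(\mathscr{Y}/\O_K)[1/p]$ as $P^2_{\dR}(\mathscr{Y}/\O_K)$, and the proof of Lemma~\ref{Lemma:Duality filtration} shows $P^2_{\dR}(\mathscr{Y}/\O_K)$ is itself a saturated $\O_K$-submodule of $H^2_{\dR}(\mathscr{Y}/\O_K)$. Two saturated $\O_K$-submodules of a free $\O_K$-module sharing the same $K$-span coincide, so $\MdR(P^2_{\et}(\mathscr{Y}_{\overline{K}},\Z_p)) = P^2_{\dR}(\mathscr{Y}/\O_K)$.

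Next, for the filtration assertion, by construction $\Fil^i(\MdR(P^2_{\et}(\mathscr{Y}_{\overline{K}},\Z_p))[1/p])$ corresponds under the canonical isomorphism to $\Fil^i D_{\dR}(P^2_{\et}(\mathscr{Y}_{\overline{K}},\Z_p)[1/p])$, and since $c_{\dR,\mathscr{Y}_K}$ is filtered and respects Chern classes this is carried to $\Fil^i_{\text{pr}}[1/p]$ inside $P^2_{\dR}(\mathscr{Y}/\O_K)[1/p]$. Using the lattice assertion and the definition $\Fil^i(\MdR(-)) = \Fil^i(\MdR(-)[1/p])\cap\MdR(-)$, I would conclude
\[
\Fil^i(\MdR(P^2_{\et}(\mathscr{Y}_{\overline{K}},\Z_p))) = \Fil^i_{\text{pr}}[1/p]\cap P^2_{\dR}(\mathscr{Y}/\O_K) = \Fil^i_{\text{pr}},
\]
the last equality holding because each $\Fil^i_{\text{pr}} = \Fil^i_{\mathrm{Hdg}}\cap P^2_{\dR}(\mathscr{Y}/\O_K)$ is saturated in $P^2_{\dR}(\mathscr{Y}/\O_K)$, which follows from Lemma~\ref{Lemma:Duality filtration} and the fact that the Hodge filtration on $H^2_{\dR}(\mathscr{Y}/\O_K)$ has free graded pieces.

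The hard part is the construction of the splitting of $P^2_{\et}$ off $H^2_{\et}$ and the verification that it is transported correctly. Since $p$ may divide the degree $2d$, one cannot split off $\langle\mathrm{ch}_p(\xi)\rangle$ orthogonally, so the splitting has to be extracted from the primitivity of $\xi$ through the Kummer sequence and Poincar\'e duality; and one must invoke the exactness of the Breuil--Kisin module functor together with the compatibility of the ($p$-adic \'etale and de Rham) comparison isomorphisms, over both $K$ and $\O_K$, with Chern classes and cup products -- precisely the content prepared in Section~\ref{Subsection:Scholze's de Rham comparison map}. Keeping the Tate twists consistent throughout is a further, purely bookkeeping, point.
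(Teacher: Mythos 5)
The decisive step in your argument is the parenthetical claim that $N\mapsto\M(N)$ ``is exact'', so that the short exact sequence $0 \to P^2_{\et} \to H^2_{\et} \to Q \to 0$ yields a short exact sequence of Breuil--Kisin modules. That is not available: the results you can cite (\cite[Theorem 1.2.1]{KisinIntModel}, \cite[Theorem 4.4]{BMS}) give a fully faithful \emph{tensor} functor, with no exactness statement, and exactness of $\M$ fails in general outside the height $\leq 1$ / Fontaine--Laffaille range. Concretely, the possible failure here is that the image of $\M(H^2_{\et}(\mathscr{Y}_{\overline{K}},\Z_p)) \to \M(Q)$ could be a proper ideal of the form $u^a\mathfrak{S}$ (the cokernel would then be $u$-power torsion), and ruling this out is precisely the content of the lemma; your $\Z_p$-splitting of the sequence is of no help, since it is not Galois-equivariant and so induces nothing on Breuil--Kisin modules. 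Your \'etale-side primitivity of $\mathrm{ch}_p(\xi)$ (via the Kummer sequence) is correct but is not the input the proof actually needs: the whole difficulty has been shifted into the unproved exactness assertion.

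The paper's proof supplies exactly the missing integrality, and it does so with genuinely geometric input on the de Rham side. It identifies $\M(P^2_{\et}(\mathscr{Y}_{\overline{K}},\Z_p))$ with the orthogonal complement $\M(\langle\mathrm{ch}_p(\xi)\rangle(-1))^{\perp}$ inside the self-dual module $\M(H^2_{\et}(\mathscr{Y}_{\overline{K}},\Z_p))$ (freeness of the complement over the two-dimensional regular local ring $\mathfrak{S}$ plus the uniqueness in \cite[Theorem 4.4]{BMS}); the cup-product duality then shows the cokernel of $j\colon \varphi^{*}\M(\langle\mathrm{ch}_p(\xi)\rangle(-1)) \hookrightarrow \varphi^{*}\M(H^2_{\et})$ is torsion-free, and its freeness is deduced from the fact that the cokernel of $\langle\mathrm{ch}_{\dR}(\xi)\rangle \hookrightarrow H^2_{\dR}(\mathscr{Y}/\O_K)$ is $p$-torsion-free, which is \cite[Corollary 1.4]{Ogus} (here, not in the \'etale cohomology, is where primitivity of $\xi$ enters), using Theorem \ref{Theorem:p-adic Hodge theorem} and Proposition \ref{Proposition:de Rham comparison and Chern class} to transport the line. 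That such cokernel statements cannot be had formally is also illustrated by Proposition \ref{Proposition:Flatness of cokernel} and by the $p=2$ issue discussed in Remark \ref{Remark: Kim-Madapusi Pera gap}. Your treatment of the filtration statement would be fine once the lattice statement is established, and is close to the paper's (freeness/saturatedness of the graded pieces), but as it stands the first assertion rests on a claim you cannot justify.
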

\begin{proof}
We shall show the first assertion.
The cup product on $H^2_{\mathrm{\acute{e}t}}(\mathscr{Y}_{\overline{K}}, \Z_p)$ induces a perfect pairing on the Breuil-Kisin module
$
\mathfrak{M}(H^2_{\mathrm{\acute{e}t}}(\mathscr{Y}_{\overline{K}}, \Z_p)).
$
Since the kernel of a homomorphism of free $\mathfrak{S}$-modules of finite rank is free, the orthogonal complement
\[
\mathfrak{M}(\langle \mathrm{ch}_{p}(\xi) \rangle(-1))^{\perp}
\subset \mathfrak{M}(H^2_{\mathrm{\acute{e}t}}(\mathscr{Y}_{\overline{K}}, \Z_p))
\]
is a free $\mathfrak{S}$-module of finite rank.
Hence it is a Breuil-Kisin module (of height $\leq 2$).
By the characterization of $\mathfrak{M}(P^2_{\mathrm{\acute{e}t}}(\mathscr{Y}_{\overline{K}}, \Z_p))$ in \cite[Theorem 4.4]{BMS},
we have 
\[
\mathfrak{M}(P^2_{\mathrm{\acute{e}t}}(\mathscr{Y}_{\overline{K}}, \Z_p)) \cong \mathfrak{M}(\langle \mathrm{ch}_{p}(\xi) \rangle(-1))^{\perp}.
\]
By Theorem \ref{Theorem:p-adic Hodge theorem}, we have
\[
\MdR(H^2_{\mathrm{\acute{e}t}}(\mathscr{Y}_{\overline{K}}, \Z_p)) \cong H^2_{\dR}(\mathscr{Y}/\O_K).
\]
Under this isomorphism, the homomorphism
\[
\MdR(\langle \mathrm{ch}_{p}(\xi) \rangle(-1))
\hookrightarrow \MdR(H^2_{\mathrm{\acute{e}t}}(\mathscr{Y}_{\overline{K}}, \Z_p))
\]
is identified with the inclusion
$
\langle \mathrm{ch}_{\dR}(\xi) \rangle \hookrightarrow H^2_{\dR}(\mathscr{Y}/\O_K)
$
by Proposition \ref{Proposition:de Rham comparison and Chern class}.
Thus it is enough to show the cokernel of the injective homomorphism
\[
j \colon \varphi^{*}\mathfrak{M}(\langle \mathrm{ch}_{p}(\xi) \rangle(-1))
\hookrightarrow \varphi^{*}\mathfrak{M}(H^2_{\mathrm{\acute{e}t}}(\mathscr{Y}_{\overline{K}}, \Z_p))
\]
is a free $\mathfrak{S}$-module.
As above, we have
\[
\mathfrak{M}(\langle \mathrm{ch}_{p}(\xi) \rangle(-1))\cong \mathfrak{M}(P^2_{\mathrm{\acute{e}t}}(\mathscr{Y}_{\overline{K}}, \Z_p))^{\perp}.
\]
Hence the cokernel of $j$ is a torsion-free $\mathfrak{S}$-module.
Therefore it suffices to show the cokernel of
$
\langle \mathrm{ch}_{\dR}(\xi) \rangle \hookrightarrow H^2_{\dR}(\mathscr{Y}/\O_K)
$
is $p$-torsion-free.
This follows from \cite[Corollary 1.4]{Ogus}.

The second assertion follows from the fact that $\Gr^{i}(\mathfrak{M}_{\dR}(P^2_{\mathrm{\acute{e}t}}(\mathscr{Y}_{\overline{K}}, \Z_p)))$ and $\Gr^i_{\text{pr}}$ are free $\O_K$-modules.
\end{proof}

\begin{lem}\label{Lemma:Primitive Breuil-Kisin module (crystalline)}
The canonical isomorphism
	\[
	\mathfrak{M}_{\cris}(P^2_{\mathrm{\acute{e}t}}(\mathscr{Y}_{\overline{K}}, \Z_p))[1/p] \cong P^2_{\cris}(\mathscr{Y}_{k}/W)[1/p]
	\]
maps $\mathfrak{M}_{\cris}(P^2_{\mathrm{\acute{e}t}}(\mathscr{Y}_{\overline{K}}, \Z_p))$ onto
$P^2_{\cris}(\mathscr{Y}_{k}/W)$.
\end{lem}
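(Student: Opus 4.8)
The plan is to mirror the proof of Lemma~\ref{Lemma:Primitive Breuil-Kisin module (de Rham)}, the crystalline case being in fact easier since no filtration intervenes. Recall from that proof the identification of Breuil--Kisin modules
\[
\mathfrak{M}(P^2_{\mathrm{\acute{e}t}}(\mathscr{Y}_{\overline{K}}, \Z_p)) \cong \mathfrak{M}(\langle \mathrm{ch}_{p}(\xi) \rangle(-1))^{\perp} \subset \mathfrak{M}(H^2_{\mathrm{\acute{e}t}}(\mathscr{Y}_{\overline{K}}, \Z_p)),
\]
the orthogonal complement being taken for the perfect pairing induced by the cup product, together with the fact proved there that the cokernel of $\varphi^{*}\mathfrak{M}(\langle \mathrm{ch}_{p}(\xi) \rangle(-1)) \hookrightarrow \varphi^{*}\mathfrak{M}(H^2_{\mathrm{\acute{e}t}}(\mathscr{Y}_{\overline{K}}, \Z_p))$ is a free $\mathfrak{S}$-module. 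Hence $\varphi^{*}\mathfrak{M}(\langle \mathrm{ch}_{p}(\xi) \rangle(-1))$ is a direct summand of $\varphi^{*}\mathfrak{M}(H^2_{\mathrm{\acute{e}t}}(\mathscr{Y}_{\overline{K}}, \Z_p))$, and therefore so is its orthogonal complement $\varphi^{*}\mathfrak{M}(P^2_{\mathrm{\acute{e}t}}(\mathscr{Y}_{\overline{K}}, \Z_p))$, with free cokernel as well.

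Now I would apply the base change $-\otimes_{\mathfrak{S}}W$ along $u \mapsto 0$. Because the two inclusions just mentioned are split, they remain injective after this base change and $\mathfrak{M}_{\cris}(P^2_{\mathrm{\acute{e}t}}(\mathscr{Y}_{\overline{K}}, \Z_p))$ is still the orthogonal complement of $\mathfrak{M}_{\cris}(\langle \mathrm{ch}_{p}(\xi) \rangle(-1))$ inside $\mathfrak{M}_{\cris}(H^2_{\mathrm{\acute{e}t}}(\mathscr{Y}_{\overline{K}}, \Z_p))$ for the induced perfect pairing. Next I would invoke Theorem~\ref{Theorem:p-adic Hodge theorem}~(2), which identifies $\mathfrak{M}_{\cris}(H^2_{\mathrm{\acute{e}t}}(\mathscr{Y}_{\overline{K}}, \Z_p))$ with $H^2_{\cris}(\mathscr{Y}_{k}/W)$ compatibly with cup products, together with the compatibility of the Bhatt--Morrow--Scholze crystalline comparison with Chern classes (Section~\ref{Subsection:Crystalline comparison map of Bhatt-Morrow-Scholze}), which makes this identification carry $\mathfrak{M}_{\cris}(\langle \mathrm{ch}_{p}(\xi) \rangle(-1))$ onto $\langle \mathrm{ch}_{\cris}(\xi_{k}) \rangle$. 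Passing to orthogonal complements, $\mathfrak{M}_{\cris}(P^2_{\mathrm{\acute{e}t}}(\mathscr{Y}_{\overline{K}}, \Z_p))$ is carried onto $\langle \mathrm{ch}_{\cris}(\xi_{k}) \rangle^{\perp}$, which equals $P^2_{\cris}(\mathscr{Y}_{k}/W)$ since $\langle \mathrm{ch}_{\cris}(\xi_{k}) \rangle \hookrightarrow H^2_{\cris}(\mathscr{Y}_{k}/W)$ has free cokernel by \cite[Corollary 1.4]{Ogus}. As this identification underlies the canonical isomorphism appearing in the statement, the lemma follows.

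The substantive points to verify --- rather than genuine obstacles --- are that base change along $u \mapsto 0$ commutes with taking orthogonal complements (which is exactly where the freeness of the cokernels from the proof of Lemma~\ref{Lemma:Primitive Breuil-Kisin module (de Rham)} is used, together with the perfectness of the pairing) and that the crystalline comparison isomorphism of \cite{BMS} is Chern-class compatible in the sense needed. Both are supplied by results already in place: Theorem~\ref{Theorem:p-adic Hodge theorem} and the compatibilities assembled in Section~\ref{Subsection:Crystalline comparison map of Bhatt-Morrow-Scholze}, paralleling the use of Proposition~\ref{Proposition:de Rham comparison and Chern class} in the de Rham case. No new ingredient is required.
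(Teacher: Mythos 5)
Your argument is correct and follows the paper's own route: the paper's proof of this lemma is precisely a one-line reduction to the freeness of the cokernel of $j$ established in the proof of Lemma \ref{Lemma:Primitive Breuil-Kisin module (de Rham)}, combined with Theorem \ref{Theorem:p-adic Hodge theorem} and Corollary \ref{Corollary:Crystalline comparison and Chern class}, which are exactly the inputs you use. You simply spell out the details the paper leaves implicit (splitness of the two inclusions, compatibility of the base change $u\mapsto 0$ with orthogonal complements, and the Chern-class compatibility identifying $\Mcris(\langle \mathrm{ch}_p(\xi)\rangle(-1))$ with $\langle \mathrm{ch}_{\cris}(\xi_k)\rangle$), and these verifications are sound.
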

\begin{proof}
	As in the proof of Lemma \ref{Lemma:Primitive Breuil-Kisin module (de Rham)}, the assertion follows from the fact
	that the cokernel of $j$ is a free $\mathfrak{S}$-module
	by using Theorem \ref{Theorem:p-adic Hodge theorem} and Corollary \ref{Corollary:Crystalline comparison and Chern class}.
\end{proof}

\subsection{$F$-crystals on Shimura varieties and the cohomology of $K3$ surfaces}\label{Subsection:$F$-crystals on Shimura varieties and the cohomology of K3 surfaces}
In this subsection, we assume $k$ is a finite field $\F_q$ or $\overline{\F}_q$.
We shall compare the $W$-module
$\widetilde{L}_{\cris, s}$
associated with a $k$-valued point $s \in \mathscr{S}_{\widetilde{\mathrm{K}}}(k)$ of the integral canonical model $\mathscr{S}_{\widetilde{\mathrm{K}}}$
with the crystalline cohomology of $K3$ surfaces.

We consider the following situation.
\begin{enumerate}
	\item 
Let $s \in M^{\mathrm{sm}}_{2d, \K^p_0, \Z_{(p)}}(k)$ be a $k$-valued point on the smooth locus and $(X,\mathscr{L})$ a quasi-polarized $K3$ surface over $k$ of degree $2d$ associated with $s \in M^{\mathrm{sm}}_{2d, \K^p_0, \Z_{(p)}}(k)$.

\item 
The image of $s$ under the Kuga-Satake morphism $\KS$ is denoted by
the same notation $s \in Z_{\K^p_0}(\Lambda)(k)$.
After replacing $k$ by a finite extension of it,
there is a $k$-valued point of $Z_{\K^p}(\Lambda)(k)$ mapped to $s$.
We fix such a point, and denote it also by $s \in Z_{\K^p}(\Lambda)(k)$.
Let $\mathcal{A}_s$ be the Kuga-Satake abelian variety over $k$
associated with the point 
$s \in Z_{\mathrm{K}^p}(k)$.

\item
We take an $\O_K$-valued point
$t \in M^{\mathrm{sm}}_{2d, \K^p_0, \Z_{(p)}}(\O_K)$
which is a lift of
$s \in M^{\mathrm{sm}}_{2d, \K^p_0, \Z_{(p)}}(k)$.
The morphism
\[
Z_{\mathrm{K}^p}(\Lambda) \to Z_{\mathrm{K}^p_0}(\Lambda)
\]
is \'etale by Proposition \ref{Proposition:LambdaFiniteEtale}.
Hence the image of the $\O_K$-valued point $t$
under the Kuga-Satake morphism lifts a unique $\O_K$-valued point on
$Z_{\mathrm{K}^p}(\Lambda)$
which is a lift of
$s \in Z_{\mathrm{K}^p}(\Lambda)$.
We also denote it by $t \in Z_{\K^p}(\Lambda)(\O_K)$.

\item
Let $(\mathscr{Y}, \xi)$ be a quasi-polarized $K3$ surface over $\O_K$
of degree $2d$ associated with
$t \in M^{\mathrm{sm}}_{2d, \K^p_0, \Z_{(p)}}(\O_K)$.
We assume $\xi$ is a line bundle whose restriction to every geometric fiber is big and nef.

\item
Let $\overline{t}$ be a geometric point of
$Z_{\K^p}(\Lambda)$ above the generic point of
$t \in Z_{\K^p}(\Lambda)(\O_K)$.
\end{enumerate}

Then the
$\Gal(\overline{K}/K)$-stable $\Z_p$-lattice in a crystalline representation
\[
\widetilde{L}'_{p}:= \widetilde{\mathbb{L}}_{p, \overline{t}}
\]
satisfies the following properties:
	\begin{enumerate}
	\item $\widetilde{L}'_{p}$ admits an even perfect bilinear form $( \ , \ )$ over $\Z_p$ which is compatible with the action of $\Gal(\overline{K}/K)$. 
	\item There is a $\Gal(\overline{K}/K)$-equivariant homomorphism
	$\iota_p \colon \Lambda_{\Z_p} \to \widetilde{L}'_{p}$
	preserving the bilinear forms.
	\item There is a $\Gal(\overline{K}/K)$-equivariant isometry over $\Z_p$
     \[
    P^2_{\mathrm{\acute{e}t}}(\mathscr{Y}_{\overline{K}},\Z_p(1)) \cong \iota_p(\Lambda_{\Z_p})^{\perp}.
    \]
(See \cite[Proposition 5.6]{MadapusiTateConj}.) 
	\item There is an isometry over $W$
	\[ \Mcris(\widetilde{L}'_{p}) \cong \widetilde{L}_{\cris, s} \]
	inducing an isomorphism of $F$-isocrystals after inverting $p$.
	(See Section \ref{Subsection: $F$-crystals and Breuil-Kisin modules}.)
	\end{enumerate}

We use the following notation on twists of $\varphi$-modules.
Let $(N, \varphi)$ be a pair of  a free $W$-module of finite rank and
a $\sigma$-linear map $\varphi$ on $N[1/p]$.
We denote the pair $(N, p^{-i}\varphi)$ by $N(i)$.

We put 
\[
\widetilde{L}_{\cris}:=\widetilde{L}_{\cris, s}.
\]
The Frobenius automorphism of $\widetilde{L}_{\cris}(-1)[1/p]$ maps the $W$-module $\widetilde{L}_{\cris}(-1)$ into itself.
Therefore $\widetilde{L}_{\cris}(-1)$ is an $F$-crystal.

\begin{prop}\label{Proposition:CrisOrthogonal}
	We have an isomorphism of $F$-crystals
	\[
	P^2_{\cris}(X/W) \cong \iota_{\cris}(\Lambda_{W})^{\perp}(-1)\subset \widetilde{L}_{\cris}(-1).
	\]
\end{prop}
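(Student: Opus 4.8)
The plan is to obtain the isomorphism by transporting, through the functor $\Mcris$, the $\Gal(\overline{K}/K)$-equivariant description of the primitive cohomology of the generic fibre recorded above (item~(3) in the list of properties of $\widetilde{L}'_p$), and then matching the outcome with Lemma~\ref{Lemma:Primitive Breuil-Kisin module (crystalline)}. Since $t$ is a lift of $s$, the special fibre $(\mathscr{Y}_k,\xi_k)$ is isomorphic to $(X,\mathscr{L})$ as a quasi-polarized $K3$ surface, so $P^2_{\cris}(\mathscr{Y}_k/W)=P^2_{\cris}(X/W)$ compatibly with Frobenius, and I will use this identification without further comment.

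First I would apply $\Mcris$ to the $\Gal(\overline{K}/K)$-equivariant isometry over $\Z_p$
\[
P^2_{\et}(\mathscr{Y}_{\overline{K}},\Z_p(1))\;\cong\;\iota_p(\Lambda_{\Z_p})^{\perp}\;\subset\;\widetilde{\mathbb{L}}_{p,\overline{t}}.
\]
Here $\iota_p(\Lambda_{\Z_p})^{\perp}$ is the kernel of the $\Gal(\overline{K}/K)$-equivariant surjection $\widetilde{\mathbb{L}}_{p,\overline{t}}\twoheadrightarrow\Lambda_{\Z_p}^{\vee}$ given by pairing against $\iota_p$. As $\Mcris$ is an exact $\otimes$-functor which, under the identifications of Section~\ref{Subsection: $F$-crystals and Breuil-Kisin modules}, carries the perfect pairing on $\widetilde{\mathbb{L}}_{p,\overline{t}}$ to the one on $\widetilde{L}_{\cris}:=\widetilde{L}_{\cris,s}$ and carries $\iota_p$ to $\iota_{\cris}$ (this last compatibility being part of the $\Lambda$-structure data; see Section~\ref{Subsection:LambdaStructures}), applying it to the exact sequence $0\to\iota_p(\Lambda_{\Z_p})^{\perp}\to\widetilde{\mathbb{L}}_{p,\overline{t}}\to\Lambda_{\Z_p}^{\vee}\to 0$ yields $\Mcris(\iota_p(\Lambda_{\Z_p})^{\perp})=\iota_{\cris}(\Lambda_W)^{\perp}$ inside $\widetilde{L}_{\cris}$. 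Hence
\[
\iota_{\cris}(\Lambda_W)^{\perp}\;\cong\;\Mcris\bigl(P^2_{\et}(\mathscr{Y}_{\overline{K}},\Z_p(1))\bigr)
\]
as $\varphi$-modules over $W$, and as $F$-isocrystals after inverting $p$.

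Next I would strip off the Tate twist. The canonical comparison $\Mcris(N)[1/p]\cong D_{\cris}(N[1/p])$ is compatible with Tate twists and $\Mcris(\Z_p(1))$ is free of rank one over $W$, so $\Mcris(N(1))=\Mcris(N)(1)$ for every $\Gal(\overline{K}/K)$-stable $\Z_p$-lattice $N$ in a crystalline representation. Applying this with $N=P^2_{\et}(\mathscr{Y}_{\overline{K}},\Z_p)$ and invoking Lemma~\ref{Lemma:Primitive Breuil-Kisin module (crystalline)} (together with $\mathscr{Y}_k\cong X$) gives
\[
\Mcris\bigl(P^2_{\et}(\mathscr{Y}_{\overline{K}},\Z_p(1))\bigr)=\Mcris\bigl(P^2_{\et}(\mathscr{Y}_{\overline{K}},\Z_p)\bigr)(1)\;\cong\;P^2_{\cris}(X/W)(1).
\]
Combining the last two displays gives $\iota_{\cris}(\Lambda_W)^{\perp}\cong P^2_{\cris}(X/W)(1)$, and twisting by $(-1)$ gives $\iota_{\cris}(\Lambda_W)^{\perp}(-1)\cong P^2_{\cris}(X/W)$. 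Finally, since $\Lambda$ is a direct summand of $\widetilde{L}$ (Lemma~\ref{LatticeEmbedding}), $\iota_{\cris}(\Lambda_W)$ and hence $\iota_{\cris}(\Lambda_W)^{\perp}$ are direct summands of $\widetilde{L}_{\cris}$; thus $\iota_{\cris}(\Lambda_W)^{\perp}(-1)$ is a $\varphi$-stable direct summand of the $F$-crystal $\widetilde{L}_{\cris}(-1)$, so both sides of the last isomorphism are $F$-crystals and the isomorphism is one of $F$-crystals, as claimed.

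The step requiring the most care is the bookkeeping around the bilinear forms and the single Tate twist: verifying that $\Mcris$ genuinely matches the perfect pairings on $\widetilde{\mathbb{L}}_{p,\overline{t}}$ and $\widetilde{L}_{\cris}$ and sends $\iota_p$ to $\iota_{\cris}$ (so that forming orthogonal complements commutes with $\Mcris$), and then checking that the Frobenius normalizations coming from $\Mcris$, from the Berthelot--Ogus comparison, and from the cycle-class twist $\Z_p(1)$ are mutually consistent, so that the concluding twist by $(-1)$ is indeed the right one. Each ingredient is standard or already recorded above, but they must be assembled with all conventions aligned.
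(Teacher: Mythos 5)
Your reduction to the Galois-equivariant isometry $P^2_{\et}(\mathscr{Y}_{\overline{K}},\Z_p(1))\cong\iota_p(\Lambda_{\Z_p})^{\perp}$ and to Lemma \ref{Lemma:Primitive Breuil-Kisin module (crystalline)} matches the paper's starting point, but the pivotal step is not justified: you assert that $\Mcris$ is an exact $\otimes$-functor, so that applying it to $0\to\iota_p(\Lambda_{\Z_p})^{\perp}\to\widetilde{\mathbb{L}}_{p,\overline{t}}\to\Lambda_{\Z_p}^{\vee}\to 0$ identifies $\Mcris(\iota_p(\Lambda_{\Z_p})^{\perp})$ with the orthogonal complement $\iota_{\cris}(\Lambda_W)^{\perp}$ \emph{integrally}. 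No such exactness is available: $\Mcris(N)=\varphi^{*}\M(N)\otimes_{\mathfrak{S},u\mapsto 0}W$, and the base change $\mathfrak{S}\to W$ is only right exact, while Kisin's functor $\M$ is not known (and cannot simply be cited) to take this surjection of lattices to a surjection of Breuil--Kisin modules. What one gets for free is only an injection of the two sides after inverting $p$, i.e.\ the statement as $F$-isocrystals, with a possibly nontrivial $p$-power-torsion discrepancy integrally --- and the integral equality is precisely the content of the proposition.

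This is exactly why the paper proves Proposition \ref{Proposition:Flatness of cokernel}: the cokernel of $\M(\iota_p(\Lambda_{\Z_p})(-1)^{\perp})\to\M(\widetilde{L}'_p(-1))$ is a free $\mathfrak{S}$-module, which is what makes the sequence stay exact after $\otimes_{\mathfrak{S}}W$ and forces $\Mcris$ of the perp to be the full (saturated) orthogonal complement $\iota_{\cris}(\Lambda_W)^{\perp}(-1)\subset\widetilde{L}_{\cris}(-1)$. That freeness is not formal bookkeeping of pairings and twists, as you suggest at the end; its proof uses genuinely geometric input --- the Hodge filtration of the quasi-polarized $K3$ (primitivity of $\xi$ via Ogus, smoothness of the moduli point through Lemma \ref{Lemma:Duality filtration} and Lemma \ref{Lemma:Primitive Breuil-Kisin module (de Rham)}), Lemma \ref{Lemma: 1-st Filtration}, and Liu's criterion for torsion-free $\varphi$-modules of height $0$. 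So your argument as written establishes the isomorphism only after inverting $p$ (already known from Madapusi Pera's work); to close the gap you must supply an argument for the freeness of that cokernel, or some equivalent saturation statement.
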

\begin{proof}
	This proposition was proved in \cite[Corollary 5.14]{MadapusiTateConj} when $p \neq 2$.
	Here we give a proof using the integral comparison theorem
	of Bhatt-Morrow-Scholze \cite{BMS}.
	(Our proof works for any $p$, including $p=2$.)
	As in the proof of Lemma \ref{Lemma:Primitive Breuil-Kisin module (de Rham)}, we have
	\[
	\M(P^2_{\mathrm{\acute{e}t}}(\mathscr{Y}_{\overline{K}},\Z_p)) \cong \M(\iota_p(\Lambda_{\Z_p})(-1))^{\perp}.
	\]
	Since 
	\[
	\Mcris(P^2_{\mathrm{\acute{e}t}}(\mathscr{Y}_{\overline{K}},\Z_p))\cong P^2_{\cris}(X/W)
	\]
	by Lemma \ref{Lemma:Primitive Breuil-Kisin module (crystalline)}, 
	it suffices to show the cokernel of the homomorphism
	\[
\varphi^{*}\M(\iota_p(\Lambda_{\Z_p})(-1)) \to \varphi^{*}\M(\widetilde{L}'_{p}(-1))
	\]
	is a free $\mathfrak{S}$-modules in the proof of Lemma \ref{Lemma:Primitive Breuil-Kisin module (de Rham)}.
	This follows from Proposition \ref{Proposition:Flatness of cokernel} below.
\end{proof}

\begin{prop}\label{Proposition:Flatness of cokernel}
The cokernel of the homomorphism
\[
\M(\iota_p(\Lambda_{\Z_p})^{\perp}) \to \M(\widetilde{L}'_{p})
\]
is a free $\mathfrak{S}$-module.
\end{prop}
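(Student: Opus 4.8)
The plan is to mirror, step for step, the proof of Lemma~\ref{Lemma:Primitive Breuil-Kisin module (de Rham)} (and Lemma~\ref{Lemma:Primitive Breuil-Kisin module (crystalline)}), with the line bundle $\xi$ replaced by the sublattice $\iota_p(\Lambda_{\Z_p})$ of $\widetilde{L}'_{p}$. I would begin by recording the structural input: $\widetilde{L}'_{p}=\widetilde{\mathbb{L}}_{p, \overline{t}}$ carries a $\Gal(\overline{K}/K)$-equivariant perfect even symmetric bilinear form, and $\iota_p(\Lambda_{\Z_p})$ is a $\Gal(\overline{K}/K)$-stable $\Z_p$-direct summand of $\widetilde{L}'_{p}$. (The map $\iota_p$ is $\Gal(\overline{K}/K)$-equivariant because the $\Lambda$-structure is given by genuine endomorphisms of the Kuga--Satake abelian scheme; and, via the isometry $\widetilde{L}_{\Z_p}\cong\widetilde{\mathbb{L}}_{p,\overline{t}}$ of Section~\ref{Subsection:Hodge tensors}, $\iota_p$ is the base change of the split inclusion $\Lambda_{\Z_p}\hookrightarrow\widetilde{L}_{\Z_p}$ coming from Lemma~\ref{LatticeEmbedding}.) Consequently $\iota_p(\Lambda_{\Z_p})^{\perp}$ is precisely the kernel of a surjection $\widetilde{L}'_{p}\twoheadrightarrow\iota_p(\Lambda_{\Z_p})^{\vee}$ of $\Gal(\overline{K}/K)$-stable $\Z_p$-lattices in crystalline representations, all of which are free over $\Z_p$.

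\textbf{Torsion-freeness.} Since $N\mapsto\M(N)$ is a tensor functor compatible with duals, the perfect form on $\widetilde{L}'_{p}$ induces a perfect symmetric $\mathfrak{S}$-bilinear form on $\M(\widetilde{L}'_{p})$ and $\M(\iota_p(\Lambda_{\Z_p})^{\vee})\cong\M(\iota_p(\Lambda_{\Z_p}))^{\vee}$. As in the proof of Lemma~\ref{Lemma:Primitive Breuil-Kisin module (de Rham)}, the orthogonal complement $\M(\iota_p(\Lambda_{\Z_p}))^{\perp}\subset\M(\widetilde{L}'_{p})$ is a free $\mathfrak{S}$-module (being the kernel of a homomorphism of free $\mathfrak{S}$-modules of finite rank), and, since it is a Breuil--Kisin submodule with the correct generic fibre, it is identified with $\M(\iota_p(\Lambda_{\Z_p})^{\perp})$ by the characterization in \cite[Theorem~4.4]{BMS} of the Breuil--Kisin module attached to a lattice in $p$-adic \'etale cohomology. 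An orthogonal complement relative to a perfect pairing over the domain $\mathfrak{S}$ is saturated, so the cokernel $C:=\M(\widetilde{L}'_{p})/\M(\iota_p(\Lambda_{\Z_p})^{\perp})$ is $\mathfrak{S}$-torsion-free.

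\textbf{From torsion-free to free.} Because $C$ is $\mathfrak{S}$-torsion-free, $u$ (equivalently $E(u)$) is a non-zero-divisor on it, so by the standard criterion --- a finitely generated module $M$ over the Noetherian local ring $\mathfrak{S}$ with $u$ a non-zero-divisor on $M$ and $M/uM$ free over $\mathfrak{S}/u=W$ is free --- it suffices to show that $C\otimes_{\mathfrak{S}}W$ is $W$-free. Using $\Tor_1^{\mathfrak{S}}(C,W)=0$ (again because $C$ is torsion-free), the identification $\Mcris(\widetilde{L}'_{p})\cong\widetilde{L}_{\cris, s}$, and the identification of the previous step, one obtains after applying $\varphi^{*}$ a short exact sequence $0\to\iota_{\cris}(\Lambda_W)^{\perp}\to\widetilde{L}_{\cris, s}\to C\otimes_{\mathfrak{S}}W\to 0$, in which $\iota_{\cris}(\Lambda_W)^{\perp}$ is the orthogonal complement of $\iota_{\cris}(\Lambda_W)$ with respect to the even perfect pairing on the free $W$-module $\widetilde{L}_{\cris, s}$, hence saturated; therefore $C\otimes_{\mathfrak{S}}W$ is $W$-torsion-free, so $W$-free, and $C$ is free over $\mathfrak{S}$. (One may equally run this step modulo $E(u)$, identifying $C\otimes_{\mathfrak{S}}\O_K$ with a de Rham cokernel via Theorem~\ref{Theorem:p-adic Hodge theorem}\,(1); unlike in Lemma~\ref{Lemma:Primitive Breuil-Kisin module (de Rham)}, no appeal to \cite[Corollary~1.4]{Ogus} is needed here, the required $p$-torsion-freeness being a formal consequence of $\Lambda$ being a direct summand of $\widetilde{L}$.)

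\textbf{The main difficulty.} The crux is the identification in the torsion-freeness step of $\M(\iota_p(\Lambda_{\Z_p})^{\perp})$ with the orthogonal complement $\M(\iota_p(\Lambda_{\Z_p}))^{\perp}$ inside the self-dual Breuil--Kisin module $\M(\widetilde{L}'_{p})$. Kisin's functor $\M$ is fully faithful but is not known to be exact --- the Breuil--Kisin module of a sublattice need not be saturated in that of the ambient lattice --- so one cannot simply apply $\M$ to the short exact sequence of the first step; the remedy, exactly as in Lemma~\ref{Lemma:Primitive Breuil-Kisin module (de Rham)}, is to replace the possibly non-saturated image by the genuine orthogonal complement and invoke \cite[Theorem~4.4]{BMS} to see that the two agree. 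Once this is granted, the remaining steps are routine.
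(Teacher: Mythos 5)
Your first step (torsion-freeness of the cokernel via $\M(\iota_p(\Lambda_{\Z_p})^{\perp})\cong\M(\iota_p(\Lambda_{\Z_p}))^{\perp}$ and the characterization in \cite[Theorem 4.4]{BMS}) is exactly the paper's, and your reduction ``torsion-free $+$ free mod $u$ $\Rightarrow$ free over $\mathfrak{S}$'' is sound. The gap is in the step where you produce the exact sequence
$0\to\iota_{\cris}(\Lambda_W)^{\perp}\to\widetilde{L}_{\cris,s}\to C\otimes_{\mathfrak{S}}W\to 0$.
What the $\Tor$-vanishing actually gives is exactness of
$0\to\Mcris(\iota_p(\Lambda_{\Z_p})^{\perp})\to\widetilde{L}_{\cris,s}\to \varphi^{*}C\otimes_{\mathfrak{S}}W\to 0$,
and the image of the left-hand term is only \emph{contained in} $\iota_{\cris}(\Lambda_W)^{\perp}$. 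Orthogonal complements do not commute with the non-flat specialization $u\mapsto 0$ (nor with reduction mod $E(u)$): for instance in $\mathfrak{S}^2$ with the hyperbolic pairing, the complement of $\mathfrak{S}\,u e_1$ is $\mathfrak{S}e_1$, whose image mod $u$ is strictly smaller than the complement of the (zero) image of $\mathfrak{S}\,ue_1$. In fact, by the rank count, equality of the image with $\iota_{\cris}(\Lambda_W)^{\perp}$ is \emph{equivalent} to $C\otimes_{\mathfrak{S}}W$ being $p$-torsion-free, i.e.\ to the proposition itself; it is also precisely the crystalline identification of Proposition \ref{Proposition:CrisOrthogonal}, which the paper deduces \emph{from} this proposition. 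So as written the argument is circular: you assume at the $W$-level the compatibility you are trying to prove.

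Your parenthetical claim that the needed $p$-torsion-freeness is ``a formal consequence of $\Lambda$ being a direct summand of $\widetilde{L}$'' is where the error becomes concrete. $\Lambda=L^{\perp}$ is a $\Z$-module direct summand, but it is not an orthogonal direct summand: its discriminant is $2d(4dp-1)$, so when $p\mid 2d$ the pairing on $\Lambda_{\Z_p}$ is degenerate mod $p$ and $\widetilde{L}'_p$ does not split as $\iota_p(\Lambda_{\Z_p})\oplus\iota_p(\Lambda_{\Z_p})^{\perp}$. This is exactly the hard case the whole $\widetilde L$-apparatus is designed for, and there freeness of the cokernel is genuinely not formal (a torsion-free non-free module such as $(p,u)\subset\mathfrak{S}$ has $p$-torsion in its reduction mod $u$, which is the phenomenon one must exclude). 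The paper excludes it by a different mechanism: it equips the cokernel with the induced Frobenius, proves that $(1\otimes\varphi)$ is divisible by $E(u)$ and has cokernel killed by $E(u)$ — using $\Gr^0(\MdR)\cong\Gr^0(\widetilde{\M}_{\dR})$ and $\Gr^2(\MdR)\cong\Gr^2(\widetilde{\M}_{\dR})$, which rest on Lemma \ref{Lemma:Duality filtration}, Lemma \ref{Lemma:Primitive Breuil-Kisin module (de Rham)} and Lemma \ref{Lemma: 1-st Filtration}, hence on the geometric input from Ogus (including \cite[Corollary 1.4]{Ogus} and the versal-deformation argument) — and then invokes \cite[Lemma 2.18 (2)]{Liu12} for torsion-free $\varphi$-modules of height $0$. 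Some such filtration/Frobenius input is unavoidable; your shortcut bypasses it only by assuming the conclusion.
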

\begin{proof}
Let $\M'$ be the cokernel of  the homomorphism
\[
\M(\iota_p(\Lambda_{\Z_p})(-1)^{\perp}) \to
\M(\widetilde{L}'_{p}(-1)).
\]
As in the proof of Lemma \ref{Lemma:Primitive Breuil-Kisin module (de Rham)},
we have
\[
\M(\iota_p(\Lambda_{\Z_p})(-1)^{\perp}) \cong \M(\iota_p(\Lambda_{\Z_p})(-1))^{\perp}.
\]
Hence the cokernel $\M'$ is a torsion-free $\mathfrak{S}$-module.
Note that $\M(\widetilde{L}'_{p}(-1))$ is an effective Breuil-Kisin module of height $ \leq 2$ and it induces a Frobenius
\[
1\otimes \varphi \colon \varphi^{*}\M' \to \M'.
\]

It is enough to show the Frobenius of $\M'$ satisfies the following properties:
\begin{enumerate}
	\item For every $x \in \varphi^{*}\M'$, the image $(1\otimes \varphi)(x)$ is divisible by $E(u)$.
	\item The cokernel of $1\otimes \varphi \colon \varphi^{*}\M' \to \M'$ is killed by $E(u)$.
\end{enumerate}
In fact, if $(1), (2)$ are proved, the homomorphism $1/E(u)(1\otimes \varphi)$ makes $\M'$ a torsion-free $\varphi$-module of height $0$ in the sense of \cite{Liu12}.
Then, we see that $\M'$ is a free $\mathfrak{S}$-module by \cite[Lemma 2.18 (2)]{Liu12}.

We shall prove $(1)$.
For simplicity, we put
\begin{align*}
	\M &:=\M(\iota_p(\Lambda_{\Z_p})(-1)^{\perp}) \\
\widetilde{\M} &:=\M(\widetilde{L}'_{p}(-1))\\
\MdR &:= \MdR(\iota_p(\Lambda_{\Z_p})(-1)^{\perp})\\
\widetilde{\M}_{\dR} &:=\MdR(\widetilde{L}'_{p}(-1)).
\end{align*}

The perfect bilinear form on $\widetilde{\M}_{\dR}$
induces
$
\widetilde{\M}_{\dR} \cong \widetilde{\M}_{\dR}^{\vee}.
$
We shall show that this isomorphism induces
\[
\Gr^2(\widetilde{\M}_{\dR}) \cong \Gr^0(\widetilde{\M}_{\dR})^{\vee}.
\]
Since the cokernel of $\Gr^2(\widetilde{\M}_{\dR}) \hookrightarrow \widetilde{\M}_{\dR}$
(resp.\ $\Gr^0(\widetilde{\M}_{\dR})^{\vee} \hookrightarrow \widetilde{\M}_{\dR}^{\vee}$) is $p$-torsion-free, it suffices to show that we have the desired isomorphism after inverting $p$.
This follows the isomorphism
\begin{align*}
    \widetilde{L}'_{p}(-1)[1/p] &\cong \iota_p(\Lambda_{\Z_p})(-1)[1/p] \oplus \iota_p(\Lambda_{\Z_p})(-1)^{\perp}[1/p] \\
    & \cong \iota_p(\Lambda_{\Z_p})(-1)[1/p] \oplus P^2_{\mathrm{\acute{e}t}}(\mathscr{Y}_{\overline{K}},\Z_p)[1/p],
\end{align*}
where we use $P^2_{\mathrm{\acute{e}t}}(\mathscr{Y}_{\overline{K}},\Z_p) \cong \iota_p(\Lambda_{\Z_p})(-1)^{\perp}$.

Since $P^2_{\mathrm{\acute{e}t}}(\mathscr{Y}_{\overline{K}},\Z_p) \cong \iota_p(\Lambda_{\Z_p})(-1)^{\perp}$, the bilinear form on $\MdR$ induces 
\[
\Gr^2(\MdR) \cong \Gr^0(\MdR)^{\vee}
\]
by Lemma \ref{Lemma:Duality filtration} and Lemma \ref{Lemma:Primitive Breuil-Kisin module (de Rham)}.

The homomorphism
\[
\MdR \to \widetilde{\M}_{\dR}
\]
induces the following commutative diagram:
\[
\xymatrix{ \Gr^2(\MdR) \ar[r]^-{\cong}\ar[d]^-{}  & \Gr^0(\MdR)^{\vee}  \\
\Gr^2(\widetilde{\M}_{\dR}) \ar[r]^-{\cong} & \Gr^0(\widetilde{\M}_{\dR})^{\vee}. \ar[u]^-{}
}
\]
Hence $\Gr^0(\widetilde{\M}_{\dR})^{\vee} \to \Gr^0(\MdR)^{\vee}$ is surjective.
Since both $\Gr^0(\MdR)^{\vee}$ and $\Gr^0(\widetilde{\M}_{\dR})^{\vee}$
are free $\O_K$-modules of rank $1$, we have
$
\Gr^0(\MdR)^{\vee} \cong \Gr^0(\widetilde{\M}_{\dR})^{\vee}$, and hence
\[
\Gr^0(\MdR) \cong \Gr^0(\widetilde{\M}_{\dR}).
\]
We also have
\[
\Gr^2(\MdR) \cong \Gr^2(\widetilde{\M}_{\dR}).
\]
By $\Gr^0(\MdR) \cong \Gr^0(\widetilde{\M}_{\dR})$ and Lemma \ref{Lemma: 1-st Filtration}, we have
\[
\Fil^1(\varphi^{*}\widetilde{\M})+\varphi^{*}\M=\varphi^{*}\widetilde{\M}.
\]
The assertion $(1)$ follows from this equality.

We shall prove $(2)$.
By $\Gr^2(\MdR) \cong \Gr^2(\widetilde{\M}_{\dR})$ and Lemma \ref{Lemma: 1-st Filtration}, we have
\[
\Fil^2(\varphi^{*}\widetilde{\M})\subset \Fil^1(\varphi^{*}\M) + E(u)\varphi^{*}\widetilde{\M}.
\]
We take an element $x \in \widetilde{\M}$.
Since $\widetilde{\M}$ is of height $\leq 2$, there is an element $y \in \varphi^{*}\widetilde{\M}$ such that
\[ (1\otimes \varphi)(y)=E(u)^2x. \]
By the above inclusion, there is an element
$z \in \Fil^1(\varphi^{*}\M)$ and
$w \in \varphi^{*}\widetilde{\M}$
such that
\[
y = z + E(u)w.
\]
We put $(1\otimes \varphi)(z):= E(u)z'$ for $z' \in \M$.
Then we have
\[
E(u)x=z' + (1\otimes \varphi)(w).
\]
This shows $E(u)x$ is zero in the cokernel of $\varphi^{*}\M' \to \M'$ and the proof of the assertion $(2)$ is complete.

The proof of Proposition \ref{Proposition:Flatness of cokernel} is complete.
\end{proof}

\subsection{Formal Brauer groups}\label{Subsection: Formal Brauer groups}

We consider the situation as in Section \ref{Subsection:$F$-crystals on Shimura varieties and the cohomology of K3 surfaces}.

Let $\widehat{\Br}:=\widehat{\Br}(X)$ be the \textit{formal Brauer group} associated with the $K3$ surface $X$.
Recall that $\widehat{\Br}$ is a one-dimensional smooth formal group scheme
pro-representing the functor
\[ 
 \Phi^2_{X} \colon {\mathrm{Art}_k} \to \text{(Abelian  groups)}
\]
defined by
\[
R \mapsto \Ker({H^2_{\rm{\acute{e}t}}(X_{R},\mathbb{G}_m)}\rightarrow{H^2_{\rm{\acute{e}t}}(X,\mathbb{G}_m)}),
\]
where $\mathrm{Art}_k$ is the category of local artinian $k$-algebras with residue field $k$, and (Abelian  groups) is the category of abelian groups;
see \cite[Chapter II, Corollary 2.12]{Artin-Mazur}.
(For basic properties of the formal Brauer group,
see also \cite[Section 6]{Liedtke}.)
The \textit{height} $h$ of the  $K3$ surface $X$ is defined to be the height of $\widehat{\Br}$.
We have
$1\leq h \leq10$ or $h = \infty$.

There is a natural equivalence from the category of one-dimensional smooth formal group schemes of finite height over $k$ to the category of one-dimensional connected $p$-divisible groups over $k$.
If the height of $X$ is finite, we identify the formal Brauer group $\widehat{\Br}$ with the corresponding connected $p$-divisible group over $k$, and let $\widehat{\Br}^{*}$ be the Cartier dual of $\widehat{\Br}$.

For a crystal $\mathscr{E}$ over $\mathrm{CRIS}(k/\Z_p)$, we will denote simply by the same letter $\mathscr{E}$ the value
$\mathscr{E}_{W \twoheadrightarrow k}$ in ($\Spec k \hookrightarrow \Spec W$).
By \cite[(5.3.3.1)]{BBM}, we have a canonical perfect bilinear form
\[
\mathbb{D}(\widehat{\Br}^{*}) \times \mathbb{D}(\widehat{\Br})(-1) \to W(-2).
\]

\begin{prop}\label{Proposition:CrysDecomposition}
	Assume the height of $X$ is finite.
	The following assertions hold:
	\begin{enumerate}
	\item There is an isomorphism of $F$-crystals
	\[
	\widetilde{L}_{\cris}(-1) \cong \mathbb{D}(\widehat{\Br}^{*})\oplus \mathbb{D}(D)(-1)\oplus\mathbb{D}(\widehat{\Br})(-1),
	\]
	where $D$ is an \'etale $p$-divisible group over $k$.
\item Under this isomorphism, the bilinear form on $\widetilde{L}_{\cris}(-1)$ is the direct sum of a perfect bilinear form
\[
\mathbb{D}(D)(-1) \times \mathbb{D}(D)(-1) \to W(-2)
\]
and the canonical perfect bilinear form
\[
\mathbb{D}(\widehat{\Br}^{*}) \times \mathbb{D}(\widehat{\Br})(-1) \to W(-2).
\]
\end{enumerate}
	\end{prop}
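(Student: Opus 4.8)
The plan is to use the slope (Newton/Hodge) decomposition of the $F$-isocrystal $\widetilde{L}_{\cris}(-1)[1/p]$ together with Proposition \ref{Proposition:CrisOrthogonal}, which identifies $P^2_{\cris}(X/W)$ with $\iota_{\cris}(\Lambda_W)^\perp(-1)\subset \widetilde{L}_{\cris}(-1)$. First I would recall the standard Artin--Mazur picture for a $K3$ surface of finite height: the $F$-crystal $H^2_{\cris}(X/W)$ has a three-step slope filtration, the slope-$<1$ part being $\mathbb{D}(\widehat{\Br})$ (up to the appropriate Tate twist), the slope-$>1$ part being $\mathbb{D}(\widehat{\Br}^*)$, and the slope-$1$ part being unit-root, hence of the form $\mathbb{D}(D)$ for an \'etale $p$-divisible group $D$ over $k$. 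Concretely, after inverting $p$ the isocrystal $H^2_{\cris}(X/W)[1/p]$ decomposes, and since $k$ is a finite field (or $\overline{\F}_q$) the slope filtration on the $F$-crystal $H^2_{\cris}(X/W)$ splits integrally up to isogeny; I would pin down the integral statement using that the unit-root part and the parts of slopes bounded away from $1$ admit canonical lattices (Dieudonn\'e theory of the corresponding $p$-divisible groups), exactly as in Nygaard--Ogus. This gives $P^2_{\cris}(X/W) \cong \mathbb{D}(\widehat{\Br})(-1)\oplus \mathbb{D}(D)(-1)\oplus \mathbb{D}(\widehat{\Br}^*)$ as $F$-crystals, where I have twisted so that $P^2_{\cris}(X/W)$ itself (not $H^2$) is the relevant object.

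Next, to pass from $P^2_{\cris}(X/W)$ to $\widetilde{L}_{\cris}(-1)$, I would invoke Proposition \ref{Proposition:CrisOrthogonal}: we have $\widetilde{L}_{\cris}(-1) = \iota_{\cris}(\Lambda_W)(-1)\oplus \iota_{\cris}(\Lambda_W)^\perp(-1) = \iota_{\cris}(\Lambda_W)(-1)\oplus P^2_{\cris}(X/W)$ after inverting $p$, and the orthogonal complement $\iota_{\cris}(\Lambda_W)^\perp(-1)$ is already shown to be a direct summand as an $F$-crystal. But the summand $\iota_{\cris}(\Lambda_W)(-1)$ is the image of the \'etale lattice $\Lambda_W$ under $\iota_{\cris}$, on which Frobenius acts by $p^{-1}$ times a $\sigma$-semilinear isomorphism after the twist — i.e.\ it is unit-root, hence of the form $\mathbb{D}(D_0)(-1)$ for an \'etale $p$-divisible group $D_0$. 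Absorbing $D_0$ into $D$ (replacing the \'etale $p$-divisible group from the $K3$ slope decomposition by its direct sum with $D_0$), I obtain assertion (1): $\widetilde{L}_{\cris}(-1)\cong \mathbb{D}(\widehat{\Br}^*)\oplus \mathbb{D}(D)(-1)\oplus \mathbb{D}(\widehat{\Br})(-1)$, where the first and last summands carry slopes $>1$ and $<1$ respectively and so are canonically the contributions of the formal Brauer group and its dual.

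For assertion (2), I would use that the bilinear form on $\widetilde{L}_{\cris}(-1)$ is $\varphi$-equivariant in the sense $(\varphi y_1,\varphi y_2)=p^2\,\sigma(y_1,y_2)$ in $W(-2)$ (recorded in Section \ref{Subsection: $F$-crystals and Breuil-Kisin modules}, with the appropriate twist). Slope considerations then force the pairing to be block-diagonal with respect to the decomposition by slopes: the pairing of a slope-$\lambda$ element with a slope-$\mu$ element lands in something of slope $\lambda+\mu$, which must equal $2$ (the slope of $W(-2)$), so $\mathbb{D}(\widehat{\Br}^*)$ (slopes $>1$) pairs trivially with itself, $\mathbb{D}(\widehat{\Br})(-1)$ (slopes $<1$) pairs trivially with itself, and the slope-$1$ part $\mathbb{D}(D)(-1)$ is orthogonal to both. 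Since the ambient form is perfect, the induced forms $\mathbb{D}(D)(-1)\times \mathbb{D}(D)(-1)\to W(-2)$ and $\mathbb{D}(\widehat{\Br}^*)\times \mathbb{D}(\widehat{\Br})(-1)\to W(-2)$ are perfect, and the latter is canonical by functoriality of the duality pairing $\mathbb{D}(\widehat{\Br}^*)\times \mathbb{D}(\widehat{\Br})(-1)\to W(-2)$ recalled from \cite[(5.3.3.1)]{BBM}; compatibility of the two descriptions of this pairing (the one coming from cup product on $H^2_{\cris}$ restricted to the primitive part and embedded in $\widetilde{L}_{\cris}$, and the one coming from Cartier duality of $\widehat{\Br}$) is the point that needs to be checked carefully.

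The main obstacle I expect is precisely this last compatibility, together with the need to make the slope decomposition of $P^2_{\cris}(X/W)$ \emph{integral} (not just isogeny) and \emph{canonical} enough that it matches the Dieudonn\'e modules $\mathbb{D}(\widehat{\Br})$, $\mathbb{D}(\widehat{\Br}^*)$ on the nose rather than up to a twist or an isogeny. Over a finite field this is controlled (the slope filtration of an $F$-crystal splits up to isogeny, and the outer graded pieces of the $K3$ $F$-crystal are exactly the Dieudonn\'e crystals of $\widehat{\Br}$ and $\widehat{\Br}^*$ by Artin--Mazur/Nygaard), but identifying the unit-root middle piece with a genuine \'etale $p$-divisible group $D$ and keeping track of the Tate twists in the pairing with $W(-2)$ will require care; I would lean on the formalism of \cite{BBM} and on the description of $H^2_{\cris}$ of a $K3$ surface in terms of its formal Brauer group to handle it.
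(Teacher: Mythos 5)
Your overall shape (slope decomposition, Artin--Mazur identification of the outer pieces, unit-root middle piece, slope argument for block-diagonality of the pairing) matches the paper, but there is a genuine gap at the step where you pass from $P^2_{\cris}(X/W)$ to $\widetilde{L}_{\cris}(-1)$. You write $\widetilde{L}_{\cris}(-1)=\iota_{\cris}(\Lambda_W)(-1)\oplus\iota_{\cris}(\Lambda_W)^{\perp}(-1)$ and treat $\iota_{\cris}(\Lambda_W)^{\perp}(-1)\cong P^2_{\cris}(X/W)$ as an $F$-crystal direct summand with complement $\iota_{\cris}(\Lambda_W)(-1)$. Integrally this is false in general: the form restricted to $\Lambda$ is not unimodular at $p$ when $p\mid 2d$ (the case this paper is built to handle), so $\iota_{\cris}(\Lambda_W)(-1)\oplus P^2_{\cris}(X/W)$ sits inside $\widetilde{L}_{\cris}(-1)$ with index divisible by $p$. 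Proposition \ref{Proposition:CrisOrthogonal} (via Proposition \ref{Proposition:Flatness of cokernel}) only says that $\iota_{\cris}(\Lambda_W)^{\perp}$ is a saturated sub-$F$-crystal with free quotient as a $W$-module; it does not provide a $\varphi$-stable complement, let alone the complement $\iota_{\cris}(\Lambda_W)$. Consequently ``absorbing $D_0$ into $D$'' only produces the decomposition after inverting $p$, i.e.\ an isomorphism of $F$-isocrystals, which is strictly weaker than the stated isomorphism of $F$-crystals. The paper avoids this by working with $\widetilde{L}_{\cris}(-1)$ directly: the key input is Lemma \ref{Lemma:HodgePolygon}, a Breuil--Kisin-module computation (using the self-dual pairing) showing the Hodge polygon of $\widetilde{L}_{\cris}(-1)$ has slopes $0,1^{20},2$, so that the Newton breakpoints lie on the Hodge polygon and Katz's Hodge--Newton decomposition \cite[Theorem 1.6.1]{Katz} gives the integral splitting of the lattice itself; the decomposition of $P^2_{\cris}(X/W)$ is then deduced from it, not the other way around. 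Your appeal to ``canonical lattices as in Nygaard--Ogus'' does not substitute for this, since having canonical lattices in the isocrystal pieces does not split the given self-dual lattice.

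Two smaller points. First, your slope attributions are reversed for the contravariant conventions of \cite{BBM} used here: the slope-$(1-1/h)$ (i.e.\ small-slope) piece is $\mathbb{D}(\widehat{\Br}^{*})$ (via the Cartier module of typical curves, \cite[Proposition 7]{Taelman} and Remark \ref{Remark:Cartier module}), while $\mathbb{D}(\widehat{\Br})(-1)$ has slope $1+1/h$; this is only a convention slip but it propagates into your description of which summand is which. Second, the compatibility you flag as the main obstacle in (2) --- matching the cup-product pairing with the canonical duality pairing $\mathbb{D}(\widehat{\Br}^{*})\times\mathbb{D}(\widehat{\Br})(-1)\to W(-2)$ --- is sidestepped in the paper: only the small-slope piece is identified intrinsically with $\mathbb{D}(\widehat{\Br}^{*})$, and the large-slope piece is then \emph{defined} to be $\mathbb{D}(\widehat{\Br})(-1)$ via the perfect pairing $\widetilde{L}_{1-1/h}\times\widetilde{L}_{1+1/h}\to W(-2)$, so assertion (2) holds by construction on that block. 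Adopting the same device would remove that verification from your plan; the slope argument you give for block-diagonality and perfectness of the restricted pairings is fine and is essentially what the paper asserts.
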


\begin{proof}
The breaking points of the Newton polygon of $\widetilde{L}_{\cris}(-1)$ lie on the Hodge polygon of it; see Lemma \ref{Lemma:HodgePolygon} and its proof below.

By the Hodge-Newton decomposition \cite[Theorem 1.6.1]{Katz},
there is a decomposition as an $F$-crystal over $W$
\[
\widetilde{L}_{\cris}(-1) \cong \widetilde{L}_{1-1/h} \oplus \widetilde{L}_{1} \oplus \widetilde{L}_{1+1/h},
\]
where
$\widetilde{L}_{\lambda}$ is an $F$-crystal over $W$
has a single slope $\lambda$ for each $\lambda \in \{\, 1-1/h,\, 1,\, 1+1/h \,\}$. 

Via this decomposition, the bilinear form $( \ , \ )$ is the direct sum of 
a perfect bilinear form
\[
\widetilde{L}_{1} \times \widetilde{L}_{1} \to W(-2)
\]
and a perfect bilinear form
\[
\widetilde{L}_{1-1/h} \times \widetilde{L}_{1+1/h} \to W(-2).
\]
Similarly, we have a decomposition 
\[
P^2_{\cris}(X/W) \cong L_{1-1/h} \oplus L_{1} \oplus L_{1+1/h}.
\]

By Proposition \ref{Proposition:CrisOrthogonal}, 
we have $P^2_{\cris}(X/W)=\iota_{\cris}(\Lambda_{W})(-1)^{\perp}$.
Since $\iota_{\cris}(\Lambda_{W})(-1)$ is contained in $\widetilde{L}_{1}$,
we have
\[
L_{1-1/h}=\widetilde{L}_{1-1/h} \quad \mathrm{and} \quad L_{1+1/h}=\widetilde{L}_{1+1/h}.
\]
We have a natural isomorphism of $F$-crystals over $W$
\[
\mathbb{D}(\widehat{\Br}^{*}) \cong L_{1-1/h}.
\]
(See \cite[Proposition 7]{Taelman} for example.
See also Remark \ref{Remark:Cartier module}.)
Using the perfect bilinear form
\[
\widetilde{L}_{1-1/h} \times \widetilde{L}_{1+1/h} \to W(-2),
\]
we identify $L_{1+1/h}$ with $L^{\vee}_{1-1/h}(-2)\cong \mathbb{D}(\widehat{\Br})(-1)$.

Since we have $p\widetilde{L}_{1} = \varphi(\widetilde{L}_{1})$,
there is an \'etale $p$-divisible group $D$ over $k$ such that $\mathbb{D}(D)(-1)\cong \widetilde{L}_{1}$.
\end{proof}

\begin{rem}\label{Remark:Cartier module}
We assume the height of $X$ is finite.
The proof of \cite[Proposition 7]{Taelman} shows that there is a natural isomorphism of $F$-crystals over $W$
\[
\mathrm{TC}(\widehat{\Br}) \cong L_{1-1/h}.
\]
Here $\mathrm{TC}(\widehat{\Br})$ is the Cartier-Dieudonn\'e module of typical curves of $\widehat{\Br}$; see \cite[I, Section 3]{Artin-Mazur} for example.
The $F$-crystal $\mathrm{TC}(\widehat{\Br})$
is naturally isomorphic to the $F$-crystal $\mathbb{D}(\widehat{\Br}^{*})$
by \cite[(5.8)]{Breen}, \cite[Th\'eor\`eme 4.2.14, (5.3.3.1)]{BBM}.
\end{rem}

The following result is used in the proof of Proposition \ref{Proposition:CrysDecomposition}.

\begin{lem}\label{Lemma:HodgePolygon}
We have an isomorphism of $W$-modules
\[
\widetilde{L}_{\cris}(-1)/\varphi(\widetilde{L}_{\cris}(-1)) \cong (W/p)^{\oplus 20} \oplus W/p^2.
\]
\end{lem}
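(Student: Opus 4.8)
The statement to be proved is Lemma~\ref{Lemma:HodgePolygon}: the cokernel of the Frobenius on the $F$-crystal $\widetilde{L}_{\cris}(-1)$ has the shape $(W/p)^{\oplus 20}\oplus W/p^2$, i.e.\ the Hodge slopes of $\widetilde{L}_{\cris}(-1)$ are $(0^{20},1,1,2)$ with the middle slope~$1$ occurring once and a slope-$2$ part occurring once. The natural strategy is to pull this back from $P^2_{\cris}(X/W)$, which is the primitive $H^2$ of a $K3$ surface, where the analogous statement (Hodge numbers $(1,20,1)$) is classical. More precisely, I would first recall from Proposition~\ref{Proposition:CrisOrthogonal} that $P^2_{\cris}(X/W)\cong\iota_{\cris}(\Lambda_W)^{\perp}(-1)\subset\widetilde{L}_{\cris}(-1)$, so as $F$-crystals
\[
\widetilde{L}_{\cris}(-1)\cong \iota_{\cris}(\Lambda_W)(-1)\ \oplus\ P^2_{\cris}(X/W),
\]
the orthogonal complement splitting off because the bilinear form on $\widetilde{L}_{\cris}$ is perfect and $\iota_{\cris}$ is an isometry onto a direct summand (this uses that $\widetilde{L}_{\cris,s}$ carries an even perfect pairing, as recorded in Section~\ref{Subsection: $F$-crystals and Breuil-Kisin modules}, and that $\Lambda$ is self-dual at $p$). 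The summand $\iota_{\cris}(\Lambda_W)(-1)$ is, up to the Tate twist $(-1)$, the $F$-crystal attached to a self-dual $\Z_p$-local system of weight $0$; concretely it arises from $\mathbb{D}$ of a trivial (unit-root) part, so $\varphi$ on $\iota_{\cris}(\Lambda_W)$ is an isomorphism, and after the twist $(-1)$ the cokernel of $\varphi$ on $\iota_{\cris}(\Lambda_W)(-1)$ is $(W/p)^{\oplus r}$ where $r=\rank \Lambda$. Since $\rank\widetilde L=22$ and $\rank L=21$, we get $\rank\Lambda=1$, contributing $(W/p)^{\oplus 1}$.

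For the remaining summand $P^2_{\cris}(X/W)$, the cokernel of $\varphi$ is governed by the Hodge filtration on $H^2_{\dR}(\mathscr Y/\O_K)$ via the standard comparison: the length of $\widetilde L_{\cris}/\varphi$ equals the total Hodge number weighted by filtration level, i.e.\ $\sum_i i\cdot\dim_k \Gr^i$. For $P^2_{\dR}$ of a quasi-polarized $K3$ the Hodge numbers are $\dim\Gr^0_{\text{pr}}=1$, $\dim\Gr^1_{\text{pr}}=19$, $\dim\Gr^2_{\text{pr}}=1$ (this follows from Lemma~\ref{Lemma:Duality filtration} together with the fact that $\dim P^2=21$). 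Here I would invoke the description of $\Fil^i(\MdR(\cdot))$ from Lemma~\ref{Lemma:Primitive Breuil-Kisin module (de Rham)} and the fact that for an effective Breuil-Kisin module the divided Frobenius matches the filtration: the elementary divisors of $\varphi$ on $P^2_{\cris}(X/W)$ are $p^0$ (with multiplicity $\dim\Gr^0=1$), $p^1$ (with multiplicity $\dim\Gr^1=19$), $p^2$ (with multiplicity $\dim\Gr^2=1$) — this is precisely the Mazur/Ogus statement that the Hodge polygon computed from $\Fil^\bullet$ gives the elementary divisors because the crystal, being of $K3$-type, satisfies Mazur's inequality with equality at the Hodge level (equivalently, $\mathscr Y$ lifts to $\O_K$, so Fontaine--Laffaille / Breuil-Kisin theory applies and $\varphi(\Fil^i)\subset p^i\cdot(\text{lattice})$ with the induced map on graded pieces an isomorphism after Lemma~\ref{Lemma: 1-st Filtration}). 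Thus the cokernel of $\varphi$ on $P^2_{\cris}(X/W)$ is $(W/p)^{\oplus 19}\oplus W/p^2$.

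Adding the two contributions gives cokernel $(W/p)^{\oplus 1}\oplus(W/p)^{\oplus 19}\oplus W/p^2=(W/p)^{\oplus 20}\oplus W/p^2$, which is the claim. The main obstacle I anticipate is making rigorous the passage from the Hodge filtration on $H^2_{\dR}(\mathscr Y/\O_K)$ (or on $\MdR$) to the precise elementary-divisor type of $\varphi$ on the $W$-lattice $P^2_{\cris}(X/W)$: a priori one only gets an inequality of polygons (Mazur), and one must argue equality. The clean way is to work with the Breuil-Kisin module $\mathfrak M(P^2_{\et}(\mathscr Y_{\overline K},\Z_p))$, use that it is effective of height $\le 2$ (Lemma~\ref{Lemma:Primitive Breuil-Kisin module (de Rham)}), and that $\Fil^i(\varphi^*\mathfrak M)$ maps onto $\Fil^i_{\text{pr}}$ with graded pieces free of the stated ranks; specializing $u\mapsto 0$ then computes $\Mcris/\varphi$ exactly, since for a Breuil-Kisin module $\mathfrak M$ of finite height the cokernel of $1\otimes\varphi\colon\varphi^*\mathfrak M\to\mathfrak M$ modulo $u$ has length $\sum_i i\dim_k\Gr^i(\varphi^*\mathfrak M/u)$ with equality. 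I would spell out this last point (it is essentially that $E(u)\equiv$ unit$\cdot p\ \mathrm{mod}\ u$ so the Hodge polygon is computed on the nose after specialization) and cite Ogus / the lifting to $\O_K$ to guarantee the graded pieces have the $K3$ Hodge numbers. Everything else is bookkeeping with ranks.
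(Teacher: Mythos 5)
Your plan has a genuine gap, and it sits exactly in the case this lemma is needed for, namely $p\mid 2d$. Your first step splits $\widetilde{L}_{\cris}(-1)$ integrally as $\iota_{\cris}(\Lambda_W)(-1)\oplus P^2_{\cris}(X/W)$, justified by the claim that $\Lambda$ is self-dual at $p$. It is not: $\Lambda=L^{\perp}\subset\widetilde{L}$ has rank $1$, and in the construction of Lemma \ref{LatticeEmbedding} it is generated by $v_1-2dv_2$ with $(v_1-2dv_2,\,v_1-2dv_2)=2d(4dp-1)$, so its discriminant has $p$-adic valuation $v_p(2d)$. For a primitive isometric embedding of $\Lambda_W$ into the unimodular lattice $\widetilde{L}_{\cris}$, the quotient $\widetilde{L}_{\cris}/\bigl(\iota_{\cris}(\Lambda_W)\oplus\iota_{\cris}(\Lambda_W)^{\perp}\bigr)$ has length $v_p(2d)$; hence when $p\mid 2d$ the decomposition holds only after inverting $p$ (which is all Proposition \ref{Proposition:CrisOrthogonal} gives you integrally about the complement), and the cokernel of $\varphi$ on $\widetilde{L}_{\cris}(-1)$ is not the direct sum of the cokernels on the two pieces. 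When $p\nmid 2d$ your route is fine and even simpler than what you wrote (apply Mazur's theorem to $H^2_{\cris}(X/W)$ and split off $\langle\mathrm{ch}_{\cris}(\mathscr{L})\rangle$ orthogonally), but the lemma must hold without that hypothesis.

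The second half has the same problem in disguise: to get the elementary divisors $(1,p^{19},p^2)$ of $\varphi$ on $P^2_{\cris}(X/W)=\Mcris(P^2_{\et}(\mathscr{Y}_{\overline{K}},\Z_p))$ you invoke a ``filtration determines elementary divisors'' principle, but this is not a formal consequence of Breuil--Kisin theory: knowing the graded ranks of $\Fil^{\bullet}$ on $\MdR$ (Lemma \ref{Lemma:Duality filtration}) gives Mazur-type inequalities, not the integral shape of $\Mcris/\varphi(\Mcris)$, and Mazur's theorem itself applies to the full cohomology, not to the primitive part, whose pairing fails to be perfect at $p$ precisely when $p\mid 2d$. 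The paper's proof is organized to avoid both issues: by Katz's theorem the Newton and Hodge polygons of $M=\widetilde{L}_{\cris}(-1)$ have the same endpoints with Newton above Hodge, so it suffices to bound the top of the Hodge polygon, i.e.\ to produce a surjection $W/p\to M/M'$ with $M'=\{x\in M\mid px\in\Im\varphi\}$; this is proved on the Breuil--Kisin module $\mathfrak{M}(\widetilde{L}'_p(-1))$ using the \emph{perfect} pairing coming from the self-duality of $\widetilde{L}$ at $p$ (yielding $E(u)^2\mathfrak{M}\subset\Im(1\otimes\varphi)$ and $p$-torsion-freeness of $\mathfrak{M}/\mathfrak{M}'$), while the decomposition $\widetilde{L}'_p(-1)[1/p]\cong P^2_{\et}(\mathscr{Y}_{\overline{K}},\Q_p)\oplus\iota_p(\Lambda_{\Q_p})(-1)$ is used only rationally, to see $\dim_K\mathrm{gr}^2D_{\dR}=1$. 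To salvage your approach you would either have to assume $p\nmid 2d$ or find a substitute for the perfect pairing on the primitive lattice; as written, the argument does not prove the lemma in general.
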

\begin{proof}
	
We put
\[
M:=\Mcris(\widetilde{L}'_{p})(-1) \cong \widetilde{L}_{\cris}(-1).
\]
Since we have
\[
P^2_{\cris}(X/W)[1/p]\oplus \Lambda_{W}[1/p](-1) \cong M[1/p],
\]
the Newton polygon of $M$ has three slopes and multiplicities described as follows:
\begin{center}
\begin{tabular}{|c|c|c|c|} \hline
\ slope \ \    &   $1-1/h$  \ \  & \ $1$  \  \ & \ $1+1/h$ \ \  \\ 
\hline
 \  multiplicity \ \    &  \  $h$ \ \ & \ $22-2h$ \ \ & \ $h$ \ \  \\ \hline
  \end{tabular}
\end{center}

The Newton polygon of $M$ is above the Hodge polygon of $M$ and both polygons have the same initial point and the end point; see \cite[Theorem 1.4.1]{Katz}.
Therefore, it is enough to show the largest slope of the Hodge polygon of $M$ is $2$, and its multiplicity is less than or equal to $1$.
We put
\[
M':= \{\, x \in M \mid px \in \Im(\varphi) \,\}.
\]
Then it is enough to show there is a surjection $W/p \to M/M'$.

We put $\mathfrak{M}:=\mathfrak{M}(\widetilde{L}'_{p}(-1))$ and define
\[
\mathfrak{M}':= \{\, x \in \mathfrak{M} \mid E(u)x \in \Im(1\otimes \varphi \colon \varphi^{*}\mathfrak{M} \to \mathfrak{M}) \,\}.
\]
Since the Frobenius
$\varphi \colon \mathfrak{S} \to \mathfrak{S}$
is faithfully flat,
we have
\[
\varphi^{*}\mathfrak{M}' = \{\, x \in \varphi^{*}\mathfrak{M} \mid \varphi(E(u))x \in \Im(1\otimes \varphi \colon \varphi^{*}(\varphi^{*}\mathfrak{M}) \to \varphi^{*}\mathfrak{M}) \,\}.
\]
Via the isomorphism 
$
\varphi^{*}\mathfrak{M}\otimes_{\mathfrak{S}}W \cong M,
$
we have a surjection
\[
(\varphi^{*}\mathfrak{M}/\varphi^{*}\mathfrak{M}')\otimes_{\mathfrak{S}}W \to M/M'.
\]
We shall show $(\varphi^{*}\mathfrak{M}/\varphi^{*}\mathfrak{M}')\otimes_{\mathfrak{S}}W \cong W/p$.

Since we have a perfect bilinear form over $\mathfrak{S}$
\[
\mathfrak{M} \times \mathfrak{M} \to \mathfrak{S}(-2)
\]
which is compatible with the Frobenius endomorphisms, 
we have
\[
E(u)^2\mathfrak{M} \subset \Im(\varphi^{*}\mathfrak{M} \to \mathfrak{M}).
\] 
Hence $\mathfrak{M}/\mathfrak{M}'$ is killed by $E(u)$ and $\mathfrak{M}/\mathfrak{M}'$ is a finite $\mathfrak{S}/E(u)$-module.
Using the perfect bilinear form of $\mathfrak{M}$ again, we see that $\mathfrak{M}/\mathfrak{M}'$ is $p$-torsion-free.
Since $\mathfrak{S}/E(u)\cong \O_K$ is a discrete valuation ring, we see that $\mathfrak{M}/\mathfrak{M}'$ is a free $\mathfrak{S}/E(u)$-module of finite rank.

Since we have
\[
P^2_{\mathrm{\acute{e}t}}(\mathscr{Y}_{\overline{K}},\Q_p) \oplus \iota_p(\Lambda_{\Q_p})(-1) \cong \widetilde{L}'_{p}(-1)[1/p],
\]
the dimension of the $K$-vector space
$\mathrm{gr}^iD_{\dR}(\widetilde{L}'_{p}(-1)[1/p])$ is as follows:
\[
\dim_{K}\mathrm{gr}^iD_{\dR}(\widetilde{L}'_{p}(-1)[1/p])=
\begin{cases}
	0 & i \neq 1, 2, \\
	20 & i = 1, \\
	1 & i=2.
\end{cases}
\]
It follows that $\mathfrak{M}/\mathfrak{M}'$ is a free $\mathfrak{S}/E(u)$-module of rank $1$ by \cite[Lemma 1.2.2]{KisinCris}.

Therefore we have 
\[
\varphi^{*}\mathfrak{M}/\varphi^{*}\mathfrak{M}'\cong \varphi^{*}(\mathfrak{M}/\mathfrak{M}') \cong \mathfrak{S}/\varphi(E(u))
\]
and
\[
(\varphi^{*}\mathfrak{M}/\varphi^{*}\mathfrak{M}')\otimes_{\mathfrak{S}}W \cong (\mathfrak{S}/\varphi(E(u)))\otimes_{\mathfrak{S}}W \cong  W/p.
\]
\end{proof}

\subsection{Remarks on the \'etaleness of the Kuga-Satake morphism}

The \'etaleness of the Kuga-Satake morphism
\[ \KS \colon M^{\mathrm{sm}}_{2d, \K^p_0, \Z_{(p)}}\to Z_{\K^p_0}(\Lambda) \]
plays an important role in this paper.
It also plays an important role
in Madapusi Pera's proof of the Tate conjecture
for $K3$ surfaces \cite{MadapusiTateConj, KimMadapusiIntModel}.
The proof of the \'etaleness was given by Madapusi Pera
in  \cite[Theorem 5.8]{MadapusiTateConj} when $p$ is odd,
and in \cite[Proposition A.12]{KimMadapusiIntModel} when $p=2$.

In the course of writing this paper,
we found some issues on the proof of the \'etaleness of $\KS$.
We can avoid these issues using our results in this section.
See Remark \ref{Remark: Kim-Madapusi Pera gap} and
Remark \ref{Remark:Bloch-Kato and BMS} below for details.
(See also \cite{MadapusiErratum}, where Madapusi Pera gave a somewhat different argument.)

\begin{rem}\label{Remark: Kim-Madapusi Pera gap}
When $p=2$, the proof of the \'etaleness of $\KS$ in
\cite[Proposition A.12]{KimMadapusiIntModel}
seems to rely on an incorrect statement on the relation between endomorphisms of
an abelian scheme and the crystalline cohomology.
Here we explain how to avoid this issue using
Proposition \ref{Proposition:Flatness of cokernel}.
First, we briefly recall Madapusi Pera's proof.
Let $\widetilde{\mathbb{L}}_{\dR}$ be the filtered vector bundle with integrable connection on $\mathscr{S}_{\widetilde{\mathrm{K}}_0}$ associated with the $\widetilde{G}_0$-representation $\widetilde{L}_{\Z_{(p)}}$.
The pullback of $\widetilde{\mathbb{L}}_{\dR}$ by $\KS$
is also denoted by the same symbol $\widetilde{\mathbb{L}}_{\dR}$.
The $\Lambda$-structure for $M^{\mathrm{sm}}_{2d, \K^p_0, \Z_{(p)}}$ induces a homomorphism of vector bundles
\[
\iota_{\dR} \colon \Lambda\otimes_{\Z}\mathcal{O}_{M^{\mathrm{sm}}_{2d, \K^p_0, \Z_{(p)}}} \to \widetilde{\mathbb{L}}_{\dR}.
\]
In order to prove the \'etaleness of $\KS$
(see \cite[Theorem 5.8]{MadapusiTateConj}, \cite[Proposition A.12]{KimMadapusiIntModel}),
Madapusi Pera showed the orthogonal complement
\[
    \mathbb{L}_{\dR}:=\iota_{\dR}(\Lambda\otimes_{\Z}\mathcal{O}_{M^{\mathrm{sm}}_{2d, \K^p_0, \Z_{(p)}}})^{\perp} \subset \widetilde{\mathbb{L}}_{\dR}
\]
with respect to the canonical bilinear form on $\widetilde{\mathbb{L}}_{\dR}$ is isomorphic to
$\mathbb{P}^2_{\dR}$ (up to twist):
\[
\alpha_{\dR} \colon \mathbb{P}^2_{\dR} \cong \mathbb{L}_{\dR}(-1).
\]
Here $\mathbb{P}^2_{\dR}$ is the primitive part of the relative de Rham cohomology of the universal family on $M^{\mathrm{sm}}_{2d, \K^p_0, \Z_{(p)}}$;
see \cite[Proposition 5.11]{MadapusiTateConj} for $p \geq 3$ and the proof of \cite[Proposition A.12]{KimMadapusiIntModel} for $p=2$.
In order to prove $\mathbb{P}^2_{\dR}$ and $\mathbb{L}_{\dR}(-1)$ are isomorphic,
he proved that the cokernel of
$\iota_{\dR}$ is a \textit{vector bundle}
on $M^{\mathrm{sm}}_{2d, \K^p_0, \Z_{(p)}}$;
see the proof of \cite[Proposition A.12]{KimMadapusiIntModel}.
To prove this, it suffices to show the cokernel of $\iota_{\dR}$ is
a free $W(\overline{\F}_q)$-module at every $W(\overline{\F}_q)$-valued point of $M^{\mathrm{sm}}_{2d, \K^p_0, \Z_{(p)}}$;
see the proof of \cite[Lemma 6.16 (iv)]{MadapusiIntModel}.
When $p \geq 3$, this freeness property was proved
by applying \cite[Lemma 6.14]{MadapusiIntModel} for $e=1$.
However, when $p = 2$, we cannot apply \cite[Lemma 6.14]{MadapusiIntModel}
because the statement of \cite[Lemma 6.14]{MadapusiIntModel} is \textit{false} when $e=p-1$.
(There are counter-examples when $p=2$ and $e=1$; see \cite[Example 3.18]{BO}.)
To avoid this issue, we can use Proposition \ref{Proposition:Flatness of cokernel}
to prove the required freeness property.
Note that we essentially used the integral comparison theorem of
Bhatt-Morrow-Scholze \cite{BMS}
in the proof of this proposition.
\end{rem}

\begin{rem}\label{Remark:Bloch-Kato and BMS}
There is another issue on the integral comparison map used in the proof of the isomorphism
$\alpha_{\dR} \colon \mathbb{P}^2_{\dR} \cong \mathbb{L}_{\dR}(-1)$.
This issue exists for every $p$ (including odd $p$).
In \cite[Theorem 5.8]{MadapusiTateConj} and
\cite[Proposition A.12]{KimMadapusiIntModel},
Madapusi Pera used the integral comparison map
for varieties with ordinary reduction proved by
Bloch-Kato \cite[Theorem 9.6]{Bloch-Kato},
and the density of ordinary locus in the special fiber of $M^{\mathrm{sm}}_{2d, \K^p_0, \Z_{(p)}}$; see the proof of \cite[Lemma 5.10]{MadapusiTateConj}.
Madapusi Pera used the compatibility
between the integral comparison map of Bloch-Kato with other comparison maps used in \cite[Section 2]{MadapusiTateConj}.
However, we could not find appropriate references for the compatibility
used in \cite{MadapusiTateConj, KimMadapusiIntModel}
at least for small $p$.
We can avoid this issue by using the de Rham comparison map $c_{\dR, \mathscr{Y}_K}$ of Scholze \cite{ScholzeHodge} and the crystalline comparison map $c_{\cris, \mathscr{Y}}$ of Bhatt-Morrow-Scholze \cite{BMS}.
The integral comparison map $c_{\cris, \mathscr{Y}}$ is a substitute for the integral comparison map of Bloch-Kato; see also Proposition \ref{Proposition:CrisOrthogonal} in this paper.
The results of Blasius-Wintenberger \cite{Blasius}, which were used in \cite{MadapusiTateConj} and \cite[Appendix A]{KimMadapusiIntModel},
also hold using $c_{\dR, \mathscr{Y}_K}$ and $c_{\cris, \mathscr{Y}}$; see Section \ref{Subsection:Blasius-Wintenberger}.
Finally, we remark that the integral comparison map $c_{\cris, \mathscr{Y}}$
can be applied to all $K3$ surfaces including non-ordinary ones.
\end{rem}

\section{Construction of liftings of points on orthogonal Shimura varieties}
\label{Section:Lifting of a special point}

In this section, we consider a point $s \in Z_{\mathrm{K}^p}(\Lambda)(\F_q)$
which is the image of the point corresponding to
a quasi-polarized $K3$ surface $(X,\mathscr{L})$ of finite height over $\F_q$.
Since the Brauer group of $\F_q$ is trivial,
$\mathscr{L}$ is a line bundle on $X$.
(See \cite[Chapter 8, Section 1, Proposition 4]{BoschLuetkebohmertRaynaud}.)

We shall construct characteristic $0$ liftings of the point
$s \in Z_{\mathrm{K}^p}(\Lambda)(\F_q)$
over a finite extension of $W(\overline{\F}_q)[1/p]$
corresponding to characteristic $0$ liftings of
the formal Brauer group $\widehat{\Br}$ of $X$.
We construct such liftings using integral $p$-adic Hodge theory and
our results on $F$-crystals on Shimura varieties in
Section \ref{Section:$F$-crystals on Shimura varieties}.

Note that, when $p \geq 5$, a stronger result can be obtained by
the method of Nygaard-Ogus \cite{NygaardOgus};
see Remark \ref{Remark:NygaardOgus}.
But, when $p \leq 3$,
it seems difficult to apply their methods.
(The deformation theory of $K3$ crystals developed in \cite{NygaardOgus}
does not work in characteristic $p = 2$ or $3$;
see \cite[p.498]{NygaardOgus}.)
The method of this paper can be applied to
$K3$ surfaces of finite height in any characteristic.

\subsection{Liftings of points with additional properties}
\label{Subsection:Liftings of points with additional properties}

In this subsection, we shall state our results on characteristic $0$ liftings of points on $Z_{\K^p}(\Lambda)$ with additional properties.

We put $k := \F_q$, and consider the situation as in Section \ref{Subsection:$F$-crystals on Shimura varieties and the cohomology of K3 surfaces}.
We assume the height $h$ of the $K3$ surface $X$ is finite.

As in the proof of Proposition \ref{Proposition:CrysDecomposition},
we have a natural embedding
\[
\mathbb{D}(\widehat{\Br})\cong L_{1+1/h}(1) \hookrightarrow P^2_{\cris}(X/W)(1).
\]
By Proposition \ref{Proposition:CrisOrthogonal}, we have an embedding
\[
\mathbb{D}(\widehat{\Br}) \hookrightarrow \widetilde{L}_{\cris}.
\]

Let $E$ be a finite totally ramified extension of $K_0=W[1/p]$ and $\mathcal{G}$ a one-dimensional smooth formal group over $\O_{E}$ whose special fiber is isomorphic to $\widehat{\Br}$.
We shall construct characteristic $0$ liftings of the point
$s \in Z_{\mathrm{K}^p}(\Lambda)(\F_q)$
over a finite extension of $W(\overline{\F}_q)[1/p]$
corresponding to $\mathcal{G}$.

Let
	\[
	\mathrm{Fil}(\mathcal{G}) \hookrightarrow \mathbb{D}(\widehat{\Br}) \otimes_{W} E\hookrightarrow \widetilde{L}_{\cris}\otimes_{W}E
	\]
be the filtration associated with $\mathcal{G}$, i.e. the $E$-vector subspace generated by the inverse image of the Hodge filtration $\Fil^1\mathbb{D}(\mathcal{G}_{{\O_E/p}})(\O_E)$ under the isomorphism
\[
\mathbb{D}(\widehat{\Br}) \otimes_{W} E \cong \mathbb{D}(\mathcal{G}_{{\O_E/p}})(\O_E) \otimes_{\O_E} E.
\]
Take a generator $e$ of $\mathrm{Fil}(\mathcal{G})$,
	and consider 
\[
i(e):=(i_{\cris}\otimes_{W}{E})(e),
\]
the image of $e$ under the embedding 
\[
i_{\cris}\otimes_{W}E \colon \widetilde{L}_{\cris}\otimes_{W}E \hookrightarrow \End_{E}(H^1_{\cris}(\mathcal{A}_s/W)^{\vee}\otimes_{W}E).
\]
Let $i(e)(H^1_{\cris}(\mathcal{A}_s/W)^{\vee}\otimes_{W}E)$ be the image of
\[
i(e) \colon H^1_{\cris}(\mathcal{A}_s/W)^{\vee}\otimes_{W}E \to H^1_{\cris}(\mathcal{A}_s/W)^{\vee}\otimes_{W}E.
\]

We define a decreasing filtration
$\{\Fil^i(\widetilde{L}_{\cris}\otimes_{W}E)\}_i$
on $\widetilde{L}_{\cris}\otimes_{W}E$ by
\[
\mathrm{Fil}^i(\widetilde{L}_{\cris}\otimes_{W}E):=
\begin{cases}
0 & i \geq 2, \\
\mathrm{Fil}(\mathcal{G}) & i = 1, \\
\mathrm{Fil}(\mathcal{G})^{\perp} & i=0, \\
\widetilde{L}_{\cris}\otimes_{W}E & i \leq -1.
\end{cases}
\]
We also define a decreasing filtration
$\{ \Fil^i(H^1_{\cris}(\mathcal{A}_s/W)^{\vee}\otimes_{W}E) \}_i$
on $H^1_{\cris}(\mathcal{A}_s/W)^{\vee}\otimes_{W}E$ by
\[
\mathrm{Fil}^i(H^1_{\cris}(\mathcal{A}_s/W)^{\vee}\otimes_{W}E):=\begin{cases}
0 & i \geq 1, \\
i(e)(H^1_{\cris}(\mathcal{A}_s/W)^{\vee}\otimes_{W}E) & i=0, \\
H^1_{\cris}(\mathcal{A}_s/W)^{\vee}\otimes_{W}E & i \leq -1.
\end{cases}
\]

The  following theorem is the main result in this section.

\begin{thm}\label{Theorem:CMliftingAbelianVariety}
Let $K$ be the composite of $E$ and $W(\overline{\F}_q)[1/p]$.
There is an $\O_{K}$-valued point
$\widetilde{s} \in Z_{\mathrm{K}^p}(\Lambda)(\O_{K})$
satisfying the following properties:
	\begin{enumerate}
		\item $\widetilde{s}$ is a lift of $\overline{s} \in Z_{\mathrm{K}^p}(\Lambda)(\overline{\F}_q)$, where $\overline{s}\in Z_{\mathrm{K}^p}(\Lambda)(\overline{\F}_q)$ 
is a geometric point above $s$.
		\item The Hodge filtration on
		$H^1_{\cris}(\mathcal{A}_s/W)^{\vee}\otimes_{W}K$
		corresponding to $\mathcal{A}_{\widetilde{s}}$ over $\O_K$ coincides with the filtration defined by
        \[
        \mathrm{Fil}^i(H^1_{\cris}(\mathcal{A}_s/W)^{\vee}\otimes_{W}K) :=
        \mathrm{Fil}^i(H^1_{\cris}(\mathcal{A}_s/W)^{\vee}\otimes_{W}E) \otimes_E K.
        \]
	\end{enumerate} 
\end{thm}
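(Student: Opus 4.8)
The plan is to produce the lifting $\widetilde{s}$ by first lifting the Kuga-Satake abelian variety $\mathcal{A}_s$ (equivalently, its $p$-divisible group) over $\O_K$ compatibly with the filtration and the crystalline tensors $\{s_{\alpha,\cris,s}\}$, and then invoking the defining (extension/deformation) property of the integral canonical model $\mathscr{S}_{\widetilde{\mathrm{K}}}$ and the finite étaleness $Z_{\mathrm{K}^p}(\Lambda)\to\mathscr{S}_{\widetilde{\mathrm{K}}}$ to upgrade this to an $\O_K$-point of $Z_{\mathrm{K}^p}(\Lambda)$. First I would record that the filtration $\mathrm{Fil}^\bullet(H^1_{\cris}(\mathcal{A}_s/W)^\vee\otimes_W E)$ is well-defined and independent of the generator $e$ (this is Proposition \ref{Proposition: Filtrations on Clifford algebras}(1), which also gives that its $0$-th piece has the correct rank, namely half of $\dim H^1$), and that by Proposition \ref{Proposition: Filtrations on Clifford algebras}(2),(3) the embedding $i_{\cris}$, the $\Z/2\Z$-grading, the right $\Cl$-action, and the idempotent $\pi'$ all respect these filtrations. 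Since $\widetilde{G}_{\Q}$ is cut out inside $\GL(H_{\Q})$ by $p^{\pm}$, $\{r_{e_i}\}$ and $\pi$ (Section \ref{Subsection: Representations of general spin groups and Hodge tensors}), this says precisely that the cocharacter defining $\mathrm{Fil}^\bullet$ factors through $\widetilde{G}$, i.e.\ the filtration is of the correct type for the Shimura datum $(\widetilde{G}_{\Q},X_{\widetilde{L}})$.

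Next I would descend to characteristic $0$ via $p$-adic Hodge theory, following the strategy Kisin uses for deformation spaces of Shimura varieties of Hodge type. One possible route: the filtered $\varphi$-module $(H^1_{\cris}(\mathcal{A}_s/W)^\vee[1/p],\mathrm{Fil}^\bullet)$ over $E$ is weakly admissible (the Newton-over-Hodge estimates hold because $\mathcal{A}_s$ has abelian-variety slopes and the filtration has weights in $\{0,1\}$; one needs the $\varphi$-stability only after base change, which is automatic for the Dieudonné module of an abelian variety), so by Colmez-Fontaine it corresponds to a crystalline $\Gal(\overline{E}/E)$-representation; the tensors $s_{\alpha,\cris,s}$, being $\varphi$-fixed and filtration-preserving, give Galois-fixed tensors on the associated Galois representation. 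Then the Breuil-Kisin / $p$-divisible group theory of Section \ref{Subsection:Breuil-Kisin modules and p-divisible groups} (Kisin for $p>2$, Lau for $p=2$) produces a $p$-divisible group $\mathcal{H}$ over $\O_E$ with $\mathbb{D}(\mathcal{H}_{\overline{\F}_q/p})(\O_E)$ carrying the prescribed Hodge filtration and lifting $\mathcal{A}_s[p^\infty]$. Base changing to $\O_K$ (with $K=E\cdot W(\overline{\F}_q)[1/p]$) and checking the ``adaptedness'' condition, one applies the description of the completion of $\mathscr{S}_{\widetilde{\mathrm{K}}}$ at the point $\overline{s}$ (Kisin \cite{KisinIntModel} for $p\geq 3$, Kim-Madapusi Pera \cite{KimMadapusiIntModel} for $p=2$): lifts of $\overline{s}$ to $\O_K$ correspond to lifts of the $p$-divisible group preserving the crystalline tensors, so $\mathcal{H}_{\O_K}$ yields a point $\widetilde{s}_0\in\mathscr{S}_{\widetilde{\mathrm{K}}}(\O_K)$ lifting $\overline{s}$ with the required Hodge filtration on $H^1_{\cris}(\mathcal{A}_s/W)^\vee\otimes_W K$. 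Finally, since $Z_{\mathrm{K}^p}(\Lambda)\to\mathscr{S}_{\widetilde{\mathrm{K}}}$ is finite étale (Proposition \ref{Proposition:LambdaFiniteEtale}) and $\O_K$ is henselian, the $\overline{\F}_q$-point $\overline{s}\in Z_{\mathrm{K}^p}(\Lambda)(\overline{\F}_q)$ lifts uniquely along $\widetilde{s}_0$ to $\widetilde{s}\in Z_{\mathrm{K}^p}(\Lambda)(\O_K)$, which is the desired point; properties (1) and (2) hold by construction.

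The main obstacle I expect is verifying that the $p$-divisible group $\mathcal{H}_{\O_K}$ (equivalently the crystalline $\Gal(\overline{K}/K)$-representation together with its tensors) is actually ``adapted'' in the sense required to apply Kisin's / Kim-Madapusi Pera's description of the completed local ring of $\mathscr{S}_{\widetilde{\mathrm{K}}}$ — i.e.\ that the Galois action on the $p$-adic realization factors through $\widetilde{G}(\Z_p)$ via an isomorphism matching $\{s_\alpha\}$ with $\{s_{\alpha,p}\}$, so that the lift lands in the Hodge-type locus and not merely in the ambient Siegel space. This is exactly the point where one must reconcile the crystalline tensors $\{\Mcris(s_{\alpha,p,\overline{\eta}})\}$ with the étale tensors via the comparison isomorphisms, and it is the technical heart tying together Proposition \ref{Proposition:LcrisBKmodule}, the integral comparison of Theorem \ref{Theorem:p-adic Hodge theorem}, and the structure theory of $\widetilde{G}$; I would handle it by showing the lattice chosen inside the crystalline representation is $\widetilde{G}(\Z_p)$-stable, using that $\widetilde{G}$ is reductive over $\Z_p$ (as $\widetilde{L}$ is self-dual at $p$) and that the filtration cocharacter factors through $\widetilde{G}$ as established above.
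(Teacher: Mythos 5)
Your overall architecture matches the paper's (build the filtration from the lifted formal group, pass through a weakly admissible filtered $\varphi$-module to a crystalline Galois representation with tensors, produce a $p$-divisible group over $\O_E$, check $\widetilde{G}$-adaptedness after base change to $\O_K$, and invoke the Kisin/Kim--Madapusi Pera description of the completed local ring), but two steps are justified in ways that do not work.

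First, the weak admissibility of $(H^1_{\cris}(\mathcal{A}_s/W)^{\vee}[1/p],\Fil^{\bullet})$ is not a formal ``Newton over Hodge'' fact about abelian varieties. The filtration here is the exotic one cut out by $i(e)$ for an isotropic vector $e\in\Fil(\mathcal{G})$, not a priori the Hodge filtration of any abelian scheme, and weak admissibility requires $t_H(B')\leq t_N(B')$ for \emph{every} $\varphi$-stable subobject $B'$; with jumps in $\{-1,0\}$ this is a genuine constraint on the position of $\Fil^0$ relative to the slope decomposition and can fail for a badly chosen isotropic line. The paper's Lemma \ref{Lemma:admissible} handles this by a Clifford--Morita argument: the right $\Cl$-action reduces everything to a module $B$ with $\End_{K_0}(B)\cong\Cl(\widetilde{L}_{\cris}[1/p])$, whose weak admissibility follows because the $\mathcal{G}$-filtration on $\widetilde{L}_{\cris}[1/p]$ is $D_{\cris}$ of an honest crystalline representation $\widetilde{L}_p[1/p]$ assembled from the $p$-divisible groups $\mathcal{G}$, $D$, $\mathcal{G}^*$ over $\O_E$ (Lemma \ref{Lemma:Lifting-LCris}); one then deduces weak admissibility of $B$ from that of $B\otimes B^{\vee}$ by the slope bookkeeping. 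Your proposal never uses the provenance of the filtration from $\mathcal{G}$ at this point, which is exactly the input that makes the statement true. (Relatedly, the hard part of ``adaptedness'' is not only lattice stability under $\GSpin(\widetilde{L}_p)(\Z_p)$ but matching the crystalline tensors and Frobenii integrally; this is Proposition \ref{Proposition:LiftingCrisLattice}, whose determinant adjustment and Frobenius untwisting $u=\sigma(v)/v$ with $v\in W(\overline{\F}_q)^{\times}$ is the reason the theorem is stated over $K=E\cdot W(\overline{\F}_q)[1/p]$ rather than over $E$.)

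Second, the last step is wrong as stated: Proposition \ref{Proposition:LambdaFiniteEtale} says $Z_{\K^p}(\Lambda)\to\mathscr{S}_{\widetilde{\mathrm{K}}}$ is finite and \emph{unramified} (only $Z_{\K^p}(\Lambda)\to Z_{\K^p_0}(\Lambda)$ is finite \'etale), so henselianness of $\O_K$ does not let you lift $\overline{s}$ along $\widetilde{s}_0$; lifting along a finite unramified map is a nontrivial condition, namely that the $\Lambda$-structure deforms. The paper gets this because $\Lambda_{\Z_p}\subset\widetilde{L}_p$ was built into the Galois-theoretic data from the start, so $\Lambda$ acts on $\mathcal{H}$ over $\O_E$, hence on $\mathcal{A}_{\widetilde{s}}[p^{\infty}]$ over $\O_K$, and then the Serre--Tate theorem lifts $\iota$ to $\Lambda_{\Z_{(p)}}\to\End_{\O_K}(\mathcal{A}_{\widetilde{s}})_{\Z_{(p)}}$; your plan needs this argument (or an equivalent) in place of the \'etaleness claim.
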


\begin{rem}
\label{Remark:NygaardOgus}
Recall that the $K3$ surface $X$ over $\F_q$ comes from the $\F_q$-valued point $s \in M^{\mathrm{sm}}_{2d, \K^p_0, \Z_{(p)}}(\F_q)$ satisfying the conditions as in Section \ref{Subsection:$F$-crystals on Shimura varieties and the cohomology of K3 surfaces}.
In particular, the Kuga-Satake abelian variety $\mathcal{A}_s$ is defined over $\F_q$.
When $p \geq 5$, a result stronger than Theorem \ref{Theorem:CMliftingAbelianVariety}
can be obtained by the method of Nygaard-Ogus in \cite{NygaardOgus}.
In fact, when $p \geq 5$,
Nygaard-Ogus constructed a characteristic $0$ lifting of
the $K3$ surface $X$ over $\O_E$ corresponding to the one-dimensional smooth formal group $\mathcal{G}$.
By the Kuga-Satake morphism, we find an $\O_E$-valued point
$\widetilde{s} \in Z_{\mathrm{K}^p}(\Lambda)(\O_E)$
lifting $s$ such that the Hodge filtration on
$H^1_{\cris}(\mathcal{A}_s/W)^{\vee}\otimes_{W}E$
corresponding to $\mathcal{A}_{\widetilde{s}}$ over $\O_E$ coincides with
the filtration
$\{ \Fil^i(H^1_{\cris}(\mathcal{A}_s/W)^{\vee}\otimes_{W}E) \}_i$
defined as above.
Hence we do not need to take the composite with $W(\overline{\F}_q)[1/p]$ when $p \geq 5$.
Note that, in this paper,
we construct characteristic $0$ liftings over $\O_K$, not over $\O_E$.
Currently, we do not know how to obtain an $\O_E$-valued point
by the methods of this paper.
\end{rem}

\subsection{Some lemmas}\label{Subsection:Some lemmas}
In this subsection, we give some lemmas which will be used in the proof of Theorem \ref{Theorem:CMliftingAbelianVariety}.
 
 By Proposition \ref{Proposition:CrysDecomposition}, we have the following isomorphism of $W$-modules inducing an isomorphism of $F$-isocrystals after inverting $p$:
\[
\widetilde{L}_{\cris} \cong \mathbb{D}(\widehat{\Br}^{*})(1)\oplus \mathbb{D}(D)\oplus\mathbb{D}(\widehat{\Br}).
\]
	
Since $D$ is an \'etale $p$-divisible group over $k$, it canonically extends over $\O_E$ and we also denote it by $D$.
Let $\mathcal{G}^*$ be the Cartier dual of $\mathcal{G}$.
We define a $\Gal(\overline{E}/E)$-stable $\Z_p$-lattice $\widetilde{L}_p$ in a crystalline representation by
\[
\widetilde{L}_p:=(T_p\mathcal{G}^*)^{\vee}(1)\oplus (T_pD)^{\vee}\oplus (T_p\mathcal{G})^{\vee}.
\]

Recall that, for every $p$-divisible group $\mathscr{G}$ over $\O_K$, there is a comparison isomorphism of filtered $\varphi$-modules
\[
c_{\mathscr{G}} \colon D_{\cris}((T_{p}\mathscr{G})^{\vee}[1/p])  \cong \mathbb{D}(\mathscr{G}_k)(W)[1/p].
\]
Moreover the composite 
\[\Mcris((T_{p}\mathscr{G})^{\vee})[1/p] \cong D_{\cris}((T_{p}\mathscr{G})^{\vee}[1/p])  \overset{c_{\mathscr{G}}}{\cong} \mathbb{D}(\mathscr{G}_k)(W)[1/p]
	\]
maps $\Mcris((T_{p}\mathscr{G})^{\vee})$ onto $\mathbb{D}(\mathscr{G}_k)(W)$.
See Section \ref{Subsection:Breuil-Kisin modules and p-divisible groups} and Section \ref{Subsection:Comparison isomorphisms for $p$-divisible groups} for details.

\begin{lem}\label{Lemma:Lifting-LCris}
	The $\Gal(\overline{E}/E)$-module $\widetilde{L}_p$ satisfies the following properties:
	\begin{enumerate}
	\item $\widetilde{L}_p$ admits an even perfect bilinear form $( \ , \ )$ which is $\Gal(\overline{E}/E)$-invariant.
		\item There is a $\Gal(\overline{E}/E)$-equivariant homomorphism
		$\iota_p: \Lambda_{\Z_p} \to \widetilde{L}_p$
		preserving the bilinear forms.
		\item There is an isometry of $F$-isocrystals
		    \[D_{\cris}(\widetilde{L}_p[1/p])\cong \widetilde{L}_{\cris}[1/p]\]
		    such that the following composite
		    \[
		    \Mcris(\widetilde{L}_p)[1/p]\cong D_{\cris}(\widetilde{L}_p[1/p])\cong \widetilde{L}_{\cris}[1/p]
		    \]
		    maps $\Mcris(\widetilde{L}_p)$ isomorphically onto $\widetilde{L}_{\cris}$.
		\item The following diagram is commutative:
            \[
            \xymatrix{ \Lambda_{W}\ar[r]^-{\iota_p} \ar[rd]_-{\iota_{\cris}} & \Mcris(\widetilde{L}_p) \ar[d]^-{\cong} \\
             & \widetilde{L}_{\cris}.
            }
            \]
        \item The following composite
        \[
        D_{\dR}(\widetilde{L}_p[1/p])\cong D_{\cris}(\widetilde{L}_p[1/p])\otimes_{K_0}E \cong \widetilde{L}_{\cris}[1/p] \otimes_{K_0}E
        \]
        preserves the filtrations.
	\end{enumerate}
\end{lem}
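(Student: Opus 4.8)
The plan is to derive all five assertions from the three-term decomposition of Proposition \ref{Proposition:CrysDecomposition} together with the comparison isomorphisms for $p$-divisible groups recalled in Section \ref{Subsection:Breuil-Kisin modules and p-divisible groups}. I would begin with $(3)$. The group $D$ is \'etale, so it extends uniquely over $\O_E$; moreover $\mathcal{G}$ lifts $\widehat{\Br}$ and $\mathcal{G}^{*}$ lifts $\widehat{\Br}^{*}$. Applying $\Mcris$ summand by summand to $\widetilde{L}_p = (T_p\mathcal{G}^{*})^{\vee}(1)\oplus(T_pD)^{\vee}\oplus(T_p\mathcal{G})^{\vee}$ and using the integral refinements of $c_{\mathcal{G}}$, $c_{\mathcal{G}^{*}}$ and of the comparison isomorphism for $D$, one obtains Frobenius-compatible identifications $\Mcris((T_p\mathcal{G}^{*})^{\vee})\cong\mathbb{D}(\widehat{\Br}^{*})$, $\Mcris((T_pD)^{\vee})\cong\mathbb{D}(D)$, $\Mcris((T_p\mathcal{G})^{\vee})\cong\mathbb{D}(\widehat{\Br})$. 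Taking the direct sum, twisting the first factor by $(1)$ (using that $\Mcris$ and $D_{\cris}$ respect the twist $N(i)=(N,p^{-i}\varphi)$), and composing with the isomorphism $\widetilde{L}_{\cris}\cong\mathbb{D}(\widehat{\Br}^{*})(1)\oplus\mathbb{D}(D)\oplus\mathbb{D}(\widehat{\Br})$ coming from Proposition \ref{Proposition:CrysDecomposition}$(1)$ produces $\Mcris(\widetilde{L}_p)\cong\widetilde{L}_{\cris}$, whose localization is the required isomorphism of $F$-isocrystals.

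Next I would construct the bilinear form of $(1)$. Transporting it naively through $\Mcris$ is not enough, since $\Mcris$ is not fully faithful; instead I would realize it on Breuil--Kisin modules, where Kisin's tensor functor $N\mapsto\mathfrak{M}(N)$ is fully faithful with $\mathfrak{M}(\Z_p)=\mathfrak{S}$. The Cartier-duality pairing $T_p\mathcal{G}\otimes T_p\mathcal{G}^{*}\to\Z_p(1)$ gives a canonical isomorphism $(T_p\mathcal{G}^{*})^{\vee}(1)\cong T_p\mathcal{G}$, hence a canonical perfect pairing of the first summand of $\widetilde{L}_p$ with the third; on the middle summand one transports the perfect even pairing on $\mathbb{D}(D)$ furnished by Proposition \ref{Proposition:CrysDecomposition}$(2)$ through the Dieudonn\'e equivalence for \'etale $p$-divisible groups over $\F_q$. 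The orthogonal direct sum of these two pairings is a $\Gal(\overline{E}/E)$-invariant form on $\widetilde{L}_p$; applying $\mathfrak{M}$ exhibits it as $\Z_p$-valued and perfect, and its evenness reduces to that of the middle summand, the first-and-third part being a split quadratic space. By the description of the form in Proposition \ref{Proposition:CrysDecomposition}$(2)$ and the compatibility of $c_{\mathcal{G}}$, $c_{\mathcal{G}^{*}}$ with Cartier duality, this form matches the one on $\widetilde{L}_{\cris}$ under the isomorphism of $(3)$, so that isomorphism is an isometry; this proves $(1)$ and the isometry claim in $(3)$.

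For $(2)$ and $(4)$: as observed in the proof of Proposition \ref{Proposition:CrysDecomposition}, $\iota_{\cris}(\Lambda_W)$ is contained in the slope-$0$ summand $\mathbb{D}(D)\subset\widetilde{L}_{\cris}$, and $\iota_{\cris}$ is $\varphi$-equivariant because endomorphisms of the Kuga--Satake abelian variety act on crystalline cohomology commuting with Frobenius while $i_{\cris}$ is an embedding of $F$-isocrystals. The Dieudonn\'e equivalence for \'etale $p$-divisible groups then converts $\iota_{\cris}$ into a $\Gal(\overline{E}/E)$-equivariant, form-preserving map $\iota_p\colon\Lambda_{\Z_p}\to(T_pD)^{\vee}\subset\widetilde{L}_p$, and the diagram in $(4)$ commutes by construction. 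Finally, $(5)$ is an unwinding of definitions: decomposing $D_{\dR}(\widetilde{L}_p[1/p])$ along the three summands, using that the comparisons for $\mathcal{G}$ and $\mathcal{G}^{*}$ are filtered (Section \ref{Subsection:Breuil-Kisin modules and p-divisible groups}) and that the \'etale summand lies in filtration degree $0$, one finds that $\mathrm{Fil}^{1}$ is one-dimensional and lies in the summand $\mathbb{D}(\widehat{\Br})\otimes_W E$, equal to the image of $\mathrm{Fil}^{1}$ of the Hodge filtration of $\mathcal{G}$ --- that is, equal to $\mathrm{Fil}(\mathcal{G})$ by its very definition in the setup of Theorem \ref{Theorem:CMliftingAbelianVariety}. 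Since $\mathbb{D}(\widehat{\Br})$ is isotropic for the form (Proposition \ref{Proposition:CrysDecomposition}$(2)$), so is $\mathrm{Fil}(\mathcal{G})$, and a dimension count using perfectness of the pairing gives $\mathrm{Fil}^{0}=\mathrm{Fil}(\mathcal{G})^{\perp}$ and $\mathrm{Fil}^{-1}$ everything, matching the filtration on $\widetilde{L}_{\cris}\otimes_W E$.

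The main obstacle, I expect, is $(1)$: proving integrality and perfectness of the form without the fully faithfulness of $\Mcris$ forces one down to Breuil--Kisin modules and requires careful bookkeeping of how the $p$-divisible group comparison isomorphisms interact with Cartier duality and with Tate twists, carried out compatibly with the parallel construction of $\iota_p$ needed for $(2)$ and $(4)$. The rest is formal manipulation of direct sums and filtrations.
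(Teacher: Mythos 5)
Your proposal is correct and follows essentially the same route as the paper: decompose $\widetilde{L}_p$ according to the three slope summands of Proposition \ref{Proposition:CrysDecomposition}, define the pairing via Cartier duality on $(T_p\mathcal{G}^{*})^{\vee}(1)\oplus(T_p\mathcal{G})^{\vee}$ and via the \'etale Dieudonn\'e equivalence on $(T_pD)^{\vee}$, obtain $\iota_p$ from the fact that $\iota_{\cris}(\Lambda_W)\subset\mathbb{D}(D)$, and deduce (3)--(5) from the integral comparison theorems for $p$-divisible groups. The only cosmetic differences are that the paper gets integrality, perfectness and evenness of the form directly on the Tate modules (so your Breuil--Kisin detour is unnecessary), and that in (5) the containment $\Fil^0\subseteq\Fil(\mathcal{G})^{\perp}$ needed before your dimension count is immediate from the Galois-invariance of the pairing (equivalently, from $\omega_{\mathcal{G}}$ and $\omega_{\mathcal{G}^{*}}$ being exact annihilators under the duality pairing).
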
	

\begin{proof}
    We equip the $\Z_p$-module
	\[(T_p\mathcal{G}^*)^{\vee}(1)\oplus (T_p\mathcal{G})^{\vee}
	\]
	with a natural bilinear form, which is even and perfect.
	Since $D$ is an \'etale $p$-divisible group,
	the even perfect bilinear form
	\[
	\mathbb{D}(D) \times \mathbb{D}(D) \to W
	\]
induces an even perfect bilinear form
	\[
	(T_pD)^{\vee} \times (T_pD)^{\vee} \to \Z_p
	\]
	which is $\Gal(\overline{E}/E)$-invariant.
	
	Let $\underline{\Lambda}^{\vee}_{\Z_p}$ be the $p$-divisible group over $\O_E$ associated with the $\Z_p$-module $\Lambda^{\vee}_{\Z_p}$ with the trivial $\Gal(\overline{E}/E)$-action.
	So we have an isomorphism of $F$-crystals
	$\mathbb{D}(\underline{\Lambda}^{\vee}_{\Z_p})\cong \Lambda_W$.
	
	The image of the homomorphism $\iota_{\cris} \colon \Lambda_W \to \widetilde{L}_{\cris}$ is contained in $\mathbb{D}(D)$, hence we have a homomorphism $\Lambda_W \to \mathbb{D}(D)$.
	Therefore, we have a morphism $D \to \underline{\Lambda}^{\vee}_{\Z_p}$ of \'etale $p$-divisible groups over $k$.
	 This extends over $\O_E$.
	Then we have a $\Gal(\overline{E}/E)$-equivariant homomorphism 
	$\Lambda_{\Z_p} \to (T_pD)^{\vee}$.
	This homomorphism preserves the bilinear forms by construction.  
	
	The other properties follow from \cite[Theorem 2.12]{KimMadapusiIntModel}.
	See also Section \ref{Subsection:Breuil-Kisin modules and p-divisible groups} and Section \ref{Subsection:Comparison isomorphisms for $p$-divisible groups}.
\end{proof}

\begin{lem}\label{Lemma:admissible}
There is a crystalline $\Gal(\overline{E}/E)$-representation $H_{\mathrm{\acute{e}t}, \Q_p}$ over $\Q_p$ such that 
\[
D_{\cris}(H_{\mathrm{\acute{e}t}, \Q_p}) \cong H^1_{\cris}(\mathcal{A}_s/W)^{\vee}[1/p]
\]
as filtered $\varphi$-modules.

\end{lem}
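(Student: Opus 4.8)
The plan is to equip the $W[1/p]$-vector space $H^1_{\cris}(\mathcal{A}_s/W)^{\vee}[1/p]$ with the structure of a filtered $\varphi$-module over $E$ (with $K_0=W[1/p]$): the Frobenius is the one coming from crystalline cohomology, and the filtration on $H^1_{\cris}(\mathcal{A}_s/W)^{\vee}\otimes_{W}E$ is the filtration $\{\mathrm{Fil}^i(H^1_{\cris}(\mathcal{A}_s/W)^{\vee}\otimes_{W}E)\}_i$ defined in Section \ref{Subsection:Liftings of points with additional properties}. By the theorem of Colmez--Fontaine that weakly admissible filtered $\varphi$-modules are admissible, it then suffices to show this filtered $\varphi$-module is weakly admissible, since the associated crystalline $\Gal(\overline{E}/E)$-representation is the desired $H_{\mathrm{\acute{e}t}, \Q_p}$. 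I would first record the equality of total degrees $t_N=t_H=-2^{21}$: on the Hodge side the filtration has jumps only in degrees $-1$ and $0$ and $\dim_E\mathrm{Fil}^0=2^{21}$ by Proposition \ref{Proposition: Filtrations on Clifford algebras}(1); on the Newton side $\mathcal{A}_s$ is an abelian variety of dimension $2^{21}$, so the slopes of $\varphi$ on $H^1_{\cris}(\mathcal{A}_s/W)^{\vee}[1/p]$ lie in $[-1,0]$ and sum to $-2^{21}$.

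It then remains to check $t_H(D')\le t_N(D')$ for every $\varphi$-stable $K_0$-subspace $D'$ of $D:=H^1_{\cris}(\mathcal{A}_s/W)^{\vee}[1/p]$ (with the induced filtration). I would reduce this to the weak admissibility of the filtered $\varphi$-module $\End_{K_0}(D)$ by the following standard trick: a short computation using $t_N(D)=t_H(D)$ gives
\[
t_H(D'\otimes D^{\vee})-t_N(D'\otimes D^{\vee})=(\dim_{K_0}D)\bigl(t_H(D')-t_N(D')\bigr),
\]
so a subspace $D'$ violating the inequality would produce the sub-filtered-$\varphi$-module $D'\otimes D^{\vee}\subset \End_{K_0}(D)$ violating it.

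Thus the heart of the matter is that $\End_{K_0}(H^1_{\cris}(\mathcal{A}_s/W)^{\vee}[1/p])$ is weakly admissible, and here I would use two inputs. First, $\widetilde{L}_{\cris}[1/p]$ equipped with the filtration $\{\mathrm{Fil}^i(\widetilde{L}_{\cris}\otimes_W E)\}_i$ is weakly admissible: by Lemma \ref{Lemma:Lifting-LCris}(3),(5) it is isomorphic, as a filtered $\varphi$-module, to $D_{\cris}(\widetilde{L}_p[1/p])$, and $\widetilde{L}_p[1/p]$ is a crystalline representation (checking that the two filtrations match amounts to tracking Tate twists through Lemma \ref{Lemma:Lifting-LCris} and Proposition \ref{Proposition:CrysDecomposition}, both filtrations having jumps $-1,0,1$ of multiplicities $1,20,1$). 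Second, because the crystalline Frobenius on $H^1_{\cris}(\mathcal{A}_s/W)^{\vee}$ fixes the tensors $\{s_{\alpha, \cris, s}\}$ and the filtration $\{\mathrm{Fil}^i(H^1_{\cris}(\mathcal{A}_s/W)^{\vee}\otimes_W E)\}_i$ is defined by a cocharacter of $\widetilde{G}_E$ (Section \ref{Subsection: Filtrations on Clifford algebras defined by isotropic elements}), $H^1_{\cris}(\mathcal{A}_s/W)^{\vee}[1/p]$ carries a $\widetilde{G}$-filtered-isocrystal structure; via the exact sequence $1\to\G_{m}\to\widetilde{G}\to\widetilde{G}_0\to1$ (the central $\G_{m}$ acting trivially on $\End_{\Q}(H_{\Q})$ by conjugation) this induces the $\widetilde{G}_0$-filtered-isocrystal whose value on the representation $\widetilde{L}_{\Q}$ recovers $\widetilde{L}_{\cris}[1/p]$ with the filtration above. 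Since $\widetilde{G}_0=\SO(\widetilde{L}_{\Z_{(p)}})$ is reductive with $\widetilde{L}_{\Q}$ a faithful (self-dual) representation, $\End_{\Q}(H_{\Q})$ is a subquotient of a finite direct sum of tensor powers of $\widetilde{L}_{\Q}$ as a $\widetilde{G}_0$-representation; applying the exact $\otimes$-compatible filtered-isocrystal functor, and using the $W$-linear identification $H_{W}\cong H^1_{\cris}(\mathcal{A}_s/W)^{\vee}$ carrying $\{s_\alpha\}$ to $\{s_{\alpha, \cris, s}\}$ (Section \ref{Subsection: $F$-crystals and Breuil-Kisin modules}), this realizes $\End_{K_0}(H^1_{\cris}(\mathcal{A}_s/W)^{\vee}[1/p])$ as a subquotient, in the category of filtered $\varphi$-modules, of a finite direct sum of tensor powers of $\widetilde{L}_{\cris}[1/p]$. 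As weakly admissible filtered $\varphi$-modules are closed under direct sums, tensor products, duals and subquotients (Colmez--Fontaine), this gives the claim.

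The main obstacle will be the compatibility assertion in the last step: that the purely representation-theoretic decomposition of $\End_{\Q}(H_{\Q})$ as a $\widetilde{G}_0$-representation, after applying the filtered-isocrystal functor attached to the $\widetilde{G}_0$-structure on $\widetilde{L}_{\cris}$, really does produce $\End_{K_0}(H^1_{\cris}(\mathcal{A}_s/W)^{\vee}[1/p])$ with precisely the crystalline Frobenius and the filtration of Section \ref{Subsection:Liftings of points with additional properties}. This relies on the $\widetilde{G}$-equivariance and filtration-compatibility of Madapusi Pera's idempotent $\pi$ and of the $\Z/2\Z$-grading and right Clifford action (Proposition \ref{Proposition: Filtrations on Clifford algebras}(3)), together with the identification of $\widetilde{L}_{\cris}\otimes_W E$ with its $e$-filtration as the Hodge filtration of $D_{\dR}(\widetilde{L}_p[1/p])$.
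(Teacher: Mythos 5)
Your proposal reaches the lemma, but by a genuinely different route from the paper at the decisive step. The paper, like you, invokes Colmez--Fontaine and the tensor trick $t_H(B'\otimes B^{\vee})-t_N(B'\otimes B^{\vee})=\dim(B)\,(t_H(B')-t_N(B'))$, but it first exploits the \emph{right} Clifford action: since $\Cl_{\Z_p}\cong \mathrm{M}_n(\Z_p)$ with $n=2^{11}$, Morita equivalence replaces $D:=H^1_{\cris}(\mathcal{A}_s/W)^{\vee}[1/p]$ by a module $B$ with $B^{n}\cong D$, and the embedding $i_{\cris}$ (filtration-compatible by Proposition \ref{Proposition: Filtrations on Clifford algebras}) identifies $\End_{K_0}(B)\cong\Cl(\widetilde{L}_{\cris}[1/p])$ as filtered $\varphi$-modules; weak admissibility of $\widetilde{L}_{\cris}[1/p]$ (Lemma \ref{Lemma:Lifting-LCris}) then gives it for $\Cl(\widetilde{L}_{\cris}[1/p])$, and the tensor trick is run on $B$ (with $t_H(B)=t_N(B)=-2^{10}$). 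You instead run the tensor trick directly on $D$ and get weak admissibility of the full $\End_{K_0}(D)$ Tannakianly, from the fact that every representation of the reductive group $\widetilde{G}_0=\SO(\widetilde{L}_{\Q})$ is a direct summand of a finite sum of tensor powers of the faithful self-dual representation $\widetilde{L}_{\Q}$. What your route buys is independence from the Clifford/Morita bookkeeping; what it costs is exactly the compatibility you flag at the end.

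Two repairs are needed for your write-up. First, ``weakly admissible filtered $\varphi$-modules are closed under subquotients'' is not true for arbitrary sub-filtered-$\varphi$-modules (a subobject of a weakly admissible module satisfies only $t_H\le t_N$, not equality), so Colmez--Fontaine cannot be cited for this; what you actually need, and what semisimplicity of $\Rep(\widetilde{G}_{0,\Q_p})$ provides, is closure under direct summands cut out by equivariant idempotents --- and that does hold, since such an idempotent splits the filtration, and the degree/slope additivity plus $t_H=t_N$ on the total space forces each summand to be weakly admissible. Second, for the equivariant inclusion of $\End_{\Q}(H_{\Q})$ into tensor powers of $\widetilde{L}_{\Q}$ to become a morphism of filtered $\varphi$-modules after base change, you need the filtration on $D\otimes_W E$ (equivalently on $\widetilde{L}_{\cris}\otimes_W E$) to be the weight filtration of a cocharacter of $\widetilde{G}_E$; Section \ref{Subsection: Filtrations on Clifford algebras defined by isotropic elements} only defines these filtrations and checks the specific compatibilities the paper needs (Proposition \ref{Proposition: Filtrations on Clifford algebras}(3) concerns the fixed tensors $p^{\pm}$, $r_{e_i}$, $\pi'$, not an arbitrary equivariant projector), so you must supply the cocharacter yourself --- it exists, via the isotropic decomposition $\langle e\rangle\oplus\langle f\rangle\oplus(\langle e\rangle\oplus\langle f\rangle)^{\perp}$ and a computation with the explicit basis as in the proof of Proposition \ref{Proposition: Filtrations on Clifford algebras}(1). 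With these two points filled in, your argument is complete and proves the lemma.
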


\begin{proof}
It is a theorem of Colmez-Fontaine that any weakly admissible filtered
$\varphi$-module is admissible; see \cite[Th\'eor{\`e}me A]{Colmez-Fontaine}.
(Today, there are several alternative proofs of this theorem.
For example, see \cite[Proposition 2.1.5]{KisinCris}.)
Hence it is enough to prove that the filtered $\varphi$-module $H^1_{\cris}(\mathcal{A}_s/W)^{\vee}[1/p]$ is weakly admissible.

As in Section \ref{Subsection:Hodge tensors}, we fix an isomorphism of $W$-modules
	\[
	H_{W} \cong H^1_{\cris}(\mathcal{A}_{s}/W)^{\vee}
	\]
	which carries $\{ s_{\alpha}\}$ to $\{ s_{\alpha, \cris, s} \}$ and makes the following diagram commutative:
	\[
	\xymatrix{
\widetilde{L}_{W} \ar[d]_-{\cong} \ar[r]^-{i} & \End_{W}(H_{W}) \ar[d]^-{\cong} \\
\widetilde{L}_{\cris} \ar[r]^-{i_{\cris}} & \End_{W}(H^1_{\cris}(\mathcal{A}_{s}/W)^{\vee}).
}
	\]
	Using this isomorphism, we equip $H^1_{\cris}(\mathcal{A}_{s}/W)^{\vee}$ with a right action of the Clifford algebra $\Cl_{W}:=\Cl\otimes_{\Z}W$ using the natural right action of $\Cl_{W}$ on $H_W$.

	The Clifford algebra $\Cl_{\F_p}$ is a central simple algebra over the finite field $\F_p$ by \cite[\S 9.4, Corollaire]{Bourbaki}.
	Hence it is isomorphic to $\mathrm{M}_n(\F_p)$ as a $\F_p$-algebra where $n=2^{11}$.
	Then $\Cl_{\Z_p}$ is isomorphic to $\mathrm{M}_n(\Z_p)$ as a $\Z_p$-algebra by \cite[Lemma 5.1.16]{Knus}.
	We fix an isomorphism $\Cl_{\Z_p} \cong \mathrm{M}_n(\Z_p)$.
	Using this isomorphism, we equip $H^1_{\cris}(\mathcal{A}_{s}/W)^{\vee}$ with a right $\mathrm{M}_n(W)$-action.
	
	The right $\mathrm{M}_n(\Z_p)$-action on $H^1_{\cris}(\mathcal{A}_{s}/W)^{\vee}[1/p]$ is compatible with the Frobenius automorphisms,
	and $\Fil^0$ is an $\mathrm{M}_n(E)$-submodule of
	$H^1_{\cris}(\mathcal{A}_{s}/W)^{\vee}\otimes_W E$.
	Therefore a $\sigma$-linear endomorphism $\varphi \otimes \sigma$ on
	\[
	H^1_{\cris}(\mathcal{A}_{s}/W)^{\vee}[1/p] \otimes_{\mathrm{M}_n(K_0)}({K_0}^{n})
	\]
	is well-defined, and
	we consider the following $E$-vector subspace
	\[
	\mathrm{Fil}^0 \otimes_{\mathrm{M}_n(E)} (E^{n}) \subset (H^1_{\cris}(\mathcal{A}_{s}/W)^{\vee} \otimes_{W}E) \otimes_{\mathrm{M}_n(E)} (E^{n}).
	\]
	These determine the structure of a filtered $\varphi$-module on 
	\[
	B:= H^1_{\cris}(\mathcal{A}_{s}/W)^{\vee}[1/p]\otimes_{\mathrm{M}_n(K_0)}({K_0}^{n}).
	\]
	Since we have an isomorphism of filtered $\varphi$-modules
	\[
	B^{n} \cong H^1_{\cris}(\mathcal{A}_{s}/W)^{\vee}[1/p],
	\]
	it suffices to show that $B$ is weakly admissible.
	
	The embedding $i_{\cris}$ induces an isomorphism of $W$-modules
	\[
	\Cl(\widetilde{L}_{\cris})\cong \End_{\Cl_W}(H^1_{\cris}(\mathcal{A}_{s}/W)^{\vee}),
	\]
	 which is an isomorphism of filtered $\varphi$-modules after inverting $p$.
	By the Morita equivalence, we have
	\[
	\End_{\Cl_{K_0}}(H^1_{\cris}(\mathcal{A}_{s}/W)^{\vee}[1/p])\cong \End_{K_0}(B).
	\]
	Hence we have an isomorphism of filtered $\varphi$-modules
	\[
	\Cl(\widetilde{L}_{\cris}[1/p])\cong \End_{K_0}(B).
	\]
	Since $\widetilde{L}_{\cris}[1/p]$ is weakly admissible by Lemma \ref{Lemma:Lifting-LCris},
	the filtered $\varphi$-module
	$\Cl(\widetilde{L}_{\cris}[1/p])$ is weakly admissible.
    Therefore,
    \[ B \otimes_{K_0}B^{\vee} \cong \End_{K_0}(B) \]
	is also weakly admissible.
	(Recall that the tensor product of two weakly admissible $\varphi$-modules
	is weakly admissible; see \cite[Corollaire 1]{Colmez-Fontaine} for example.)
	
	In order to show $B$ is weakly admissible,
	we have to show $t_{H}(B')\leq t_{N}(B')$ for every filtered $\varphi$-submodule $B' \subset B$.
	(For the definition of the functions $t_{H}$ and $t_{N}$, see \cite[1.1.3]{KisinCris} for example.)
	Note that we have
	\[
	t_{H}(B)= t_{N}(B)=-2^{10}.
	\]
	Since
	$B \otimes_{K_0}B^{\vee}$
	is weakly admissible, we have
	\[
	t_{H}(B'\otimes B^{\vee})\leq t_{N}(B'\otimes B^{\vee}).
	\]
	Each side of the inequality may be computed as 
	\begin{align*}
		t_{H}(B'\otimes B^{\vee}) & = \dim_{K_0}(B')t_{H}(B')+ \dim_{K_0}(B^{\vee})t_{H}(B^{\vee})\\
         & =\dim_{K_0}(B')t_{H}(B')- \dim_{K_0}(B)t_{H}(B)
	\end{align*}
		and
	\begin{align*}
		t_{N}(B'\otimes B^{\vee}) & = \dim_{K_0}(B')t_{N}(B')+ \dim_{K_0}(B^{\vee})t_{N}(B^{\vee})\\
         & =\dim_{K_0}(B')t_{N}(B')- \dim_{K_0}(B)t_{N}(B).
	\end{align*}
    It follows that $t_{H}(B')\leq t_{N}(B')$.
    
    Therefore, $B$ is weakly admissible, and the proof of Lemma \ref{Lemma:admissible} is complete.
\end{proof}

\subsection{$p$-divisible groups adapted to general spin groups}
\label{Subsection:p-divisible groups adapted to general spin groups}

By Lemma \ref{Lemma:admissible},
there is a crystalline $\Gal(\overline{E}/E)$-representation $H_{\mathrm{\acute{e}t}, \Q_p}$ over $\Q_p$ such that 
\[
D_{\cris}(H_{\mathrm{\acute{e}t}, \Q_p}) \cong H^1_{\cris}(\mathcal{A}_s/W)^{\vee}\otimes_{W}E
\]
as filtered $\varphi$-modules.

The tensors $\{ s_{\beta, \cris, s} \} \subset \{ s_{\alpha, \cris, s}\}$ corresponding to $p^{\pm}$, $\{ r_{e_i} \}_{1 \leq i \leq 2^{22}}$, and the endomorphism $\pi'$
preserve the filtration on
\[ (H^1_{\cris}(\mathcal{A}_s/W)^{\vee}\otimes_{W}E)^{\otimes} \]
by Proposition \ref{Proposition: Filtrations on Clifford algebras}.
(For the tensors $p^{\pm}$, $\{ r_{e_i} \}_{1 \leq i \leq 2^{22}}$, and $\pi'$, see Section \ref{Subsection: Representations of general spin groups and Hodge tensors} for details.)
Therefore the tensors $\{ s_{\beta, \cris, s} \}$ induce tensors $\{ s'_{\beta, p} \}$ of $H^{\otimes}_{\mathrm{\acute{e}t}, \Q_p}$.
By Proposition \ref{Proposition: Filtrations on Clifford algebras} again, the injection
\[
i_{\cris}\otimes_{W}E \colon \widetilde{L}_{\cris}\otimes_{W}E \hookrightarrow \End_{W}(H^1_{\cris}(\mathcal{A}_{s}/W)^{\vee}\otimes_{W}E)
\]
preserves the filtrations.
Therefore $i_{\cris}\otimes_{W}E$ induces an inclusion of crystalline $\Gal(\overline{E}/E)$-representations
\[
\widetilde{L}_p[1/p] \hookrightarrow \End_{\Q_p}(H_{\mathrm{\acute{e}t}, \Q_p}).
\]

The derived group $\widetilde{G}^{\mathrm{der}}_{\Q_{p}}$
of $\widetilde{G}_{\Q_{p}}$ is the spin double cover of
$\widetilde{G}_{0, \Q_p}=\SO(\widetilde{L}_{\Q_p})$,
and a simply connected semisimple algebraic group over $\Q_p$.
So, we have $H^1(\Q_p, \widetilde{G}^{\mathrm{der}}_{\Q_{p}})=0$ by \cite[Theorem 6.4]{Platonov-Rapinchuk}.
This implies that
$
H^1(\Q_p, \widetilde{G}_{\Q_{p}})=0
$
and every $\widetilde{G}_{\Q_{p}}$-torsor over $\Q_{p}$ is trivial.
Hence there is an isomorphism of $\Q_p$-vector spaces
\[
H_{\Q_p} \cong H_{\mathrm{\acute{e}t}, \Q_p}
\]
which carries $\{ s_{\beta} \}$ to $\{ s'_{\beta, p} \}$ and induces the following commutative diagram:
\[
	\xymatrix{
 \widetilde{L}_{\Q_p} \ar[d]_-{\cong} \ar[r]^-{i} & \End_{\Q_p}(H_{\Q_p}) \ar[d]^-{\cong} \\
\widetilde{L}_p[1/p] \ar[r]^-{} & \End_{\Q_p}(H_{\mathrm{\acute{e}t}, \Q_p}),
}
\]
where $\widetilde{L}_{\Q_p} \cong \widetilde{L}_p[1/p]$
is an isometry over $\Q_p$.

Since the tensors $\{ s'_{\beta, p} \}$ are fixed by $\Gal(\overline{E}/E)$
and $\widetilde{G}(\Q_p)$ is the stabilizer of the tensors $\{ s_{\beta} \}$,
we have the following commutative diagram:
\[
\xymatrix{
&\GSpin(\widetilde{L}_p)(\Q_p) \ar[r]^-{\cong} \ar[d]^-{} & \widetilde{G}(\Q_p) \ar[d]^-{} \\
\Gal(\overline{E}/E) \ar[r]^-{\rho_0} \ar[ru]^-{\rho} &\SO(\widetilde{L}_p)(\Q_p) \ar[r]^-{\cong}  & \widetilde{G}_0(\Q_p). 
}
\]
Here $\rho_0$ is the continuous homomorphism corresponding to the crystalline $\Gal(\overline{E}/E)$-representation $\widetilde{L}_p[1/p]$.
(See Lemma \ref{Lemma:Lifting-LCris} for the properties of the $\Gal(\overline{E}/E)$-stable $\Z_p$-lattice $\widetilde{L}_p$
in $\widetilde{L}_p[1/p]$.)

\begin{lem}\label{Lemma: Galois-stable}
We have
$\rho(\Gal(\overline{E}/E)) \subset \GSpin(\widetilde{L}_p)(\Z_p).$
\end{lem}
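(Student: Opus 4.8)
The plan is to exploit the fact that the Galois action $\rho$ comes from an honest crystalline $\Gal(\overline{E}/E)$-stable $\Z_p$-lattice, namely $\widetilde{L}_p$, and that $\GSpin(\widetilde{L}_p)$ is the stabilizer (over $\Z_p$) of the tensors $\{s_\beta\}$ sitting inside $\widetilde{L}_p^{\otimes}$ and related modules. First I would recall from the diagram just above the statement that $\rho$ factors $\rho_0$ through $\GSpin(\widetilde{L}_p)(\Q_p) \hookrightarrow \widetilde{G}(\Q_p)$, and that $\rho_0$ is the homomorphism attached to the crystalline representation on $\widetilde{L}_p[1/p]$, whose underlying lattice $\widetilde{L}_p$ is $\Gal(\overline{E}/E)$-stable by construction (Lemma \ref{Lemma:Lifting-LCris}). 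Hence for every $g \in \Gal(\overline{E}/E)$, the element $\rho_0(g) \in \SO(\widetilde{L}_p)(\Q_p)$ actually lies in $\SO(\widetilde{L}_{p})(\Z_p) = \SO(\widetilde{L}_p)(\Q_p) \cap \GL(\widetilde{L}_p)(\Z_p)$, because $\rho_0(g)$ preserves the lattice $\widetilde{L}_p$ and the bilinear form on it.

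The main point is then to upgrade this from $\SO$ to $\GSpin$. Here I would use the tensor $\{s_\beta\} \subset H^{\otimes}_{\Z_{(p)}}$ (those corresponding to $p^{\pm}$, $\{r_{e_i}\}$, and $\pi'$) together with the identification of lattices $H_{\Z_p} \cong$ (a $\Gal$-stable lattice in $H_{\mathrm{\acute{e}t},\Q_p}$) produced from $\widetilde{L}_p$ via the Clifford algebra construction. Concretely, $\widetilde{L}_p$ determines a $\Gal(\overline{E}/E)$-stable lattice $\Cl(\widetilde{L}_p)$ inside $\Cl(\widetilde{L}_p[1/p]) \cong H_{\mathrm{\acute{e}t},\Q_p}$ (compare the discussion in Section \ref{Subsection:Outline} of the introduction and in Section \ref{Subsection:p-divisible groups adapted to general spin groups}), and one checks that $\rho(g)$ stabilizes this lattice and fixes the integral tensors $\{s_\beta\}$. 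Since $\GSpin(\widetilde{L}_p) \subset \GL(H_{\Z_p})$ is by definition the stabilizer over $\Z_p$ of these tensors, this forces $\rho(g) \in \GSpin(\widetilde{L}_p)(\Z_p)$. Equivalently, one can argue purely inside the Clifford algebra: $\rho(g)$ already lies in $\widetilde{G}(\Q_p) = \GSpin(\widetilde{L}_p)(\Q_p)$, its image in $\SO(\widetilde{L}_p)(\Q_p)$ preserves the lattice $\widetilde{L}_p$, and the spinor norm / similitude factor of $\rho(g)$ is a $p$-adic unit (this is where a small computation is needed: the similitude character of $\rho$ is the cyclotomic character up to a unit twist, since the Hodge–Tate weights of $\widetilde{L}_p[1/p]$ are $\{-1,0,1\}$, so its value on $\Gal(\overline{E}/E)$ lies in $\Z_p^\times$); hence $\rho(g)$ normalizes $\widetilde{L}_p$ inside $\Cl(\widetilde{L}_p)^\times$ and therefore lies in $\GSpin(\widetilde{L}_p)(\Z_p)$.

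I expect the main obstacle to be the bookkeeping of integral structures: one must be careful that the isomorphism $H_{\Q_p} \cong H_{\mathrm{\acute{e}t},\Q_p}$ built above (which a priori is only a $\Q_p$-isomorphism carrying $\{s_\beta\}$ to $\{s'_{\beta,p}\}$) can be chosen, or at least shown to be compatible with, the $\Gal(\overline{E}/E)$-stable $\Z_p$-lattice coming from $\widetilde{L}_p$ via the Clifford construction, so that the integral statement $\rho(g) \in \GSpin(\widetilde{L}_p)(\Z_p)$ really follows. This is essentially the same kind of integral-comparison argument that appears in Kisin's work (\cite[1.3.9]{KisinModp}) and in \cite[Theorem 2.12]{KimMadapusiIntModel}, applied to the $p$-divisible groups $\mathcal{G}$, $\mathcal{G}^*$, $D$ whose Tate modules build $\widetilde{L}_p$; once one knows the comparison isomorphisms match lattices on the nose (which was recorded in Lemma \ref{Lemma:Lifting-LCris}), the computation of the similitude factor via Hodge–Tate weights finishes the proof.
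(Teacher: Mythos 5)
Your second argument (the one ``purely inside the Clifford algebra'') isolates the same two inputs as the paper: $\rho_0(\Gal(\overline{E}/E)) \subset \SO(\widetilde{L}_p)(\Z_p)$ because the lattice and its bilinear form are Galois-stable, and the spinor norm $\nu(\rho(g))$ lies in $\Z_p^\times$ (compactness of the Galois group already gives this, since any continuous character from a profinite group into $\Q_p^\times$ has image in $\Z_p^\times$; the detour through Hodge--Tate weights and the cyclotomic character is unnecessary). But your concluding inference --- ``hence $\rho(g)$ normalizes $\widetilde{L}_p$ inside $\Cl(\widetilde{L}_p)^\times$ and therefore lies in $\GSpin(\widetilde{L}_p)(\Z_p)$'' --- is exactly the point at issue and is asserted rather than proved: knowing that conjugation by $\rho(g)$ preserves $\widetilde{L}_p$ and that $\nu(\rho(g))$ is a unit does not by itself show that $\rho(g)$ and $\rho(g)^{-1}$ lie in the integral Clifford algebra. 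The paper supplies the missing step: from the exact sequence $1 \to \G_{m, \Z_p} \to \GSpin(\widetilde{L}_p) \to \SO(\widetilde{L}_p) \to 1$, smoothness and Hilbert's theorem 90 give surjectivity of $\GSpin(\widetilde{L}_p)(\Z_p) \to \SO(\widetilde{L}_p)(\Z_p)$; hence one may write $\rho(g) = a g'$ with $g' \in \GSpin(\widetilde{L}_p)(\Z_p)$ and $a \in \Q_p^\times$, and then $\nu(\rho(g)) = a^2 \nu(g') \in \Z_p^\times$ forces $a \in \Z_p^\times$, so $\rho(g) \in \GSpin(\widetilde{L}_p)(\Z_p)$. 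With this lifting step added, your second argument becomes the paper's proof.

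Your first route, via the stabilizer description by the tensors $\{s_\beta\}$ and the lattice $\Cl(\widetilde{L}_p) \subset H_{\mathrm{\acute{e}t}, \Q_p}$, is circular as stated. The Galois-stability of $\Cl(\widetilde{L}_p)$ is not an available hypothesis: it is precisely the consequence the paper draws from this lemma in the very next sentence, and at this stage $H_{\mathrm{\acute{e}t}, \Q_p}$ is only an abstract crystalline representation produced by weak admissibility (Lemma \ref{Lemma:admissible}), equipped with a $\Q_p$-isomorphism to $H_{\Q_p}$ carrying tensors but with no specified integral structure. The $p$-divisible group $\mathcal{H}$, and hence any integral comparison isomorphism in the style of Kisin or Kim--Madapusi Pera that would control lattices in $H_{\mathrm{\acute{e}t}, \Q_p}$, is constructed only \emph{after} the lemma, using the lattice $H_{\mathrm{\acute{e}t}} = \Cl(\widetilde{L}_p)$ whose stability you would be assuming; the integral comparisons for $\mathcal{G}$, $\mathcal{G}^*$, $D$ recorded in Lemma \ref{Lemma:Lifting-LCris} control $\widetilde{L}_p$ itself but not the Clifford lattice inside $H_{\mathrm{\acute{e}t}, \Q_p}$. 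So the ``bookkeeping of integral structures'' you flag cannot be resolved by those comparison theorems here; the group-theoretic lifting argument above is what closes the gap.
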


\begin{proof}
Since we have an exact sequence of group schemes over $\Z_{p}$
\[
1 \to
\G_{m, \Z_{p}} \to \GSpin(\widetilde{L}_p) \to \SO(\widetilde{L}_p) \to 1,
\]
it follows that
\[
\GSpin(\widetilde{L}_p)(\Z_p) \to \SO(\widetilde{L}_p)(\Z_p)
\]
is surjective by Hilbert's theorem 90 and the smoothness of $\G_{m, \Z_{p}}$.
Moreover we have
$
\rho_0(\Gal(\overline{E}/E)) \subset \SO(\widetilde{L}_p)(\Z_p).
$
Thus, for every $g \in \Gal(\overline{E}/E)$, there is an element $g' \in \GSpin(\widetilde{L}_p)(\Z_p)$ and $a \in \Q^{\times}_p$ such that $\rho(g)=ag'$.
Let $\nu \colon \GSpin(\widetilde{L}_p) \to \G_m$ be the spinor norm.
Since $\Gal(\overline{E}/E))$ is compact, we have $\nu(ag')=\nu(\rho(g)) \in \Z^{\times}_p$.
Hence we have $a \in \Z^{\times}_p$ and
$\rho(g)=ag' \in \GSpin(\widetilde{L}_p)(\Z_p)$.
In conclusion, we have
$
\rho(\Gal(\overline{E}/E)) \subset \GSpin(\widetilde{L}_p)(\Z_p).
$
\end{proof}

By Lemma \ref{Lemma: Galois-stable}, we see that $\Cl(\widetilde{L}_p)$ is a $\Gal(\overline{E}/E)$-stable $\Z_p$-lattice in the crystalline representation $H_{\mathrm{\acute{e}t}, \Q_p}$, which will be denoted by $H_{\mathrm{\acute{e}t}}$.

Let $K$ be the composite of $E$ and $W(\overline{\F}_q)[1/p]$. 
In Proposition \ref{Proposition:LiftingCrisLattice} below, we shall show that $H_{\mathrm{\acute{e}t}}$ satisfies properties which should be satisfied when $H_{\mathrm{\acute{e}t}}$ is the $p$-adic Tate module of the abelian scheme $\mathcal{A}_{\widetilde{s}}$ associated with a desired lift $\widetilde{s} \in Z_{\mathrm{K}^p}(\Lambda)(\O_{K})$.

The next lemma will be used in the proof of Proposition \ref{Proposition:LiftingCrisLattice}.

\begin{lem}\label{Lemma: isometry}
There is an isometry $\widetilde{L}_p \cong \widetilde{L}_{\Z_p}$ over $\Z_p$.
\end{lem}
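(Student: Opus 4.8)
The plan is to reduce the statement to a comparison of generic fibres followed by the local classification of unimodular quadratic forms. First I would recall from Section~\ref{Subsection:p-divisible groups adapted to general spin groups} that the isomorphism $H_{\Q_p}\cong H_{\mathrm{\acute{e}t},\Q_p}$ carrying $\{s_\beta\}$ to $\{s'_{\beta,p}\}$ restricts on the $\widetilde{L}$-component to an isometry of quadratic spaces over $\Q_p$
\[
\widetilde{L}_{\Q_p}\ \cong\ \widetilde{L}_p[1/p].
\]
Hence $\widetilde{L}_p$ and $\widetilde{L}_{\Z_p}$ may be viewed as two $\Z_p$-lattices inside one and the same quadratic space over $\Q_p$. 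Both are even and self-dual and have rank $22$: for $\widetilde{L}_p$ this is Lemma~\ref{Lemma:Lifting-LCris}~(1); for $\widetilde{L}_{\Z_p}$ it follows from Lemma~\ref{LatticeEmbedding}~(2) together with the fact that $\widetilde{L}=E_8^{\oplus2}\oplus U^{\oplus2}\oplus L'$ is an even lattice.

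The remaining point is the following fact from the theory of quadratic forms over local rings: an even unimodular quadratic lattice over $\Z_p$ is determined up to isometry by its rank and its discriminant in $\Z_p^{\times}/(\Z_p^{\times})^2$. For odd $p$ this is elementary, since any unimodular $\Z_p$-lattice is diagonalizable and two such are isometric once their ranks and discriminants agree. For $p=2$ one invokes the classification of $2$-adic lattices by their Jordan splittings: for a single unimodular Jordan block of even (type~II) form, the remaining $2$-adic invariants are pinned down, so again rank and discriminant suffice; here I would cite a standard reference on quadratic forms (e.g.\ \cite{Bass} or O'Meara). Since the discriminant of a unimodular lattice is a $p$-adic unit and the natural map $\Z_p^{\times}/(\Z_p^{\times})^2\hookrightarrow\Q_p^{\times}/(\Q_p^{\times})^2$ is injective, the discriminant is detected by the associated quadratic space over $\Q_p$; combined with the isometry $\widetilde{L}_{\Q_p}\cong\widetilde{L}_p[1/p]$ this shows $\widetilde{L}_p$ and $\widetilde{L}_{\Z_p}$ have the same rank and discriminant, hence are isometric over $\Z_p$.

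The main obstacle is the $p=2$ case of the classification statement, specifically the assertion that for an even unimodular $\Z_2$-lattice the finer $2$-adic invariants (oddity, etc.) are already forced by rank and discriminant; the odd-$p$ case and the passage between $\widetilde{L}_p$ and its generic fibre are routine. (Alternatively, one could argue over $W$ using the isometry $\Mcris(\widetilde{L}_p)\cong\widetilde{L}_{\cris}\cong\widetilde{L}_W$ from Lemma~\ref{Lemma:Lifting-LCris}, but descending an isometry along the unramified extension $\Z_p\hookrightarrow W$ runs into the same classification input, so the route through $\Q_p$ seems cleanest.)
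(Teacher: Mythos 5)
Your proof is correct, but it takes a genuinely different route from the paper's. You view $\widetilde{L}_p$ and $\widetilde{L}_{\Z_p}$ as two even self-dual rank-$22$ lattices inside the common quadratic space supplied by the isometry $\widetilde{L}_{\Q_p}\cong\widetilde{L}_p[1/p]$, note that their discriminants agree because a unit square class over $\Q_p$ is already detected over $\Z_p$, and then invoke the classification of unimodular $\Z_p$-lattices: rank and discriminant suffice for odd $p$, and for $p=2$ the fact that an even unimodular $\Z_2$-lattice is an orthogonal sum of hyperbolic planes $U$ and at most one copy of the lattice $V$ with Gram matrix $\left(\begin{smallmatrix}2&1\\1&2\end{smallmatrix}\right)$ (using $V\oplus V\cong U\oplus U$), so rank and discriminant again determine the class; this $2$-adic statement, which you flag as the main obstacle, is indeed a standard theorem (O'Meara, Kneser, Conway--Sloane), and evenness of $\widetilde{L}_p$ is exactly what Lemma \ref{Lemma:Lifting-LCris}~(1) provides, so the argument closes. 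The paper argues instead cohomologically: it uses the chain of isometries $\widetilde{L}_p\otimes_{\Z_p}W\cong\Mcris(\widetilde{L}_p)\cong\widetilde{L}_{\cris}\cong\widetilde{L}_W$ (via \cite[Corollary 1.3.5]{KisinIntModel}) to see that $\mathrm{Isom}(\widetilde{L}_p,\widetilde{L}_{\Z_p})$ is an $\mathrm{O}(\widetilde{L}_{\Z_p})$-torsor over $\Z_p$, uses the $\Q_p$-isometry to reduce the structure group to $\mathrm{SO}(\widetilde{L}_{\Z_p})$, and concludes by $H^1(\Z_p,\mathrm{SO}(\widetilde{L}_{\Z_p}))=0$ (smoothness plus Lang's theorem). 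Your route is more elementary and does not need the comparison over $W$, at the price of citing the delicate $2$-adic lattice classification; the paper's torsor argument is uniform in $p$, avoids any explicit classification, and is the kind of argument that transfers to situations where such a classification is unavailable.
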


\begin{proof}
    There is an isometry $\widetilde{L}_{\cris} \cong \widetilde{L}_W$ over $W$; see Section \ref{Subsection:Hodge tensors}.
    We have an isometry $\Mcris(\widetilde{L}_p) \cong \widetilde{L}_{\cris}$  over $W$ by Lemma \ref{Lemma:Lifting-LCris}.
    By using \cite[Corollary 1.3.5]{KisinIntModel}, we see that there is an isometry $\widetilde{L}_p\otimes_{\Z_p}W \cong \Mcris(\widetilde{L}_p)$ over $W$.
    So, the functor $\mathrm{Isom}(\widetilde{L}_p, \widetilde{L}_{\Z_p})$ on $\Z_p$-algebras which sends a $\Z_p$-algebra $R$
    to the set of isometries over $R$ from $(\widetilde{L}_p)_R$ to $\widetilde{L}_{R}$ is represented by an $\mathrm{O}(\widetilde{L}_{\Z_p})$-torsor, which is also denoted by $\mathrm{Isom}(\widetilde{L}_p, \widetilde{L}_{\Z_p})$.
    Here $\mathrm{O}(\widetilde{L}_{\Z_p})$ is the orthogonal group over $\Z_p$.
    This $\mathrm{O}(\widetilde{L}_{\Z_p})$-torsor corresponds to an element $x \in H^1(\Z_p, \mathrm{O}(\widetilde{L}_{\Z_p}))$.
    Since there is an isometry $\widetilde{L}_p[1/p] \cong \widetilde{L}_{\Q_p}$ over $\Q_p$, it follows that $x$ comes from an element of $H^1(\Z_p, \mathrm{SO}(\widetilde{L}_{\Z_p}))$.
    By Lang's theorem and the smoothness of $\mathrm{SO}(\widetilde{L}_{\Z_p})$, we have $H^1(\Z_p, \mathrm{SO}(\widetilde{L}_{\Z_p}))=0$.
Thus, we see that the $\mathrm{O}(\widetilde{L}_{\Z_p})$-torsor $\mathrm{Isom}(\widetilde{L}_p, \widetilde{L}_{\Z_p})$ is trivial.
\end{proof}

\begin{prop}\label{Proposition:LiftingCrisLattice}
	There are $\Gal(\overline{E}/E)$-invariant tensors $\{ s_{\alpha, p}\}$ of $H^{\otimes}_{\mathrm{\acute{e}t}}$ and an isomorphism of $W(\overline{\F}_q)$-modules
\[
\Phi \colon \Mcris(H_{\mathrm{\acute{e}t}})\otimes_{W}W(\overline{\F}_q) \overset{\cong}{\longrightarrow} H^1_{\cris}(\mathcal{A}_s/W)^{\vee}\otimes_{W} W(\overline{\F}_q)
\]
	satisfying the following properties:
	\begin{enumerate}
	\item There is an isomorphism of $\Z_p$-modules
	    \[
	    H_{\Z_p}\cong H_{\mathrm{\acute{e}t}}
	    \]
	    which carries $\{ s_{\alpha} \}$ to $\{ s_{\alpha, p} \}$ and induces the following commutative diagram:
\[
	\xymatrix{
 \widetilde{L}_{\Z_p} \ar[d]_-{\cong} \ar[r]^-{i} & \End_{\Z_p}(H_{\Z_p}) \ar[d]^-{\cong} \\
\widetilde{L}_p \ar[r]^-{} & \End_{\Z_p}(H_{\mathrm{\acute{e}t}}),
}
\]
where $\widetilde{L}_{\Z_p} \cong \widetilde{L}_p$
is an isometry over $\Z_p$.
	\item The isomorphism $\Phi$ is an isomorphism of $F$-isocrystals after inverting $p$.
	\item The isomorphism $\Phi$ carries $\{ \Mcris(s_{\alpha, p}) \}$ to $\{ s_{\alpha, \cris, s} \}$.
	\item The following diagram is commutative:
		\[
		\xymatrix{\Mcris(\widetilde{L}_p)\otimes_{W}W(\overline{\F}_q) \ar@{=}[d]^-{} \ar[r]^-{} & \End_{W(\overline{\F}_q)}(\Mcris(H_{\mathrm{\acute{e}t}})\otimes_{W}W(\overline{\F}_q)) \ar[d]^-{\cong} \\
 \widetilde{L}_{\cris}\otimes_{W} W(\overline{\F}_q) \ar[r]^-{i_{\cris}} & \End_{W(\overline{\F}_q)}(H^1_{\cris}(\mathcal{A}_{s}/W)^{\vee}\otimes_{W} W(\overline{\F}_q)),
}
\]
where we identify
		$
		\Mcris(\widetilde{L}_p)\otimes_{W}W(\overline{\F}_q)
		$
		with
		$\widetilde{L}_{\cris}\otimes_{W}W(\overline{\F}_q)$
		using the isomorphism
$\Mcris(\widetilde{L}_p)\cong \widetilde{L}_{\cris}$
in Lemma \ref{Lemma:Lifting-LCris}.
	\end{enumerate}
\end{prop}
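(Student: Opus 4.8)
The plan is to build everything out of the isometry $\widetilde{L}_p \cong \widetilde{L}_{\Z_p}$ established in Lemma \ref{Lemma: isometry}, together with Kisin's trivialization results for Breuil--Kisin modules attached to $\GSpin$-representations, and then to reconcile the resulting comparison with the Frobenius structure after base change to $W(\overline{\F}_q)$. The four assertions split naturally into a purely $p$-adic part (the tensors $\{s_{\alpha,p}\}$ and assertion (1)), a crystalline part over $W$ (assertions (3) and (4)), and the gluing of the two via the $F$-isocrystal structure (assertion (2)), which I expect to be the only real difficulty.

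First I would construct $\{s_{\alpha,p}\}$ and verify (1). Since the formation of Clifford algebras is functorial in quadratic spaces over $\Z_p$, an isometry $\widetilde{L}_p \cong \widetilde{L}_{\Z_p}$ induces an isomorphism of $\Z_p$-algebras $\Cl(\widetilde{L}_p) \cong \Cl(\widetilde{L}_{\Z_p}) = H_{\Z_p}$ respecting the $\Z/2\Z$-grading, the right $\Cl$-action, and the embeddings of $\widetilde{L}$; recalling $H_{\mathrm{\acute{e}t}} = \Cl(\widetilde{L}_p)$, I would transport the tensors $\{s_\alpha\} \subset H^{\otimes}_{\Z_p}$ across this isomorphism to obtain $\{s_{\alpha,p}\} \subset H^{\otimes}_{\mathrm{\acute{e}t}}$, so that (1) holds by construction. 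Galois-invariance of $\{s_{\alpha,p}\}$ follows from Lemma \ref{Lemma: Galois-stable}: the $\Gal(\overline{E}/E)$-action on $H_{\mathrm{\acute{e}t}}$ factors through $\GSpin(\widetilde{L}_p)(\Z_p)$, which the isomorphism above carries onto $\widetilde{G}(\Z_p) = \GSpin(\widetilde{L}_{\Z_p})(\Z_p)$, the stabilizer of $\{s_\alpha\}$.

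Next I would build a $W(\overline{\F}_q)$-linear isomorphism $\Psi$ satisfying (3) and (4). Applying the tensor functor $\M(-)$ and reducing modulo $u$, the tensors $\{s_{\alpha,p}\}$ yield Frobenius-invariant tensors $\{\Mcris(s_{\alpha,p})\}$ of $\Mcris(H_{\mathrm{\acute{e}t}})^{\otimes}$. As $H_{\mathrm{\acute{e}t}}$ is a crystalline $\Z_p$-lattice whose Galois action factors through $\widetilde{G}(\Z_p) = \GSpin(\widetilde{L}_p)(\Z_p)$, the argument of Proposition \ref{Proposition:LcrisBKmodule} (namely \cite[Proposition 1.3.4, Corollary 1.3.5]{KisinIntModel}), combined with Lemma \ref{Lemma:Lifting-LCris}(4), produces an isomorphism of $W$-modules $H_W \cong \Mcris(H_{\mathrm{\acute{e}t}})$ carrying $\{s_\alpha\}$ to $\{\Mcris(s_{\alpha,p})\}$ and compatible with $i$ and $\Mcris(i_p)$. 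On the other hand, by Section \ref{Subsection: $F$-crystals and Breuil-Kisin modules} there is an isomorphism of $W$-modules $H_W \cong H^1_{\cris}(\mathcal{A}_s/W)^{\vee}$ carrying $\{s_\alpha\}$ to $\{s_{\alpha,\cris,s}\}$ and compatible with $i$ and $i_{\cris}$. Composing these (after $\otimes_W W(\overline{\F}_q)$, and using Lemma \ref{Lemma:Lifting-LCris} to identify $\Mcris(\widetilde{L}_p)$ with $\widetilde{L}_{\cris}$) gives a $W(\overline{\F}_q)$-linear isomorphism $\Psi$ which carries $\{\Mcris(s_{\alpha,p})\}$ to $\{s_{\alpha,\cris,s}\}$ and fits into the square of (4); thus it remains only to arrange (2).

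The hard part will be upgrading $\Psi$ to a $\Phi$ that is also an isomorphism of $F$-isocrystals after inverting $p$. The set of $W(\overline{\F}_q)$-linear isomorphisms $\Mcris(H_{\mathrm{\acute{e}t}}) \otimes_W W(\overline{\F}_q) \cong H^1_{\cris}(\mathcal{A}_s/W)^{\vee} \otimes_W W(\overline{\F}_q)$ carrying $\{\Mcris(s_{\alpha,p})\}$ to $\{s_{\alpha,\cris,s}\}$ is a torsor under $\widetilde{G}(W(\overline{\F}_q))$, and I must locate one in it that is $\varphi$-compatible. Transporting the two $\sigma$-linear operators to a single $\widetilde{G}$-bundle via $\Psi$, one checks — using Lemma \ref{Lemma:Lifting-LCris}(3) and the compatibility of $\Psi$ with $i_{\cris}$ — that both induce the \emph{same} Frobenius on the $\SO(\widetilde{L})$-part $\widetilde{L}_{\cris}[1/p]$, so that they differ by an element of the central torus $\G_{m} \subset \widetilde{G}$, i.e.\ by a scalar $c \in W(\overline{\F}_q)[1/p]^{\times}$. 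By Lemma \ref{Lemma:admissible} the $\varphi$-modules $\Mcris(H_{\mathrm{\acute{e}t}})[1/p]$ and $H^1_{\cris}(\mathcal{A}_s/W)^{\vee}[1/p]$ are abstractly isomorphic, which forces $v_p(c) = 0$; and since $W(\overline{\F}_q)$ has algebraically closed residue field, $u \mapsto \sigma(u)/u$ is surjective on $W(\overline{\F}_q)[1/p]^{\times}$, so $c = \sigma(g)/g$ for a scalar $g$. As $g$ is central it acts trivially on $\widetilde{L}$, so replacing $\Psi$ by $\Phi := \Psi \circ g$ preserves (1), (3), (4) and now achieves (2). I expect this isolation and killing of the central $\G_{m}$-discrepancy over $W(\overline{\F}_q)$ to be the technical heart of the proof, and it is precisely what forces the statement to be over $W(\overline{\F}_q)$ rather than $W(\F_q)$.
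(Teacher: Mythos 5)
Your overall route is the same as the paper's: transport the tensors through an isometry $\widetilde{L}_p\cong\widetilde{L}_{\Z_p}$ from Lemma \ref{Lemma: isometry}, trivialize the Breuil--Kisin module with tensors via Kisin's argument, compare with a trivialization of $H^1_{\cris}(\mathcal{A}_s/W)^{\vee}$, and kill the remaining Frobenius discrepancy by solving $\sigma(v)/v=c$ over $W(\overline{\F}_q)$. But there is a genuine gap where you assert that the composite $\Psi$ already ``fits into the square of (4)''. Each of your two trivializations is compatible with the $\widetilde{L}$-embeddings only up to \emph{some} isometry, so the composite matches $\Mcris(\widetilde{L}_p)\otimes_W W(\overline{\F}_q)$ with $\widetilde{L}_{\cris}\otimes_W W(\overline{\F}_q)$ via an element $\psi\in \mathrm{O}(\widetilde{L}_{\cris})(W)$ that has no reason to coincide with the specific ($\varphi$-equivariant) identification of Lemma \ref{Lemma:Lifting-LCris} demanded in (4). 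This is not cosmetic: your argument for (2) transports the two Frobenii to the $\widetilde{L}$-part and claims they agree there, hence differ by a central scalar, and that claim uses precisely the commutativity in (4). If $\psi\neq\id$ (and $\psi$ need not commute with $\varphi$), the transported Frobenius on $\widetilde{L}_{\cris}[1/p]$ is $\psi\circ\varphi\circ\psi^{-1}$ rather than $\varphi$, the discrepancy between $b$ and $b'$ is then a general element of $\widetilde{G}$ rather than a scalar, and you would have to produce an integral $\sigma$-conjugation in $\widetilde{G}(W(\overline{\F}_q))$, not merely a unit solving $\sigma(v)/v=c$.

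The paper closes this gap by a normalization you are missing: it forms $\psi$ as above, first arranges $\det\psi=1$ --- which may require replacing the chosen isometry $f$ by $h\circ f$ with $\det h=-1$, i.e.\ changing the tensors $\{s_{\alpha,p}\}$, so you cannot regard them as fixed once and for all in your first step --- and then lifts $\psi^{-1}\in\widetilde{G}_0(W)$ to an element of $\widetilde{G}(W)$ and absorbs it into the second trivialization, forcing $\psi=\id$ so that (4) holds on the nose. Once that is done, the rest of your argument goes through: the discrepancy is a scalar $u\in K_0^{\times}$; your slope comparison via $\Mcris(H_{\mathrm{\acute{e}t}})[1/p]\cong D_{\cris}(H_{\mathrm{\acute{e}t},\Q_p})\cong H^1_{\cris}(\mathcal{A}_s/W)^{\vee}[1/p]$ to get $v_p(u)=0$ is a legitimate alternative to the paper's height argument; and $v\mapsto\sigma(v)/v$ is surjective onto $W(\overline{\F}_q)^{\times}$ (not onto all of $W(\overline{\F}_q)[1/p]^{\times}$, but units are all you need) because the residue field is algebraically closed.
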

\begin{proof}
There is an isometry $f \colon \widetilde{L}_p \cong \widetilde{L}_{\Z_p}$ over $\Z_p$ by Lemma \ref{Lemma: isometry}.
We choose such an isometry.

Let $\{ f^{*}(s_{\alpha}) \}$ be the tensors on $H_{\mathrm{\acute{e}t}}=\Cl(\widetilde{L}_p)$ corresponding to the tensors $\{ s_{\alpha} \}$ under $f$.
Since we have
\[ \rho(\Gal(\overline{E}/E)) \subset \GSpin(\widetilde{L}_p)(\Z_p), \]
these tensors are $\Gal(\overline{E}/E)$-invariant.
Let $\{ \mathfrak{M}(f^{*}(s_{\alpha})) \}$ be the induced tensors of $\mathfrak{M}(H_{\mathrm{\acute{e}t}})$.

As in the proofs of \cite[Proposition 1.3.4, Corollary 1.3.5]{KisinIntModel},
there is an isomorphism of $\mathfrak{S}$-modules
\[
\Phi_f \colon \mathfrak{M}(H_{\mathrm{\acute{e}t}}) \cong H_{\mathfrak{S}}
\]
 which carries $\{ \mathfrak{M}(f^{*}(s_{\alpha})) \}$ to $\{ s_{\alpha} \}$ and induces an isometry over $\mathfrak{S}$
\[
\mathfrak{M}(\widetilde{L}_p) \cong \widetilde{L}_{\mathfrak{S}}.
\]
We denote it also by $\Phi_f$.
Hence we have an isomorphism of $W$-modules
\[
\Phi_f \colon \Mcris(H_{\mathrm{\acute{e}t}}) \cong H_{W}
\]
which carries $\{ \Mcris(f^{*}(s_{\alpha})) \}$ to $\{ s_{\alpha} \}$ and induces an isometry over $W$
\[
\Phi_f \colon \Mcris(\widetilde{L}_p) \cong \widetilde{L}_{W}.
\]
We also choose an isomorphism
\[
	\Phi' \colon H_{W} \cong H^1_{\cris}(\mathcal{A}_{s}/W)^{\vee}
	\]
	which carries $\{ s_{\alpha}\}$ to $\{ s_{\alpha, \cris, s} \}$ which induces an isometry $\Phi' \colon \widetilde{L}_{W} \cong \widetilde{L}_{\cris}$.
	
Consider the composite of the following isomorphisms
\[
\psi \colon \widetilde{L}_{\cris}=\Mcris(\widetilde{L}_p) \overset{\Phi_f}{\longrightarrow}   \widetilde{L}_{W} \overset{\Phi'}{\longrightarrow} \widetilde{L}_{\cris}.
\]

We may assume $\det(\psi)=1$ as follows:
Suppose $\det(\psi)=-1$.
We take an isometry $h \colon \widetilde{L}_{\Z_p} \cong \widetilde{L}_{\Z_p}$ with $\det(h)=-1$.
(For example, $h$ can be constructed using a decomposition $\widetilde{L}_{\Z_p}\cong U_{\Z_p} \oplus U^{\perp}_{\Z_p}$ of quadratic spaces.)
We consider the tensors $\{ (h\circ f)^{*}(s_{\alpha}) \}$ on $H^{\otimes}_{\mathrm{\acute{e}t}}$ and the tensors $\{ \mathfrak{M}((h\circ f)^{*}(s_{\alpha})) \}$ on $\mathfrak{M}(H_{\mathrm{\acute{e}t}})^{\otimes}$.
Then the isomorphism
\[
\mathfrak{M}(H_{\mathrm{\acute{e}t}}) \overset{\Phi_f}{\longrightarrow} H_{\mathfrak{S}}  \overset{h}{\longrightarrow} H_{\mathfrak{S}}.
\]
carries $\{ \mathfrak{M}((h\circ f)^{*}(s_{\alpha})) \}$ to $\{ s_{\alpha} \}$
and the isomorphism 
\[
\xymatrix{\widetilde{L}_{\cris}=\Mcris(\widetilde{L}_p)\ar[r]^-{h \circ \Phi_f} &   \widetilde{L}_{W} \ar[r]^-{\Phi'}& \widetilde{L}_{\cris}
}
\]
has determinant $1$.
Therefore, after replacing $\{ f^{*}(s_{\alpha}) \}$ by $\{ (h\circ f)^{*}(s_{\alpha}) \}$ and replacing $\Phi_f$ by $h \circ \Phi_f$, we may assume $\det(\psi)=1$.

Then there is an element $g \in \widetilde{G}(W)$ whose image under the surjection
$\widetilde{G}(W) \twoheadrightarrow \widetilde{G}_0(W)$
is $(\Phi_f \circ \Phi')^{-1} \in \widetilde{G}_0(W)$. 
After replacing $\Phi'$ by $\Phi'\circ g$, we may assume $\psi=\id$.

Under the isomorphism $\Phi_f$, the Frobenius on $\Mcris(H_{\mathrm{\acute{e}t}})[1/p]$ has of the form $b\sigma$ for some $b \in \widetilde{G}(K_0)$. 
Similarly, under the isomorphism $\Phi'$, the Frobenius on $H^1_{\cris}(\mathcal{A}_{s}/W)^{\vee}[1/p]$ has of the form $b'\sigma$ for some $b' \in \widetilde{G}(K_0)$.
Since $\psi=\id$, the elements $b$ and $b'$ have the same image under the surjection
$\widetilde{G}(K_0) \twoheadrightarrow \widetilde{G}_0(K_0)$.
Hence there is an element $u \in K^{\times}_0$ such that $b=ub'$.

We shall show $u \in W^{\times}$.
Since the Hodge-Tate weights of $H_{\mathrm{\acute{e}t}, \Q_p}$ are in $\{ 0, 1 \}$
(i.e.\ $\Gr^{i}(D_{\dR}(H_{\mathrm{\acute{e}t}, \Q_p})^{\vee})=0$ if $i \neq 0, 1$),
the effective Breuil-Kisin module $\M(H^{\vee}_{\mathrm{\acute{e}t}})$ is of height $\leq 1$.
Hence the cokernel of the Frobenius of $\Mcris(H_{\mathrm{\acute{e}t}})^{\vee}$ is killed by $p$.
Moreover, since $\mathcal{A}_{s}$ is an abelian variety of dimension $2^{21}$,
the cokernel of the Frobenius of 
$
H^1_{\cris}(\mathcal{A}_{s}/W)
$
is isomorphic to
$
(W/p)^{\oplus 2^{21}}
$
as a $W$-module.
From these facts, we conclude $u \in W^{\times}$.

Take an element $v \in W(\overline{\F}_q)^{\times}$ such that $\sigma(v)/v=u$.
Then the isomorphism 
\[
\Mcris(H_{\mathrm{\acute{e}t}})\otimes_{W}{W(\overline{\F}_q)} \overset{\Phi_f}{\longrightarrow} H_{W(\overline{\F}_q)} \overset{\times v}{\longrightarrow} H_{W(\overline{\F}_q)} \overset{\Phi'}{\longrightarrow} H^1_{\cris}(\mathcal{A}_{s}/W)^{\vee}\otimes_{W}W(\overline{\F}_q)
\]
is an isomorphism of $F$-isocrystals after inverting $p$.

The proof of Proposition \ref{Proposition:LiftingCrisLattice} is complete. 
\end{proof}

Recall that there is an equivalence of categories between
the category of $p$-divisible groups and
the category of $\mathrm{Gal}(\overline{E}/E)$-stable $\Z_p$-lattices in crystalline representations
whose Hodge-Tate weights are in $\{ 0, 1 \}$;
see \cite[Corollary 2.2.6]{KisinCris}, \cite[Theorem 2.2.1]{Liu13}.
Therefore, there is a (unique) $p$-divisible group $\mathcal{H}$ over $\O_E$ whose $p$-adic Tate module is
isomorphic to $H_{\mathrm{\acute{e}t}}$.

By Proposition \ref{Proposition:LiftingCrisLattice},
the base change $\mathcal{H}_{\O_K}$ is a lift of the $p$-divisible group $\mathcal{A}_{\overline{s}}[p^{\infty}]$ associated with
$\mathcal{A}_{\overline{s}}$.
As a corollary of Proposition \ref{Proposition:LiftingCrisLattice}, the $p$-divisible group $\mathcal{H}_{\O_K}$ is $\widetilde{G}$-adapted to $\mathcal{A}_{\overline{s}}[p^{\infty}]$
in the sense of \cite[Definition 3.3]{KimMadapusiIntModel}:

\begin{cor}
\label{Corollary:adapted}
The $p$-divisible group $\mathcal{H}_{\O_K}$ is \textit{$\widetilde{G}$-adapted} to $\mathcal{A}_{\overline{s}}[p^{\infty}]$.
\end{cor}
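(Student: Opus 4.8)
The plan is to deduce Corollary~\ref{Corollary:adapted} directly from Proposition~\ref{Proposition:LiftingCrisLattice} by unwinding the definition of $\widetilde{G}$-adaptedness from \cite[Definition 3.3]{KimMadapusiIntModel}. Recall that a $p$-divisible group $\mathcal{H}_{\O_K}$ lifting $\mathcal{A}_{\overline{s}}[p^{\infty}]$ is $\widetilde{G}$-adapted if: (i) there is an isomorphism of $\Z_p$-modules $H_{\Z_p} \cong T_p\mathcal{H}_{\O_K}$ carrying the fixed tensors $\{s_\alpha\}$ to a collection of $\Gal(\overline{K}/K)$-invariant tensors; (ii) there is an isomorphism of $W(\overline{\F}_q)$-modules $\mathbb{D}(\mathcal{H}_{\O_K})\otimes W(\overline{\F}_q) \cong H^1_{\cris}(\mathcal{A}_{\overline{s}}/W)^\vee \otimes W(\overline{\F}_q)$ (equivalently on $\Mcris(T_p\mathcal{H}_{\O_K})$, via the comparison of Section~\ref{Subsection:Breuil-Kisin modules and p-divisible groups}) which is an isomorphism of $F$-isocrystals after inverting $p$ and which carries the crystalline tensors to $\{s_{\alpha,\cris,s}\}$; and (iii) these two isomorphisms are compatible with the respective realizations of the embedding $i \colon \widetilde{L} \hookrightarrow \End(H)$, i.e. with the $\widetilde{G}$-structure on both sides.

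First I would identify $T_p\mathcal{H}_{\O_K}$ with $H_{\mathrm{\acute{e}t}} \otimes_{\Z_p}\Z_p = H_{\mathrm{\acute{e}t}}$ (viewed over $\O_K$ via $E \subset K$), which holds by construction since $\mathcal{H}$ is the $p$-divisible group over $\O_E$ attached to the $\Gal(\overline{E}/E)$-stable lattice $H_{\mathrm{\acute{e}t}} = \Cl(\widetilde{L}_p)$ by \cite[Corollary 2.2.6]{KisinCris}, \cite[Theorem 2.2.1]{Liu13}, and base-changing the $p$-divisible group corresponds to restricting the Galois representation. Then property~(i) is exactly Proposition~\ref{Proposition:LiftingCrisLattice}(1): the isomorphism $H_{\Z_p}\cong H_{\mathrm{\acute{e}t}}$ carrying $\{s_\alpha\}$ to the $\Gal(\overline{E}/E)$-invariant tensors $\{s_{\alpha,p}\}$, together with the compatible isometry $\widetilde{L}_{\Z_p}\cong \widetilde{L}_p$ fitting into the displayed square with the maps $i$. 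Property~(ii) is Proposition~\ref{Proposition:LiftingCrisLattice}(2) and (3): the isomorphism $\Phi \colon \Mcris(H_{\mathrm{\acute{e}t}})\otimes_W W(\overline{\F}_q) \xrightarrow{\cong} H^1_{\cris}(\mathcal{A}_s/W)^\vee \otimes_W W(\overline{\F}_q)$ is an isomorphism of $F$-isocrystals after inverting $p$ and carries $\{\Mcris(s_{\alpha,p})\}$ to $\{s_{\alpha,\cris,s}\}$; here I would also invoke the canonical comparison $\Mcris(T_p\mathcal{H}_{\O_K})[1/p] \cong \mathbb{D}(\mathcal{H}_{\overline{\F}_q})(W)[1/p]$ from Section~\ref{Subsection:Breuil-Kisin modules and p-divisible groups} (mapping $\Mcris$ onto $\mathbb{D}$) to rephrase $\Phi$ as an isomorphism between $\mathbb{D}(\mathcal{H}_{\O_K})$ and $H^1_{\cris}(\mathcal{A}_{\overline{s}}/W)^\vee$ after the base change to $W(\overline{\F}_q)$. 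Property~(iii), the compatibility of (i) and (ii) with the $i$-realizations, is precisely the commutative diagram of Proposition~\ref{Proposition:LiftingCrisLattice}(4).

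Concretely, the proof is short: invoke Proposition~\ref{Proposition:LiftingCrisLattice} to produce the data, then check that items (1)--(4) there match the bullet points in \cite[Definition 3.3]{KimMadapusiIntModel} verbatim, using the identification $T_p\mathcal{H}_{\O_K}=H_{\mathrm{\acute{e}t}}$ and the standard crystalline comparison for $\mathcal{H}_{\O_K}$ recalled in Section~\ref{Subsection:Breuil-Kisin modules and p-divisible groups} to translate between $\Mcris$ and Dieudonn\'e modules. One should also note that $\mathcal{H}_{\O_K}$ genuinely lifts $\mathcal{A}_{\overline{s}}[p^\infty]$ (not merely up to isogeny): this follows because $\Phi$ is an isomorphism of $W(\overline{\F}_q)$-modules (an integral statement, by Proposition~\ref{Proposition:LiftingCrisLattice}), so the associated special fibers have isomorphic Dieudonn\'e modules over $W(\overline{\F}_q)$, hence $\mathcal{H}_{\overline{\F}_q}\cong \mathcal{A}_{\overline{s}}[p^\infty]$ by Dieudonn\'e theory over the perfect field $\overline{\F}_q$.

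The main obstacle, such as it is, is bookkeeping: one must make sure that the tensors $\{s_{\alpha,p}\}$ produced in Proposition~\ref{Proposition:LiftingCrisLattice}(1) are the \emph{same} Galois-invariant tensors whose crystalline images $\{\Mcris(s_{\alpha,p})\}$ appear in parts (3) and (4), so that the de Rham/\'etale and crystalline compatibilities refer to a single coherent choice of tensors on $H_{\mathrm{\acute{e}t}}$; this is already arranged in the statement of that proposition. There is also a minor point about which Frobenius normalization is used for $\widetilde{L}_{\cris}(-1)$ versus $\widetilde{L}_{\cris}$, but since the definition of adaptedness only concerns $H^1_{\cris}(\mathcal{A}_s/W)^\vee$ and its tensors (not the Tate twist of $\widetilde{L}_{\cris}$), no twisting issue arises. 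Given all of Proposition~\ref{Proposition:LiftingCrisLattice}, the corollary is essentially a definition-chase.
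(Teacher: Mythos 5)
Your proof is correct and takes essentially the same route as the paper, which deduces the corollary directly from Proposition \ref{Proposition:LiftingCrisLattice}, citing \cite[Theorem 2.5]{KimMadapusiIntModel} to make the match with \cite[Definition 3.3]{KimMadapusiIntModel} precise. Your definition-chase --- identifying $T_p\mathcal{H}_{\O_K}$ with $H_{\mathrm{\acute{e}t}}$, transporting the tensors via $\Phi$ and the comparison of Section \ref{Subsection:Breuil-Kisin modules and p-divisible groups}, and using Dieudonn\'e theory over $\overline{\F}_q$ to see that the special fibre is $\mathcal{A}_{\overline{s}}[p^{\infty}]$ --- is exactly the content packaged in that citation.
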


\begin{proof}
The assertion follows from Proposition
\ref{Proposition:LiftingCrisLattice}
and \cite[Theorem 2.5]{KimMadapusiIntModel}.
\end{proof}

\subsection{Proof of Theorem \ref{Theorem:CMliftingAbelianVariety}}\label{Subsection:ProofMainTheorem}
In this subsection, we shall complete the proof of Theorem \ref{Theorem:CMliftingAbelianVariety}.

By Corollary \ref{Corollary:adapted},
there is a lift $\widetilde{s} \in \mathscr{S}_{\widetilde{\mathrm{K}}}(\O_K)$ of $\overline{s} \in \mathscr{S}_{\widetilde{\mathrm{K}}}(\overline{\F}_q)$
such that the $p$-divisible group associated with $\mathcal{A}_{\widetilde{s}}$
is isomorphic to $\mathcal{H}_{\O_K}$.
See \cite[Lemma 3.8]{KimMadapusiIntModel} and the proof of \cite[Proposition 4.6]{KimMadapusiIntModel} for details.
Although the residue field is assumed to be finite in \cite[Lemma 3.8]{KimMadapusiIntModel}, the same result holds for $\O_K$ since the $p$-divisible group $\mathcal{A}_{\overline{s}}[p^{\infty}]$ is defined over the finite field $k$.

By Proposition \ref{Proposition:LiftingCrisLattice} (4), the $0$-th piece of the Hodge filtration on
$H^1_{\cris}(\mathcal{A}_s/W)^{\vee}\otimes_{W}K$
coincides with $\Fil^0(H^1_{\cris}(\mathcal{A}_s/W)^{\vee}\otimes_{W}K)$ defined in Section \ref{Subsection:Liftings of points with additional properties}.
        
To prove Theorem \ref{Theorem:CMliftingAbelianVariety}, it remains to show $\widetilde{s} \in \mathscr{S}_{\widetilde{\mathrm{K}}}(\O_K)$ lifts to an
$\O_K$-valued point of
$ Z_{\mathrm{K}^p}(\Lambda)$.
By the construction of $H_{\mathrm{\acute{e}t}}$,
we have a $\Gal(\overline{K}/K)$-equivariant homomorphism
\[
\Lambda_{\Z_p}\subset\widetilde{L}_p \to \End_{\Z_p}(H_{\mathrm{\acute{e}t}}).
\]
Therefore, we get
\[ \Lambda_{\Z_{(p)}} \to \End_{\O_K}(\mathcal{H}_{\O_K}) \]
lifting a homomorphism
\[ \Lambda_{\Z_{(p)}} \to \End_{\overline{\F}_q}(\mathcal{A}_{\overline{s}}[p^{\infty}])
\]
induced by $\iota$.
By the Serre-Tate theorem, the $\Lambda$-structure $\iota$ lifts to a homomorphism
\[ \Lambda_{\Z_{(p)}} \to \End_{\O_K}(\mathcal{A}_{\widetilde{s}})_{\Z_{(p)}}. \]

The proof of Theorem \ref{Theorem:CMliftingAbelianVariety} is complete.
\hfill $\qed$

\section{Kisin's algebraic groups associated with Kuga-Satake abelian varieties over finite fields} \label{Section:Kisin's algebraic groups}

In this section, we attach an algebraic group $I$ over $\Q$ to
a quasi-polarized $K3$ surface of finite height over $\overline{\F}_q$.
It is a subgroup of the multiplicative group of the endomorphism algebra of the Kuga-Satake abelian variety.
Then we study its action on the formal Brauer group of
the $K3$ surface.

\subsection{Kisin's algebraic groups}\label{Subsection:Kisin's algebraic groups}

Let $s \in Z_{\mathrm{K}^p}(\F_q)$ be an $\F_q$-valued point and $\overline{s}\in Z_{\mathrm{K}^p}(\overline{\F}_q)$ a geometric point above $s$.
Let $\Aut_{\Q}(\mathcal{A}_{\overline{s}})$ be the algebraic group over $\Q$ defined by
\[
\Aut_{\Q}(\mathcal{A}_{\overline{s}})(R):= (\End_{\overline{\F}_q}(\mathcal{A}_{\overline{s}}) \otimes_{\Z} R)^{\times}
\]
for every $\Q$-algebra $R$.

After replacing $\F_q$ by a finite extension of it, we may assume that all endomorphisms of $\mathcal{A}_{\overline{s}}$ are defined over $\F_q$.
Namely we have
\[
\End_{\F_q}(\mathcal{A}_{s}) \otimes_{\Z} \Q \cong
\End_{\overline{\F}_q}(\mathcal{A}_{\overline{s}}) \otimes_{\Z} \Q.
\]

The global sections $\{ s^p_{\alpha} \}$ of ${V}^p(\mathcal{A})^{\otimes}$ give rise to global sections $\{ s_{\alpha, \ell} \}$ of $V_{\ell}(\mathcal{A})^{\otimes}$.
The $\A^p_f$-local system
$\widetilde{\mathbb{V}}^p$ and the embedding $i^p$
determine a $\Q_{\ell}$-local system
$\widetilde{\mathbb{V}}_{\ell}$ and an embedding
$
\widetilde{\mathbb{V}}_{\ell} \hookrightarrow \underline{\End}(V_{\ell}(\mathcal{A}))
$
of $\Q_{\ell}$-local systems for every $\ell \neq p$.
The homomorphism $\iota^p$ induced by the $\Lambda$-structure for $s \in Z_{\mathrm{K}^p}(\F_q)$
determines a homomorphism
$
\iota_{\ell} \colon \Lambda_{\Q_{\ell}} \to \widetilde{\mathbb{V}}_{\ell, \overline{s}}.
$

We fix an isomorphism 
$
H_{\Q_{\ell}} \cong V_{\ell}(\mathcal{A}_{\overline{s}})
$
of $\Q_{\ell}$-vector spaces which carries $\{ s_{\alpha}\}$ to $\{ s_{\alpha, \ell, \overline{s}} \}$ and induces the following commutative diagram:
\[
\xymatrix{\Lambda_{\Q_{\ell}}  \ar^-{}[r] \ar_-{\iota_{\ell}}[rd] &
\widetilde{L}_{\Q_{\ell}} \ar[d]^-{\cong} \ar[r]^-{i} & \End_{\Q_{\ell}}(H_{\Q_{\ell}}) \ar[d]_-{\cong} \\
& \widetilde{\mathbb{V}}_{\ell, \overline{s}} \ar[r]^-{i_{\ell}} & \End_{\Q_{\ell}}(V_{\ell}(\mathcal{A}_{\overline{s}})),
}
\]
where $\widetilde{L}_{\Q_{\ell}} \cong \widetilde{\mathbb{V}}_{\ell, \overline{s}}$ is an isometry over $\Q_{\ell}$. 

Let
$
H_{W} \cong H^1_{\cris}(\mathcal{A}_{s}/W)^{\vee}
$
be an isomorphism as in Section \ref{Subsection:Hodge tensors}.
After inverting $p$ and composing an element of $\widetilde{G}(W[1/p])$,
we can find an isomorphism
\[ H_{W[1/p]} \cong H^1_{\cris}(\mathcal{A}_{s}/W)^{\vee}[1/p] \]
which carries $\{ s_{\alpha}\}$ to $\{ s_{\alpha, \cris, s} \}$
and induces the same diagram as above by \cite[Lemma 2.8]{MadapusiIntModel} and the fact that every $\GSpin(L_{W[1/p]})$-torsor over $W[1/p]$ is trivial (see \cite[Theorem 6.4]{Platonov-Rapinchuk} and the arguments in Section \ref{Subsection:p-divisible groups adapted to general spin groups}).
We fix such an isomorphism.

Kisin introduced an algebraic group
$\widetilde{I}_{\ell}$ over $\Q_{\ell}$
for every prime number $\ell$ (including $\ell = p$),
and an algebraic group $\widetilde{I}$ over $\Q$
as follows; see \cite[(2.1.2)]{KisinModp} for details.
\begin{enumerate}
\item For a prime number $\ell \neq p$,
let $\Frob_q \in \End_{\Q_{\ell}}(H_{\Q_{\ell}})$ 
be the $\Q_{\ell}$-endomorphism of $H_{\Q_{\ell}}$ induced by the $q$-th power Frobenius of $\mathcal{A}_{\overline{s}}$.
Since $\Frob_q$ fixes the tensors $\{ s_{\alpha, \ell, \overline{s}} \}$, we have
$
\Frob_q \in \widetilde{G}(\Q_{\ell}).
$
For every integer $m \geq 1$, we define an algebraic $\Q_{\ell}$-subgroup
$
\widetilde{I}_{\ell, m}
$
of $\widetilde{G}_{\Q_{\ell}}$ by
\[ \widetilde{I}_{\ell, m}(R) := \{\, g \in \widetilde{G}(R) \mid g \Frob^{m}_q = \Frob^{m}_q g \,\} \]
for every $\Q_{\ell}$-algebra $R$.
For sufficiently divisible $m \geq 1$, 
the algebraic group $\widetilde{I}_{\ell, m}$ does not depend on $m$,
and we denote it by $\widetilde{I}_{\ell}$. 

\item For $\ell = p$, we define an algebraic group $\widetilde{I}_{p, m}$ over $\Q_p$ by 
\[
\widetilde{I}_{p, m}(R) := \{\, g \in \widetilde{G}(R\otimes_{\Q_p}W(\F_{q^m})[1/p]) \mid g \varphi = \varphi g \,\}.
\]
For sufficiently divisible $m \geq 1$, 
the algebraic group $\widetilde{I}_{p, m}$ does not depend on $m$,
and we denote it by $\widetilde{I}_{p}$.

\item Let
$\widetilde{I} \subset \Aut_{\Q}(\mathcal{A}_{\overline{s}})$
be the largest closed $\Q$-subgroup of 
$\Aut_{\Q}(\mathcal{A}_{\overline{s}})$
mapped into $\widetilde{I}_{\ell}$ for every $\ell$ (including $\ell=p$).
\end{enumerate}
Replacing $\F_q$ be a finite extension of it, we may assume $\widetilde{I}_{\ell, 1}=\widetilde{I}_{\ell}$ and
$\widetilde{I}_{p, 1}=\widetilde{I}_{p}$.

Kisin proved that the natural map
\[
\widetilde{I}_{\Q_{\ell}} \to \widetilde{I}_{\ell}
\]
is an isomorphism of algebraic groups over $\Q_{\ell}$
for every $\ell$ (including $\ell=p$)
\cite[Corollary 2.3.2]{KisinModp}.

For our purpose, we need
an algebraic subgroup $I \subset \widetilde{I}$ over $\Q$ defined using the $\Lambda$-structure; see Definition \ref{Definition:Lambda structure}.
If $L$ is self-dual at $p$, it coincides with Kisin's algebraic group associated with an $\F_q$-valued point of the integral canonical model of $\mathrm{Sh}_{\mathrm{K}_0}(\SO(L_{\Q}), X_{L})$ taken in a similar way as in Section \ref{Subsection:$F$-crystals on Shimura varieties and the cohomology of K3 surfaces}.

\begin{defn}
\label{Definition: algebraic group I}
\begin{enumerate}
\item Let 
$I \subset \widetilde{I}$
be the algebraic subgroup over $\Q$ defined by
\[
I(R):=\{\, g \in \widetilde{I}(R) \mid ghg^{-1} = h \,\, {\rm{in}} \, \End_{\overline{\F}_q}(\mathcal{A}_{\overline{s}})_R \,\, {\rm{for \,\, every}} \, h \in \iota(\Lambda_{\Q})  \,\}
\]
for every $\Q$-algebra $R$.

\item For a prime number $\ell \neq p$, let 
$I_{\ell} \subset \widetilde{I}_{\ell}$
be the algebraic subgroup over $\Q_{\ell}$ defined by
\[
I_{\ell}(R):=\{\, g \in \widetilde{I}_{\ell}(R) \mid ghg^{-1} = h \,\, {\rm{in}} \, \End_{\Frob_{q}}(H_{\Q_{\ell}})_R \,\, {\rm{for \,\, every}} \, h \in i(\Lambda_{\Q_{\ell}})  \,\}
\]
for every $\Q_{\ell}$-algebra $R$.

\item For $\ell = p$, let 
$I_{p} \subset \widetilde{I}_p$
be the algebraic subgroup over $\Q_p$ defined in a similar way as above.
\end{enumerate}
\end{defn}

As in Kisin's paper \cite{KisinModp},
we shall prove that the natural map
\[
I_{\Q_{\ell}} \to I_{\ell}
\]
is an isomorphism of algebraic groups over $\Q_{\ell}$
for every $\ell$ (including $\ell=p$).
Here we prove it for some $\ell \neq p$.
The case of general $\ell$ will be proved later;
see Corollary \ref{Corollary:KisinGroup(II)}.

\begin{prop}\label{Proposition:KisinGroup(I)}
\begin{enumerate}
\item For some prime number $\ell \neq p$,
the natural map
\[
I_{\Q_{\ell}} \to I_{\ell}
\]
is an isomorphism of algebraic groups over $\Q_{\ell}$.

\item The algebraic groups $I$ and $\GSpin(L_{\Q})$
over $\Q$ have the same rank.
(Recall that the rank of an algebraic group over a field $k$ is the dimension of a maximal $k$-torus of it.)
\end{enumerate}
\end{prop}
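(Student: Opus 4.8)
The statement to prove is Proposition \ref{Proposition:KisinGroup(I)}: (1) for \emph{some} $\ell \neq p$, the natural map $I_{\Q_\ell} \to I_\ell$ is an isomorphism, and (2) $I$ and $\GSpin(L_\Q)$ have the same rank. The strategy is to leverage the analogous fact for $\widetilde{I}$, namely that $\widetilde{I}_{\Q_\ell} \to \widetilde{I}_\ell$ is an isomorphism for all $\ell$ (Kisin, \cite[Corollary 2.3.2]{KisinModp}), and to cut out $I$ (resp.\ $I_\ell$) from $\widetilde{I}$ (resp.\ $\widetilde{I}_\ell$) by imposing the \emph{same} commutation condition with $\iota(\Lambda_\Q)$ (resp.\ $i(\Lambda_{\Q_\ell})$). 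The first observation is that $I$ is exactly the centralizer in $\widetilde{I}$ of the subalgebra generated by $\iota(\Lambda_{\Q})$ inside $\End_{\overline{\F}_q}(\mathcal{A}_{\overline s})\otimes\Q$, and similarly $I_\ell$ is the centralizer in $\widetilde{I}_\ell$ of the subalgebra generated by $i(\Lambda_{\Q_\ell})$ inside $\End_{\Frob_q}(H_{\Q_\ell})$. Under the fixed isomorphism $H_{\Q_\ell}\cong V_\ell(\mathcal{A}_{\overline s})$ carrying $\{s_\alpha\}$ to $\{s_{\alpha,\ell,\overline s}\}$ and compatible with $i$, $\iota_\ell$, the element $\Frob_q$ corresponds to the $q$-power Frobenius and $i(\Lambda_{\Q_\ell})$ corresponds (after extending scalars) to $\iota(\Lambda_\Q)_{\Q_\ell}$ acting on $V_\ell$. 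Since $\iota_S$ is defined over $\Z_{(p)}$, the subalgebra $\iota(\Lambda_\Q)\subset\End_{\overline{\F}_q}(\mathcal{A}_{\overline s})\otimes\Q$ is a fixed $\Q$-subalgebra whose $\Q_\ell$-points, via Tate's theorem ($\End_{\F_q}(\mathcal{A}_s)\otimes\Q_\ell\cong\End_{\Frob_q}(V_\ell)$, after enlarging $\F_q$ so all endomorphisms are defined over $\F_q$), go to $i(\Lambda_{\Q_\ell})$ inside $\End_{\Frob_q}(H_{\Q_\ell})$. Therefore forming the centralizer commutes with base change $\Q\rightsquigarrow\Q_\ell$: the centralizer of $\iota(\Lambda_\Q)$ in $\widetilde I$, base-changed to $\Q_\ell$, equals the centralizer in $\widetilde I_{\Q_\ell}=\widetilde I_\ell$ of $\iota(\Lambda_\Q)_{\Q_\ell}=i(\Lambda_{\Q_\ell})$, which is $I_\ell$ — giving (1). (Taking the centralizer of a fixed subspace of the ambient endomorphism algebra is represented by a closed subgroup scheme whose formation is compatible with flat base change, so there is no subtlety beyond identifying the two subspaces.)

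For part (2), the plan is: choose $\ell\neq p$ as in (1), so $\mathrm{rank}\, I = \mathrm{rank}\, I_\ell$. Then compute the rank of $I_\ell$. By definition $I_\ell$ is the common centralizer in $\widetilde G_{\Q_\ell}=\GSpin(\widetilde L_{\Q_\ell})$ of $\Frob^m_q$ (for sufficiently divisible $m$) and of $i(\Lambda_{\Q_\ell})$. The key point is that the decomposition $\widetilde L_{\Q_\ell} = L_{\Q_\ell}\oplus\Lambda_{\Q_\ell}$ (orthogonal, coming from $L\hookrightarrow\widetilde L$ with complement $\Lambda$) is $\Frob_q$-stable, with $\Frob_q$ acting trivially on the $\Lambda_{\Q_\ell}$-factor (the $\Lambda$-structure means $\iota(\Lambda)$ consists of $\Frob_q$-invariant tensors — this is part of what being a $\Lambda$-structure encodes, compatible with condition (1) of Definition \ref{Definition:Lambda structure}). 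Consequently, centralizing $i(\Lambda_{\Q_\ell})$ forces $I_\ell$ to land in the ``$\GSpin$ of the $L_{\Q_\ell}$-part'' $\GSpin(L_{\Q_\ell})\subset\GSpin(\widetilde L_{\Q_\ell})$, and then $I_\ell$ is precisely the centralizer of $\Frob_q$ (an element of $\GSpin(L_{\Q_\ell})$) in $\GSpin(L_{\Q_\ell})$ — i.e.\ the Kisin group attached to the corresponding point of the (possibly non-hyperspecial) orthogonal Shimura variety for $L$. The rank of the centralizer of a semisimple element equals the rank of the ambient group, so $\mathrm{rank}\, I_\ell = \mathrm{rank}\,\GSpin(L_{\Q_\ell}) = \mathrm{rank}\,\GSpin(L_\Q)$. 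This gives (2). One must check $\Frob_q$ is semisimple in $\GSpin(L_{\Q_\ell})$: this follows because its image in $\SO(L_{\Q_\ell})$ acts semisimply on $L_{\Q_\ell}$ (the Frobenius on $P^2_{\et}$ is semisimple, a theorem for $K3$ surfaces — via the Weil conjectures the eigenvalues are distinct enough, or one invokes semisimplicity of Frobenius on $H^2$ of a $K3$), and a central extension by $\G_m$ of a group with semisimple element has a semisimple lift up to the central $\G_m$, which does not affect rank.

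\textbf{Main obstacle.} The routine parts are the centralizer-compatible-with-base-change argument (part 1) and the rank count for a centralizer of a semisimple element. The genuinely delicate point is the bookkeeping ensuring that $\Frob_q$ acts trivially on $\iota(\Lambda_{\Q_\ell})$ and that $i(\Lambda_{\Q_\ell})$ is literally the $\Q_\ell$-base change of the $\Q$-subalgebra $\iota(\Lambda_\Q)$: this requires carefully unwinding the compatibility diagrams among $\iota^p$, $\iota_\ell$, the isometry $\widetilde L_{\Q_\ell}\cong\widetilde{\mathbb V}_{\ell,\overline s}$, and the identification $H_{\Q_\ell}\cong V_\ell(\mathcal A_{\overline s})$, together with Tate's theorem to pass between $\End_{\F_q}(\mathcal A_s)\otimes\Q_\ell$ and $\End_{\Frob_q}(V_\ell)$ — and one must have already replaced $\F_q$ by a finite extension so that all endomorphisms and the $\Lambda$-structure are rational. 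I expect that to be the technical heart; once the subspaces are correctly matched, both (1) and (2) follow formally from Kisin's results and elementary reductive group theory. It may also be cleanest to first record, as a lemma, that $I_\ell$ (as defined) coincides with the centralizer of $\Frob_q$ inside $\GSpin(L_{\Q_\ell})$, since this identification is exactly the bridge to Kisin's setup and makes the comparison with $\GSpin(L_\Q)$ transparent.
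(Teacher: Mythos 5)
Your architecture is essentially the paper's: for (1), identify $I$ (resp.\ $I_\ell$) as the centralizer of $\iota(\Lambda_{\Q})$ (resp.\ $i(\Lambda_{\Q_\ell})$) inside $\widetilde{I}$ (resp.\ $\widetilde{I}_\ell$), match the two subalgebras via the compatibility diagrams and Tate's theorem $\End_{\F_q}(\mathcal{A}_s)_{\Q_\ell}\cong\End_{\Frob_q}(H_{\Q_\ell})$, and conclude from $\widetilde{I}_{\Q_\ell}\cong\widetilde{I}_\ell$; for (2), observe that $\Frob_q$ and $I_\ell$ fix $i(\Lambda_{\Q_\ell})$ pointwise, so $I_\ell\subset\GSpin(L_{\Q_\ell})$ is the centralizer of the semisimple $\Frob_q^m$ and hence has full rank. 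Two steps, however, need repair.

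First, your justification of semisimplicity is not valid as written: the Weil conjectures give purity, not semisimplicity, and the Frobenius eigenvalues on $H^2$ of a $K3$ are certainly not ``distinct enough'' (the eigenvalue $q$ occurs with multiplicity at least the Picard number). The clean route, and the one the paper takes, is on the abelian-variety side: $\Frob_q$ acts semisimply on $V_\ell(\mathcal{A}_{\overline{s}})$ (classical for abelian varieties over finite fields), and since $H_{\Q_\ell}$ is a faithful representation of $\widetilde{G}_{\Q_\ell}$ this makes $\Frob_q$ semisimple as an element of $\GSpin$. Passing instead through semisimplicity of Frobenius on $P^2_{\mathrm{\acute{e}t}}$ of the $K3$ is possible as a citation, but that statement is itself a theorem proved via Kuga--Satake, so it gains nothing. (Your central-$\G_m$ remark is fine once phrased correctly: the unipotent part of $\Frob_q$ maps to $1$ in $\SO$, hence lies in the central $\G_m$ and is trivial.)

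Second, for (1) you lean on Kisin's Corollary 2.3.2 ($\widetilde{I}_{\Q_\ell}\to\widetilde{I}_\ell$ an isomorphism for every $\ell$). That result rests on the CM-lifting theorem for mod $p$ points and is proved in Kisin's paper only for $p>2$, while the present paper is written precisely to cover $p=2,3$. The paper therefore runs only the elementary argument from the \emph{proof} of Kisin's Corollary 2.1.7 (with the $p=2$ input supplied by Kim--Madapusi Pera, Theorem A.8), at a prime $\ell$ chosen so that $\GSpin(L_{\Q})$ and $\widetilde{G}_{\Q}$ split at $\ell$ and all Frobenius eigenvalues on $H_{\Q_\ell}$ lie in $\Q_\ell$. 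This choice is not cosmetic: it makes $\widetilde{I}_\ell$ and $I_\ell$ centralizers of a \emph{split torus} (the connected Zariski closure of $\langle\Frob_q\rangle$), hence connected split reductive, which is exactly what the one-prime Tate-theorem comparison needs; for a general $\ell$ the centralizer of $\Frob_q^m$ may be disconnected, and the ``all $\ell$'' statement is only recovered later (Corollary 8.7 here, mirroring Kisin's 2.3.2) as a consequence of the lifting theorem. Since the proposition only claims the isomorphism for \emph{some} $\ell$, you should build this choice of $\ell$ into the proof rather than quote the all-$\ell$ result; with that change, and the semisimplicity fixed as above, the rest of your argument (centralizers commuting with base change, the identification $\iota(\Lambda_{\Q})\otimes\Q_\ell = i(\Lambda_{\Q_\ell})$, full rank of the centralizer, and rank being insensitive to the ground field) goes through as in the paper.
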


\begin{proof}
$(1)$ We fix a prime number $\ell \neq p$ such that
$\GSpin(L_{\Q})$ and $\widetilde{G}_{\Q}$ are split at $\ell$, and
all the eigenvalues of $\Frob_q$ acting on  $H_{\Q_{\ell}}$
are contained in $\Q_{\ell}$. 
We shall show the assertion $(1)$ holds for such $\ell$.
By the proof of \cite[Corollary 2.1.7]{KisinModp}, 
the homomorphism 
\[
\widetilde{I}_{\Q_{\ell}} \to \widetilde{I}_{\ell} 
\]
is an isomorphism. 
(Precisely, Kisin proved it in \cite{KisinModp} assuming $p \geq 3$
and the restriction of $\psi$ to $H_{\Z_{(p)}}$ is perfect.
These assumptions are unnecessary; see the proof of \cite[Theorem A.8]{KimMadapusiIntModel}.)
By Tate's theorem, we have 
\[
\End_{\F_q}(\mathcal{A}_{s})_{\Q_{\ell}}
\cong \End_{\Frob_{q}}(H_{\Q_{\ell}}).
\] 
Now, the assertion (1) follows from 
the definitions of $I$ and $I_{\ell}$.

$(2)$
We follow Kisin's proof of \cite[Corollary 2.1.7]{KisinModp}.
Since $\Frob_q$ and $I_{\ell}$
act trivially on $i(\Lambda_{\Q_{\ell}})$,
we have $\Frob_q \in \GSpin(L_{\Q_{\ell}})$ and
$I_{\ell} \subset \GSpin(L_{\Q_{\ell}})$; see \cite[(2.6.1)]{MadapusiIntModel}.
The element $\Frob_q \in \GSpin(L_{\Q_{\ell}})$ is semisimple since the action of $\Frob_q$ on $V_{\ell}(\mathcal{A}_{\overline{s}})$ is semisimple.
Thus the connected component $S$ of the Zariski closure of the group
$\langle \Frob_q \rangle$ generated by $\Frob_q$
is a split torus in $\GSpin(L_{\Q_{\ell}})$ by the hypotheses on $\ell$. 
Since $I_{\ell}$ is the same as the centralizer of $\Frob^m_q$
in $\GSpin(L_{\Q_{\ell}})$ for a sufficiently divisible $m$, 
it follows that $I_{\ell}$ coincides with the centralizer of $S$.
Therefore, $I_{\ell}$ contains a split maximal torus of $\GSpin(L_{\Q_{\ell}})$.
(Hence $I_{\ell}$ is a connected split reductive group over $\Q_{\ell}$.)
In particular, the rank of $I_{\ell}$ as an algebraic group over $\Q_{\ell}$ is equal to the rank of  $\GSpin(L_{\Q_{\ell}})$ as an algebraic group over $\Q_{\ell}$.
Since we have $I_{\Q_{\ell}} \cong I_{\ell}$, 
the ranks of the algebraic groups $I$ and $\GSpin(L_{\Q})$
over $\Q$ are equal.
\end{proof}

\subsection{The action of Kisin's groups on the formal Brauer groups of $K3$ surfaces}\label{Subsection:The action of Kisin's group on the formal Brauer groups}

We consider the situation as in Section \ref{Section:Lifting of a special point} and keep the notation.
In particular, we assume the height $h$ of $X$ is finite.

As in the proof of Proposition \ref{Proposition:CrysDecomposition}, we have a decomposition
\[
P^2_{\cris}(X/W) \cong L_{1-1/h} \oplus L_{1} \oplus L_{1+1/h}.
\]
Here, $L_{\lambda}$ is an $F$-crystal over $W$ which has a single slope $\lambda$
for each $\lambda \in \{ 1-1/h,\, 1,\, 1+1/h \}$.
Moreover, there is an isomorphism of $F$-crystals over $W$:
	\[
	\mathbb{D}(\widehat{\Br})
	\cong L_{1+1/h}(1).
	\]

By Proposition \ref{Proposition:CrisOrthogonal}, the algebraic group $I$ acts on $P^2_{\cris}(X/W)(1)[1/p]$.
Hence we have the following homomorphism of algebraic groups over $\Q_p$:
\[
I_{\Q_p} \to \Res_{K_0/\Q_p}\GL(P^2_{\cris}(X/W)(1)[1/p]).
\]

For a one-dimensional smooth formal group $\mathscr{G}$ over a ring $A$,
let $\Aut_{\Q_p}(\mathscr{G})$ be the $\Q_p$-group such that
\[
\Aut_{\Q_p}(\mathscr{G})(R):=(\End_{A}(\mathscr{G})\otimes_{\Z_p}R)^{\times}
\]
for every $\Q_p$-algebra $R$.

\begin{lem}\label{Lemma:ActionFormalBrauerGroup}
There is a homomorphism
\[
I_{\Q_p} \to (\Aut_{\Q_p}(\widehat{\Br}))^{\mathrm{op}}
\]
which is compatible with
\[
I_{\Q_p} \to \Res_{K_0/\Q_p}\GL(P^2_{\cris}(X/W)(1)[1/p])
\]
via the composite
$
P^2_{\cris}(X/W)(1) \to  L_{1+1/h}(1) \cong \mathbb{D}(\widehat{\Br}),
$
where the first map is the projection.
\end{lem}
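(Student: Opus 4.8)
The statement asks to produce a homomorphism $I_{\Q_p} \to (\Aut_{\Q_p}(\widehat{\Br}))^{\mathrm{op}}$ compatible with the already-constructed action on $P^2_{\cris}(X/W)(1)[1/p]$ via the projection to $L_{1+1/h}(1) \cong \mathbb{D}(\widehat{\Br})$. The key input is that the decomposition $P^2_{\cris}(X/W)(1)[1/p] \cong L_{1-1/h}(1)[1/p] \oplus L_1(1)[1/p] \oplus L_{1+1/h}(1)[1/p]$ is the slope (Newton) decomposition of an $F$-isocrystal, hence is canonical and functorial: any automorphism of the $F$-isocrystal $P^2_{\cris}(X/W)(1)[1/p]$ preserves each isoclinic summand. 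Therefore the action of $I_{\Q_p}$ obtained from Proposition \ref{Proposition:CrisOrthogonal} (which is by automorphisms of $F$-isocrystals, since $I \subset \widetilde{I}$ acts through Frobenius-commuting elements of $\widetilde{G}$) restricts to a homomorphism $I_{\Q_p} \to \Res_{K_0/\Q_p}\GL(L_{1+1/h}(1)[1/p])$ landing in the automorphisms of the $F$-isocrystal $L_{1+1/h}(1)[1/p]$. First I would make precise that $I_{\Q_p}$ does act on $P^2_{\cris}(X/W)(1)$ by $F$-isocrystal automorphisms: this follows because $I \subset \widetilde{I}$ and, under the isomorphism $\widetilde{I}_{\Q_p} \cong \widetilde{I}_p$, elements commute with the Frobenius $\varphi$ on $H^1_{\cris}(\mathcal{A}_s/W)^{\vee}[1/p]$, hence with $\varphi$ on $\widetilde{L}_{\cris}[1/p]$ and on its sub-$F$-isocrystal $P^2_{\cris}(X/W)(1)[1/p]$.

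\textbf{Second step: identify isocrystal automorphisms with the formal group side.} The isomorphism $\mathbb{D}(\widehat{\Br}) \cong L_{1+1/h}(1)$ of $F$-crystals over $W$ (from the proof of Proposition \ref{Proposition:CrysDecomposition}; see also Remark \ref{Remark:Cartier module}) induces an isomorphism of $F$-isocrystals after inverting $p$. By the Dieudonné theory for connected $p$-divisible groups over the perfect field $k = \F_q$ (contravariant crystalline Dieudonné functor), the functor $\mathbb{D}$ gives an anti-equivalence between the isogeny category of connected $p$-divisible groups over $k$ and the category of effective isoclinic $F$-isocrystals of the appropriate slope; concretely, $\End_{k}(\widehat{\Br})\otimes_{\Z_p}\Q_p \cong \End_{F\text{-isoc}}(\mathbb{D}(\widehat{\Br})[1/p])^{\mathrm{op}}$. (Here one uses that $\widehat{\Br}$ of finite height corresponds to a connected $p$-divisible group, and the contravariance of $\mathbb{D}$ accounts for the ``$\mathrm{op}$'' in the target.) Passing to unit groups of these (possibly non-commutative) $\Q_p$-algebras gives $\Aut_{\Q_p}(\widehat{\Br}) \cong (\Aut_{F\text{-isoc}}(\mathbb{D}(\widehat{\Br})[1/p]))^{\mathrm{op}}$ as algebraic groups over $\Q_p$. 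Composing the homomorphism from the first step with this identification yields the desired $I_{\Q_p} \to (\Aut_{\Q_p}(\widehat{\Br}))^{\mathrm{op}}$.

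\textbf{Third step: compatibility.} Chasing the construction, the composite $P^2_{\cris}(X/W)(1) \to L_{1+1/h}(1) \cong \mathbb{D}(\widehat{\Br})$ is exactly the projection onto the slope-$(1+1/h)$ summand followed by the chosen identification, and by construction the action of $g \in I_{\Q_p}$ on $\mathbb{D}(\widehat{\Br})[1/p]$ is the restriction of its action on $P^2_{\cris}(X/W)(1)[1/p]$ to this summand; the diagram commutes essentially tautologically once one knows the summand is $I_{\Q_p}$-stable, which is the content of the first step. One should also remark that the construction is independent of the choice of the identification $\mathbb{D}(\widehat{\Br}) \cong L_{1+1/h}(1)$ up to conjugation, which does not affect the statement.

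\textbf{Main obstacle.} The delicate point is bookkeeping around variance and twists: ensuring that the Dieudonné functor used (contravariant $\mathbb{D}$ of \cite{BBM}) really produces the opposite algebra, and that the slope indexing $1+1/h$ (rather than $1-1/h$) matches $\widehat{\Br}$ versus its dual $\widehat{\Br}^{*}$ — Remark \ref{Remark:Cartier module} and the normalization $\mathbb{D}(\widehat{\Br}) \cong L_{1+1/h}(1)$ from Proposition \ref{Proposition:CrysDecomposition} pin this down, but one must apply them consistently. A secondary technical point is verifying that $I_{\Q_p}$ indeed acts by $F$-isocrystal automorphisms and not merely $W[1/p]$-linear automorphisms; this requires invoking Kisin's isomorphism $\widetilde{I}_{\Q_p} \cong \widetilde{I}_p$ together with the functoriality of all the comparison isomorphisms set up in Sections \ref{Section:Breuil-Kisin modules} and \ref{Section:$F$-crystals on Shimura varieties}, after which the slope decomposition does the rest automatically.
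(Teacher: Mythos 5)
Your argument is correct and follows essentially the same route as the paper: the paper likewise uses that $I_{\Q_p}$ lands in the Frobenius-commuting automorphisms, that the slope decomposition splits this group as a product $\GL_{\varphi}(V_{-1/h})\times\GL_{\varphi}(V_{0})\times\GL_{\varphi}(V_{1/h})$, projects to the slope-$(1+1/h)$ factor, and identifies $\GL_{\varphi}(V_{1/h})$ with $(\Aut_{\Q_p}(\widehat{\Br}))^{\mathrm{op}}$ via Dieudonn\'e theory. Your extra care about the contravariance of $\mathbb{D}$ producing the opposite group and about why the $I_{\Q_p}$-action commutes with $\varphi$ simply makes explicit what the paper leaves implicit.
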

\begin{proof}
We put 
$V_{\lambda}:=L_{1+\lambda}(1)[1/p]$
for each $\lambda \in \{ -1/h,\, 0,\, 1/h \}$.
For an $F$-isocrystal $M$ over $K_0$,
	let $\GL_{\varphi}(M)$ be the algebraic group over $\Q_p$
	defined by
	\[
	\GL_{\varphi}(M)(R):=\{\, g \in \GL_{K_0\otimes_{\Q_p} R}(M\otimes_{\Q_p} R) \mid g \varphi = \varphi g \,\}
	\]
	for every $\Q_p$-algebra $R$, where $\varphi$ is the Frobenius of $M$.
We have an isomorphism of algebraic groups over $\Q_p$:
	\[
	\GL_{\varphi}(P^2_{\cris}(X/W)(1)[1/p])\cong \GL_{\varphi}(V_{-1/h})\times \GL_{\varphi}(V_{0}) \times \GL_{\varphi}(V_{1/h}).
	\]
Let
\[ I_{\Q_p} \to \GL_{\varphi}(V_{1/h}). \]
be the composite of the map $I_{\Q_p} \to  \GL_{\varphi}(P^2_{\cris}(X/W)(1)[1/p])$
with the projection $\GL_{\varphi}(P^2_{\cris}(X/W)(1)[1/p])\to \GL_{\varphi}(V_{1/h})$.

Note that we have an isomorphism
	\[
	(\Aut_{\Q_p}(\widehat{\Br}))^{\mathrm{op}} \cong \GL_{\varphi}(V_{1/h}).
	\]
Hence we have a homomorphism of algebraic groups over $\Q_p$
	\[
	I_{\Q_p} \to (\Aut_{\Q_p}(\widehat{\Br}))^{\mathrm{op}}.
	\]
\end{proof}

\section{Lifting of $K3$ surfaces over finite fields with actions of tori}
\label{Section:Lifting of K3 surfaces over finite fields with actions of tori}

\subsection{$K3$ surfaces with complex multiplication}\label{Subsection:K3 surfaces with complex multiplication}

In this subsection, we recall the definition and basic properties of
$K3$ surfaces with complex multiplication over $\C$.

Let $Y$ be a projective $K3$ surface over $\C$.
Let
\[
T_{Y}:=\Pic(Y)^{\perp}_{\Q} \subset H^{2}_B(Y, \Q(1))\]
be the transcendental part of the singular cohomology, which has the $\Q$-Hodge structure coming from $H^{2}(Y, \Q(1))$.

Let
\[
E_{Y}:={\rm{End}}_{\rm{Hdg}}(T_{Y})
\]
be the $\Q$-algebra of $\Q$-linear endomorphisms on $T_{Y}$ preserving the $\Q$-Hodge structure on it. 
We say $Y$ has \textit{complex multiplication (CM)} if $E_{Y}$ is a CM field and ${\rm{dim}}_{E_{Y}}(T_{Y})=1$.
Here a number field is called CM if it is a purely imaginary quadratic extension of a totally real number field.

Let ${\mathrm{MT}}(T_{Y})$ be the \textit{Mumford-Tate group} of $T_{Y}$.
By the definition, it is the smallest algebraic $\Q$-subgroup  of
$\SO(T_{Y})$ such that $h_Y(\mathbb{S}(\R)) \subset {\mathrm{MT}}(T_{Y})(\R)$,
where 
\[
h_Y \colon \mathbb{S} \to \SO(T_{Y})_{\R}
\]
is the homomorphism over $\R$ corresponding to the $\Q$-Hodge structure of $T_Y$.
By the results of Zarhin \cite[Section 2]{Zarhin}, the $K3$ surface $Y$ has CM if and only if the Mumford-Tate group
${\mathrm{MT}}(T_{Y})$ is commutative; see \cite[Proposition 8]{Taelman} for example.

In the rest of this subsection, we fix a $\C$-valued point $t \in M^{\mathrm{sm}}_{2d, \K^p_0, \Q}(\C)$.
Let $(Y, \xi)$ be a quasi-polarized $K3$ surface over $\C$ associated with $t$.
The image of $t$ under the Kuga-Satake morphism $\mathrm{KS}$
is also denoted by $t \in Z_{\K^p_0}(\Lambda)(\C)$.

\begin{prop}\label{Proposition:CM K3 is defined over a number field}
	Assume $Y$ is a $K3$ surface with CM over $\C$.
	Then there exist a number field $F \subset \C$ and an $F$-valued point $t_0 \in M_{2d, \K^p_0, \Q}(F)$ such that the morphism $t \colon \Spec \C \to Z_{\K^p_0}(\Lambda)$ factors through the image of $t_0$ under $\KS$.
\end{prop}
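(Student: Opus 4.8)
The statement asserts that a CM point on the Kuga--Satake side (the image of a $\C$-point of the $K3$ moduli space under $\KS$) descends to a number field. The plan is to realize $t$ as a \emph{special point} of the orthogonal Shimura variety $\mathrm{Sh}_{\mathrm{K}_0}(\SO(L_{\Q}), X_{L})$ (equivalently, of $Z_{\K^p_0}(\Lambda)$), and then invoke the theory of canonical models, which guarantees that special points are defined over number fields (indeed over abelian extensions of the reflex field of the associated torus). First I would recall that for a quasi-polarized $K3$ surface $(Y,\xi)$ over $\C$, the Kuga--Satake morphism sends $t$ to the point of $\mathrm{Sh}_{\mathrm{K}_0}(\SO(L_{\Q}), X_{L})$ whose Hodge structure on $P^2_B(Y,\Q(1))$ is the one coming from $Y$. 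The Mumford--Tate group of this polarized Hodge structure is $\mathrm{MT}(P^2_B(Y,\Q(1)))$, and since $P^2_B(Y,\Q(1)) = \mathrm{Pic}(Y)_{\Q}(1)^{\perp}$ differs from $T_Y$ only by the (Tate-type) Picard part, the Mumford--Tate group of $P^2_B(Y,\Q(1))$ is commutative if and only if $\mathrm{MT}(T_Y)$ is; by the Zarhin criterion recalled just above, this holds precisely because $Y$ has CM.

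The key steps, in order, are: (1) Identify $\mathrm{MT}(P^2_B(Y,\Q(1)))$ with $\mathrm{MT}(T_Y)$ up to a central torus, and conclude it is a torus $T \subset \SO(L_{\Q})$ (a maximal torus after enlarging if necessary); the Hodge cocharacter $h_Y \colon \mathbb{S} \to \SO(L_{\Q})_{\R}$ factors through $T_{\R}$, so $(T, \{h_Y\})$ is a Shimura sub-datum of $(\SO(L_{\Q}), X_{L})$, i.e.\ $t$ is a special point. (2) Apply the theorem on canonical models of Shimura varieties (Deligne, Milne): the point $t \in \mathrm{Sh}_{\mathrm{K}_0}(\SO(L_{\Q}), X_{L})(\C)$ attached to a special pair $(T,h)$ is defined over $\overline{\Q}$, in fact over a specific finite abelian extension of the reflex field $E(T,h)$; hence $t \in Z_{\K^p_0}(\Lambda)(\C)$ lies in $Z_{\K^p_0}(\Lambda)(\overline{\Q})$, so over some number field $F \subset \C$. (3) Descend the $K3$ surface itself: since $\KS \colon M^{\mathrm{sm}}_{2d,\K^p_0} \to Z_{\K^p_0}(\Lambda)$ is \'etale (Proposition \ref{Proposition:KugaSatakeMap}) and $t$ (as a point of $Z_{\K^p_0}(\Lambda)$) is in the image of the $\C$-point coming from $(Y,\xi)$, after possibly enlarging $F$ the point of $Z_{\K^p_0}(\Lambda)(F)$ lifts (uniquely, by \'etaleness and the henselian/formal-smoothness argument) to an $F$-point $t_0 \in M^{\mathrm{sm}}_{2d,\K^p_0}(F)$, hence to a point of $M_{2d,\K^p_0,\Q}(F)$, and the $K3$ surface it classifies has generic fiber isomorphic over $\C$ to $Y$.

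The main obstacle is step (1)--(2): one must carefully verify that the image point genuinely is a \emph{special point} of the Shimura variety, i.e.\ that the Mumford--Tate group is a torus and that the resulting datum is a Shimura sub-datum in the technical sense (reductive subgroup, Hodge cocharacter condition, reflex field considerations). The passage from $T_Y$ to $P^2_B(Y,\Q(1))$ requires knowing that adjoining the algebraic (Tate-type) part $\mathrm{Pic}(Y)_\Q(1)$ keeps the Mumford--Tate group commutative --- this is immediate since Tate classes contribute only a split torus --- but one should state it cleanly. Once $t$ is known to be special, descent to a number field is the standard theory of canonical models (and one may need to pass to a finite extension $F$ to trivialize the level structure and to have the lift $t_0$ rational); the \'etaleness of $\KS$ then transports the rationality from the period domain back to the moduli of $K3$ surfaces. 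I expect steps (3) to be routine given the \'etaleness already established, and the genuinely substantive point to be the clean statement that CM $\Rightarrow$ special point, for which the references to Zarhin \cite{Zarhin} and to canonical model theory suffice.
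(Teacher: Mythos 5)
Your proposal is correct and takes essentially the same route as the paper: CM implies the image point on the orthogonal Shimura variety is a special point, hence has residue field a number field by the theory of canonical models, and the \'etaleness of $\KS$ in characteristic $0$ transports this rationality back to the moduli space of $K3$ surfaces. The only cosmetic difference is that the paper reads off the rationality on the larger Shimura variety $\mathscr{S}_{\widetilde{\mathrm{K}}_0}$ and then uses finiteness of $Z_{\K^p_0}(\Lambda)\to\mathscr{S}_{\widetilde{\mathrm{K}}_0}$, whereas you argue directly on $\mathrm{Sh}_{\mathrm{K}_0}(\SO(L_{\Q}),X_{L})$, which amounts to the same thing in characteristic $0$.
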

\begin{proof}
This proposition follows from Rizov's result \cite[Corollary 3.9.4]{Rizov10}
as follows.
The image of $t \in Z_{\K^p_0}(\Lambda)(\C)$ under the morphism
$Z_{\K^p_0}(\Lambda) \to \mathscr{S}_{\widetilde{\mathrm{K}}_0}$
is denoted by the same symbol $t$.
If $Y$ has CM, then the residue field of the image $t \in \mathscr{S}_{\widetilde{\mathrm{K}}_0}(\C)$ is a number field by the definition of the canonical model
$\mathscr{S}_{\widetilde{\mathrm{K}}_0, \Q}= \mathrm{Sh}_{\widetilde{\mathrm{K}}_0}$
over $\Q$.
Since the morphism
$Z_{\K^p_0}(\Lambda) \to \mathscr{S}_{\widetilde{\mathrm{K}}_0}$
is finite by Proposition \ref{Proposition:LambdaFiniteEtale}, the residue field of $t \in Z_{\K^p_0}(\Lambda)(\C)$ is a number field.
Now, the assertion follows from the \'etaleness of the Kuga-Satake morphism $\mathrm{KS}$ (in characteristic $0$).
\end{proof}

\begin{rem}
\label{Remark:CM K3 is defined over a number field}
Pjatecki{\u\i}-{\v{S}}apiro and {\v{S}}afarevi{\v{c}} also showed every $K3$ surface with CM is defined over a number field; see \cite[Theorem 4]{Shafarevich2}.
\end{rem}

For the quasi-polarized $K3$ surface $(Y, \xi)$ over $\C$, the primitive singular cohomology is defined by
\[
P^2_{B}(Y, \Q(1)):= \mathrm{ch}_B(\xi)^{\perp} \subset H^2_{B}(Y, \Q(1)).
\]

We fix a $\C$-valued point of $Z_{\K^p}(\Lambda)$ mapped to $t$, and also denote it by $t \in Z_{\K^p}(\Lambda)(\C)$.
We have the Kuga-Satake abelian variety $\mathcal{A}_t$ over $\C$ corresponding to $t \in Z_{\K^p}(\Lambda)(\C)$.
As in Section \ref{Subsection:$F$-crystals on Shimura varieties and the cohomology of K3 surfaces}, the stalk
\[
\widetilde{V}_{t}:=\widetilde{\mathbb{V}}_{B, t}
\]
satisfies the following properties:
\begin{enumerate}
	\item $\widetilde{V}_{t}$ admits a perfect bilinear form $( \ , \ )$ over $\Q$ which is a quasi-polarization.
	\item There is a homomorphism $\iota_B \colon \Lambda_{\Q} \to \widetilde{V}_{t}$ preserving the bilinear forms and the $\Q$-Hodge structures.
	\item There is an isometry over $\Q$
     \[
    P^2_{B}(Y,\Q(1)) \cong \iota_B(\Lambda_{\Q})^{\perp}
    \]
    preserving the $\Q$-Hodge structures.
    \item The following diagram commutes:
    \[
    \xymatrix{&\GSpin(\widetilde{V}_{t})_{\R}  \ar[d]^-{} \ar[r]^-{} & \GL(H^1_B(\mathcal{A}_t, \Q)^{\vee})_{\R} \\
\mathbb{S} \ar[r]_-{h_0} \ar[ru]^-{h} &\SO(\widetilde{V}_{t})_{\R}, 
    }
    \]
    where $h_0$ is the homomorphism of algebraic groups over $\R$ corresponding to the $\Q$-Hodge structure on $\widetilde{V}_{t}$ and the composite of the following homomorphisms
    \[ \mathbb{S} \overset{h}{\to} \GSpin(\widetilde{V}_{t})_{\R}  \to \GL(H^1_B(\mathcal{A}_t, \Q)^{\vee})_{\R}
    \]
    is the homomorphism corresponding to the $\Q$-Hodge structure on $H^1_B(\mathcal{A}_t, \Q)^{\vee}$.
\end{enumerate}

\begin{prop}\label{Proposition:CM K3 and CM AV}
	The $K3$ surface $Y$ has CM if and only if the Kuga-Satake abelian variety $\mathcal{A}_t$ has CM.
\end{prop}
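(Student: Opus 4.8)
The plan is to prove the equivalence by relating the Mumford-Tate group of the transcendental part $T_Y$ of the $K3$ surface to the Mumford-Tate group of $H^1_B(\mathcal{A}_t,\Q)$, the first cohomology of the Kuga-Satake abelian variety, and then invoking the characterization (recalled above, from Zarhin and Taelman) that $Y$ has CM if and only if $\mathrm{MT}(T_Y)$ is commutative, together with the classical fact that an abelian variety has CM if and only if its Mumford-Tate group is a torus (equivalently, commutative). So the statement reduces to showing: $\mathrm{MT}(T_Y)$ is commutative if and only if $\mathrm{MT}(H^1_B(\mathcal{A}_t,\Q))$ is commutative.

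First I would pass from $T_Y$ to $P^2_B(Y,\Q(1))$: these differ only by the algebraic (Hodge) classes $\Pic(Y)_\Q^{\perp}$ inside the primitive part, so $\mathrm{MT}(T_Y)$ and $\mathrm{MT}(P^2_B(Y,\Q(1)))$ have the same image in $\SO(T_Y)$ up to the torus acting on the extra algebraic classes; more precisely, since Hodge classes contribute only a commutative piece, $\mathrm{MT}(T_Y)$ is commutative iff the Hodge group of $P^2_B(Y,\Q(1))$ is commutative. Then using the $\Q$-Hodge structure embedding $P^2_B(Y,\Q(1)) \cong \iota_B(\Lambda_\Q)^{\perp} \subset \widetilde{V}_t$ and the commutative diagram in item (4) above, the homomorphism $h \colon \mathbb{S} \to \GSpin(\widetilde{V}_t)_\R$ defining the Kuga-Satake Hodge structure projects compatibly both to $\SO(\widetilde{V}_t)_\R$ (governing $\widetilde{V}_t$, hence $P^2_B(Y,\Q(1))$ via the orthogonal complement decomposition) and to $\GL(H^1_B(\mathcal{A}_t,\Q)^\vee)_\R$ (governing the abelian variety). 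The key point is that the Mumford-Tate group of $\widetilde{V}_t$ and the Mumford-Tate group of $H^1_B(\mathcal{A}_t,\Q)$ are both generated by the image of $h$ (up to the central $\G_m$), so one is commutative precisely when the other is; and since $\iota_B(\Lambda_\Q)$ consists of Hodge classes in $\widetilde{V}_t$ (being the image of a space with trivial Hodge structure, shifted appropriately), $\mathrm{MT}$ of $\widetilde{V}_t$ and of its summand $P^2_B(Y,\Q(1))$ agree up to a commutative factor.

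Concretely, I would argue as follows. If $\mathcal{A}_t$ has CM, then $\mathrm{MT}(H^1_B(\mathcal{A}_t,\Q))$ is a torus; since it surjects onto the image of $h$ in $\SO(\widetilde{V}_t)_\R$ (composed with the spin covering), that image lies in a torus, hence $\mathrm{MT}(\widetilde{V}_t)$ is commutative; restricting to the orthogonal complement $\iota_B(\Lambda_\Q)^\perp \cong P^2_B(Y,\Q(1))$ keeps it commutative, and therefore $\mathrm{MT}(T_Y)$ is commutative, so $Y$ has CM. Conversely, if $Y$ has CM, then $\mathrm{MT}(T_Y)$, hence the Hodge group of $P^2_B(Y,\Q(1))$, is commutative; because $\iota_B(\Lambda_\Q)$ is a sum of Hodge classes, $\mathrm{MT}(\widetilde{V}_t)$ is commutative; the Kuga-Satake homomorphism $h$ factors through $\GSpin(\widetilde{V}_t)_\R$, whose image under $\GSpin \to \SO$ lands in the commutative group $\mathrm{MT}(\widetilde{V}_t)$, so the full image of $h$ in $\GSpin(\widetilde{V}_t)_\R$ — which generates $\mathrm{MT}(H^1_B(\mathcal{A}_t,\Q))$ up to the central $\G_m$ — is an extension of a commutative group by $\G_m$, hence itself commutative (a central extension of a commutative group by a central torus is commutative). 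Thus $\mathrm{MT}(H^1_B(\mathcal{A}_t,\Q))$ is a torus and $\mathcal{A}_t$ has CM.

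The main obstacle will be handling the spin double cover carefully: one must check that the preimage in $\GSpin$ of a commutative (torus) subgroup of $\SO$, intersected with the image of $h$, is again commutative — this uses that $\GSpin \to \SO$ has central kernel $\G_m$ and that $h(\mathbb{S})$ is connected, so its image in $\GSpin$ is a central extension of an abelian group by a subgroup of the central $\G_m$, which is abelian. A secondary technical point is the comparison of $\mathrm{MT}(\widetilde{V}_t)$ with $\mathrm{MT}(P^2_B(Y,\Q(1)))$: I would justify that adjoining the finitely many Hodge classes $\iota_B(\Lambda_\Q)$ (which span a subspace on which $\mathrm{MT}$ acts trivially, up to Tate twist) does not enlarge the group beyond a commutative factor, by the standard fact that $\mathrm{MT}$ of a direct sum of Hodge structures surjects onto $\mathrm{MT}$ of each summand with the discrepancy controlled by the action on Hodge classes. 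Both points are routine once the setup of item (4) is in hand, so I expect the proof to be short.
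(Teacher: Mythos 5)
Your overall route is the same as the paper's: identify $\mathrm{MT}(T_Y)$ with $\mathrm{MT}(\widetilde{V}_{t})$ (the paper does this in one line, observing that $h_0$ factors through $\SO(T_Y)_{\R}\subset\SO(\widetilde{V}_{t})_{\R}$, so the two Mumford-Tate groups literally coincide; your ``up to a commutative factor'' bookkeeping with the Hodge classes $\iota_B(\Lambda_{\Q})$ is workable but unnecessary), and then compare $\mathrm{MT}(\widetilde{V}_{t})$ with $\mathrm{MT}(H^1_B(\mathcal{A}_t,\Q)^{\vee})$ through $\GSpin(\widetilde{V}_{t})\to\SO(\widetilde{V}_{t})$. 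The direction ``$\mathcal{A}_t$ has CM $\Rightarrow$ $Y$ has CM'' is fine and is exactly the paper's: $\mathrm{MT}(\widetilde{V}_{t})$ is contained in the image of $\mathrm{MT}(H^1_B(\mathcal{A}_t,\Q)^{\vee})$, hence is commutative.

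The gap is in the converse. You deduce commutativity of $\mathrm{MT}(H^1_B(\mathcal{A}_t,\Q)^{\vee})$ from the claim that the preimage of $\mathrm{MT}(\widetilde{V}_{t})$ in $\GSpin(\widetilde{V}_{t})$, being ``a central extension of a commutative group by a central torus,'' is commutative. That general principle is false: pushing out the finite Heisenberg extension $1\to\mu_n\to H_n\to(\Z/n\Z)^2\to 1$ along $\mu_n\hookrightarrow\G_m$ gives a noncommutative central extension of a commutative group by $\G_m$. Your fallback remark --- that $h(\mathbb{S})$ is connected and its image in $\GSpin(\widetilde{V}_{t})_{\R}$ is abelian --- does not close the gap either: the image of $h$ is always abelian, since $\mathbb{S}$ is a torus; the whole point is that $\mathrm{MT}(H^1_B(\mathcal{A}_t,\Q)^{\vee})$ is the smallest $\Q$-group whose real points contain that image, and the $\Q$-Zariski envelope of a commutative set of real points need not be commutative (otherwise every Mumford-Tate group would be commutative). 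What actually saves your argument is that the quotient $\mathrm{MT}(\widetilde{V}_{t})$ is connected, commutative and reductive, hence a torus, and a central extension of a torus by $\G_m$ is commutative because the commutator pairing $T\times T\to\G_m$ is bimultiplicative and therefore trivial by rigidity of tori. The paper sidesteps this entirely: the preimage is obviously solvable, $\mathrm{MT}(H^1_B(\mathcal{A}_t,\Q)^{\vee})$ is reductive by Deligne, and a connected reductive subgroup of a solvable group is a torus, hence commutative. Either repair works, but as written your key step rests on a false statement.
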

\begin{proof}
    This proposition was essentially proved by Tretkoff; see \cite[Corollary 3.2]{Tretkoff}.
    We give an argument from the point of view of algebraic groups.
	Since $h_0$ is the composite of $h_Y$ with the following inclusions
	\[
	\SO(T_{Y})_{\R} \hookrightarrow \SO(P^2_{B}(Y,\Q(1)))_{\R} \hookrightarrow \SO(\widetilde{V}_{t})_{\R},
	\]
	we have
	$\mathrm{MT}(T_{Y})\cong\mathrm{MT}(\widetilde{V}_{t})$.
	We shall show $\mathrm{MT}(\widetilde{V}_{t})$ is commutative if and only if $\mathrm{MT}(H^1_B(\mathcal{A}_t, \Q)^{\vee})$ is commutative.
	Since $\mathrm{MT}(H^1_B(\mathcal{A}_t, \Q)^{\vee})$ is contained in $\GSpin(\widetilde{V}_{t})$ and $\mathrm{MT}(\widetilde{V}_{t})$ is contained in the image of $\mathrm{MT}(H^1_B(\mathcal{A}_t, \Q)^{\vee})$, we only have to show $\mathrm{MT}(H^1_B(\mathcal{A}_t, \Q)^{\vee})$ is commutative if $\mathrm{MT}(\widetilde{V}_{t})$ is so.
	
	We assume $\mathrm{MT}(\widetilde{V}_{t})$ is commutative.
	Then the inverse image of $\mathrm{MT}(\widetilde{V}_{t})$ under the homomorphism $\GSpin(\widetilde{V}_{t}) \to \SO(\widetilde{V}_{t})$ is a solvable algebraic group and contains $\mathrm{MT}(H^1_B(\mathcal{A}_t, \Q)^{\vee})$.
	Since $\mathrm{MT}(H^1_B(\mathcal{A}_t, \Q)^{\vee})$ is a reductive group (see \cite[Proposition 3.6]{Deligne900}),
	it is commutative.
\end{proof}

\begin{cor}\label{Corollary:CM for every embedding}
Let $F$ be a field which can be embedded in $\C$.
Let $Z$ be a $K3$ surface over $F$.
 If $Z \otimes_{F, j} \C$ has CM for an embedding $j \colon F \hookrightarrow \C$, then $Z \otimes_{F, j'} \C$ has CM for every embedding $j' \colon F \hookrightarrow \C$.
\end{cor}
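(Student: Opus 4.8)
The statement follows from Corollary \ref{Corollary:CMLifting} combined with Proposition \ref{Proposition:CM K3 and CM AV} (and the Kuga--Satake formalism recalled above), but the cleanest route is to reduce to a statement about the Mumford--Tate group that is visibly independent of the chosen embedding. The plan is as follows. First I would replace $Z$ by a model over a finitely generated subfield of $F$ and then, after choosing $j$, regard $Z \otimes_{F,j} \C$ as arising from a $\C$-valued point $t \in M^{\mathrm{sm}}_{2d,\K^p_0,\Q}(\C)$ for a suitable quasi-polarization of some degree $2d$; by Proposition \ref{Proposition:CM K3 is defined over a number field}, the CM hypothesis for $j$ forces this point, hence also its image under $\KS$ in $Z_{\K^p_0}(\Lambda)$ and in $\mathscr{S}_{\widetilde{\mathrm{K}}_0}$, to be defined over a number field. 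The crucial consequence is that the corresponding point of the canonical model $\mathscr{S}_{\widetilde{\mathrm{K}}_0,\Q} = \mathrm{Sh}_{\widetilde{\mathrm{K}}_0}$ is a \emph{special point} (a CM point) of the orthogonal Shimura variety.

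Next I would use the standard fact that being a special point on a Shimura variety is a purely arithmetic-geometric property of the point, independent of the complex embedding: the Hodge-theoretic characterization of CM, namely that the Mumford--Tate group $\mathrm{MT}(\widetilde{V}_t)$ (equivalently $\mathrm{MT}(T_Z)$ by the computation in the proof of Proposition \ref{Proposition:CM K3 and CM AV}) is commutative, can be detected after any single embedding because the Mumford--Tate group of the motive $H^2(Z)(1)$ (resp.\ of $H^1$ of the Kuga--Satake abelian variety) is, up to conjugacy, an invariant of $Z$ over $F$, not of the chosen $j \colon F \hookrightarrow \C$. Concretely, one shows that for two embeddings $j, j'$ the two complex points of $\mathscr{S}_{\widetilde{\mathrm{K}}_0}$ lie in the same $\Aut(\C/\Q)$-orbit of the number-field-valued point obtained above, and the property "$\mathrm{MT}$ is commutative" is stable under this Galois action on points of the canonical model. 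Equivalently, via Proposition \ref{Proposition:CM K3 and CM AV} the statement is translated to: if the Kuga--Satake abelian variety $\mathcal{A}_t$ (which descends to a number field together with $Z$) has CM for one embedding, it has CM for every embedding --- and this is classical for abelian varieties, since having CM means $\End$ contains a commutative semisimple algebra of maximal dimension, a condition on $\mathcal{A}_t$ over the number field itself.

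So the key steps, in order, are: (i) descend $(Z,\xi)$ and its Kuga--Satake abelian variety $\mathcal{A}$ to a number field $F_0 \subset F$ using Proposition \ref{Proposition:CM K3 is defined over a number field} applied to the embedding $j$; (ii) observe that CM for the $K3$ surface is equivalent, by Proposition \ref{Proposition:CM K3 and CM AV}, to CM for $\mathcal{A}_{\C}$, and that the endomorphism algebra $\End(\mathcal{A}_{\overline{F_0}})\otimes\Q$ is independent of the embedding of $F_0$ into $\C$; (iii) conclude that $\dim_{E}(T_{Z\otimes_{j'}\C})=1$ with $E$ a CM field for every $j'$, since both the algebra of Hodge endomorphisms and the transcendental lattice have dimensions that are invariants of the number field model. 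The main obstacle I expect is step (ii)--(iii): one must be careful that the identification of the Hodge endomorphism algebra $E_Z = \mathrm{End}_{\mathrm{Hdg}}(T_Z)$ and of $\mathrm{dim}_{E_Z}(T_Z)$ across embeddings is genuinely an equality of invariants and not merely an isomorphism of abstract data --- this is where one invokes that, for a variety over a number field, the Mumford--Tate group attached to any singular realization is independent of the embedding up to $\mathrm{GL}$-conjugacy (a consequence of the fact that algebraic de Rham cohomology is a common $\overline{\Q}$-structure), so in particular its property of being commutative, and the dimension counts it governs, do not change. With that in hand, the corollary is immediate.
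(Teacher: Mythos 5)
Your load-bearing chain is essentially the paper's proof: by Proposition \ref{Proposition:CM K3 and CM AV}, CM for the $K3$ surface is equivalent to CM for its Kuga--Satake abelian variety, and CM for an abelian variety is an embedding-independent condition (it is detected by the endomorphism algebra over an algebraically closed field of definition; equivalently, any $\Aut(\C)$-conjugate of a CM abelian variety again has CM). The paper's proof is exactly this, with the standard implicit reductions you also mention: spread $Z$ out over a finitely generated subfield so that the two embeddings differ by an automorphism of $\C$, and use that the Kuga--Satake morphism and the universal abelian scheme are defined over $\Q$ (indeed $\Z_{(p)}$), so conjugation is compatible with the construction. Two cautions about the extra scaffolding. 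First, the opening appeal to Corollary \ref{Corollary:CMLifting} is a red herring: that result concerns $K3$ surfaces over finite fields and is proved later, independently of the present corollary, so it should not be cited here. Second, the parenthetical assertion that for an arbitrary variety over a number field the Mumford--Tate group of the Betti realization is independent of the complex embedding ``because algebraic de Rham cohomology is a common $\overline{\Q}$-structure'' is not a theorem; this is essentially the problem of conjugation of Hodge classes (absolute Hodge cycles), open in general, and known for abelian varieties by Deligne and for $K3$ surfaces precisely via the Kuga--Satake reduction you are carrying out. Fortunately that assertion is not needed: your argument through $\End(\mathcal{A}_{\overline{F}_0})\otimes_{\Z}\Q$, which is manifestly independent of the embedding, already gives the corollary, and the detour through Proposition \ref{Proposition:CM K3 is defined over a number field} and special points is likewise unnecessary (though harmless).
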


\begin{proof}
The assertion follows from Proposition \ref{Proposition:CM K3 and CM AV} and the fact that, for an abelian variety $A$ over $\C$ with CM and every automorphism $f \colon \C \cong \C$, the abelian variety $A \otimes_{\C, f}\C$ has CM.
\end{proof}

\begin{rem}
Let $F$ be a field of characteristic $0$ which can be embedded into $\C$,
and $Z$ a $K3$ surface over $F$.
We say $Z$ has CM if $Z \otimes_{F, j}{\C}$ has CM for
some embedding $j \colon F \hookrightarrow \C$,
in which case $Z \otimes_{F, j'}{\C}$ has CM for \textit{every} embedding $j' \colon F \hookrightarrow \C$ by Corollary \ref{Corollary:CM for every embedding}.
\end{rem}

\subsection{A lemma on liftings of formal groups with action of tori}
\label{Subsection:A lemma on liftings of formal groups with action of tori}

The following result on characteristic $0$ liftings of one-dimensional smooth formal groups is presumably well-known. (For the definition of $\Aut_{\Q_p}(\mathcal{G}_0)$ and $\Aut_{\Q_p}(\mathcal{G})$, see Section \ref{Subsection:The action of Kisin's group on the formal Brauer groups}.)

\begin{lem}\label{Lemma:FormalGroupLifting}
	Let $\mathcal{G}_0$ be a one-dimensional smooth formal group over $\F_q$. 
	Let $T_p$ be an algebraic torus over $\Q_p$ and 
	\[
	\rho \colon T_p \to \Aut_{\Q_p}(\mathcal{G}_0)
	\]
	a homomorphism of algebraic groups over $\Q_p$. 
	Assume that the height of $\mathcal{G}_0$ is finite,
	and the Frobenius $\Phi$ of $\mathcal{G}_0$ over $\F_q$ is contained in $\rho(T_p(\Q_p))$.
	Then, there exist a finite totally ramified extension $E$ of $K_0$ and a smooth formal group $\mathcal{G}$ over $\O_{E}$ satisfying the following properties:
	\begin{enumerate}
		\item The special fiber of $\mathcal{G}$ is isomorphic to $\mathcal{G}_0$.
		\item The homomorphism $\rho$ factors as 
	\[
	T_p \to \Aut_{\Q_p}(\mathcal{G})\to \Aut_{\Q_p}(\mathcal{G}_0).
	\]
	\end{enumerate}
\end{lem}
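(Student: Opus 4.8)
The statement is essentially a deformation-theoretic fact: a one-dimensional formal group of finite height $\mathcal{G}_0/\F_q$ equipped with an action of a torus $T_p$ through $\rho$, with Frobenius lying in the image of $\rho$, admits a characteristic $0$ lift over some $\O_E$ carrying the torus action. The natural approach is to pass to the corresponding connected $p$-divisible group and use Dieudonn\'e theory together with Grothendieck--Messing deformation theory. Let me write $D_0 := \mathbb{D}(\mathcal{G}_0)$ for the Dieudonn\'e module over $W = W(\F_q)$, a free $W$-module with $\sigma$-semilinear $F$ and the induced $V$; since $\mathcal{G}_0$ has finite height $h$, $D_0[1/p]$ is an isoclinic $F$-isocrystal of slope $1/h$, and $\End(\mathcal{G}_0)\otimes_{\Z_p}\Q_p = \End_{F}(D_0[1/p])$ is (the opposite of) a central division algebra over $\Q_p$. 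The homomorphism $\rho$ makes $D_0[1/p]$ a module over the commutative semisimple $\Q_p$-algebra $R := $ image of $T_p(\Q_p)$-action inside $\End_F(D_0[1/p])$, and the hypothesis that $\Phi \in \rho(T_p(\Q_p))$ says Frobenius itself is $R$-linear, so $D_0[1/p]$ with its $F$ is naturally an object built from $R$.

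\emph{Key steps.} First I would reduce to a statement about filtrations: lifting $\mathcal{G}_0$ to $\O_E$ is, by Grothendieck--Messing (or directly by the deformation theory of $p$-divisible groups, as in Kisin \cite{KisinCris}), the same as choosing a direct summand $\mathrm{Fil} \subset D_0 \otimes_W \O_E$ lifting the Hodge filtration $\mathrm{Fil}^1 \mathbb{D}(\mathcal{G}_0)$, which for a one-dimensional formal group is a line; and lifting \emph{with the $T_p$-action} means choosing this line to be stable under $R \otimes_{\Q_p} E$ acting on $D_0 \otimes_W E$. So the whole problem becomes: find a finite extension $E/K_0$ and an $R \otimes E$-stable line in $D_0\otimes_W E$ reducing to the given Hodge line mod the maximal ideal. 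Second, I would analyze the $R\otimes E$-module structure: since $R$ is a product of fields and $D_0[1/p]$ is free of rank one over (the relevant factor giving slope $1/h$, essentially) $\End$-algebra, after a large enough $E$ the module $D_0 \otimes_W E$ decomposes into eigenlines for $R \otimes E$; any such eigenline is automatically a lift of \emph{some} line in the special fiber, and since the special-fiber Hodge line is an eigenline for the reduction $R \otimes \F_q$-action (here one uses $\Phi \in \rho(T_p)$ to see the reduced action is still by the torus and the Hodge filtration is cut out compatibly — this is where one must be slightly careful), one of the eigenlines reduces correctly. Third, having produced the filtration, invoke Grothendieck--Messing/Kisin to get the $p$-divisible group $\mathcal{H}/\O_E$, then pass back to the associated formal group $\mathcal{G}$ (its identity component, or use that a connected $p$-divisible group over a complete DVR of mixed characteristic is the one associated to a smooth formal group); the $R$-action lifts by functoriality of the deformation theory, giving the factorization $T_p \to \Aut_{\Q_p}(\mathcal{G}) \to \Aut_{\Q_p}(\mathcal{G}_0)$.

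\emph{Main obstacle.} The subtle point is matching the \emph{special fiber} exactly: Grothendieck--Messing lifts produce a $p$-divisible group whose special fiber is canonically $\mathcal{G}_0$, so if I build $\mathrm{Fil}$ correctly there is nothing to fix; but I must ensure the chosen eigenline of $R\otimes E$ actually reduces mod $\varpi_E$ to the Hodge line of $\mathcal{G}_0$, rather than to some other $R$-stable line in the special fiber. This forces a careful bookkeeping of how $R\otimes \O_E \twoheadrightarrow R \otimes \F_q$ (or rather $\O_E/\varpi_E$) permutes eigenlines and which eigencharacter the special-fiber Hodge line carries — and it is precisely here that the hypothesis $\Phi \in \rho(T_p(\Q_p))$ is used, guaranteeing that the Frobenius structure, hence the slope decomposition, hence the position of the Hodge line, is controlled by $R$. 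A clean alternative avoiding this, which I would fall back on if the eigenline bookkeeping gets messy, is to note $\Aut_{\Q_p}(\mathcal{G}_0)$ contains a maximal torus $T'$ with $\Phi \in T'(\Q_p)$, reduce to lifting the single pair $(\mathcal{G}_0, T')$ (then restrict $\rho$ along $T_p \to \mathrm{image} \subseteq T'$), and for that use the classical Lubin--Tate / Gross theory of canonical and quasi-canonical liftings of formal $\O$-modules, which provides exactly such an $E$ and $\mathcal{G}/\O_E$ with the full $T'$-action; this is the most robust route and is morally what the paper will do, since it only needs $\mathcal{G}$ over \emph{some} finite totally ramified $E$, not a minimal one.
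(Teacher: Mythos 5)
Your fallback route is, in essence, the paper's actual proof, and it is the part of your proposal that works. The paper chooses a maximal torus $T'_p$ of $\Aut_{\Q_p}(\mathcal{G}_{0,\overline{\F}_q})$ containing $\rho(T_p)$, identifies $T'_p = K'^{\times}$ for a maximal commutative subfield $K'$ of the division algebra $\End_{\overline{\F}_q}(\mathcal{G}_{0,\overline{\F}_q})\otimes_{\Z_p}\Q_p$ with $\O_{K'}$ contained in the maximal order, and then uses the hypothesis $\Phi\in\rho(T_p(\Q_p))\subset T'_p(\Q_p)$ exactly as you indicate: every element of $K'$ commutes with $\Phi$, so $\O_{K'}\subset\End_{\F_q}(\mathcal{G}_0)$, i.e.\ $\mathcal{G}_0$ becomes a formal $\O_{K'}$-module over $\F_q$. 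The lifting step is then even cheaper than quasi-canonical lifting theory: the universal formal $\O_{K'}$-module lives over a polynomial ring $\O_{K'}[(S_i)_{i\in\N}]$ (Hazewinkel), so the classifying map to $\F_q$ lifts through $\O_E\twoheadrightarrow\F_q$ for any totally ramified extension $E$ of $K_0$ that is a $K'$-algebra, and the resulting formal $\O_{K'}$-module $\mathcal{G}$ over $\O_E$ carries the $\O_{K'}$-action, which gives the factorization of $\rho$ through $\Aut_{\Q_p}(\mathcal{G})$. So if you restrict your restriction map correctly (your $\rho$ lands in $T'$, hence in $K'^{\times}$), your last paragraph is a complete strategy.

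Your primary route, as written, has a genuine gap at the step ``lifting $\mathcal{G}_0$ to $\O_E$ is the same as choosing a line in $D_0\otimes_W\O_E$ lifting the Hodge line, and endomorphisms preserving it lift.'' The field $E$ must be large enough to split (or contain) the commutative algebra generated by the torus, so you cannot keep the ramification small, and over a highly ramified $\O_E$ the Grothendieck--Messing correspondence is not available in this naive form (the ideal $(\varpi)$ of $\O_E/p$ carries divided powers only for $e\leq p-1$, and nilpotent ones only for $e<p-1$); this is precisely the small-$p$, large-$e$ regime the paper must handle, and it is why the paper itself works with Breuil--Kisin modules, weak admissibility and the Bhatt--Morrow--Scholze comparison in Sections 5--6 rather than with naive crystalline deformation theory (it even flags a false integral statement at $e=p-1$ in the literature). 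One can repair your route up to isogeny, by feeding the weakly admissible filtered $\varphi$-module $(D_0[1/p],F,\mathrm{Fil})$ into the equivalence with $p$-divisible groups over $\O_E$ up to isogeny, but then the special fiber is only isogenous to $\mathcal{G}_0$ and matching it on the nose requires an additional lattice/quasi-isogeny argument; moreover your ``eigenline bookkeeping'' (the chosen $R\otimes E$-stable line must reduce to the Hodge line, and when $\rho(T_p)$ is not maximal the eigenspaces are not lines) is acknowledged but never carried out. So the filtration argument is not a proof as it stands; the maximal-torus/formal $\O_{K'}$-module argument is the correct and complete one.
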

\begin{proof}
We fix an isomorphism $\mathcal{G}_0 \cong \Spf \F_q[[x]]$ and consider $\mathcal{G}_0$ as a formal group law in $\F_q[[x, y]]$.

	The composite of the following homomorphism
	\[
T_p \overset{\rho}{\longrightarrow}\Aut_{\Q_p}(\mathcal{G}_0) \longrightarrow \Aut_{\Q_p}(\mathcal{G}_{0, \overline{\F}_q})
	\]
	is also denoted by $\rho$.
	Take a maximal $\Q_p$-torus $T'_p$ of $\Aut_{\Q_p}(\mathcal{G}_{0, \overline{\F}_q})$ containing $\rho(T_p)$.
	
	It is well-known that $\End_{\overline{\F}_q}(\mathcal{G}_{0, \overline{\F}_q})\otimes_{\Z_p}\Q_p$ is a central division algebra over $\Q_p$ and $\End_{\overline{\F}_q}(\mathcal{G}_{0, \overline{\F}_q})$ is the maximal order of it; see \cite[Corollary 20.2.14]{Hazewinkel}. 
	Hence, there is a maximal commutative $\Q_p$-subalgebra
	\[
	K' \subset \End_{\overline{\F}_q}(\mathcal{G}_{0, \overline{\F}_q})\otimes_{\Z_p}\Q_p
	\]
	such that $K'^{\times}=T'_p$ as algebraic groups over $\Q_p$, and we have $\O_{K'} \subset \End_{\overline{\F}_q}(\mathcal{G}_{0, \overline{\F}_q})$.
	Moreover, since $T'_p(\Q_p)$ contains the Frobenius $\Phi$ of $\mathcal{G}_0$ over $\F_q$, the endomorphisms in $K'$ commute with $\Phi$.
	Hence, we have $\O_{K'} \subset \End_{\F_q}(\mathcal{G}_0)$.
	
	We regard the formal group law $\mathcal{G}_0$ as a formal $\O_{K'}$-module over $\F_q$ in the sense of \cite[(18.6.1)]{Hazewinkel}.
	The universal formal $\O_{K'}$-module $\mathcal{G}^{\mathrm{univ}}$ exists and it is a formal $\O_{K'}$-group over a polynomial ring $\O_{K'}[(S_i)_{i \in \N}]$ with infinitely many variables over $\O_{K'}$; see \cite[(21.4.8)]{Hazewinkel}.
	(Note that $\mathcal{G}^{\mathrm{univ}}$ does not classify isomorphism classes of formal $\O_{K'}$-modules, but formal $\O_{K'}$-modules.)
	
	We take a finite totally ramified extension $E$ of $K_0$ such that $E$ is a $K'$-algebra.
	Then there is a formal $\O_{K'}$-module $\mathcal{G} \in \O_E[[x, y]]$ over $\O_{E}$ whose reduction modulo the maximal ideal of $\O_E$ is equal to $\mathcal{G}_{0}$ such that the homomorphism $\O_{K'} \to \End_{\F_q}(\mathcal{G}_{0})$ factors as
	\[
	\O_{K'} \to \End_{\O_{E}}(\mathcal{G}) \to \End_{\F_q}(\mathcal{G}_{0}).
	\]
	Therefore, the composite of $\rho$ with the inclusion
	$
	\Aut_{\Q_p}(\mathcal{G}_0) \to \Aut_{\Q_p}(\mathcal{G}_{0})
	$
	factors as 
	\[
	T_p \to \Aut_{\Q_p}(\mathcal{G}) \to \Aut_{\Q_p}(\mathcal{G}_{0}).
	\]
\end{proof}

\subsection{Liftings of $K3$ surfaces over finite fields with actions of tori}\label{Subsection:Liftings of K3 surfaces over finite fields with actions of tori}

Let $(Y, \xi)$ be a quasi-polarized $K3$ surface over a field $k$ of characteristic $0$ or $p$.
Let $\overline{k}$ be an algebraic closure of $k$.

For a prime number $\ell \neq p$, the primitive part of the $\ell$-adic cohomology is denoted by
\[
P^2_{\mathrm{\acute{e}t}}(Y_{\overline{k}},\Q_{\ell}(1))
:= \mathrm{ch}_{\ell}(\xi)^{\perp} \subset H^2_{\mathrm{\acute{e}t}}(Y_{\overline{k}},\Q_{\ell}(1)).
\]
It is equipped with a canonical action of $\Gal(\overline{k}/k)$.

When $\overline{k}$ is a subfield of $\C$, we have a canonical isomorphism
\[
P^2_{\mathrm{\acute{e}t}}(Y_{\overline{k}},\Q_{\ell}(1))
\cong
P^2_{B}(Y_{\C}, \Q(1)) \otimes_{\Q} \Q_{\ell},
\]
where $P^2_{B}(Y_{\C}, \Q(1))$ denotes the primitive part of the singular cohomology of $Y_{\C}$.

We consider the situation as in Section \ref{Subsection:$F$-crystals on Shimura varieties and the cohomology of K3 surfaces} and Section \ref{Section:Lifting of a special point}, and keep the notation.
In particular, $(X, \mathscr{L})$ is a quasi-polarized $K3$ surface over $\F_q$.
We attach the algebraic group $I$ over $\Q$ to the $\F_q$-valued point $s \in Z_{\mathrm{K}^p}(\F_q)$;
see Definition \ref{Definition: algebraic group I}.

The following theorem concerns characteristic $0$ liftings of
$K3$ surfaces of finite height over finite extensions of $W(\overline{\F}_q)[1/p]$.
Since every $K3$ surface with CM is defined
over a number field (see 
Proposition \ref{Proposition:CM K3 is defined over a number field} and
Remark \ref{Remark:CM K3 is defined over a number field}),
Theorem \ref{Theorem:LiftingTorusAction:Restate}
implies Theorem \ref{Theorem:CMlifting} in Introduction; see Corollary \ref{Corollary:CMLifting}.

\begin{thm}
\label{Theorem:LiftingTorusAction:Restate}
Let $T \subset I$ be a maximal torus over $\Q$.
Then there exist a finite extension $K$ of $W(\overline{\F}_q)[1/p]$
and a quasi-polarized $K3$ surface $(\mathcal{X},  \mathcal{L})$ over $\O_K$
such that the special fiber
$(\mathcal{X}_{\overline{\F}_q}, \mathcal{L}_{\overline{\F}_q})$
is isomorphic to $(X_{\overline{\F}_q}, \mathscr{L}_{\overline{\F}_q})$,
and, for every embedding $K \hookrightarrow \C$,
the quasi-polarized $K3$ surface
$(\mathcal{X}_{\C},  \mathcal{L}_{\C})$
satisfies the following properties:
\begin{enumerate}
\item The $K3$ surface $\mathcal{X}_{\C}$ has CM.

\item There is a homomorphism of algebraic groups over $\Q$
\[ T \to \SO(P^2_{B}(\mathcal{X}_{\C}, \Q(1))). \]

\item
For every $\ell \neq p$,
the action of $T(\Q_{\ell})$ on
$P^2_{B}(\mathcal{X}_{\C},\,\Q(1)) \otimes_{\Q} \Q_{\ell}$
is identified with the action of $T(\Q_{\ell})$ on
$P^2_{\mathrm{\acute{e}t}}(X_{\overline{\F}_q},\Q_{\ell}(1))$
via the canonical isomorphisms
\[
P^2_{B}(\mathcal{X}_{\C}, \Q(1)) \otimes_{\Q} \Q_{\ell}
\cong P^2_{\mathrm{\acute{e}t}}(\mathcal{X}_{\C}, \Q_{\ell}(1))
\cong P^2_{\mathrm{\acute{e}t}}(X_{\overline{\F}_q},\Q_{\ell}(1)))
\]
(using the embedding $K \hookrightarrow \C$,
we consider $K$ as a subfield of $\C$).

\item The action of every element of $T(\Q)$ on
$P^2_{B}(\mathcal{X}_{\C}, \Q(1))$
preserves the $\Q$-Hodge structure on it.
\end{enumerate}
\end{thm}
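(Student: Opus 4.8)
The plan is to combine the abstract lifting result for points on the orthogonal Shimura variety (Theorem \ref{Theorem:CMliftingAbelianVariety}) with the action of Kisin's group $I$ on the formal Brauer group (Lemma \ref{Lemma:ActionFormalBrauerGroup}), and then transport everything back to the moduli space of $K3$ surfaces via the \'etaleness of the Kuga-Satake morphism $\KS$. First I would replace $\F_q$ by a finite extension so that all the conclusions of Section \ref{Section:Kisin's algebraic groups} hold, in particular so that $I_{\Q_p}\cong I_p$ and the maximal torus $T\subset I$ determines a maximal torus $T_{\Q_p}\subset I_p$. Via Lemma \ref{Lemma:ActionFormalBrauerGroup} the group $I_{\Q_p}$ acts on $\widehat{\Br}=\widehat{\Br}(X)$ (through $(\Aut_{\Q_p}(\widehat{\Br}))^{\mathrm{op}}$, which is harmless since we will only use it to produce a lift), and one checks that the geometric Frobenius $\Phi$ of $\widehat{\Br}$ lies in the image of $T_{\Q_p}(\Q_p)$ — this is because $\Frob_q$ centralizes a maximal torus, hence lies in every maximal torus of $I_p$ containing its own Zariski closure; after a further finite extension of $\F_q$ we may arrange $\Phi\in\rho(T_p(\Q_p))$. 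Then Lemma \ref{Lemma:FormalGroupLifting} produces a finite totally ramified extension $E/K_0$ and a one-dimensional smooth formal group $\mathcal G$ over $\O_E$ lifting $\widehat{\Br}$ such that the $T_p$-action lifts to $\Aut_{\Q_p}(\mathcal G)$.

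Next I would feed $\mathcal G$ into Theorem \ref{Theorem:CMliftingAbelianVariety}: with $K$ the composite of $E$ and $W(\overline{\F}_q)[1/p]$, we obtain an $\O_K$-point $\widetilde s\in Z_{\mathrm{K}^p}(\Lambda)(\O_K)$ lifting $\overline s$, whose associated Hodge filtration on $H^1_{\cris}(\mathcal A_s/W)^\vee\otimes_W K$ is the one built from $\mathrm{Fil}(\mathcal G)$ via $i(e)$. Since $\KS\colon M^{\mathrm{sm}}_{2d,\K^p_0,\Z_{(p)}}\to Z_{\K^p_0}(\Lambda)$ is \'etale (Proposition \ref{Proposition:KugaSatakeMap}, as corrected in Section \ref{Section:$F$-crystals on Shimura varieties}) and the $\O_K$-point $t$ above $s$ was chosen compatibly, the point $\widetilde s$ descends to an $\O_K$-valued point of $M^{\mathrm{sm}}_{2d,\K^p_0,\Z_{(p)}}$, i.e.\ to a quasi-polarized $K3$ surface $(\mathcal X,\mathcal L)$ over $\O_K$ with $(\mathcal X_{\overline{\F}_q},\mathcal L_{\overline{\F}_q})\cong(X_{\overline{\F}_q},\mathscr L_{\overline{\F}_q})$. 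For assertions (1)--(4) I would argue as in the Outline (Section \ref{Subsection:Outline}): for any embedding $K\hookrightarrow\C$, each element of $T(\Q)\subset(\End_{\overline{\F}_q}(\mathcal A_{\overline s})\otimes\Q)^\times$ preserves the lifted Hodge filtration on $H^1_{\cris}(\mathcal A_s/W)^\vee\otimes_W K$ by construction, hence (by the comparison isomorphism $c_{\mathcal A_{\widetilde s}}$ and Berthelot-Ogus) lifts to an element of $(\End_{\C}(\mathcal A_{\mathcal X_\C})\otimes\Q)^\times$ preserving the $\Q$-Hodge structure on $H^1_B(\mathcal A_{\mathcal X_\C},\Q)$. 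Using the $T(\Q)$-equivariant embedding of $\Q$-Hodge structures $P^2_B(\mathcal X_\C,\Q(1))\hookrightarrow\End_\C(H^1_B(\mathcal A_{\mathcal X_\C},\Q)^\vee)$ coming from the $\Lambda$-structure, and the isometry $P^2_B(\mathcal X_\C,\Q(1))\cong\iota_B(\Lambda_\Q)^\perp\subset\widetilde V_t$, this gives (2) and (4); the $\ell$-adic compatibility (3) follows from the compatibility of the analogous embeddings over $\A^p_f$ and the fact that $\widetilde{\mathbb V}^p$ specializes to both $\widetilde{\mathbb V}_{\ell,\overline s}$ and $\widetilde{\mathbb V}_{B,t}\otimes\Q_\ell$ compatibly with $T$. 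Finally (1): since $\mathrm{rank}\,T=\mathrm{rank}\,I=\mathrm{rank}\,\GSpin(L_\Q)=\mathrm{rank}\,\SO(P^2_B(\mathcal X_\C,\Q(1)))$ by Proposition \ref{Proposition:KisinGroup(I)}(2), and $T$ acts on $P^2_B(\mathcal X_\C,\Q(1))$ preserving the Hodge structure, the Mumford-Tate group $\mathrm{MT}(T_{\mathcal X_\C})$ is contained in the centralizer of a maximal torus, hence is commutative, so $\mathcal X_\C$ has CM by the criterion recalled in Section \ref{Subsection:K3 surfaces with complex multiplication}; independence of the embedding is Corollary \ref{Corollary:CM for every embedding}.

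\textbf{The main obstacle.} The delicate point is the descent of $\widetilde s$ from $Z_{\mathrm{K}^p}(\Lambda)$ back to the moduli space $M^{\mathrm{sm}}_{2d,\K^p_0,\Z_{(p)}}$ of $K3$ surfaces, which rests entirely on the \'etaleness of $\KS$; as flagged in Remark \ref{Remark: Kim-Madapusi Pera gap} and Remark \ref{Remark:Bloch-Kato and BMS}, the original proof of this \'etaleness has gaps in small residue characteristic, and the argument here must instead invoke Proposition \ref{Proposition:Flatness of cokernel} and the integral comparison theorems of Bhatt-Morrow-Scholze (Theorem \ref{Theorem:p-adic Hodge theorem}) to repair it. A second, more bookkeeping-heavy difficulty is checking the \emph{compatibility} of the $T$-actions across all three realizations — crystalline, $\ell$-adic, and Betti — of the primitive cohomology; this requires carefully matching the $\Lambda$-structures and the tensors $\{s_\alpha\}$ through the comparison isomorphisms, and is the content that Section \ref{Section:CompatibilityComparisonIsom} is designed to supply. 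Everything else is a recombination of results already in place: Theorem \ref{Theorem:CMliftingAbelianVariety}, Lemma \ref{Lemma:ActionFormalBrauerGroup}, Lemma \ref{Lemma:FormalGroupLifting}, and the rank equality of Proposition \ref{Proposition:KisinGroup(I)}.
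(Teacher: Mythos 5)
Your proposal follows essentially the same route as the paper's own proof: lift the formal Brauer group together with the $T_{\Q_p}$-action (Lemma \ref{Lemma:FormalGroupLifting}), feed the resulting filtration into Theorem \ref{Theorem:CMliftingAbelianVariety}, descend through the \'etale Kuga-Satake morphism, observe that $T(\Q)$ preserves the lifted Hodge filtration and hence lifts to endomorphisms of the Kuga-Satake abelian scheme, embed $T$ via the tensors and the $\Lambda$-structure into $\GSpin(P^2_{B}(\mathcal{X}_{\C},\Q(1)))$, and conclude CM from the rank equality of Proposition \ref{Proposition:KisinGroup(I)} together with the fact that the image of $\mathbb{S}$ centralizes $T_{\R}$. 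Your explicit verification that the Frobenius of $\widehat{\Br}$ lands in $\rho(T_{\Q_p}(\Q_p))$ after a finite extension (needed for the hypothesis of Lemma \ref{Lemma:FormalGroupLifting}) is a point the paper leaves implicit, and is correct once stated via centrality of a power of $\Frob_q$ in $I$.
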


\begin{proof}
Recall $\widehat{\Br}=\widehat{\Br}(X)$ is the formal Brauer group associated with $X$.
	Since the height of $X$ is $h < \infty$, there is a natural homomorphism
	\[
	I_{\Q_{p}} \to (\Aut_{\Q_p}(\widehat{\Br}))^{\mathrm{op}}
	\]
	by Lemma \ref{Lemma:ActionFormalBrauerGroup}.
	Hence we have a homomorphism
	\[
	T_{\Q_p} \to \Aut_{\Q_p}(\widehat{\Br})
	\]
	of algebraic groups over $\Q_p$. 
	
	By Lemma \ref{Lemma:FormalGroupLifting}, there exist a finite totally ramified extension $E$ of $K_0$ and a one-dimensional smooth formal group $\mathcal{G}$ over $\O_E$ whose special fiber is isomorphic to $\widehat{\Br}$ such that the homomorphism $\rho$ factors as 
	\[
	T_{\Q_p} \to \Aut_{\Q_p}(\mathcal{G}) \to \Aut_{\Q_p}(\widehat{\Br}).
	\]
	
	As in Section \ref{Section:Lifting of a special point}, we have the filtration associated with $\mathcal{G}$ 
	\[
	\mathrm{Fil}^1(\mathcal{G}) \hookrightarrow \mathbb{D}(\widehat{\Br}) \otimes_{W} E \hookrightarrow \widetilde{L}_{\cris}\otimes_{W}E. 
	\]
	This is an isotropic line on $\widetilde{L}_{\cris}\otimes_{W}E$.
	Take a generator $e$ of $\mathrm{Fil}^1(\mathcal{G})$,
	and consider an endomorphism
	\[
	i(e):=(i_{\cris}\otimes_{K_0}{E})(e),
	\]
	which is the image of $e$ under the embedding 
	\[
	i_{\cris}\otimes_{W}E \colon \widetilde{L}_{\cris}\otimes_{W}E \hookrightarrow \End_{E}(H^1_{\cris}(\mathcal{A}_s/W)^{\vee}\otimes_{W}E).
	\]
	
	By Theorem \ref{Theorem:CMliftingAbelianVariety}, there exist a finite extension $K$ of $W(\overline{\F}_q)[1/p]$ containing $E$ and an $\O_K$-valued point $\widetilde{s} \in Z_{\mathrm{K}^p}(\O_K)$ such that
	\begin{itemize}
		\item $\widetilde{s}$ is a lift of $\overline{s}$, and
		\item the $0$-th piece  
        \[
        \mathrm{Fil}^0_{\widetilde{s}} \subset H^1_{\cris}(\mathcal{A}_s/W)^{\vee}\otimes_{W}K \cong H^1_{\dR}(\mathcal{A}_{\widetilde{s}}/K)^{\vee}
        \]
        of the Hodge filtration coincides with the image
        \[ i(e)(H^1_{\cris}(\mathcal{A}_s/W)^{\vee}\otimes_{W}K). \]
	\end{itemize} 
	
Since the embedding $i_{\cris}$ is $I_{\Q_p}$-equivariant and the action of
$T_{\Q_p}$ on $\widetilde{L}_{\cris}[1/p]$ preserves $\Fil^1(\mathcal{G})$, we see that the action of $T$ on $H^1_{\dR}(\mathcal{A}_{\widetilde{s}}/K)^{\vee}$ preserves $\mathrm{Fil}^0_{\widetilde{s}}$.
Therefore the $\Q$-torus $T$ can be considered as a $\Q$-torus in $(\End_{\O_K}(\mathcal{A}_{\widetilde{s}})\otimes_{\Z}\Q)^{\times}$. 

Since the Kuga-Satake morphism is \'etale
(see Proposition \ref{Proposition:KugaSatakeMap}),
the $\O_K$-valued point $\widetilde{s} \in \mathscr{S}_{\widetilde{\mathrm{K}}}(\O_K)$ can be lifted to an $\O_K$-valued point on 
$M^{\mathrm{sm}}_{2d, \K^p_0, \Z_{(p)}}$.
We denote it also by $\widetilde{s}$.

Let $(\mathcal{X}, \mathcal{L})$ be the quasi-polarized $K3$ surface over $\O_K$ corresponding to $\widetilde{s} \in M^{\mathrm{sm}}_{2d, \K^p_0, \Z_{(p)}}(\O_K)$.
Its special fiber is isomorphic to $(X_{\overline{\F}_q}, \mathscr{L}_{\overline{\F}_q})$.

We choose an embedding $K \hookrightarrow \C$. We shall show $\mathcal{X}\otimes_{\O_K}\C$ satisfies the conditions of
Theorem \ref{Theorem:LiftingTorusAction:Restate}.
We denote by $x$ the $\C$-valued point comes from $\widetilde{s}$ and the embedding $K \hookrightarrow \C$.
The algebraic group $T$ over $\Q$ can be considered as a subgroup of $\GL(H^1_{B}(\mathcal{A}_x, \Q)^{\vee})$.
We fix an isomorphism of $\Q$-vector spaces
\[
H_{\Q} \cong H^1_{B}(\mathcal{A}_x,\Q)^{\vee}
\]
which carries $\{ s_{\alpha}\}$ to $\{ s_{\alpha, B, x} \}$ and induces the following commutative diagram:
\[
\xymatrix{\Lambda_{\Q}  \ar^-{}[r] \ar_-{\iota_{B}}[rd] &
\widetilde{L}_{\Q} \ar[d]^-{\cong} \ar[r]^-{i} & \End_{\Q}(H_{\Q}) \ar[d]_-{\cong} \\
& \widetilde{\mathbb{V}}_{B, x} \ar[r]^-{i_{B}} & \End_{\Q}(H^1_{B}(\mathcal{A}_x,\Q)^{\vee}).
}
\]
This isomorphism identifies $\GSpin(\widetilde{V})$ with $\GSpin(\widetilde{V}_{x, B})$ and identifies $\GSpin(\widetilde{V}_{x, B})$ with the subgroup of $\GL(H^1_{B}(\mathcal{A}_x, \Q))^{\vee}$ defined by $\{ s_{\alpha, B, x} \}$, where  $\widetilde{V}:=\widetilde{L}_{\Q}$.
Since $T$ is contained in $\widetilde{I}$, we can consider $T$ as a subgroup of $\GSpin(\widetilde{V}_{x, B})$.
       Moreover, since $T$ is contained in $I$, we see that $T$ is compatible with $\iota(\Lambda\otimes_{\Z}\Q)$.
Hence we have 
       \[
       T \hookrightarrow \GSpin(P^2_{B}(\mathcal{X}_{\C}, \Q(1))).
       \]
       Composing this homomorphism with
       \[
       \GSpin(P^2_{B}(\mathcal{X}_{\C}, \Q(1))) \to \SO(P^2_{B}(\mathcal{X}_{\C}, \Q(1))),
       \]
       we have a homomorphism of algebraic groups over $\Q$
       \[
       T \to \SO(P^2_{B}(\mathcal{X}_{\C}, \Q(1))).
       \]
       The base change of this homomorphism to $\Q_{\ell}$ is identified with the homomorphism
       \[
       T_{\Q_{\ell}} \to \SO(P^2_{\mathrm{\acute{e}t}}(X_{\overline{\F}_q},\Q_{\ell}(1)))
       \]
       via the canonical isomorphism 
       \[
       P^2_{B}(\mathcal{X}_{\C}, \Q(1))\otimes_{\Q}{\Q_{\ell}} \cong P^2_{\mathrm{\acute{e}t}}(X_{\overline{\F}_q},\Q_{\ell}(1)).
       \]

      Finally, we shall prove the $K3$ surface $\mathcal{X}_{\C}$ has CM.
      It is enough to show the Mumford-Tate group of $P^2_{B}(\mathcal{X}_{\C}, \Q(1))$ is commutative; see Section \ref{Subsection:K3 surfaces with complex multiplication}.
      It suffices to prove the image of 
      \[
      \mathbb{S} \to \SO(P^2_{B}(\mathcal{X}_{\C}, \R(1)))
      \] is contained in the image of $T_{\R}$.
      To prove this, it suffices to show the image of 
      \[
      \mathbb{S} \to \GSpin(P^2_{B}(\mathcal{X}_{\C}, \R(1)))\]
       is contained in $T_{\R}$.
      By Proposition \ref{Proposition:KisinGroup(I)}, it follows that $T$ is a maximal $\Q$-torus of $\GSpin(P^2_{B}(\mathcal{X}_{\C}, \Q(1)))$.
      Therefore, it suffices to show the image of $\mathbb{S}$ is contained in the centralizer of $T_{\R}$.
      Since $T(\Q)$ is Zariski dense in $T_{\R}$, this follows from the fact that every element of $T(\Q)$ comes from an element of $\End_{\C}(\mathcal{A}_x) \otimes_{\Z} \Q$.
\end{proof}

Using Theorem \ref{Theorem:LiftingTorusAction:Restate},
we can show that the assertion of
Proposition \ref{Proposition:KisinGroup(I)} (1)
holds for every $\ell$ (including $\ell = p$).

\begin{cor}
\label{Corollary:KisinGroup(II)}
For every prime number $\ell$ (including $\ell = p$),
the canonical homomorphism 
\[
I_{\Q_{\ell}} \to I_{\ell}
\]
is an isomorphism.
\end{cor}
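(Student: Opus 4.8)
The plan is to reduce the statement, as Kisin does, to a statement about the group $\widetilde I$, and then to exploit the maximal torus $T\subset I$ produced by Theorem~\ref{Theorem:LiftingTorusAction:Restate}. For $\ell\neq p$ the case where $\GSpin(L_{\Q})$ and $\widetilde G_{\Q}$ are split at $\ell$ and all eigenvalues of $\Frob_q$ on $H_{\Q_\ell}$ lie in $\Q_\ell$ is already Proposition~\ref{Proposition:KisinGroup(I)}~(1); the point is to remove these hypotheses and to treat $\ell=p$ uniformly. First I would recall from \cite[Corollary~2.3.2]{KisinModp} (and its $2$-adic extension in \cite{KimMadapusiIntModel}) that the canonical map $\widetilde I_{\Q_\ell}\to\widetilde I_\ell$ is an isomorphism for \emph{every} $\ell$ including $p$; since $I$ is by definition the centralizer in $\widetilde I$ of $\iota(\Lambda_{\Q})$ and $I_\ell$ is the centralizer in $\widetilde I_\ell$ of $i(\Lambda_{\Q_\ell})$ (resp.\ $\iota_{\cris}(\Lambda_{W})$ at $p$), the map $I_{\Q_\ell}\to I_\ell$ is injective with image a centralizer, so it suffices to prove equality of dimensions, equivalently that every maximal $\Q_\ell$-torus of $I_\ell$ comes from $I_{\Q_\ell}$.

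The second step is a rank comparison. By Proposition~\ref{Proposition:KisinGroup(I)}~(2), $I$ and $\GSpin(L_{\Q})$ have the same rank, and by the construction in Theorem~\ref{Theorem:LiftingTorusAction:Restate} the maximal $\Q$-torus $T\subset I$ embeds into $\GSpin(P^2_B(\mathcal X_{\C},\Q(1)))$ as a maximal torus, compatibly with the identifications $P^2_B(\mathcal X_{\C},\Q(1))\otimes\Q_\ell\cong P^2_{\et}(X_{\overline\F_q},\Q_\ell(1))$. Thus $T_{\Q_\ell}\subset I_{\Q_\ell}\to I_\ell\subset\GSpin(L_{\Q_\ell})$ realizes a maximal torus of $\GSpin(L_{\Q_\ell})$ inside $I_\ell$. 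On the other hand $I_\ell$ is the centralizer of (a power of) $\Frob_q$ in $\GSpin(L_{\Q_\ell})$, hence $\mathrm{rank}\,I_\ell\le\mathrm{rank}\,\GSpin(L_{\Q_\ell})=\mathrm{rank}\,I$. Combining, $\mathrm{rank}\,I_\ell=\mathrm{rank}\,I_{\Q_\ell}$, and since $I_{\Q_\ell}\hookrightarrow I_\ell$ with the source containing a maximal torus of the target, the two connected reductive groups coincide; because $I_\ell$ is connected (being the centralizer of a torus, namely the Zariski closure of $\langle\Frob_q\rangle$, in a connected reductive group) the closed immersion $I_{\Q_\ell}^{\circ}\hookrightarrow I_\ell$ of equal-dimensional smooth connected groups is an isomorphism, and one checks $I_{\Q_\ell}$ is itself connected the same way.

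The third step handles $\ell=p$ specifically. Here $I_p$ is defined inside $\widetilde I_p=\{g\in\widetilde G(R\otimes_{\Q_p}W(\F_{q^m})[1/p]):g\varphi=\varphi g\}$ as the centralizer of $\iota_{\cris}(\Lambda_{W})$, and the crystalline Frobenius $\varphi$ plays the role $\Frob_q$ played for $\ell\neq p$. I would argue exactly as above: Theorem~\ref{Theorem:LiftingTorusAction:Restate}~(2) gives $T\hookrightarrow\GSpin(P^2_B(\mathcal X_{\C},\Q(1)))$, and via the comparison $P^2_{\cris}(X/W)(1)[1/p]\cong$ the de Rham cohomology of the generic fibre $\cong P^2_B\otimes B_{\mathrm{dR}}$ the torus $T_{\Q_p}$ lands in $I_p$ as a maximal torus of $\GSpin(L_{\Q_p})$ commuting with $\varphi$; combined with $\mathrm{rank}\,I_p\le\mathrm{rank}\,\GSpin(L_{\Q_p})$ (centralizer of the semisimple element, or rather the isocrystal automorphism, $\varphi$) one gets $I_{\Q_p}\xrightarrow{\sim}I_p$. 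The main obstacle I anticipate is the $p$-adic bookkeeping: unlike $\Frob_q\in\widetilde G(\Q_\ell)$, the Frobenius $\varphi$ is only an automorphism of an $F$-isocrystal over $W(\F_{q^m})[1/p]$, so "centralizer of $\varphi$" must be interpreted in the twisted-centralizer sense and one must use Proposition~\ref{Proposition:CrisOrthogonal} together with Corollary~\ref{Corollary:KisinGroup(II)}'s own input, Theorem~\ref{Theorem:LiftingTorusAction:Restate}~(3)--(4), to see that the $\Q_p$-torus produced actually lies in $I_p$ and has full rank; making the identification of $T$ inside $\GSpin(L_{\Q_p})$ canonical and $\varphi$-equivariant is where the argument needs the full strength of the lifting theorem rather than a formal dimension count.
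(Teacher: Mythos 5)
There is a genuine gap at the decisive step. In your second step you establish only an equality of \emph{ranks}: $T_{\Q_\ell}\subset I_{\Q_\ell}\subset I_\ell$ is a maximal torus of $I_\ell$ because $\rank I_\ell\le\rank\GSpin(L_{\Q_\ell})=\rank I$. From this you conclude that "the two connected reductive groups coincide," but a proper connected reductive subgroup can very well contain a maximal torus of the ambient group (the torus itself, a Levi, the centralizer of any subtorus), so "contains a maximal torus of the target" does not give "equal-dimensional," and equal rank is not equivalent to equal dimension — yet equal dimension is exactly what your closed-immersion argument needs. This is precisely the point where the paper's proof (following Kisin's Corollary 2.3.2) does real work: it uses Proposition \ref{Proposition:KisinGroup(I)}~(1), i.e.\ the isomorphism $I_{\Q_{\ell_0}}\cong I_{\ell_0}$ at one good prime $\ell_0$, to compute $\dim I$, and then the fact that $\dim I_\ell$ is \emph{independent of $\ell$} — because $I_\ell$ is the centralizer of the semisimple element $\Frob_q^m$, which by Theorem \ref{Theorem:LiftingTorusAction:Restate} sits inside the single $\Q$-torus $T\subset\GSpin(P^2_B(\mathcal{X}_\C,\Q(1)))$, so the set of roots killed by $\Frob_q^m$ (equivalently the Frobenius eigenvalue ratios equal to $1$) does not depend on $\ell$ — to force $\dim I_{\Q_\ell}=\dim I_\ell$ for every $\ell$. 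Your proposal never uses Proposition \ref{Proposition:KisinGroup(I)}~(1) to transfer dimension information to the other primes, so the conclusion does not follow. The same defect is compounded at $\ell=p$, where, as you yourself note, the identification of $T_{\Q_p}$ inside the twisted centralizer $I_p$ and the bound $\rank I_p\le\rank\GSpin(L_{\Q_p})$ are left unproved; the paper handles this by running Kisin's argument for the $\sigma$-centralizer, again with the dimension count as the engine.

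Your first step, by contrast, points at a genuinely different and potentially shorter route, but you do not carry it out. If one takes Kisin's $\widetilde{I}_{\Q_\ell}\xrightarrow{\ \sim\ }\widetilde{I}_\ell$ (for \emph{all} $\ell$, including $p$) as a black box, then the corollary follows formally: by the $\Lambda$-structure compatibilities (Definition \ref{Definition:Lambda structure} and the marking isomorphisms fixed in Section \ref{Subsection:Kisin's algebraic groups}), the $\ell$-adic, respectively crystalline, realization of $\iota(\Lambda_{\Q})$ spans exactly $i(\Lambda_{\Q_\ell})$, respectively $i_{\cris}(\Lambda_W)$, so $I_{\Q_\ell}$ and $I_\ell$ are the centralizers of the \emph{same} subspace inside $\widetilde{I}_{\Q_\ell}\cong\widetilde{I}_\ell$, and no torus or dimension argument is needed at all. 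You stop short of this observation and instead retreat to the (incorrectly justified) rank argument. Be aware, moreover, that Kisin's Corollary 2.3.2 rests on his CM-lifting theorem, proved under $p>2$; the paper deliberately avoids invoking it and instead reproves the statement for $I$ from its own Proposition \ref{Proposition:KisinGroup(I)} and Theorem \ref{Theorem:LiftingTorusAction:Restate}, which are valid in all residue characteristics, so relying wholesale on \cite[Corollary 2.3.2]{KisinModp} would reintroduce a restriction the paper is at pains to remove.
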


\begin{proof}
The assertion follows from Proposition \ref{Proposition:KisinGroup(I)} and Theorem
\ref{Theorem:LiftingTorusAction:Restate}.
(See the proof of \cite[Corollary 2.3.2]{KisinModp} for details.)
\end{proof}

We shall give applications of Theorem \ref{Theorem:LiftingTorusAction:Restate}
to CM liftings and quasi-canonical liftings of $K3$ surfaces of finite height
over a finite field.
For the definition of CM liftings used in this paper,
see Section \ref{Subsection:Introduction:CMLifting}.
For the definition of quasi-canonical liftings,
see \cite[Definition 1.5]{NygaardOgus}.

\begin{cor}
\label{Corollary:CMLifting}
Let $X$ be a $K3$ surface of finite height over $\F_q$.
Then there is a positive integer $m \geq 1$
such that $X_{\F_{q^m}}$ admits a CM lifting.
\end{cor}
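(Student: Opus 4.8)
The plan is to deduce Corollary \ref{Corollary:CMLifting} from Theorem \ref{Theorem:LiftingTorusAction:Restate} together with the fact that a $K3$ surface with CM over $\C$ descends to a number field. First I would reduce to the case where the $K3$ surface is quasi-polarized with a primitive quasi-polarization defined over $\F_q$: choosing any ample line bundle on $X$ and making it primitive, we obtain $(X,\mathscr{L})$ over some finite extension of $\F_q$, and since the formal Brauer group is unchanged by base field extension, $X_{\F_{q^m}}$ still has finite height. After replacing $\F_q$ by a further finite extension we are in exactly the situation of Section \ref{Section:Lifting of K3 surfaces over finite fields with actions of tori}: the Kuga-Satake abelian variety $\mathcal{A}_s$ is defined over $\F_q$, all its endomorphisms are defined over $\F_q$, and the algebraic group $I$ over $\Q$ attached to the $\F_q$-point $s \in Z_{\mathrm{K}^p}(\Lambda)(\F_q)$ is defined (Definition \ref{Definition: algebraic group I}).

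Next I would pick a maximal torus $T \subset I$ over $\Q$; such a torus exists because $I$ is a (nonempty) linear algebraic group over the field $\Q$. Applying Theorem \ref{Theorem:LiftingTorusAction:Restate} to $T$ yields a finite extension $K$ of $W(\overline{\F}_q)[1/p]$ and a quasi-polarized $K3$ surface $(\mathcal{X},\mathcal{L})$ over $\O_K$ whose special fiber is $(X_{\overline{\F}_q},\mathscr{L}_{\overline{\F}_q})$ and such that, for every embedding $K \hookrightarrow \C$, the generic fiber $\mathcal{X}_{\C}$ is a $K3$ surface with CM. By Proposition \ref{Proposition:CM K3 is defined over a number field} (see also Remark \ref{Remark:CM K3 is defined over a number field}), $\mathcal{X}_{\C}$, being a CM $K3$ surface, is defined over a number field; more precisely, the corresponding point of the moduli space $M^{\mathrm{sm}}_{2d,\K^p_0,\Q}$ has residue field a number field.

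The remaining, and slightly delicate, step is to convert the $\O_K$-model $(\mathcal{X},\mathcal{L})$ — with $K$ only a finite extension of $W(\overline{\F}_q)[1/p]$, not the fraction field of the ring of integers of a number field — into an honest CM lifting in the sense of Section \ref{Subsection:Introduction:CMLifting}, i.e.\ a $K3$ surface over $\O_{F,(v)}$ for a number field $F$ with residue field $\F_{q^m}$. Here I would argue as follows: the point $\widetilde{s} \in M^{\mathrm{sm}}_{2d,\K^p_0,\Z_{(p)}}(\O_K)$ produced in the proof of Theorem \ref{Theorem:LiftingTorusAction:Restate} has generic fiber lying over a $\C$-point of the moduli space that is defined over a number field $F_0$; since $M^{\mathrm{sm}}_{2d,\K^p_0,\Z_{(p)}}$ is of finite type over $\Z_{(p)}$, the $\O_K$-point specializing this $F_0$-point to the given $\overline{\F}_q$-point of $X$ in fact factors, after possibly enlarging $F_0$ to a number field $F$ and choosing a place $v \mid p$ of $F$ with residue field a finite extension $\F_{q^m}$ of $\F_q$, through an $\O_{F,(v)}$-point of $M^{\mathrm{sm}}_{2d,\K^p_0,\Z_{(p)}}$: one spreads out the $F_0$-point over a localization of $\O_{F_0}$ using properness/separatedness of the moduli stack, and matches the special fiber over the chosen place $v$ with $X_{\F_{q^m}}$ using the valuative criterion and the fact that both the $\O_K$-model and the $\O_{F,(v)}$-model reduce to the same point of $M^{\mathrm{sm}}_{2d,\K^p_0,\overline{\F}_q}$. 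This produces a $K3$ surface $\mathscr{X}$ over $\O_{F,(v)}$ with special fiber $X_{\F_{q^m}}$ and generic fiber a $K3$ surface with CM, which is the desired CM lifting. The main obstacle is precisely this descent/spreading-out bookkeeping: one must be careful that the geometric generic fiber of the $\O_K$-model agrees with the one coming from the number-field point (which follows since both represent the same point of the $\Q$-scheme $M^{\mathrm{sm}}_{2d,\K^p_0,\Q}$ with residue field a number field), and that after the necessary finite extension $\F_{q^m}/\F_q$ the special fibers can be identified; once this is arranged, the conclusion is immediate.
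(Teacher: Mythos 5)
Your proposal is correct and takes essentially the same route as the paper: after a finite extension reduce to a point of $M^{\mathrm{sm}}_{2d, \K^p_0, \Z_{(p)}}(\F_q)$ as in Section \ref{Subsection:$F$-crystals on Shimura varieties and the cohomology of K3 surfaces}, apply Theorem \ref{Theorem:LiftingTorusAction:Restate} to a maximal torus of $I$, and use Proposition \ref{Proposition:CM K3 is defined over a number field} to get a number-field point, the descent to a model over $\O_{F,(v)}$ being left implicit in the paper just as you spell it out. One small caution: the moduli space is not proper, so your appeal to ``properness'' in the spreading-out step is not available; the factorization through $\O_{F,(v)}$ instead follows from the fact that the $\O_K$-point factors through the local ring of the closure of its generic image point, which is essentially of finite type over $\Z_{(p)}$ with number-field fraction field and finite residue field, hence is dominated by the valuation ring of the induced place.
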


\begin{proof}
After replacing $\F_q$ by its finite extension, we may assume $X$ comes from an $\F_q$-valued point $s \in M^{\mathrm{sm}}_{2d, \K^p_0, \Z_{(p)}}(\F_q)$ satisfying the conditions as in Section \ref{Subsection:$F$-crystals on Shimura varieties and the cohomology of K3 surfaces}.
After replacing $\F_q$ by its finite extension again, there exist a number field $F$, a finite place $v$ of $F$ with residue field $\F_q$, and a $K3$ surface $\mathscr{X}$ over $\O_{F, (v)}$
whose special fiber $\mathscr{X}_{\F_q}$ is isomorphic to $X$,
and generic fiber $\mathscr{X}_{F}$ is a $K3$ surface with CM over $F$ by Theorem \ref{Theorem:LiftingTorusAction:Restate}
and Proposition \ref{Proposition:CM K3 is defined over a number field}.
\end{proof}

\begin{cor}
\label{Corollary:QuasiCanonicalLifting}
Let $X$ be a $K3$ surface of finite height over $\F_q$.
Then there is a positive integer $m \geq 1$
such that $X_{\F_{q^m}}$ admits a quasi-canonical lifting.
\end{cor}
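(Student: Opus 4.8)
\textbf{Proof plan for Corollary \ref{Corollary:QuasiCanonicalLifting}.}
The plan is to extract a quasi-canonical lifting from the output of Theorem \ref{Theorem:LiftingTorusAction:Restate} by choosing the maximal torus $T \subset I$ appropriately and then checking that the resulting lift of $X$ over $\O_K$ is quasi-canonical in the sense of \cite[Definition 1.5]{NygaardOgus}. Recall that a lifting $\mathcal{X}/\O_K$ of a finite-height $K3$ surface $X/\F_q$ is quasi-canonical precisely when the Hodge filtration on the crystalline (equivalently, de Rham) cohomology is the one induced by a lifting of the formal Brauer group $\widehat{\Br}(X)$; equivalently, in the language used in Section \ref{Section:Lifting of a special point}, the filtration $\Fil^1 \subset H^2_{\cris}(X/W)(1)\otimes_W E$ comes from a one-dimensional smooth formal group $\mathcal{G}$ over $\O_E$ lifting $\widehat{\Br}$. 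But this is exactly the filtration we used to construct $(\mathcal{X},\mathcal{L})$ in the proof of Theorem \ref{Theorem:LiftingTorusAction:Restate}: there the lift $\widetilde{s} \in Z_{\mathrm{K}^p}(\Lambda)(\O_K)$ was built from the filtration $\mathrm{Fil}^1(\mathcal{G}) \hookrightarrow \mathbb{D}(\widehat{\Br})\otimes_W E \hookrightarrow \widetilde{L}_{\cris}\otimes_W E$ associated to a formal group $\mathcal{G}$ lifting $\widehat{\Br}$, and the \'etaleness of the Kuga-Satake morphism (Proposition \ref{Proposition:KugaSatakeMap}) transported this to a lift of $(X,\mathscr{L})$ over $\O_K$. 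Hence any maximal torus $T \subset I$ already yields, via Theorem \ref{Theorem:LiftingTorusAction:Restate}, a lifting $(\mathcal{X},\mathcal{L})/\O_K$ whose Hodge filtration is induced by a lifting $\mathcal{G}$ of $\widehat{\Br}$.

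So the first step is: fix a maximal torus $T \subset I$ over $\Q$ (one exists since $I$ is an algebraic group over a field of characteristic $0$), replace $\F_q$ by a finite extension $\F_{q^m}$ so that the hypotheses of Section \ref{Subsection:$F$-crystals on Shimura varieties and the cohomology of K3 surfaces} and of Definition \ref{Definition: algebraic group I} hold, and apply Theorem \ref{Theorem:LiftingTorusAction:Restate} to obtain a finite extension $K$ of $W(\overline{\F}_q)[1/p]$ and a quasi-polarized $K3$ surface $(\mathcal{X},\mathcal{L})$ over $\O_K$ with $(\mathcal{X}_{\overline{\F}_q},\mathcal{L}_{\overline{\F}_q}) \cong (X_{\overline{\F}_q},\mathscr{L}_{\overline{\F}_q})$. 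The second step is to trace through the construction in the proof of that theorem and the proof of Theorem \ref{Theorem:CMliftingAbelianVariety}: the lift $\widetilde{s}$ is the one whose associated Kuga-Satake Hodge filtration $\mathrm{Fil}^0_{\widetilde s}$ equals $i(e)(H^1_{\cris}(\mathcal{A}_s/W)^{\vee}\otimes_W K)$ for $e$ a generator of $\mathrm{Fil}^1(\mathcal{G})$; pulling this back along $\KS$ and along the compatibility (Proposition \ref{Proposition:CrisOrthogonal}, Proposition \ref{Proposition:CrysDecomposition}) between $\widetilde{L}_{\cris}(-1)$, $P^2_{\cris}(X/W)$ and $\mathbb{D}(\widehat{\Br}^*) \oplus \mathbb{D}(D)(-1) \oplus \mathbb{D}(\widehat{\Br})(-1)$, one sees that the Hodge filtration on $H^2_{\dR}(\mathcal{X}/\O_K)$ (equivalently on $H^2_{\cris}(X/W)(1)\otimes_W K$) is precisely the one cut out by $\mathcal{G}$. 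By \cite[Theorem 1.6 or the characterization in Definition 1.5]{NygaardOgus}, such a lifting is quasi-canonical; here one should also remark that passing from $\O_E$ to the larger $\O_K$ does not affect this, since the defining property of a quasi-canonical lifting only involves the Hodge filtration, which is stable under base change.

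The third step is a small bookkeeping point: Corollary \ref{Corollary:CMLifting} already shows, after replacing $\F_q$ by $\F_{q^m}$, that there is a number field $F$, a place $v$ with residue field $\F_{q^m}$, and a $K3$ surface over $\O_{F,(v)}$ lifting $X_{\F_{q^m}}$ with CM generic fiber; but the quasi-canonicity statement is a statement about a lifting over $\O_K$ (a $p$-adic base), so it is cleaner to deduce Corollary \ref{Corollary:QuasiCanonicalLifting} directly from Theorem \ref{Theorem:LiftingTorusAction:Restate} as above rather than from Corollary \ref{Corollary:CMLifting}. One should state the conclusion for $X_{\F_{q^m}}$ over a finite extension $\O_K$ of $W(\overline{\F}_q)[1/p]$ (or descend to $\O_E$ over $W(\F_{q^{m'}})$ after a further finite extension of the residue field, using that the formal group $\mathcal{G}$ and the filtration are defined over $\O_E$), exactly as the quasi-canonical liftings of Nygaard-Ogus are.

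\textbf{Main obstacle.} The one genuinely non-formal point is verifying that the Hodge filtration of the lift produced by Theorem \ref{Theorem:LiftingTorusAction:Restate} really is the filtration associated to a lifting of $\widehat{\Br}$ \emph{as a filtration on $P^2_{\dR}(\mathcal{X}/\O_K)$}, not merely on the Kuga-Satake side; this requires matching $\mathrm{Fil}^1(\mathcal{G}) \subset \mathbb{D}(\widehat{\Br})\otimes_W E$ with $\Fil^1_{\text{pr}} \subset P^2_{\dR}(\mathcal{X}/\O_K)$ through the chain of comparison isomorphisms (Lemma \ref{Lemma:Primitive Breuil-Kisin module (de Rham)}, Lemma \ref{Lemma:Primitive Breuil-Kisin module (crystalline)}, Proposition \ref{Proposition:CrisOrthogonal}) together with the Berthelot-Ogus compatibility, and then invoking the precise deformation-theoretic characterization of quasi-canonical liftings from \cite{NygaardOgus}. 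Since $p = 2,3$ is allowed here while \cite{NygaardOgus} works with $p \geq 5$, one must also be careful: either restrict Corollary \ref{Corollary:QuasiCanonicalLifting} to $p \geq 5$ (where \cite{NygaardOgus} applies directly), or reinterpret "quasi-canonical" via the Hodge filtration condition, which makes sense in all characteristics and is exactly what our construction produces. I would take the latter route and flag the $p \geq 5$ case as the one where this coincides literally with the Nygaard-Ogus notion.
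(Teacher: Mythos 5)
There is a genuine gap: you have mis-stated what ``quasi-canonical'' means, and your argument never touches the property that actually needs to be verified. In the sense of Nygaard--Ogus the defining feature of a quasi-canonical lifting is compatibility with the \emph{Frobenius}: some power $\Frob_q^m$ must preserve the Hodge filtration induced by the lifting (equivalently, act as an endomorphism of the Hodge structure of the generic fiber). The condition you substitute for it --- ``the Hodge filtration is the one induced by \emph{some} lifting $\mathcal{G}$ of $\widehat{\Br}(X)$'' --- is strictly weaker once the height satisfies $h\geq 2$: the deformation space of $\widehat{\Br}$ has dimension $h-1$, an arbitrary lift $\mathcal{G}$ produces a filtration $\Fil^1(\mathcal{G})\subset\mathbb{D}(\widehat{\Br})\otimes_W E$ that is in general \emph{not} stable under any power of Frobenius, and the corresponding lifting of $X$ is then not quasi-canonical. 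So even after you carefully match $\Fil^1(\mathcal{G})$ with $\Fil^1_{\mathrm{pr}}$ through the comparison isomorphisms (your ``main obstacle''), you have not proved the statement; and your proposed ``reinterpretation'' of quasi-canonicity for small $p$ reinterprets it as the wrong condition. The Frobenius-compatibility is built into the construction not by the mere existence of $\mathcal{G}$, but by the special choice made in Lemma \ref{Lemma:FormalGroupLifting}: there $\mathcal{G}$ is taken with an $\O_{K'}$-action (CM by a maximal commutative subalgebra containing the Frobenius of $\widehat{\Br}$), and it is this extra structure that forces $\Frob_q^m$ to respect the filtration.

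The paper's proof is correspondingly short and hinges on a point absent from your write-up: for sufficiently divisible $m$, the element $\Frob_q^m$ is centralized by $I$ (see Definition \ref{Definition: algebraic group I} and Section \ref{Subsection:Kisin's algebraic groups}), hence lies in \emph{every} maximal torus $T\subset I$, so $\Frob_q^m\in T(\Q)$. Property (4) of Theorem \ref{Theorem:LiftingTorusAction:Restate} then says that $\Frob_q^m$ preserves the $\Q$-Hodge structure on $P^2_{B}(\mathcal{X}_{\C},\Q(1))$ of the constructed lifting, which is exactly the Nygaard--Ogus condition; together with Proposition \ref{Proposition:CM K3 is defined over a number field} (to get the lifting over a base with the expected residue field, rather than only over $\O_K$ with $K$ a finite extension of $W(\overline{\F}_q)[1/p]$) this yields the corollary. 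To repair your argument you would need to add precisely these two steps: the centrality of $\Frob_q^m$ in $I$ (so that the torus action contains Frobenius), and the deduction that the lifting's Hodge filtration is Frobenius-stable, rather than merely of formal-Brauer-group type.
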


\begin{proof}
After replacing $\F_q$ by its finite extension, we may assume $X$ comes from an $\F_q$-valued point $s \in M^{\mathrm{sm}}_{2d, \K^p_0, \Z_{(p)}}(\F_q)$ satisfying the conditions as in Section \ref{Subsection:$F$-crystals on Shimura varieties and the cohomology of K3 surfaces}.
Since $\Frob^m_q \in T(\Q)$ for a sufficiently divisible $m \geq 1$,
the characteristic $0$ lifting constructed in Theorem \ref{Theorem:LiftingTorusAction:Restate} is a quasi-canonical lifting in the sense of Nygaard-Ogus.
Hence the assertion follows from 
Theorem \ref{Theorem:LiftingTorusAction:Restate} and Proposition \ref{Proposition:CM K3 is defined over a number field}.
\end{proof}

\begin{rem}
Corollary \ref{Corollary:QuasiCanonicalLifting} was previously known
when $p \geq 5$ by Nygaard-Ogus \cite[Theorem 5.6]{NygaardOgus}.
Precisely, when $p \geq 5$, Nygaard-Ogus proved the existence of quasi-canonical liftings without extending the base field $\F_q$.
On the other hand, by the methods of this paper,
it is necessary to take a finite extension $\F_{q^m}$ of $\F_q$
for some $m \geq 1$.
\end{rem}

\section{The Tate conjecture for the square of a $K3$ surfaces over finite fields}
\label{Section:TateConjecture}

\subsection{The statement of the main results}

We recall the statement of the Tate conjecture.
Let $V$ be a projective smooth variety over $\F_q$.
For a prime number $\ell \neq p$,
the Tate conjecture for $\ell$-adic cohomology states
the surjectivity of the $\ell$-adic cycle class map
\[
\mathrm{cl}^i_{\ell} \colon Z^i(V) \otimes_{\Z} \Q_{\ell} \to H^{2i}_{\text{\'et}}( V_{\overline{\F}_q}, \Q_{\ell}{(i)})^{\Gal(\overline{\F}_q/\F_q)}
\]
for every $i$.
Similarly, the Tate conjecture for crystalline cohomology states
the surjectivity of the crystalline cycle class map
\[
\mathrm{cl}^{i}_{\cris} \colon Z^{i}(V)\otimes_{\Z}\Q_p \to
H^{2i}_{\cris}( V/W(\F_q))^{\varphi = p^i} \otimes_{\Z} \Q
\]
for every $i$.
Here $Z^i(V)$ denotes the group of algebraic cycles of codimension $i$ on $V$. (See
\cite[Conjecture 0.1]{NygaardOgus}, \cite[Section 1]{Tate}, \cite[Conjecture 1.1]{Totaro}.)

Here is the statement of our results on the Tate conjecture for the square of a $K3$ surface over a finite field.
As before, let $p$ be a prime number, and $q$ a power of $p$.

\begin{thm}
\label{Theorem:TateConjecture:Restate}
Let $X$ be a $K3$ surface (of any height) over $\F_q$.
We put
$X \times X := X \times_{\Spec \F_q} X$
and
$X_{\overline{\F}_q} \times X_{\overline{\F}_q} := X_{\overline{\F}_q} \times_{\Spec \overline{\F}_q} X_{\overline{\F}_q}$.
\begin{enumerate}
\item For every prime number $\ell \neq p$,
the $\ell$-adic cycle class map
\[
\mathrm{cl}^{i}_{\ell} \colon Z^{i}(X \times X)\otimes_{\Z}\Q_{\ell} \to H^{2i}_{\rm{\acute{e}t}}(  X_{\overline{\F}_q} \times X_{\overline{\F}_q}, \Q_{\ell}{(i)})^{\Gal(\overline{\F}_q/\F_q)}
\]
is surjective for every $i$.
\item The crystalline cycle class map
\[
\mathrm{cl}^{i}_{\cris} \colon Z^{i}(X \times X)\otimes_{\Z}\Q_p \to
H^{2i}_{\cris}( (X \times X)/W(\F_q))^{\varphi = p^i} \otimes_{\Z} \Q
\]
is surjective for every $i$.
\end{enumerate}
\end{thm}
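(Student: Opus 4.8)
The plan is to dispose first of the cases already settled in the literature and then to isolate the genuinely new content: the Tate classes of K\"unneth type $(2,2)$ on $X\times X$. If $X$ is supersingular, the Tate conjecture for $X$ forces the Picard number of $X_{\overline{\F}_q}$ to be $22$, and the K\"unneth formula reduces everything to the Tate conjecture for divisors on $X$; so from now on I assume $X$ has finite height. Using the K\"unneth decomposition of $H^{4}_{\et}(X_{\overline{\F}_q}\times X_{\overline{\F}_q},\Q_\ell(2))$, the vanishing $H^1(X)=0$, and the triviality of $H^0$ and $H^4$ as Galois modules, every component other than the $(2,2)$ one is spanned by the classes of $X\times\{x_0\}$, $\{x_0\}\times X$, products of divisors, and, via the Tate conjecture for $X$, divisor classes on $X$; and by Poincar\'e duality a $\Frob_q$-invariant class in the $(2,2)$ component is exactly a $\Frob_q$-equivariant endomorphism of $H^{2}_{\et}(X_{\overline{\F}_q},\Q_\ell(1))$. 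After replacing $\F_q$ by a finite extension (descending at the end by a trace over $\Gal(\F_{q^m}/\F_q)$), the Tate conjecture for $X$ identifies the $\Frob_q$-fixed subspace of $H^{2}_{\et}(X_{\overline{\F}_q},\Q_\ell(1))$ with the rational N\'eron--Severi space $V'$, spanned by divisor classes; since the form is nondegenerate on $V'$, every endomorphism supported on $V'$ is realized by a correspondence $\sum c_{ij}D_i\times D_j$, and as $\Frob_q$ has no nonzero fixed vector on the complement $W\subset P^{2}_{\et}(X_{\overline{\F}_q},\Q_\ell(1))$, one is reduced to proving that every element of $\End_{\Frob_q}(W)$ is a $\Q_\ell$-linear combination of classes of codimension-$2$ cycles on $X\times X$.

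The next step is the group-theoretic reduction. By Corollary \ref{Corollary:KisinGroup(II)} the algebraic group $I$ over $\Q$ is connected reductive and satisfies $I_{\Q_\ell}\cong I_\ell$, where $I_\ell$ acts on $P^{2}_{\et}(X_{\overline{\F}_q},\Q_\ell(1))=L_{\Q_\ell}$ through $\GSpin(L_{\Q_\ell})\to\SO(L_{\Q_\ell})$ as the centralizer of a power of $\Frob_q$. A standard density argument (the regular semisimple $\Q$-points are Zariski dense and each lies in a unique maximal $\Q$-torus, so the union of maximal $\Q$-tori is dense, and its $\Q_\ell$-span equals the bicommutant of $I$) produces finitely many maximal tori $T_1,\dots,T_n\subset I$ over $\Q$ whose $\Q$-points together span, over $\Q_\ell$, the span of $I(\Q_\ell)$ in $\End_{\Q_\ell}(P^{2}_{\et})$. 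After enlarging $\F_q$ so that all root-of-unity eigenvalues of $\Frob_q$ on $H^2$ become trivial, an explicit description of the centralizer of the semisimple orthogonal operator $\Frob_q$ shows that the span of $I_\ell$ restricted to $W$ is all of $\End_{\Frob_q}(W)$; hence $\End_{\Frob_q}(W)=\sum_i\operatorname{span}_{\Q_\ell}\bigl(\text{image of }T_i(\Q)\bigr)$.

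It then remains to prove algebraicity of each operator $\gamma\in T_i(\Q)$. For each $T_i$, Theorem \ref{Theorem:LiftingTorusAction:Restate} provides a quasi-polarized $K3$ surface $(\mathcal{X}_i,\mathcal{L}_i)$ over $\O_{K_i}$ with special fiber $(X_{\overline{\F}_q},\mathscr{L}_{\overline{\F}_q})$ and CM generic fiber, a homomorphism $T_i\to\SO(P^2_{B}((\mathcal{X}_i)_\C,\Q(1)))$ compatible $\ell$-adically with the action on $P^{2}_{\et}(X_{\overline{\F}_q},\Q_\ell(1))$, and such that each $\gamma\in T_i(\Q)$ preserves the $\Q$-Hodge structure. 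Extending $\gamma$ by the identity on $\langle\mathrm{ch}_B(\mathcal{L}_i)\rangle$ yields a Hodge isometry of $H^2_{B}((\mathcal{X}_i)_\C,\Q(1))$, which by the theorems of Mukai and Buskin on the Hodge conjecture for products of $K3$ surfaces is the class of a codimension-$2$ cycle $Z_{i,\gamma}$ on $(\mathcal{X}_i)_\C\times(\mathcal{X}_i)_\C$. Spreading $Z_{i,\gamma}$ out over an open of $\Spec\O_{K_i}$ (after a further finite extension if necessary) and specializing to the closed fiber, smooth proper base change identifies its $\ell$-adic cycle class with the action of $\gamma$ on $P^{2}_{\et}(X_{\overline{\F}_q},\Q_\ell(1))$ by the compatibility in Theorem \ref{Theorem:LiftingTorusAction:Restate}(3); this proves surjectivity of $\mathrm{cl}^i_\ell$ after enlarging $\F_q$, and the trace argument descends the cycles to $\F_q$.

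For the crystalline statement I would run the same argument with $p$-adic coefficients: Proposition \ref{Proposition:CrisOrthogonal} and the isomorphism $I_{\Q_p}\cong I_p$ replace their $\ell$-adic counterparts in the group-theoretic reduction, and the crystalline class of the very same cycle $Z_{i,\gamma}$ specializes to the action of $\gamma$ on $P^2_{\cris}(X/W)(1)$ by the compatibilities between the crystalline and de Rham cycle class maps and the Berthelot--Ogus isomorphism recorded in Section \ref{Section:CompatibilityComparisonIsom}; the contributions from the other K\"unneth components are again handled by divisor classes and point classes. The main obstacle, granting Theorem \ref{Theorem:LiftingTorusAction:Restate}, is the combination of the group-theoretic spanning step --- where the reductivity of $I$ and the equality of ranks with $\GSpin(L_{\Q})$ are essential, and where one must check that the centralizer of $\Frob_q$ in $\SO(L_{\Q_\ell})$ already spans the full $\Frob_q$-commutant on the non-trivial isotypic part --- and the input of Buskin's theorem; the K\"unneth bookkeeping and the compatibility of specialization with cycle classes are then routine.
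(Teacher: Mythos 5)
Your argument for part (1) in degree $4$ is essentially the paper's: the supersingular case via Picard rank $22$ and K\"unneth, the reduction of the $(2,2)$ K\"unneth component to $\End_{\Frob_q}$ of the transcendental part $V_\ell$ (Lemmas \ref{Lemma:SurjectivityOutsideMiddleDegree}, \ref{Lemma:TateConjecturePrimitivePart}), the spanning of this commutant by finitely many maximal $\Q$-tori of $I$ via density of semisimple rational points and the eigenspace/Schur analysis (Lemmas \ref{Lemma:Endomorphism1}--\ref{Lemma:SemisimpleElementGenerarion}), and then Theorem \ref{Theorem:LiftingTorusAction:Restate} plus Mukai--Buskin and specialization, with descent to $\F_q$ handled as in Lemma \ref{Lemma:TateConjectureFiniteExtension}.

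Where you genuinely diverge is part (2) (and the passage to all $\ell$). You propose to rerun the whole argument with $p$-adic coefficients, using $I_{\Q_p}\cong I_p$, Proposition \ref{Proposition:CrisOrthogonal}, and the comparison compatibilities of Section \ref{Section:CompatibilityComparisonIsom}. The paper instead proves the statement for one fixed $\ell_0$ and deduces everything else by a dimension count (Lemma \ref{Lemma:TateConjecture:Reduction}): semisimplicity of $\Frob_q$ on $H^4$ gives $\rank_{\Z} N^2(X\times X)=\dim_{\Q_{\ell_0}}$ of the Tate classes, independence of $\ell$ of the characteristic polynomial transfers this to every $\ell$, and on the crystalline side Hilbert's theorem 90 converts the $K_0$-dimension of $H^{\varphi^r=q^2}$ into the $\Q_p$-dimension of $H^{\varphi=p^2}$. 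Your route can probably be made to work, but as sketched it skates over exactly the point this reduction is designed to avoid: on the crystalline side the Frobenius is $\sigma$-semilinear over $K_0=W(\F_q)[1/p]$ while the Tate classes $H^{\varphi=p^2}$ form only a $\Q_p$-space, so you would need a crystalline analogue of the spanning Lemma \ref{Lemma:Endomorphism1} for $F$-isocrystals and an argument matching the $\Q_p$-span of the images of $T_i(\Q)$ with the $\varphi$-fixed Tate classes, i.e.\ essentially the same Hilbert 90 descent, plus the compatibility of the specialized cycle's crystalline class with the action of $\gamma$ on $P^2_{\cris}(X/W)(1)$. The paper's shortcut buys you all of this for free once a single $\ell$ is done; your direct approach would buy a proof of (2) that does not pass through numerical equivalence, at the cost of redoing the linear algebra in the semilinear setting and invoking the full strength of the comparison-compatibility results.
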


\subsection{Previous results on the Tate conjecture}

In this subsection,
we recall previously known results on the Tate conjecture
which will be used to prove
Theorem \ref{Theorem:TateConjecture:Restate}.

\begin{lem}
\label{Lemma:TateConjectureFiniteExtension}
Let $V$ be a projective smooth variety over $\F_q$.
Let $\ell$ be a prime number different from $p$.
Let $i$ be an integer, and $m \geq 1$ a positive integer.
If the Tate conjecture holds for algebraic cycles of codimension $i$ on
the variety $V_{\F_{q^m}}$ over $\F_{q^m}$,
then the Tate conjecture holds for algebraic cycles of codimension $i$ on
the variety $V$ over $\F_q$.
\end{lem}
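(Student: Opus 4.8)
The plan is to descend along the finite extension $\F_{q^m}/\F_q$ by pushing Tate classes forward and then dividing by the degree. First I would set $\Gamma = \Gal(\overline{\F}_q/\F_q)$ and $\Gamma' = \Gal(\overline{\F}_q/\F_{q^m})$, so that $\Gamma'$ is the open subgroup of $\Gamma$ of index $m$. Since $V_{\overline{\F}_q} = (V_{\F_{q^m}})_{\overline{\F}_q}$, the $\ell$-adic cohomology groups are canonically identified, and one has the inclusion of invariants
\[
H^{2i}_{\et}(V_{\overline{\F}_q}, \Q_\ell(i))^{\Gamma} \subset H^{2i}_{\et}(V_{\overline{\F}_q}, \Q_\ell(i))^{\Gamma'}.
\]
So let $\alpha$ be a class in $H^{2i}_{\et}(V_{\overline{\F}_q}, \Q_\ell(i))^{\Gamma}$. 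By the hypothesis applied to $V_{\F_{q^m}}$, there is a cycle $Z \in Z^i(V_{\F_{q^m}}) \otimes_{\Z} \Q_\ell$ with $\mathrm{cl}^i_\ell(Z) = \alpha$.

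The key step is to apply the pushforward (norm/trace) along the finite flat morphism $\pi \colon V_{\F_{q^m}} \to V$ of degree $m$. This morphism induces $\pi_* \colon Z^i(V_{\F_{q^m}}) \to Z^i(V)$ on cycles (a finite extension of the base field being a finite flat morphism of the same relative dimension, so pushforward preserves codimension), and compatibly a Gysin map $\pi_* \colon H^{2i}_{\et}(V_{\overline{\F}_q}, \Q_\ell(i)) \to H^{2i}_{\et}(V_{\overline{\F}_q}, \Q_\ell(i))$ which on the level of Galois cohomology is, after the canonical identification, the trace/corestriction map $\sum_{g} g$ where $g$ ranges over a set of coset representatives of $\Gamma'$ in $\Gamma$. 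Since $\alpha$ is already $\Gamma$-invariant, this trace sends $\alpha$ to $m\alpha$. Compatibility of cycle class maps with proper pushforward then gives
\[
\mathrm{cl}^i_\ell(\pi_* Z) = \pi_*\bigl(\mathrm{cl}^i_\ell(Z)\bigr) = \pi_*(\alpha) = m\alpha.
\]
As we are working with $\Q_\ell$-coefficients, $m$ is invertible, so $\alpha = \mathrm{cl}^i_\ell\bigl(\tfrac{1}{m}\pi_* Z\bigr)$ lies in the image of the cycle class map over $\F_q$. The crystalline statement (part (2), if one needs it in this lemma — but as stated the lemma is only about $\ell$-adic cohomology) would be handled identically, using that crystalline cohomology is insensitive to base extension of the perfect residue field after tensoring with $\Q$, that $H^{2i}_{\cris}((V\times_{\F_q}\F_{q^m})/W(\F_{q^m}))^{\varphi = p^i} = H^{2i}_{\cris}(V/W(\F_q))^{\varphi=p^i} \otimes_{W(\F_q)} W(\F_{q^m})$ has the same Frobenius-fixed part after $\otimes \Q$, and that the crystalline cycle class map is compatible with proper pushforward.

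The only genuinely delicate point is the identification of the Gysin pushforward $\pi_*$ on $\ell$-adic cohomology with the Galois trace under the comparison $H^*_{\et}(V_{\overline{\F}_q}) = H^*_{\et}((V_{\F_{q^m}})_{\overline{\F}_q})$; this is a standard fact (the composite $\pi_*\pi^*$ is multiplication by $\deg \pi = m$, and $\pi^*$ is the identity here, while $\pi_*$ unwinds to corestriction along $\Gamma' \subset \Gamma$), but it is worth stating cleanly. Everything else is formal. I do not expect any real obstacle; the lemma is a routine transfer argument, and the proof is essentially three lines once the compatibility of cycle class maps with pushforward is invoked.
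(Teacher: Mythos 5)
Your proof is correct: the descent-by-pushforward (norm) argument, using that $\mathrm{cl}^i_\ell(\pi_*Z)=\sum_{g\in\Gamma/\Gamma'}g\cdot\mathrm{cl}^i_\ell(Z)=m\alpha$ for a $\Gamma$-invariant class $\alpha$ and then dividing by $m$ in $\Q_\ell$-coefficients, is exactly the standard transfer argument, and your flagged subtlety (that $\pi$ base-changed to $\overline{\F}_q$ becomes a disjoint union of $m$ copies, so the Gysin map unwinds to the Galois trace) is the right point to make precise. The paper gives no proof of this lemma at all, simply citing Totaro's survey, and your argument is the same routine one that reference supplies.
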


\begin{proof}
See \cite[Section 2, p.6]{Totaro} for example.
\end{proof}

\begin{lem}
\label{Lemma:SurjectivityOutsideMiddleDegree}
Let $X$ be a $K3$ surface over $\F_q$.
Let $\ell$ be a prime number different from $p$.
The $\ell$-adic cycle class map $\mathrm{cl}^i_{\ell}$ for $X \times X$
is surjective for every $i \neq 2$.
The same is true for the crystalline cycle class map
$\mathrm{cl}^{i}_{\cris}$ for every $i \neq 2$.
\end{lem}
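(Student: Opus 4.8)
The plan is to reduce the assertion, in every degree outside the middle one, to the Tate conjecture for divisors on $X$ itself by means of the K\"unneth formula. Since $X\times X$ has dimension $4$, the target of $\mathrm{cl}^i_\ell$ (and of $\mathrm{cl}^i_{\cris}$) vanishes unless $i\in\{0,1,2,3,4\}$, so it suffices to treat $i\in\{0,1,3,4\}$. For $i=0$ the fundamental class $\mathrm{cl}^0_\ell([X\times X])$ generates $H^0_{\et}(X_{\overline{\F}_q}\times X_{\overline{\F}_q},\Q_\ell)^{\Gal(\overline{\F}_q/\F_q)}=\Q_\ell$, and for $i=4$ the class of a single closed point generates the one-dimensional space $H^8_{\et}(X_{\overline{\F}_q}\times X_{\overline{\F}_q},\Q_\ell(4))^{\Gal(\overline{\F}_q/\F_q)}$; the same holds verbatim for $\mathrm{cl}^0_{\cris}$ and $\mathrm{cl}^4_{\cris}$. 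It remains to handle $i=1$ and $i=3$.

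First I would treat $i=1$. Write $p_1,p_2\colon X\times X\to X$ for the two projections. Because $H^1_{\et}(X_{\overline{\F}_q},\Q_\ell)=0$ (part of the definition of a $K3$ surface), the K\"unneth formula yields a $\Gal(\overline{\F}_q/\F_q)$-equivariant decomposition
\[
H^2_{\et}(X_{\overline{\F}_q}\times X_{\overline{\F}_q},\Q_\ell(1))\cong p_1^{*}H^2_{\et}(X_{\overline{\F}_q},\Q_\ell(1))\oplus p_2^{*}H^2_{\et}(X_{\overline{\F}_q},\Q_\ell(1)),
\]
in which each $p_j^{*}$ is injective and compatible with cycle class maps. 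Hence the image of $\mathrm{cl}^1_\ell$ for $X\times X$ contains $p_1^{*}\,\Im(\mathrm{cl}^1_\ell(X))\oplus p_2^{*}\,\Im(\mathrm{cl}^1_\ell(X))$, and by the Tate conjecture for $X$ (proved in \cite{Charles13, KimMadapusiIntModel, MadapusiTateConj, Maulik}; see also Lemma \ref{Lemma:TateConjecturePrimitivePart}) each summand $\Im(\mathrm{cl}^1_\ell(X))$ already equals the full space of Galois invariants $H^2_{\et}(X_{\overline{\F}_q},\Q_\ell(1))^{\Gal(\overline{\F}_q/\F_q)}$. Thus $\mathrm{cl}^1_\ell$ for $X\times X$ is surjective; concretely the relevant cycles are $D\times X$ and $X\times D$ with $D$ a divisor on $X$.

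Next I would treat $i=3$. Using $H^1_{\et}(X_{\overline{\F}_q},\Q_\ell)=0$ once more, together with $H^4_{\et}(X_{\overline{\F}_q},\Q_\ell(2))\cong\Q_\ell$ (trivial Galois action, generated by the class of a fixed closed point $x_0\in X$), the K\"unneth formula gives a $\Gal(\overline{\F}_q/\F_q)$-equivariant isomorphism
\[
H^6_{\et}(X_{\overline{\F}_q}\times X_{\overline{\F}_q},\Q_\ell(3))\cong H^2_{\et}(X_{\overline{\F}_q},\Q_\ell(1))\oplus H^2_{\et}(X_{\overline{\F}_q},\Q_\ell(1)),
\]
under which the two factors are the images of $\alpha\mapsto p_1^{*}\alpha\cdot p_2^{*}[x_0]$ and $\alpha\mapsto p_1^{*}[x_0]\cdot p_2^{*}\alpha$, hence are spanned by the cycle classes of $D\times\{x_0\}$ and $\{x_0\}\times D$ for divisors $D$ on $X$. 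Invoking the Tate conjecture for $X$ again, $\mathrm{cl}^3_\ell$ is surjective. The crystalline statements for $i\in\{1,3\}$ are proved identically: replace $\ell$-adic cohomology by crystalline cohomology, and use the crystalline K\"unneth formula, $H^1_{\cris}(X/W(\F_q))=0$, $H^4_{\cris}(X/W(\F_q))\cong W(\F_q)(-2)$ (so that after twisting the Frobenius acts by $p^0$), and the crystalline form of the Tate conjecture for $X$.

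The entire argument is formal once the Tate conjecture for divisors on $X$ is granted; the only substantive input — the step being black-boxed here — is therefore precisely that deep theorem, in both its $\ell$-adic and crystalline incarnations. No genuinely new difficulty arises in passing from $X$ to $X\times X$ in these degrees, which is exactly why the truly new content of Theorem \ref{Theorem:TateConjecture:Restate} is concentrated in the middle case $i=2$, to be treated separately.
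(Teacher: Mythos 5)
Your proof is correct, and for $i=1$ it is the same argument as the paper's (K\"unneth plus $H^1_{\et}(X_{\overline{\F}_q},\Q_\ell)=0$ plus the Tate conjecture for $X$). The difference is at $i=3$: the paper does not use K\"unneth there at all, but instead picks an ample line bundle $\mathscr{L}$ on $X\times X$ and invokes the hard Lefschetz theorem, so that cup product with $(\mathrm{ch}_\ell(\mathscr{L}))^2$ gives a Galois-equivariant isomorphism $H^2_{\et}(X_{\overline{\F}_q}\times X_{\overline{\F}_q},\Q_\ell(1))\cong H^6_{\et}(X_{\overline{\F}_q}\times X_{\overline{\F}_q},\Q_\ell(3))$, and surjectivity of $\mathrm{cl}^3_\ell$ follows formally from the already established surjectivity of $\mathrm{cl}^1_\ell$. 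You instead decompose $H^6$ by K\"unneth (using $H^1=H^3=0$ and $H^4_{\et}(X_{\overline{\F}_q},\Q_\ell(2))\cong\Q_\ell$ generated by a point class) and exhibit the explicit cycles $D\times\{x_0\}$ and $\{x_0\}\times D$. Your route is slightly more elementary: it needs only K\"unneth, Poincar\'e duality for the point class, and the compatibility of cycle classes with pullback and cup product, and in particular it sidesteps hard Lefschetz, whose crystalline incarnation (needed for the paper's phrase ``the proof for the crystalline cohomology is exactly the same'') rests on the Katz--Messing type transfer from the $\ell$-adic case. The paper's route buys brevity: once hard Lefschetz is granted, the case $i=3$ is a one-line reduction to $i=1$ with no need to name explicit cycles. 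Your explicit treatment of $i=0,4$ is harmless extra detail; the paper disposes of those degrees by the remark that the statement is obvious outside $i\in\{1,2,3\}$.
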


\begin{proof}
It is enough to prove the assertion for $i=1,3$.
For $i=1$, the K\"unneth formula gives an isomorphism:
\begin{align*}
&\quad H^{2}_{\rm{\acute{e}t}}( X_{\overline{\F}_q} \times X_{\overline{\F}_q}, \Q_{\ell}{(1)}) \\
&\cong \bigoplus_{(i,j) = (0,2),(2,0)}
H^{i}_{\rm{\acute{e}t}}( X_{\overline{\F}_q}, \Q_{\ell})
\otimes_{\Q_{\ell}} H^{j}_{\rm{\acute{e}t}}( X_{\overline{\F}_q}, \Q_{\ell})
\otimes_{\Q_{\ell}} \Q_{\ell}(1).
\end{align*}
Every element in the left-hand side fixed by $\Gal(\overline{\F}_q/\F_q)$
is written as
$\mathrm{pr}_1^{\ast} \alpha + \mathrm{pr}_2^{\ast} \beta$
for some
$\alpha, \beta \in H^{2}_{\rm{\acute{e}t}}( X \otimes_{\F_q}{\overline{\F}_q}, \Q_{\ell}{(1)})$
fixed by $\Gal(\overline{\F}_q/\F_q)$,
where $\mathrm{pr}_1, \mathrm{pr}_2 \colon X \times X \to X$
are the projections.
By the Tate conjecture for the $K3$ surface $X$
(see \cite{Charles13, KimMadapusiIntModel, MadapusiTateConj, Maulik}),
such elements $\alpha$ and $\beta$ are $\Q_{\ell}$-linear combinations of
classes of divisors on $X$.
Hence $\mathrm{pr}_1^{\ast} \alpha + \mathrm{pr}_2^{\ast} \beta$
is a $\Q_{\ell}$-linear combination of classes of divisors on $X \times X$.
This proves the surjectivity of $\mathrm{cl}^{1}_{\ell}$.

To show the surjectivity of $\mathrm{cl}^{3}_{\ell}$,
we take an ample line bundle $\mathscr{L}$ on $X \times X$.
By the hard Lefschetz theorem,
the cup product with the square $(\mathrm{ch}_{\ell}(\mathscr{L}))^2$
of the first Chern class induces a $\Gal(\overline{\F}_q/\F_q)$-equivariant isomorphism:
\[
H^{2}_{\rm{\acute{e}t}}( X_{\overline{\F}_q} \times X_{\overline{\F}_q}, \Q_{\ell}{(1)})
\cong
H^{6}_{\rm{\acute{e}t}}( X_{\overline{\F}_q} \times X_{\overline{\F}_q}, \Q_{\ell}{(3)}).
\]
Taking the $\Gal(\overline{\F}_q/\F_q)$-invariants
and using the surjectivity of $\mathrm{cl}^{1}_{\ell}$,
we see that $\mathrm{cl}^{3}_{\ell}$ is surjective.

The proof for the crystalline cohomology is exactly the same.
\end{proof}

\begin{lem}
\label{Lemma:TateConjecture:Reduction}
Let $X$ be a $K3$ surface over $\F_q$.
Assume that Theorem \ref{Theorem:TateConjecture:Restate} (1) holds
for $i=2$ and for some $\ell \neq p$.
Then Theorem \ref{Theorem:TateConjecture:Restate} (1) holds
for $i=2$ and for every $\ell \neq p$,
and Theorem \ref{Theorem:TateConjecture:Restate} (2) holds for $i=2$.
\end{lem}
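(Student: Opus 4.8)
The plan is to reduce the statement about all $\ell\neq p$ (and about crystalline cohomology) to the single-$\ell$ hypothesis by a weight/semisimplicity and comparison argument. First I would unwind the K\"unneth decomposition
\[
H^{4}_{\rm{\acute{e}t}}( X_{\overline{\F}_q} \times X_{\overline{\F}_q}, \Q_{\ell}{(2)})
\cong
\bigoplus_{(a,b)=(0,4),(2,2),(4,0)}
H^{a}_{\rm{\acute{e}t}}( X_{\overline{\F}_q}, \Q_{\ell})
\otimes_{\Q_{\ell}} H^{b}_{\rm{\acute{e}t}}( X_{\overline{\F}_q}, \Q_{\ell})
\otimes_{\Q_{\ell}} \Q_{\ell}(2),
\]
and by Lemma \ref{Lemma:SurjectivityOutsideMiddleDegree} (applied after transposing, or directly to the $(0,4)$ and $(4,0)$ pieces via divisors on $X$) it suffices to treat the $(2,2)$ component, i.e.\ to show every Frobenius-invariant element of $H^{2}\otimes H^{2}(2)$ is algebraic. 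Using Poincar\'e duality on the second factor, such an element is identified with a $\Frob_q$-equivariant $\Q_{\ell}$-endomorphism of $H^{2}_{\rm{\acute{e}t}}( X_{\overline{\F}_q}, \Q_{\ell}(1))$.

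Next I would split off the part spanned by divisors: writing $H^{2}_{\rm{\acute{e}t}}( X_{\overline{\F}_q}, \Q_{\ell}(1)) = \mathrm{NS}(X_{\overline{\F}_q})_{\Q_{\ell}} \oplus T_{\ell}$ with $T_\ell$ the orthogonal complement (this uses that $\Frob_q$ acts semisimply on $H^2$, which holds for $K3$ surfaces, and that the cup-product pairing is $\Frob_q$-equivariant and non-degenerate on $\mathrm{NS}$), the endomorphisms commuting with $\Frob_q$ decompose accordingly. The $\mathrm{NS}\otimes\mathrm{NS}$, $\mathrm{NS}\otimes T_\ell$, $T_\ell\otimes\mathrm{NS}$ blocks are handled by products of divisors on $X$ together with the Tate conjecture for $X$ itself (the correspondences $D_1\times D_2$, and $D\times\Gamma$ with $\Gamma$ built from graphs of Frobenius powers, as in the introduction), so the only remaining piece is the space of $\Frob_q$-equivariant endomorphisms of $T_\ell$, equivalently endomorphisms of the primitive part $P^2_{\rm{\acute{e}t}}( X_{\overline{\F}_q}, \Q_{\ell}(1))$ modulo $\mathrm{NS}$. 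I would then invoke the hypothesis for the chosen $\ell_0$: algebraicity there gives a cycle $Z\in Z^2(X\times X)\otimes\Q$ whose class in the $\ell_0$-adic realization generates (together with the divisor-type cycles) all of the Frobenius-invariants. The key point is that the $\Q$-span of algebraic cycles, being independent of $\ell$, then also generates the Tate classes for every other $\ell$: concretely, the cycle class maps $\mathrm{cl}^2_\ell$ are compatible as $\ell$ varies through the common $\Q$-structure $Z^2(X\times X)\otimes\Q$, and the dimension of the target $H^{4}_{\rm{\acute{e}t}}(\cdots)^{\Frob_q}$ is independent of $\ell$ by Weil-conjecture purity (the characteristic polynomial of $\Frob_q$ on each $H^i$ has $\Z$-coefficients independent of $\ell$), so surjectivity for one $\ell$ forces $\dim_{\Q}(\text{image of }\mathrm{cl}^2_{\Q})$ to equal that common dimension, hence surjectivity for all $\ell$.

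For the crystalline statement (2) I would run the same bookkeeping with $H^{2i}_{\cris}((X\times X)/W(\F_q))^{\varphi=p^i}\otimes\Q$ in place of the $\ell$-adic target. Again Lemma \ref{Lemma:SurjectivityOutsideMiddleDegree} disposes of $i\neq 2$, and for $i=2$ the relevant space is controlled by the same characteristic polynomial of $\Frob_q$ (the crystalline Frobenius on $H^*_{\cris}(X)$ has the same characteristic polynomial as the $\ell$-adic one), so $\dim_{\Q} H^{4}_{\cris}^{\varphi=p^2}\otimes\Q$ equals the $\ell$-adic Tate dimension; since the $\Q$-space of cycle classes already surjects onto the $\ell_0$-adic Tate classes, a dimension count gives surjectivity of $\mathrm{cl}^2_{\cris}$ as well. (One should note the crystalline cycle class map is compatible with the $\ell$-adic ones in the sense that they are all computed on the fixed finite-dimensional $\Q$-vector space $Z^2(X\times X)\otimes\Q$, and the equality of Frobenius characteristic polynomials is what makes the target dimensions agree.)

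The main obstacle I expect is not any single deep input but rather assembling the semisimplicity and purity facts cleanly enough to make the ``dimension count transfers between realizations'' argument rigorous: one must know that $\Frob_q$ acts semisimply on $H^2_{\rm{\acute{e}t}}(X_{\overline{\F}_q},\Q_\ell)$ (true for $K3$ surfaces, e.g.\ via the Kuga--Satake abelian variety and Tate's theorem, or via \cite{Zarhin}), that the characteristic polynomial of $\Frob_q$ on each cohomology group lies in $\Z[t]$ independently of $\ell$ and matches the crystalline one, and that the cycle class maps in the various cohomology theories are compatible through the common source $Z^2\otimes\Q$. Once these are in place the argument is purely linear-algebraic, but the bookkeeping of the three K\"unneth blocks and the $\mathrm{NS}$-versus-transcendental splitting needs to be done carefully so that the ``divisor-type'' cycles are genuinely algebraic over $\F_q$ and not merely over $\overline{\F}_q$ — this is where Lemma \ref{Lemma:TateConjectureFiniteExtension} may be needed to descend, after which one concludes for $X$ over $\F_q$.
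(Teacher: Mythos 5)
Your overall strategy, a dimension count based on the $\ell$-independence of the characteristic polynomial of $\Frob_q$ together with semisimplicity, is the same as the paper's, but the pivotal step has a genuine gap. You treat ``the dimension of the image of $\mathrm{cl}^2_{\Q}$'' as a single quantity that can be compared across all $\ell$ (and with the crystalline theory). In fact the rank of the image of $Z^{2}(X\times X)\otimes\Q_{\ell}$ is the rank of $Z^{2}$ modulo $\ell$-adic homological equivalence, and its independence of $\ell$ is not known unconditionally for codimension-$2$ cycles on a fourfold; assuming it here begs the question. The paper bridges this with numerical equivalence: putting $N^{2}(X\times X):=Z^{2}(X\times X)/\sim_{\mathrm{num}}$, one has for every $\ell$ the chain
$\rank_{\Z}N^{2}\le \dim_{\Q_\ell}\mathrm{Im}(\mathrm{cl}^{2}_{\ell})\le \dim_{\Q_\ell}H^{4}_{\rm{\acute{e}t}}(X_{\overline{\F}_q}\times X_{\overline{\F}_q},\Q_{\ell}(2))^{\Gal(\overline{\F}_q/\F_q)}\le m$,
where $m$ is the multiplicity of the relevant eigenvalue in the characteristic polynomial, which is the genuinely $\ell$-independent quantity; at $\ell_0$ the hypothesis together with semisimplicity of $\Frob_q$ on $H^{4}$ of the square (Deligne \cite{Deligne:LiftingK3}) and Tate's equivalences \cite{Tate} force all terms to equal $\rank_{\Z}N^{2}$, hence equality, i.e.\ surjectivity, for every $\ell$. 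Your appeal to semisimplicity and purity gives the $\ell$-independence of the target dimension, but without an $\ell$-independent lower bound such as $\rank_{\Z}N^{2}$ the ``common $\Q$-structure'' argument does not close.

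For the crystalline statement the remark that the crystalline Frobenius has the same characteristic polynomial is also not enough by itself: the characteristic polynomial controls the $K_0$-linear operator $\varphi^{r}$ (with $q=p^{r}$, $K_0=W(\F_q)[1/p]$), whereas Theorem \ref{Theorem:TateConjecture:Restate} (2) concerns the $\Q_p$-vector space $H^{\varphi=p^{2}}$ for the $\sigma$-semilinear $\varphi$. The paper supplies the missing step: $p^{-2}\varphi$ defines a semilinear $\Gal(K_0/\Q_p)$-action on the $K_0$-space $H^{\varphi^{r}=q^{2}}$, and Hilbert's theorem 90 gives $\dim_{\Q_p}H^{\varphi=p^{2}}=\dim_{K_0}H^{\varphi^{r}=q^{2}}=\rank_{\Z}N^{2}(X\times X)$, after which the classes of numerically independent cycles (which lie in $H^{\varphi=p^{2}}$ and remain linearly independent there) give surjectivity of $\mathrm{cl}^{2}_{\cris}$. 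Incidentally, the K\"unneth and N\'eron--Severi-versus-transcendental bookkeeping in your first two paragraphs, and the descent via Lemma \ref{Lemma:TateConjectureFiniteExtension}, are not needed for this reduction lemma; they belong to the proof of the theorem itself.
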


\begin{proof}
Let
$N^2(X \times X):= Z^{2}(X \times X)/\sim_{\mathrm{num}}$
be the group of numerically equivalent classes of algebraic cycles of codimension $2$ on $X \times X$.
It is a finitely generated abelian group.
Assume that Theorem \ref{Theorem:TateConjecture:Restate} (1) holds for a prime number $\ell_0 \neq p$.
Since the action of the geometric Frobenius morphism $\Frob_q$
on the $\ell_0$-adic cohomology
$H^{4}_{\rm{\acute{e}t}}(  X_{\overline{\F}_q} \times X_{\overline{\F}_q}, \Q_{\ell_0}{(2)})$
is semisimple (see \cite[Corollaire 1.10]{Deligne:LiftingK3}),
the characteristic polynomial of $\Frob_q$ on
$H^{4}_{\rm{\acute{e}t}}(  X_{\overline{\F}_q} \times X_{\overline{\F}_q}, \Q_{\ell_0}{(2)})$
is equal to the rank of $N^2(X \times X)$;
see \cite[Theorem 2.9]{Tate}.
For any prime number $\ell \neq p$, the following inequality holds (see \cite[Proposition 2.8 (iii)]{Tate}):
\[
\rank_{\Z} N^2(X \times X) \leq
\dim_{\Q_{\ell}} (H^{4}_{\rm{\acute{e}t}}(  X_{\overline{\F}_q} \times X_{\overline{\F}_q}, \Q_{\ell}{(2)})^{\Gal(\overline{\F}_q/\F_q)}).
\]
Since the characteristic polynomial of $\Frob_q$
does not depend on $\ell$,
the above inequality is an equality for any $\ell \neq p$.
Hence Theorem \ref{Theorem:TateConjecture:Restate} (1) holds
for $i=2$ and for every $\ell \neq p$.

The same argument works also for crystalline cohomology.
We put $q = p^r$, $K_0 := W(\F_q)[1/p]$, and
\[ H := H^{4}_{\cris}( (X \times X)/W(\F_q)) \otimes_{\Z} \Q. \]
The $r$-th power $\varphi^r$ of the absolute Frobenius automorphism
acts $K_0$-linearly on $H$,
and the characteristic polynomial of $\varphi^r$ on $H$
coincides with the characteristic polynomial of $\Frob_q$
on the $\ell$-adic cohomology for every $\ell \neq q$.
Hence we have the following equality
\[
\rank_{\Z} N^2(X \times X) =
\dim_{K_0} (H^{\varphi^r = q^2}).
\]
The action of $p^{-2} \varphi$ on the $K_0$-vector space
$H^{\varphi^r = q^2}$
can be considered as a semilinear action of $\Gal(K_0/\Q_p)$
on $H^{\varphi^r = q^2}$.
By Hilbert's theorem 90, we have
\[ \dim_{\Q_p} (H^{\varphi = p^2}) = \dim_{K_0} ( H^{\varphi^r = q^2}). \]
Hence we have
\[
\rank_{\Z} N^2(X \times X) = \dim_{\Q_p} (H^{\varphi = p^2}).
\]
Consequently, Theorem \ref{Theorem:TateConjecture:Restate} (2) holds for $i=2$.
\end{proof}

\begin{rem}
We can prove the following:
if Theorem \ref{Theorem:TateConjecture:Restate} (2) holds for $i=2$,
then Theorem \ref{Theorem:TateConjecture:Restate} (1) holds for $i=2$ and for every $\ell \neq p$.
To prove this, it is enough to show that,
for a $K3$ surface $X$ over $\F_q$ with $q = p^r$,
the action of $\varphi^r$ on the crystalline cohomology of $X$ is semisimple.
When $X$ is of finite height,
it follows from the semisimplicity of Frobenius
for the Kuga-Satake abelian variety.
(See \cite[Theorem 5.17 (3)]{MadapusiTateConj}.
For the case $p=2$, see also \cite[Appendix A]{KimMadapusiIntModel}.)
When $X$ is supersingular,
it follows from the Tate conjecture for $X$;
see Lemma \ref{Lemma:SupersingularPicardNumber} below.

\end{rem}

The followings results on supersingular $K3$ surfaces are well-known.

\begin{lem}
\label{Lemma:SupersingularPicardNumber}
Let $X$ be a $K3$ surface over an algebraically closed field $k$
of characteristic $p > 0$.
Then $X$ is supersingular (i.e.\ the height of $X$ is $\infty$)
if and only if the rank of the Picard group $\Pic(X)$ is $22$.
\end{lem}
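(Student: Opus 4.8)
The plan is to prove the equivalence by comparing the Newton polygon of the $F$-crystal $H^2_{\cris}(X/W)$ with the rank of the Picard group via the Tate conjecture and the crystalline cycle class map. First I would recall that the formal Brauer group $\widehat{\Br}(X)$ governs the slopes of the second crystalline cohomology: the Artin--Mazur formal group has height $h$ exactly when the Newton polygon of $H^2_{\cris}(X/W)$ has slopes $1-1/h$, $1$ (with multiplicity $22-2h$), and $1+1/h$, while $X$ is supersingular precisely when $H^2_{\cris}(X/W)$ is isoclinic of slope $1$ (equivalently, $\varphi = p$ on $H^2_{\cris}(X/W)[1/p]$ up to the pertinent twist). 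This is the classical dichotomy of Artin: either $1 \le h \le 10$ or $h = \infty$, and in the supersingular case the whole $F$-crystal $H^2_{\cris}(X/W)(1)$ has all Frobenius slopes equal to $0$, so $H^2_{\cris}(X/W)(1)^{\varphi = 1} \otimes \Q$ is $22$-dimensional over $\Q_p$ (after passing to $W(\overline{k})$ and using that $k$ is algebraically closed).

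Next I would invoke the Tate conjecture for divisors on $X$, which is known unconditionally by the cited work of Charles, Kim--Madapusi Pera, Madapusi Pera, and Maulik (valid in all characteristics). The crystalline cycle class map $\mathrm{cl}^1_{\cris} \colon \Pic(X) \otimes \Q_p \to H^2_{\cris}(X/W)(1)^{\varphi = 1} \otimes \Q$ is then surjective; combined with the injectivity of the cycle class map and the comparison of the rank of $\Pic(X)$ with $\dim_{\Q_p}$ of the Tate/crystalline classes, one gets that $\rank \Pic(X)$ equals the multiplicity of slope $0$ in the Newton polygon of $H^2_{\cris}(X/W)(1)$. If $X$ is supersingular this multiplicity is $22$, giving $\rank \Pic(X) = 22$. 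Conversely, since $\rank \Pic(X) \le 22 = \dim H^2$ always and equality forces all $22$ cohomology classes (after the appropriate Tate twist) to be fixed by Frobenius, the Newton polygon is forced to be the straight line of slope $0$, i.e.\ $H^2_{\cris}(X/W)(1)$ is isoclinic of slope $0$, which is exactly the supersingularity of $\widehat{\Br}(X)$, i.e.\ $h = \infty$.

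The main obstacle here is making the comparison between $\rank \Pic(X)$ and the dimension of the space of Frobenius-fixed crystalline classes precise: one must use that the Frobenius action on $H^2_{\cris}(X/W)[1/p]$ is semisimple (in the supersingular or, more relevantly, the equal-slope situation this is automatic, but one should note it), and one must combine the $\ell$-adic and crystalline versions of the Tate conjecture, which give the same Picard rank. Concretely I would argue: over $k$ algebraically closed, $\rank \Pic(X) = \dim_{\Q_{\ell}} H^2_{\et}(X, \Q_{\ell}(1))^{\Gal}$ by the (geometric version of the) Tate conjecture, and this dimension is at most $22$ with equality iff the geometric Frobenius acts trivially on a full-rank sublattice, which over the algebraic closure means the slopes are all $1$ on $H^2_{\cris}$ before twisting. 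The only real content beyond bookkeeping is the input that the Newton polygon slopes of $H^2_{\cris}(X/W)$ are determined by $h$ as above, which is Artin--Mazur theory and is implicitly used throughout Section~\ref{Section:$F$-crystals on Shimura varieties} (e.g.\ in the proof of Proposition~\ref{Proposition:CrysDecomposition}); I would cite that and Artin's original dichotomy rather than reprove it.

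Finally, I would record the immediate corollary needed downstream: for a supersingular $K3$ surface $X$ over $\F_q$, the Tate conjecture for $X$ (again known) gives that $\Pic(X_{\overline{\F}_q})$ has rank $22$, hence $H^2_{\et}(X_{\overline{\F}_q}, \Q_{\ell}(1))$ is spanned by cycle classes, and then the Tate conjecture for $X \times X$ follows from the Künneth formula by writing every Tate class in the $(2,2)$-component as a sum of products of divisor classes (together with the $(0,4)$ and $(4,0)$ contributions handled in Lemma~\ref{Lemma:SurjectivityOutsideMiddleDegree}). This is exactly the reduction already indicated in Remark after Theorem~\ref{Theorem:TateConjecture}, so no new argument is needed there; the present lemma is purely the slope-theoretic characterization of supersingularity in terms of the Picard rank.
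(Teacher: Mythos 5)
Your argument is fine in the easy direction (a rank $22$ Picard group forces a full-rank sublattice of $H^2_{\cris}(X/W)(1)$ on which $\varphi$ acts as the identity, so the isocrystal is isoclinic and $h=\infty$), and for $k=\overline{\F}_q$ the hard direction is also essentially the standard argument, provided you fill in the steps you wave at: Katz--Messing to identify the eigenvalues of $\Frob_q$ on $H^2_{\mathrm{\acute{e}t}}$ with those of the $q$-power crystalline Frobenius, Kronecker's theorem to see that in the supersingular case all eigenvalues of $\Frob_q$ on $H^2_{\mathrm{\acute{e}t}}(X_{\overline{\F}_q},\Q_\ell(1))$ are roots of unity, semisimplicity of Frobenius, and Dieudonn\'e--Manin to see that the $\varphi$-invariants of the slope-zero isocrystal are $22$-dimensional over $\Q_p$; then the Tate conjecture over $\F_{q^m}$ gives $\rho=22$.

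The genuine gap is that the lemma is stated for an \emph{arbitrary} algebraically closed field $k$ of characteristic $p$, and your proof does not reach that case. Over such a $k$ there is no Frobenius or Galois action on $H^2_{\mathrm{\acute{e}t}}(X,\Q_\ell(1))$, so the ``geometric version of the Tate conjecture'' you invoke (``$\rank\Pic(X)=\dim_{\Q_\ell}H^2_{\mathrm{\acute{e}t}}(X,\Q_\ell(1))^{\Gal}$'') is not a statement of the cited theorems of Charles, Maulik, Madapusi Pera and Kim--Madapusi Pera, which are proved over finite (or finitely generated) fields; likewise the surjectivity of $\mathrm{cl}^1_{\cris}$ onto the full $\varphi$-fixed part over a general $k$ is precisely Artin's conjecture, not something you may quote. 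Reducing to the finite-field case is not ``bookkeeping'': supersingular $K3$ surfaces have positive-dimensional moduli, so $X$ need not be defined over $\overline{\F}_p$, and if you spread out and specialize to a closed point $X_0$ over a finite field (the special fibre is still supersingular since Newton polygons rise), the specialization map $\Pic(X)\hookrightarrow\Pic(X_0)$ is injective, giving only $\rho(X)\le\rho(X_0)=22$ --- the wrong inequality. To transfer rank $22$ back to the geometric generic fibre one needs Artin's theorem that in a connected family of supersingular $K3$ surfaces the N\'eron--Severi group specializes isomorphically (line bundles deform across the supersingular locus because $\widehat{\Br}\cong\widehat{\mathbb{G}}_a$); this is exactly the additional input in the reference the paper relies on (the paper's own proof is simply a citation of Liedtke's Theorem 4.8, together with the remark that the $p=2$ case follows since the Tate conjecture in characteristic $2$ is now known). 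Either supply that family argument, cite it, or restrict the statement to $k=\overline{\F}_q$, which is all the paper actually uses.
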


\begin{proof}
See \cite[Theorem 4.8]{Liedtke} for example.
(Precisely, the characteristic $p$ is assumed to be odd in \cite[Theorem 4.8]{Liedtke}.
But the same proof works in the case $p=2$
because the Tate conjecture for $K3$ surfaces in characteristic $2$
is now proved by \cite[Theorem A.1]{KimMadapusiIntModel}.)
\end{proof}

\begin{lem}
\label{Lemma:TateConjectureSupersingular}
Let $X$ be a supersingular $K3$ surface over $\F_q$.
Then the Tate conjecture for $X \times X$ holds for
the $\ell$-adic cohomology for every prime number $\ell \neq p$,
and for the crystalline cohomology.
\end{lem}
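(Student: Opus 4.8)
The plan is to reduce the Tate conjecture for the square of a supersingular $K3$ surface to the case of divisors on $X$ itself via the Künneth formula, using that supersingular $K3$ surfaces have maximal Picard number. First I would invoke Lemma \ref{Lemma:SupersingularPicardNumber}: since $X$ is supersingular over $\F_q$, the Picard rank of $X_{\overline{\F}_q}$ equals $22$, so after replacing $\F_q$ by a finite extension (which is harmless by Lemma \ref{Lemma:TateConjectureFiniteExtension}) all of $H^2_{\mathrm{\acute{e}t}}(X_{\overline{\F}_q}, \Q_\ell(1))$ is spanned by classes of divisors defined over $\F_q$, and similarly all of $H^2_{\cris}(X/W(\F_q))^{\varphi = p}$ is generated by crystalline classes of divisors.

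By Lemma \ref{Lemma:SurjectivityOutsideMiddleDegree} it suffices to treat the middle degree $i = 2$, and by Lemma \ref{Lemma:TateConjecture:Reduction} it is enough to handle a single $\ell \neq p$; so I would fix such an $\ell$ and prove surjectivity of $\mathrm{cl}^2_\ell$ for $X \times X$. Apply the Künneth decomposition
\[
H^{4}_{\rm{\acute{e}t}}( X_{\overline{\F}_q} \times X_{\overline{\F}_q}, \Q_{\ell}{(2)})
\cong \bigoplus_{(i,j) = (0,4),(2,2),(4,0)}
H^{i}_{\rm{\acute{e}t}}( X_{\overline{\F}_q}, \Q_{\ell})
\otimes_{\Q_{\ell}} H^{j}_{\rm{\acute{e}t}}( X_{\overline{\F}_q}, \Q_{\ell})
\otimes_{\Q_{\ell}} \Q_{\ell}(2).
\]
The $(0,4)$ and $(4,0)$ summands contribute only multiples of the classes of $X \times \{x_0\}$ and $\{x_0\} \times X$, which are obviously algebraic. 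For the $(2,2)$ summand, any Galois-invariant tensor lies in the image of the Galois-invariants of $H^2 \otimes H^2$; since each $H^2_{\mathrm{\acute{e}t}}(X_{\overline{\F}_q}, \Q_\ell(1))$ is spanned by divisor classes $\mathrm{ch}_\ell(D_k)$ with $D_k$ defined over $\F_q$, every such tensor is a $\Q_\ell$-linear combination of classes $\mathrm{ch}_\ell(D_1 \times D_2) = \mathrm{pr}_1^*\mathrm{ch}_\ell(D_1) \cup \mathrm{pr}_2^*\mathrm{ch}_\ell(D_2)$, hence algebraic. This proves surjectivity of $\mathrm{cl}^2_\ell$, and then Lemma \ref{Lemma:TateConjecture:Reduction} upgrades it to all $\ell \neq p$ and to the crystalline statement.

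The crystalline case can alternatively be argued directly in the same way, replacing $\Q_\ell$-cohomology by $H^2_{\cris}(X/W(\F_q))^{\varphi=p}$ and using that it is spanned by crystalline Chern classes of divisors (again by Lemma \ref{Lemma:SupersingularPicardNumber} together with the Tate conjecture for divisors on $X$), but routing through Lemma \ref{Lemma:TateConjecture:Reduction} is cleaner since the semisimplicity and rank-counting bookkeeping is already packaged there. There is no real obstacle here: the only subtlety is the harmless passage to a finite extension of $\F_q$ to realize all divisor classes over the base field, which is exactly what Lemma \ref{Lemma:TateConjectureFiniteExtension} is for; everything else is the Künneth formula plus the known Tate conjecture for $K3$ surfaces. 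This also records, as noted in Remark \ref{Remark:TateConjectureSupersingularPower}, that the same argument gives the Tate conjecture for arbitrary products of supersingular $K3$ surfaces.
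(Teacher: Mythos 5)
Your proof is correct and follows essentially the same route as the paper: maximal Picard rank via Lemma \ref{Lemma:SupersingularPicardNumber} after a harmless finite extension of $\F_q$, then the K\"unneth formula to exhibit the relevant cohomology of $X \times X$ as spanned by classes of point classes and products of divisors. The only cosmetic difference is that the paper simply repeats the same argument verbatim for the crystalline cohomology rather than routing the crystalline statement and the remaining primes $\ell$ through Lemma \ref{Lemma:TateConjecture:Reduction} and Lemma \ref{Lemma:SurjectivityOutsideMiddleDegree}, which changes nothing essential.
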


\begin{proof}
Fix a prime number $\ell \neq p$.
After replacing $\F_q$ by a finite extension of it,
we may assume
$H^2_{\mathrm{\acute{e}t}}(X_{\overline{\F}_q}, \Q_{\ell}(1))$
is spanned by classes of divisors on $X$ defined over $\F_q$
by Lemma \ref{Lemma:SupersingularPicardNumber}.
We have an isomorphism
\begin{align*}
&\quad H^{4}_{\rm{\acute{e}t}}( X_{\overline{\F}_q} \times X_{\overline{\F}_q}, \Q_{\ell}{(2)}) \\
&\cong \bigoplus_{(i,j) = (0,4),(2,2),(4,0)}
H^{i}_{\rm{\acute{e}t}}( X_{\overline{\F}_q}, \Q_{\ell})
\otimes_{\Q_{\ell}} H^{j}_{\rm{\acute{e}t}}( X_{\overline{\F}_q}, \Q_{\ell})
\otimes_{\Q_{\ell}} \Q_{\ell}(2)
\end{align*}
by the K\"unneth formula.
Hence 
$H^{4}_{\rm{\acute{e}t}}( X_{\overline{\F}_q} \times X_{\overline{\F}_q}, \Q_{\ell}{(2)})$
is spanned by classes of algebraic cycles of codimension $2$ on $X \times X$.
Thus the Tate conjecture holds for $X \times X$.
The same proof works for the crystalline cohomology.
\end{proof}

\begin{rem}
\label{Remark:TateConjectureSupersingularPower}
By the same argument, we can prove the Tate conjecture holds for any power $X \times \cdots \times X$
for a supersingular $K3$ surface $X$ over $\F_q$.
\end{rem}

\subsection{Endomorphisms of the cohomology of a $K3$ surface over a finite field}
\label{Subsection:EndomorphismsCohomologyK3}

Let $X$ be a $K3$ surface of finite height over $\F_q$.
After replacing $\F_q$ by its finite extension, we may assume $X$ comes from an $\F_q$-valued point $s \in M^{\mathrm{sm}}_{2d, \K^p_0, \Z_{(p)}}(\F_q)$ satisfying the conditions as in Section \ref{Subsection:$F$-crystals on Shimura varieties and the cohomology of K3 surfaces}.
Let $I$ be the algebraic group over $\Q$ associated with $s \in Z_{\mathrm{K}^p}(\F_q)$; see Definition \ref{Definition: algebraic group I}.

In this subsection, we fix a prime number $\ell \neq p$.
Let
\[ V_{\ell} := \mathrm{ch}_{\ell}(\Pic(X_{\overline{\F}_q}))^{\perp} \subset H^2_{\mathrm{\acute{e}t}}(X_{\overline{\F}_q},\Q_{\ell}(1))
\]
be the transcendental part of the $\ell$-adic cohomology.
By the Tate conjecture for $X$ \cite{Charles13, KimMadapusiIntModel, MadapusiTateConj, Maulik}, every eigenvalue of $\Frob_q$ is not a root of unity.

\begin{lem}
\label{Lemma:TateConjecturePrimitivePart}
\begin{enumerate}
\item There is a $\Gal(\overline{\F}_q/\F_q)$-equivariant
isomorphism
\begin{align*}
&\quad 
H^{4}_{\rm{\acute{e}t}}( X_{\overline{\F}_q} \times X_{\overline{\F}_q}, \Q_{\ell}{(2)}) \\
&\cong
\Q_{\ell} \oplus
(\Pic(X_{\overline{\F}_q})^{\otimes 2} \otimes_{\Z} \Q_{\ell})
\oplus
(\Pic(X_{\overline{\F}_q}) \otimes_{\Z} V_{\ell})^{\oplus 2} \oplus \End_{\Q_{\ell}}(V_{\ell}).
\end{align*}

\item The Tate conjecture holds for $X \times X$ if and only if
the $\Q_{\ell}$-vector subspace
\[ \End_{\Frob_q}(V_{\ell}) = \End_{\Q_{\ell}}(V_{\ell})^{\Gal(\overline{\F}_q/\F_q)} \]
is spanned by classes of algebraic cycles of codimension $2$ on $X \times X$.
\end{enumerate}
\end{lem}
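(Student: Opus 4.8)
The plan is to decompose $H^{4}_{\et}(X_{\overline{\F}_q} \times X_{\overline{\F}_q}, \Q_{\ell}(2))$ by the K\"unneth formula and then identify each summand. Since a $K3$ surface has $H^{1}_{\et}(X_{\overline{\F}_q}, \Q_{\ell}) = H^{3}_{\et}(X_{\overline{\F}_q}, \Q_{\ell}) = 0$, only the bidegrees $(0,4)$, $(2,2)$, $(4,0)$ occur; the two outer pieces become canonically isomorphic to $\Q_{\ell}$ after the Tate twist and are realized by the cycle classes of $X \times \{x\}$ and $\{x\} \times X$ for a closed point $x$ of $X$. For the middle piece $H^{2}_{\et}(X_{\overline{\F}_q}, \Q_{\ell}(1))^{\otimes 2}$ I would use that the intersection pairing on $\Pic(X_{\overline{\F}_q})$ is non-degenerate, so that cup product exhibits $V_{\ell} = \mathrm{ch}_{\ell}(\Pic(X_{\overline{\F}_q}))^{\perp}$ as a $\Gal(\overline{\F}_q/\F_q)$-stable complement of $\Pic(X_{\overline{\F}_q}) \otimes_{\Z} \Q_{\ell}$ on which the induced pairing is perfect; in particular $V_{\ell}$ is self-dual as a Galois module. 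Tensoring the decomposition $H^{2}_{\et}(X_{\overline{\F}_q}, \Q_{\ell}(1)) \cong (\Pic(X_{\overline{\F}_q}) \otimes_{\Z} \Q_{\ell}) \oplus V_{\ell}$ with itself and using $V_{\ell} \otimes_{\Q_{\ell}} V_{\ell} \cong V_{\ell} \otimes_{\Q_{\ell}} V_{\ell}^{\vee} \cong \End_{\Q_{\ell}}(V_{\ell})$ yields the decomposition in part (1), all maps being $\Gal(\overline{\F}_q/\F_q)$-equivariant.

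For part (2) I would first reduce to the middle degree: by Lemma~\ref{Lemma:SurjectivityOutsideMiddleDegree} the cycle class maps $\mathrm{cl}^{i}_{\ell}$ for $X \times X$ are already surjective for all $i \neq 2$, so the Tate conjecture for $X \times X$ amounts to surjectivity of $\mathrm{cl}^{2}_{\ell}$. By Lemma~\ref{Lemma:TateConjectureFiniteExtension} I may replace $\F_q$ by a finite extension, and hence assume that $\Frob_q$ acts trivially on $\Pic(X_{\overline{\F}_q}) \otimes_{\Z} \Q_{\ell}$ and that every class in $\Pic(X_{\overline{\F}_q})$ comes from a divisor on $X$ over $\F_q$. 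Now take $\Gal(\overline{\F}_q/\F_q)$-invariants in the decomposition of part (1). The summand $\Pic(X_{\overline{\F}_q})^{\otimes 2} \otimes_{\Z} \Q_{\ell}$ is then entirely Frobenius-invariant and spanned by the classes of the cycles $D_1 \times D_2$ with $D_i$ divisors on $X$; the outer summands are spanned by the classes of $X \times \{x\}$ and $\{x\} \times X$; and the cross terms $\Pic(X_{\overline{\F}_q}) \otimes_{\Z} V_{\ell}$ contribute nothing, because $\Frob_q$ now acts on them through $V_{\ell}$ alone and $V_{\ell}^{\Frob_q} = 0$ by the Tate conjecture for $X$ (\cite{Charles13, KimMadapusiIntModel, MadapusiTateConj, Maulik}). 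Hence the only component whose Frobenius-invariant part is not manifestly spanned by algebraic cycles is $\End_{\Q_{\ell}}(V_{\ell})$, with invariants $\End_{\Frob_q}(V_{\ell})$, which gives the equivalence asserted in part (2).

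This lemma is essentially bookkeeping once the K\"unneth decomposition and the Tate conjecture for the $K3$ surface $X$ are available; the only point requiring a little care is the $\Gal(\overline{\F}_q/\F_q)$-equivariance of the orthogonal decomposition of $H^{2}_{\et}(X_{\overline{\F}_q}, \Q_{\ell}(1))$ (which rests on the non-degeneracy of the intersection pairing on $\Pic(X_{\overline{\F}_q})$, so that $V_{\ell}$ is a canonical Galois-stable complement) and the descent of the manifestly algebraic classes to cycles on $X \times X$ over $\F_q$, which is why I pass to a finite extension via Lemma~\ref{Lemma:TateConjectureFiniteExtension}. The genuinely substantive input, namely the algebraicity of $\End_{\Frob_q}(V_{\ell})$, is not part of this lemma: it is established in the subsequent sections by lifting $X$ to characteristic $0$ preserving the action of a maximal torus of $I$ (Theorem~\ref{Theorem:LiftingTorusAction:Restate}) and invoking the results of Mukai and Buskin on the Hodge conjecture for products of $K3$ surfaces.
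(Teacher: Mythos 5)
Part (1) and the skeleton of part (2) are the paper's own argument: K\"unneth together with the vanishing of odd cohomology, the Galois-equivariant splitting $H^2_{\et}(X_{\overline{\F}_q},\Q_\ell(1))\cong(\Pic(X_{\overline{\F}_q})\otimes_{\Z}\Q_\ell)\oplus V_\ell$ coming from the non-degeneracy of the intersection form on the Picard lattice, the identification $V_\ell\otimes_{\Q_\ell}V_\ell\cong\End_{\Q_\ell}(V_\ell)$ via Poincar\'e duality, the reduction to codimension $2$ by Lemma~\ref{Lemma:SurjectivityOutsideMiddleDegree}, and the vanishing of the invariants of the cross terms. The step that does not work as written is ``by Lemma~\ref{Lemma:TateConjectureFiniteExtension} I may replace $\F_q$ by a finite extension'': the statement you are proving is an equivalence in which both sides refer to the original Frobenius (the Tate conjecture for $X\times X$ over $\F_q$, and the algebraicity of $\End_{\Frob_q}(V_\ell)$ by cycles on $X\times X$ over $\F_q$), and Lemma~\ref{Lemma:TateConjectureFiniteExtension} only transfers the Tate conjecture \emph{downwards}, from $\F_{q^m}$ to $\F_q$. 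In the substantive ``if'' direction, your argument over $\F_{q^m}$ would require the hypothesis that $\End_{\Frob_{q^m}}(V_\ell)$ is spanned by algebraic classes, which is a priori larger than $\End_{\Frob_q}(V_\ell)$ and is not what you are given; so you cannot first prove the Tate conjecture for $(X\times X)_{\F_{q^m}}$ and then descend. Replacing the base field changes both sides of the equivalence, and the change is not reversible in the direction you need.

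The repair is to argue directly over $\F_q$, as the paper does. The cross terms $\Pic(X_{\overline{\F}_q})\otimes_{\Z}V_\ell$ already have no $\Frob_q$-invariants: $\Frob_q$ acts on $\Pic(X_{\overline{\F}_q})\otimes_{\Z}\Q_\ell$ through a finite group, hence with root-of-unity eigenvalues, while by the Tate conjecture for $X$ no eigenvalue of $\Frob_q$ on $V_\ell$ is a root of unity (as recorded just before the lemma), so no eigenvalue on the tensor product equals $1$. For the remaining non-$\End$ summands, the $\Frob_q$-invariants of $\Pic(X_{\overline{\F}_q})^{\otimes 2}\otimes_{\Z}\Q_\ell$ and the outer $\Q_\ell$-factors are spanned by classes of cycles defined over $\F_q$: take $D_1\times D_2$ (resp.\ $X\times\{x\}$, $\{x\}\times X$) over a finite extension where the relevant divisors are defined and push forward along $X_{\F_{q^m}}\times_{\F_{q^m}}X_{\F_{q^m}}\to X\times_{\F_q}X$; this averaging is exactly the mechanism behind Lemma~\ref{Lemma:TateConjectureFiniteExtension}, applied to individual classes rather than to the conjecture as a whole. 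With these two points the ``if'' direction of (2) follows, and the ``only if'' direction is immediate since $\End_{\Frob_q}(V_\ell)$ sits inside the Galois invariants of $H^4_{\et}(X_{\overline{\F}_q}\times X_{\overline{\F}_q},\Q_\ell(2))$.
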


\begin{proof}
(1) We have
\[
H^{i}_{\rm{\acute{e}t}}( X_{\overline{\F}_q}, \Q_{\ell}) \cong
\begin{cases}
\Q_{\ell} & i = 0 \\
(\Pic(X_{\overline{\F}_q}) \otimes_{\Z} \Q_{\ell}(-1)) \oplus V_{\ell}(-1)& i = 2 \\
\Q_{\ell}(-2) & i = 4 \\
0         & i \neq 0,2,4.
\end{cases}
\]
By the Poincar\'e duality, we have isomorphisms
\[ V_{\ell} \otimes_{\Q_{\ell}} V_{\ell} \cong 
   V_{\ell}^{\vee} \otimes_{\Q_{\ell}} V_{\ell} \cong
   \End_{\Q_{\ell}}(V_{\ell}).
\]
Hence the assertion (1) follows by the K\"unneth formula.

(2) The $\Q_{\ell}$-vector space
$\Pic(X_{\overline{\F}_q}) \otimes_{\Z} V_{\ell}$
has no non-zero $\Gal(\overline{\F}_q/\F_q)$-invariants.
Hence the assertion (2) follows.
\end{proof}

Since the action of $I(\Q_{\ell})$ on
the primitive part $P^{2}_{\mathrm{\acute{e}t}}( X_{\overline{\F}_q}, \Q_{\ell}{(1)})$
commutes with $\Frob_q$,
it also acts on $V_{\ell}$.
For a sufficiently divisible $m \geq 1$,
the following conditions are satisfied:
\begin{enumerate}
\item $I_{\ell} = I_{\ell, m} = I_{\Q_{\ell}}$
(for the definition of $I_{\ell}, I_{\ell, m}$,
see Section \ref{Subsection:Kisin's algebraic groups}).

\item The image of $I_{\ell, m}$ under the homomorphism
$\GSpin(V_{\ell}) \to \SO(V_{\ell})$
is equal to the centralizer $\SO_{\Frob^{m}_q}(V_{\ell})$ of $\Frob^{m}_q$ in $\SO(V_{\ell})$.
\end{enumerate}

In the rest of this subsection, we fix an integer $m \geq 1$
satisfying the above conditions.

Let $\End_{\Frob^{m}_q}(V_{\ell})$ be the set of $\Q_{\ell}$-linear
endomorphisms of $V_{\ell}$ commuting with $\Frob^{m}_q$.
Similarly, let
$\End_{\Frob^{m}_q}(V_{\ell} \otimes_{\Q_{\ell}} \overline{\Q}_{\ell})$
be the set of $\overline{\Q}_{\ell}$-linear
endomorphisms of $V_{\ell} \otimes_{\Q_{\ell}} \overline{\Q}_{\ell}$
commuting with $\Frob^{m}_q$.
We have a map
\[ I(\Q_{\ell}) \to \End_{\Frob^{m}_q}(V_{\ell}). \]
Similarly, we also have a map
\[ I(\overline{\Q}_{\ell}) \to \End_{\Frob^{m}_q}(V_{\ell} \otimes_{\Q_{\ell}} \overline{\Q}_{\ell}). \]

\begin{lem}
\label{Lemma:Endomorphism1}
The $\overline{\Q}_{\ell}$-vector space
$\End_{\Frob^{m}_q}(V_{\ell} \otimes_{\Q_{\ell}} \overline{\Q}_{\ell})$
is spanned by the image of $I(\overline{\Q}_{\ell})$.
\end{lem}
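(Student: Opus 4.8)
\textbf{Proof strategy for Lemma \ref{Lemma:Endomorphism1}.}
The plan is to diagonalize $\Frob^m_q$ acting on $V_\ell \otimes_{\Q_\ell}\overline{\Q}_\ell$, describe $\End_{\Frob^m_q}(V_\ell\otimes_{\Q_\ell}\overline{\Q}_\ell)$ explicitly in terms of the eigenspace decomposition, and then show each of these "block" endomorphisms is realized by an element of $I(\overline{\Q}_\ell)$ using that $I$ contains a split maximal torus of $\GSpin(V_\ell)$ over $\overline{\Q}_\ell$ whose image in $\SO(V_\ell)$ is the centralizer $\SO_{\Frob^m_q}(V_\ell)$. First I would recall that $V_\ell$ carries the (negative of the cup-product) quadratic form, that $\Frob^m_q$ lies in $\SO(V_\ell)$, and that after base change to $\overline{\Q}_\ell$ the semisimple element $\Frob^m_q$ becomes diagonalizable; write $V_\ell\otimes\overline{\Q}_\ell = \bigoplus_{\lambda} W_\lambda$ for its eigenspaces. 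Because $\Frob^m_q$ preserves the pairing, the pairing induces a perfect duality between $W_\lambda$ and $W_{q^{2m}/\lambda}$ (here using that the similitude factor of $\Frob_q$ on $P^2_{\et}(X_{\overline{\F}_q},\Q_\ell(1))$ is a power of $q$), so $\End_{\Frob^m_q}(V_\ell\otimes\overline{\Q}_\ell) = \bigoplus_\lambda \End_{\overline{\Q}_\ell}(W_\lambda)$ as a $\overline{\Q}_\ell$-algebra.

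Next I would analyze $\SO_{\Frob^m_q}(V_\ell)_{\overline{\Q}_\ell}$. Grouping the eigenspaces into self-dual pieces and hyperbolic pairs $W_\lambda \oplus W_{q^{2m}/\lambda}$, the centralizer of $\Frob^m_q$ in $\mathrm{O}(V_\ell)_{\overline{\Q}_\ell}$ is a product of general linear groups $\GL(W_\lambda)$ (one factor for each dual pair, acting on $W_\lambda$ with the contragredient action on its dual partner) together with an orthogonal group on the part where $\lambda = q^{2m}/\lambda$; the connected centralizer $\SO_{\Frob^m_q}(V_\ell)_{\overline{\Q}_\ell}$ is then a product of such $\GL$'s and an $\SO$-factor. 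Its Lie algebra, which is the $\overline{\Q}_\ell$-span of the unipotent-plus-semisimple elements generating it, visibly spans $\bigoplus_\lambda \End(W_\lambda)$ modulo the constraint coming from $\mathfrak{so}$ on the self-dual block; one checks that together with the scalars (which come from the $\G_m$ in $\GSpin$) one recovers all of $\bigoplus_\lambda\End_{\overline{\Q}_\ell}(W_\lambda) = \End_{\Frob^m_q}(V_\ell\otimes\overline{\Q}_\ell)$. More precisely, I would argue that the associative $\overline{\Q}_\ell$-subalgebra of $\End_{\overline{\Q}_\ell}(V_\ell\otimes\overline{\Q}_\ell)$ generated by the image of $\SO_{\Frob^m_q}(V_\ell)(\overline{\Q}_\ell)$ together with $\overline{\Q}_\ell\cdot\mathrm{id}$ equals the full commutant of $\Frob^m_q$; this is a standard double-centralizer / Jacobson density statement once one knows $\SO_{\Frob^m_q}(V_\ell)_{\overline{\Q}_\ell}$ acts on each $W_\lambda$ through (at least) $\SL(W_\lambda)$ or $\SO(W_\lambda)$, whose natural representations are irreducible.

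Then I would transport this along the isomorphism $I_{\overline{\Q}_\ell} \cong I_\ell$ (Proposition \ref{Proposition:KisinGroup(I)}, or Corollary \ref{Corollary:KisinGroup(II)}) and the fact that the image of $I_{\ell,m} = I_\ell$ in $\SO(V_\ell)$ is exactly $\SO_{\Frob^m_q}(V_\ell)$ (condition (2) on the chosen $m$). Since $\GSpin(V_\ell)(\overline{\Q}_\ell)\to\SO(V_\ell)(\overline{\Q}_\ell)$ is surjective with central kernel $\G_m$, the action of $I(\overline{\Q}_\ell)$ on $V_\ell\otimes\overline{\Q}_\ell$ factors through $\SO_{\Frob^m_q}(V_\ell)(\overline{\Q}_\ell)$ composed with the scalars; hence the $\overline{\Q}_\ell$-span of the image of $I(\overline{\Q}_\ell)$ in $\End_{\overline{\Q}_\ell}(V_\ell\otimes\overline{\Q}_\ell)$ equals the algebra generated by $\SO_{\Frob^m_q}(V_\ell)(\overline{\Q}_\ell)$ and scalars, which by the previous paragraph is $\End_{\Frob^m_q}(V_\ell\otimes\overline{\Q}_\ell)$. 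The main obstacle I anticipate is the bookkeeping in the second step: correctly identifying the structure of $\SO_{\Frob^m_q}(V_\ell)_{\overline{\Q}_\ell}$ when $\Frob^m_q$ has eigenvalues $\lambda$ with $\lambda^2 = q^{2m}$ (i.e.\ $\lambda = \pm q^m$), since on that block one only gets an orthogonal group rather than a general linear group, and one must verify the density argument still delivers the full block of endomorphisms there — using that $V_\ell$ has no Tate classes beyond those accounted for, equivalently that $\Frob^m_q$ has no eigenvalue equal to $q^m$ on the transcendental part $V_\ell$, which follows from the definition of $V_\ell$ as the orthogonal complement of the Picard classes together with the Tate conjecture for $X$.
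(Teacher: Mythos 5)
Your proposal follows essentially the same route as the paper's proof: decompose $V_{\ell}\otimes_{\Q_{\ell}}\overline{\Q}_{\ell}$ into eigenspaces of $\Frob^{m}_q$ paired off by the bilinear form, identify the centralizer $\SO_{\Frob^{m}_q}(V_{\ell})_{\overline{\Q}_{\ell}}$ with a product of general linear groups $\prod_i \GL(W_i)$, and conclude by a Burnside/double-centralizer argument that the semisimple algebra spanned by the image of $I(\overline{\Q}_{\ell})$ (which surjects onto this centralizer via $I_{\overline{\Q}_{\ell}}\cong I_{\ell}$ and condition (2) on $m$; note the central $\G_m$ of $\GSpin$ acts trivially on $V_{\ell}$, not by scalars, though scalars lie in the span anyway) is the full commutant of $\Frob^{m}_q$. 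The paper phrases the last step slightly differently — it sets $R$ equal to the span of the image of $I(\overline{\Q}_{\ell})$, uses Schur's lemma to show the commutant of $R$ consists of blockwise scalars, and then invokes the bicommutant of the semisimple algebra $R$ — but the mathematical content is the same as your density argument.

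The one step that needs tightening is your treatment of a possible self-dual eigenvalue block. You correctly flag the eigenvalues with $\lambda^2=q^{2m}$, i.e.\ $\lambda=\pm q^m$ in your untwisted normalization, as the place where one would get an orthogonal rather than a general linear factor (and where the span of the group need not be the full matrix block in low rank), but you then exclude only $\lambda=q^m$ via "no extra Tate classes". An eigenvalue $-q^m$ would cause exactly the same problem and is not excluded by that statement alone. What the Tate conjecture actually yields — and what the paper records at the outset of this argument — is the stronger fact that no eigenvalue of $\Frob_q$ on $V_{\ell}\subset H^2_{\mathrm{\acute{e}t}}(X_{\overline{\F}_q},\Q_{\ell}(1))$ is a root of unity (apply the Tate conjecture over every finite extension $\F_{q^n}$ and use that $V_{\ell}$ meets $\Pic(X_{\overline{\F}_q})\otimes\Q_{\ell}$ trivially); this excludes $\pm q^m$, equivalently $\pm 1$ after the twist, simultaneously, so no self-dual block occurs. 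With that substitution your argument is complete. A cosmetic remark: on the Tate-twisted $P^2_{\mathrm{\acute{e}t}}(X_{\overline{\F}_q},\Q_{\ell}(1))$ the Frobenius is an isometry, so the duality pairs $W_{\lambda}$ with $W_{\lambda^{-1}}$, which is the form your pairing $W_{\lambda}\leftrightarrow W_{q^{2m}/\lambda}$ takes before twisting; the paper works throughout in the twisted normalization.
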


\begin{proof}
Let $R \subset \End_{\overline{\Q}_{\ell}}(V_{\ell} \otimes_{\Q_{\ell}} \overline{\Q}_{\ell})$
be the $\overline{\Q}_{\ell}$-vector subspace
generated by the image of $I(\overline{\Q}_{\ell})$.
Since the action of $I(\overline{\Q}_{\ell})$ on
$V_{\ell} \otimes_{\Q_{\ell}} \overline{\Q}_{\ell}$ is semisimple,
$R$ is a semisimple $\overline{\Q}_{\ell}$-subalgebra.
Hence it suffices to prove every element of
$\End_{\Frob^{m}_q}(V_{\ell} \otimes_{\Q_{\ell}} \overline{\Q}_{\ell})$
commutes with every element in the commutant of $R$.

Since the action of $\Frob^{m}_q$ preserves the bilinear form on $V_{\ell}$,
if $\alpha$ is an eigenvalue of $\Frob^{m}_q$,
then $\alpha^{-1}$ is also an eigenvalue of $\Frob^{m}_q$.
Since all the eigenvalues of $\Frob^{m}_q$ on $V_{\ell}$
are not roots of unity,
we denote the distinct eigenvalues of $\Frob^{m}_q$ by
$\alpha_1, \alpha_1^{-1}, \ldots, \alpha_r, \alpha_r^{-1} \in \overline{\Q}_{\ell}$.
Let $W_1, W_1^{-},\ldots, W_r, W_r^{-}$
be the eigenspaces of the eigenvalues
$\alpha_1, \alpha_1^{-1}, \ldots, \alpha_r, \alpha_r^{-1}$, respectively.
Since $\Frob^{m}_q$ acts semisimply on $V_{\ell}$, we have
\[ V_{\ell} \otimes_{\Q_{\ell}} \overline{\Q}_{\ell} \cong
   \bigoplus_{i=1}^{r} (W_i \oplus W_i^{-}). \]
Hence we have
\begin{align*}
\End_{\Frob^{m}_q}(V_{\ell} \otimes_{\Q_{\ell}} \overline{\Q}_{\ell})
&\cong \bigoplus^{r}_{i=1} (\End_{\overline{\Q}_{\ell}}(W_i) \oplus \End_{\overline{\Q}_{\ell}}(W_i^{-})), \\
\SO_{\Frob^{m}_q}(V_{\ell} \otimes_{\Q_{\ell}} \overline{\Q}_{\ell})
&\cong \prod^{r}_{i=1} \GL(W_i).
\end{align*}
By Schur's lemma, every element
$g \in \End_{\overline{\Q}_{\ell}}(V_{\ell} \otimes_{\Q_{\ell}} \overline{\Q}_{\ell})$
in the commutant of $R$ is written as
\[
g = \bigoplus_{i=1}^{r} (g_i \oplus g_i^{-}),
\]
where $g_1,g_1^{-},\ldots,g_r,g_r^{-}$ are multiplication by scalars.
Hence $g$ commutes with every element of
$\End_{\Frob^{m}_q}(V_{\ell} \otimes_{\Q_{\ell}} \overline{\Q}_{\ell})$.
\end{proof}

\begin{lem}
\label{Lemma:Endomorphism2}
Let $G$ be an algebraic group over an algebraically closed field $k$ of characteristic $0$.
Let $V$ be a finite dimensional $k$-vector space,
and $\rho \colon G \to \GL(V)$ a morphism of algebraic groups over $k$.
For any Zariski dense subset $Z \subset G(k)$,
we have $\langle \rho(Z) \rangle = \langle \rho(G(k)) \rangle$,
where $\langle \rho(Z) \rangle$ (resp.\ $\langle \rho(G(k)) \rangle$)
is the $k$-vector subspace of $\End_k(V)$ spanned by $\rho(Z)$ (resp.\ $\rho(G(k))$).
\end{lem}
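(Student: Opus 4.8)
The plan is to reduce the statement to two elementary facts: a $k$-linear subspace of an affine space is Zariski closed, and morphisms of $k$-varieties are continuous for the Zariski topology. First I would set $W := \langle \rho(Z) \rangle \subseteq \End_k(V)$. Since $\End_k(V)$ is a finite-dimensional $k$-vector space, it is naturally identified with the affine space $\mathbb{A}^{(\dim_k V)^2}_k$, and $W$, being a linear subspace (the span of the possibly infinite set $\rho(Z)$ inside a finite-dimensional space), is cut out by finitely many linear equations; hence $W$ is a Zariski closed subset of $\End_k(V)$.

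Next I would regard $\rho$ as a morphism of $k$-varieties $G \to \End_k(V)$, obtained by composing the given morphism $G \to \GL(V)$ with the open immersion $\GL(V) \hookrightarrow \End_k(V)$. Then $\rho^{-1}(W)$ is a Zariski closed subset of $G$. By the definition of $W$ we have $Z \subseteq \rho^{-1}(W)$, and since $Z$ is Zariski dense in $G$ by hypothesis, it follows that $\rho^{-1}(W) = G$. In particular $\rho(G(k)) \subseteq W$, so $\langle \rho(G(k)) \rangle \subseteq W = \langle \rho(Z) \rangle$. The reverse inclusion $\langle \rho(Z) \rangle \subseteq \langle \rho(G(k)) \rangle$ is immediate from $Z \subseteq G(k)$, which finishes the argument.

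There is essentially no serious obstacle here; the only point to keep in mind is that "Zariski dense subset" is used in the sense that the Zariski closure of $Z$ in $G$ is all of $G$, which is exactly what lets one pass from $Z \subseteq \rho^{-1}(W)$ to $\rho^{-1}(W) = G$. I would also remark that the hypothesis $\chara k = 0$ is not actually needed for this argument, and that $G$ need not be assumed connected or reduced beyond being a $k$-variety on which $\rho$ is a morphism; these assumptions are harmless and match the intended application.
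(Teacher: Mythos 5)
Your proof is correct, but it takes a genuinely different route from the paper's. The paper fixes $d := \dim_k \langle \rho(G(k)) \rangle$ and considers the morphism $\psi \colon G^d \to \wedge^d \End_k(V)$ sending a $d$-tuple to the wedge of its images under $\rho$; if $\dim_k \langle \rho(Z) \rangle < d$, then $\psi$ vanishes on $Z^d$, which is Zariski dense in $G^d$, hence $\psi$ vanishes on all of $G(k)^d$, contradicting the existence of $d$ linearly independent elements of $\rho(G(k))$. You instead observe that the span $W := \langle \rho(Z) \rangle$ is a linear, hence Zariski closed, subset of $\End_k(V)$, so $\rho^{-1}(W)$ is closed in $G$, contains the dense set $Z$, and therefore is all of $G$, giving $\rho(G(k)) \subseteq W$ directly. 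Your argument is shorter: it avoids the auxiliary step that a product of dense subsets is dense in $G^d$ (which the paper's proof implicitly needs), and, as you note, it uses neither characteristic $0$ nor any structural hypotheses on $G$ beyond $\rho$ being a morphism of varieties over an algebraically closed field. The wedge-power device of the paper is the natural tool when one must compare dimensions of spans without an ambient closed subspace in hand, but here the closed-preimage argument suffices and is the more economical of the two; both establish exactly the same statement.
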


\begin{proof}
We put $d := \dim_k \langle \rho(G(k)) \rangle$.
Let $\psi$ be the composite of the following maps
\[
G(k)^d := \prod_{i=1}^{d} G(k) \to 
\prod_{i=1}^{d} \End_k(V) \to 
\wedge^d \End_k(V).
\]
If $\dim_k \langle \langle \rho(Z) \rangle < d$,
we have $\psi(Z^d) = \{ 0 \}$.
Since $Z^d \subset G(k)^d$ is Zariski dense,
we have $\psi(G(k)^d) = \{ 0 \}$,
which is absurd.
The contradiction shows $\dim_k \langle \rho(Z) \rangle = d$.
\end{proof}

\begin{lem}
\label{Lemma:SemisimpleElementGenerarion}
\begin{enumerate}
\item As a $\Q_{\ell}$-vector space, $\End_{\Frob^{m}_q}(V_{\ell})$
is spanned by the image of semisimple elements in $I(\Q)$.

\item There exist maximal tori
$T_1,\ldots,T_n \subset I$ over $\Q$ such that
the $\Q_{\ell}$-vector space $\End_{\Frob^{m}_q}(V_{\ell})$
is spanned by the image of $T_1(\Q),\ldots,T_n(\Q)$.
\end{enumerate}
\end{lem}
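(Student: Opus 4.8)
The plan is to derive both parts from Lemma~\ref{Lemma:Endomorphism1} and Lemma~\ref{Lemma:Endomorphism2} by combining a Zariski-density argument over $\overline{\Q}_\ell$ with a descent to $\Q_\ell$. Throughout I use that $I$ is a connected reductive group over $\Q$: by Proposition~\ref{Proposition:KisinGroup(I)} and Corollary~\ref{Corollary:KisinGroup(II)} the canonical map $I_{\Q_\ell}\to I_\ell$ is an isomorphism and (from the proof of Proposition~\ref{Proposition:KisinGroup(I)}) $I_\ell$ is connected reductive, so $I$ is connected reductive and geometrically integral. I write $\rho\colon I_{\overline{\Q}_\ell}\to\GL(V_\ell\otimes_{\Q_\ell}\overline{\Q}_\ell)$ for the resulting representation on the transcendental part.

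First I would establish part~(1) with ``semisimple'' strengthened to ``regular semisimple''. Since $I$ is connected reductive over the infinite field $\Q$ it has a maximal torus defined over $\Q$, and choosing a regular element in the Zariski-dense subset of $\Q$-points of that torus produces a $\Q$-point of the regular semisimple locus $I^{\mathrm{rs}}\subset I$. As $I^{\mathrm{rs}}$ is a dense open subscheme of the geometrically integral $I$, it is itself geometrically integral and smooth, hence $I^{\mathrm{rs}}(\Q)$ is Zariski dense in $I^{\mathrm{rs}}$, and therefore in $I_{\overline{\Q}_\ell}$. Applying Lemma~\ref{Lemma:Endomorphism2} over $k=\overline{\Q}_\ell$ to $Z=I^{\mathrm{rs}}(\Q)$ and then Lemma~\ref{Lemma:Endomorphism1}, the $\overline{\Q}_\ell$-span of $\rho(I^{\mathrm{rs}}(\Q))$ equals $\End_{\Frob^m_q}(V_\ell\otimes_{\Q_\ell}\overline{\Q}_\ell)$. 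Now $\rho(I^{\mathrm{rs}}(\Q))$ lies in the $\Q_\ell$-subspace $\End_{\Frob^m_q}(V_\ell)\subset\End_{\Q_\ell}(V_\ell)$ --- the image of $I(\Q)$ in $\GL_{\Q_\ell}(V_\ell)$ is contained there because $I_{\Q_\ell}\cong I_{\ell,m}$ centralizes $\Frob^m_q$ --- and a subset of a $\Q_\ell$-vector space spans it over $\Q_\ell$ if and only if it spans the scalar extension to $\overline{\Q}_\ell$, using $\End_{\Frob^m_q}(V_\ell)\otimes_{\Q_\ell}\overline{\Q}_\ell=\End_{\Frob^m_q}(V_\ell\otimes_{\Q_\ell}\overline{\Q}_\ell)$. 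Hence $\rho(I^{\mathrm{rs}}(\Q))$ spans $\End_{\Frob^m_q}(V_\ell)$ over $\Q_\ell$, which gives part~(1) since regular semisimple elements are semisimple.

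For part~(2), using that $\End_{\Frob^m_q}(V_\ell)$ is finite dimensional I would extract from part~(1) finitely many regular semisimple elements $g_1,\dots,g_n\in I(\Q)$ whose images span it, and set $T_i:=Z_I(g_i)^{\circ}$. Since $g_i\in I(\Q)$, each $T_i$ is a connected $\Q$-subgroup of $I$; since $g_i$ is regular semisimple, $T_i$ is a maximal torus of $I$, and it contains $g_i$ because $g_i$ is central in $Z_I(g_i)$, hence commutes with $T_i$, hence lies in $Z_I(T_i)=T_i$ (a maximal torus in a connected reductive group being self-centralizing). Then the $\Q_\ell$-span of the images of $T_1(\Q),\dots,T_n(\Q)$ contains the images of $g_1,\dots,g_n$ and so equals $\End_{\Frob^m_q}(V_\ell)$. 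Apart from the two standard structural facts about $I$ invoked above --- Zariski density of the $\Q$-points of the regular semisimple locus, and the existence of a $\Q$-rational maximal torus through each semisimple $\Q$-point --- which both reduce to the reductivity and connectedness of $I$ recorded in Proposition~\ref{Proposition:KisinGroup(I)} and Corollary~\ref{Corollary:KisinGroup(II)}, the argument is formal; I do not anticipate a genuine obstacle.
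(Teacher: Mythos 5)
Your strategy coincides with the paper's: both proofs combine Lemma \ref{Lemma:Endomorphism1} and Lemma \ref{Lemma:Endomorphism2} with the Zariski density of rational semisimple points of $I$, descend the spanning statement from $\overline{\Q}_{\ell}$ to $\Q_{\ell}$ via $\End_{\Frob^{m}_q}(V_{\ell})\otimes_{\Q_{\ell}}\overline{\Q}_{\ell}=\End_{\Frob^{m}_q}(V_{\ell}\otimes_{\Q_{\ell}}\overline{\Q}_{\ell})$, and then place the chosen elements of $I(\Q)$ inside maximal $\Q$-tori. For part (2) the paper simply invokes that every semisimple element of $I(\Q)$ lies in a maximal torus of $I$ defined over $\Q$, whereas you restrict to regular semisimple elements and take $T_i=Z_I(g_i)^{\circ}$; that variant is correct and in fact sidesteps the (true but not completely trivial) rationality statement for non-regular semisimple elements.

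There is, however, one step whose justification does not hold as written: you infer that $I^{\mathrm{rs}}(\Q)$ is Zariski dense in $I^{\mathrm{rs}}$ from the facts that $I^{\mathrm{rs}}$ is smooth and geometrically integral (and has a $\Q$-point). Smoothness and geometric integrality of a $\Q$-variety, even together with the existence of a rational point, do not imply density of its rational points, so this inference would fail as a general principle. What is actually needed -- and what the paper cites directly, namely that the semisimple elements of $I(\Q)$ are Zariski dense in $I_{\overline{\Q}_{\ell}}$ (SGA3, Exp.\ XIV, Cor.\ 6.4) -- can instead be obtained as follows: a connected reductive group over an infinite field is unirational over that field, so $I(\Q)$ is Zariski dense in $I$; intersecting with the open subset $I^{\mathrm{rs}}$ gives density of $I^{\mathrm{rs}}(\Q)$ in $I^{\mathrm{rs}}$, hence in $I$, and since the Zariski closure of a set of $\Q$-points is defined over $\Q$ and $I$ is geometrically integral, density passes to $I_{\overline{\Q}_{\ell}}$. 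With this repair (or by quoting the same reference as the paper), the remainder of your argument, including the descent of the spanning statement to $\Q_{\ell}$ and the construction of the tori $T_i$ containing $g_i$, is correct.
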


\begin{proof}
(1) \ Since $I$ is a connected reductive algebraic group over $\Q$,
the set of semisimple elements in $I(\Q)$ is Zariski dense in $I(\overline{\Q}_{\ell})$;
see \cite[Expose XIV, Corollaire 6.4]{SGA3}.
By Lemma \ref{Lemma:Endomorphism1} and Lemma \ref{Lemma:Endomorphism2},
the $\overline{\Q}_{\ell}$-vector space
$\End_{\Frob^{m}_q}(V_{\ell} \otimes_{\Q_{\ell}} \overline{\Q}_{\ell})$
is spanned by the image of semisimple elements in $I(\Q)$.
Since
\[ \End_{\Frob^{m}_q}(V_{\ell} \otimes_{\Q_{\ell}} \overline{\Q}_{\ell})
    = \End_{\Frob^{m}_q}(V_{\ell}) \otimes_{\Q_{\ell}} \overline{\Q}_{\ell}, \]
the $\Q_{\ell}$-vector space
$\End_{\Frob^{m}_q}(V_{\ell})$
is spanned by the image of semisimple elements in $I(\Q)$.

(2) \ The assertion follows from the fact that
every semisimple element of $I(\Q)$ is contained in
a maximal torus of $I$ over $\Q$.
\end{proof}

\subsection{The results of Mukai and Buskin}

The following theorem will be used
in our proof of Theorem \ref{Theorem:TateConjecture:Restate}.

\begin{thm}[Mukai, Buskin]
\label{Theorem:MukaiBuskin}
Let $T$ and $S$ be projective $K3$ surfaces over $\C$.
Let $\psi \colon H^2_{B}(S,\Q) \cong H^2_{B}(T,\Q)$
be an isomorphism of $\Q$-vector spaces which preserves
the cup product pairings and the $\Q$-Hodge structure.
Let $[\psi] \in H^4_{B}(S \times T,\Q(2))$
be the class corresponding to $\psi$ by the Poincar\'e duality
and the K\"unneth formula.
Then $[\psi]$ is the class of an algebraic cycle of codimension $2$ on $S \times T$.
\end{thm}

\begin{proof}
See \cite[Theorem 1.1]{Buskin}, \cite[Theorem 2]{Mukai}.
(See also \cite[Corollary 0.4]{Huybrechts}.)
\end{proof}

\subsection{Proof of Theorem \ref{Theorem:TateConjecture:Restate}}

In this subsection, we shall prove Theorem \ref{Theorem:TateConjecture:Restate}.

By Lemma \ref{Lemma:TateConjectureSupersingular},
it is enough to prove Theorem \ref{Theorem:TateConjecture:Restate}
for $K3$ surfaces of finite height.

Let $X$ be a $K3$ surface of finite height over $\F_q$.
After replacing $\F_q$ by its finite extension, we may assume $X$ comes from an $\F_q$-valued point $s \in M^{\mathrm{sm}}_{2d, \K^p_0, \Z_{(p)}}(\F_q)$ satisfying the conditions as in Section \ref{Subsection:$F$-crystals on Shimura varieties and the cohomology of K3 surfaces}.
Let $I$ be the algebraic group over $\Q$ associated with $s \in Z_{\mathrm{K}^p}(\F_q)$; see Definition \ref{Definition: algebraic group I}.

By Lemma \ref{Lemma:TateConjecture:Reduction},
it is enough prove Theorem \ref{Theorem:TateConjecture:Restate} (1)
for a fixed $\ell$.
We fix a prime number $\ell \neq p$.
We take a sufficiently divisible integer $m \geq 1$
as in Section \ref{Subsection:EndomorphismsCohomologyK3}.
Replacing $\F_q$ by a finite extension of it
(see Lemma \ref{Lemma:TateConjectureFiniteExtension}),
we may assume $m=1$.

By Lemma \ref{Lemma:SemisimpleElementGenerarion},
there exist maximal tori $T_1,\ldots,T_n \subset I$ over $\Q$
such that $\End_{\Frob_q}(V_{\ell})$
is spanned by the image of $T_1(\Q),\ldots,T_n(\Q)$.
By Lemma \ref{Lemma:SurjectivityOutsideMiddleDegree}
and Lemma \ref{Lemma:TateConjecturePrimitivePart},
it is enough to show that, for every $i$ with $1 \leq i \leq n$,
the image of $T_i(\Q)$ in $\End_{\Frob_q}(V_{\ell})$
is spanned by classes of algebraic cycle of codimension $2$ on
$X_{\overline{\F}_q} \times X_{\overline{\F}_q}$.

Fix an integer $i$ with $1 \leq i \leq n$.
By Theorem \ref{Theorem:LiftingTorusAction:Restate},
there exist a finite extension $K$ of $W(\overline{\F}_{q})[1/p]$
and a quasi-polarized $K3$ surface
$(\mathcal{X}, \mathcal{L})$
over $\O_K$ whose special fiber is isomorphic to
$(X_{\overline{\F}_q}, \mathscr{L}_{\overline{\F}_q})$
such that, for any embedding $K \hookrightarrow \C$,
there is a homomorphism of algebraic groups over $\Q$
\[ T_i \to \SO(P^2_{B}(\mathcal{X}_{\C}, \Q(1))), \]
and the action of every element of $T_i(\Q)$
on $P^2_{B}(\mathcal{X}_{\C}, \Q(1))$
preserves the $\Q$-Hodge structure on it.
We extend the action of $T_i(\Q)$ on the primitive part
$P^2_{B}(\mathcal{X}_{\C},\,\Q(1))$
to the full cohomology
$H^2_{B}(\mathcal{X}_{\C},\,\Q(1))$
so that every element of $T_i(\Q)$ acts trivially
on the first Chern class $\mathrm{ch}_B(\mathcal{L}_{\C})$.
Hence we have a homomorphism of algebraic groups over $\Q$
\[ T_i \to \SO(H^2_{B}(\mathcal{X}_{\C}, \Q(1))) \]
whose image preserves the $\Q$-Hodge structure on
$H^2_{B}(\mathcal{X}_{\C}, \Q(1))$.

By the results of Mukai and Buskin (see Theorem \ref{Theorem:MukaiBuskin}),
the image of every element of $T_i(\Q)$ in $\SO(H^2_{B}(\mathcal{X}_{\C}, \Q(1)))$
is a class of an algebraic cycle of codimension $2$ on
$\mathcal{X}_{\C} \times \mathcal{X}_{\C}$.

Taking the specialization of algebraic cycles,
we conclude that the image of every element of $T_i(\Q)$ in $\End_{\Frob_q}(V_{\ell})$
is a class of an algebraic cycle of codimension $2$ on
$X_{\overline{\F}_q} \times X_{\overline{\F}_q}$.

The proof of Theorem \ref{Theorem:TateConjecture:Restate} is complete.
\qed


\section{Compatibility of $p$-adic comparison isomorphisms}
\label{Section:CompatibilityComparisonIsom}

Throughout the article, we only use the following types of $p$-adic period morphisms
from the literature:
\begin{enumerate}
\item the de Rham comparison map of Scholze \cite{ScholzeHodge}, 
\item the crystalline comparison map of Bhatt-Morrow-Scholze \cite{BMS}, 
\item Faltings' comparison map for $p$-divisible groups over $\O_K$ \cite{Faltings99}, and
\item Lau's period morphism in display theory \cite{LauGalois}. 
\end{enumerate}

All these period morphisms are known to be compatible.
The maps (1) and (2) are compatible via the Berthelot-Ogus isomorphism by \cite[Theorem 13.1]{BMS}.
For abelian schemes over $\O_K$, (2) and (3) are compatible by \cite[Proposition 14.8.3]{Scholze};
see also \cite[Proposition 4.15]{ScholzeSurvey} for the Hodge-Tate counterpart.
Finally, (3) and (4) are compatible by \cite[Proposition 6.2]{LauGalois}. More details are given below. 

We also check some basic properties of (1) and (2) to use
the results of Blasius-Wintenberger \cite{Blasius}. 

In this section, we fix a perfect field $k$ of characteristic $p>0$.
We put $W := W(k)$.
We fix a finite totally ramified extension $K$ of $W[1/p]$,
and an algebraic closure $\overline{K}$ of $K$.
We use the same notation as in Section \ref{Section:Breuil-Kisin modules}.

\subsection{The de Rham comparison map of Scholze}
\label{Subsection:Scholze's de Rham comparison map}
We give some basic properties of the de Rham comparison map constructed by Scholze \cite{ScholzeHodge}.
(Compare with \cite[Theorem A1]{TsujiSurvey}.)

Let $X$ be a smooth proper variety over $K$,
and denote by $X^{\ad}$ the adic space associated with it.
By \cite[Theorem 8.4]{ScholzeHodge}, we have an isomorphism
\[
H^i_{\dR}(X^{\ad}/K)\otimes_K B_{\dR} \cong H^i_{\et}(X^{\ad}_{C}, \Q_p) \otimes_{\Q_p} B_{\dR}, 
\]
where $C$ is the completion of ${\overline{K}}$. Another construction of the same map is given in \cite[Theorem 13.1]{BMS}.
Combined with GAGA results such as \cite[Theorem 3.7.2]{Huber}, we get a filtered isomorphism
of $\Gal(\overline{K}/K)$-modules
\[
c_{\dR, X}\colon H^i_{\dR}(X/K)\otimes_K B_{\dR} \overset{\cong}{\longrightarrow} H^i_{\et}(X_{\overline{K}}, \Q_p) \otimes_{\Q_p} B_{\dR}.
\]

By construction, the de Rham comparison map $c_{\dR, X}$ is functorial in $X$ with respect to pullback, and compatible with cup products. 

\begin{rem}
\label{Remark:deRhamComparisonAlgebraicSpace}
The construction of $c_{\dR, X}$ can be generalized to any smooth proper algebraic space $X$ over $K$.
Indeed, \cite{ConradTemkin} supplies a functorial construction of the analytification of $X$, and the corresponding adic space $X^{\ad}$.
As before, we have an isomorphism
\[
H^i_{\dR}(X^{\ad}/K)\otimes_K B_{\dR} \cong H^i_{\et}(X^{\ad}_{C}, \Q_p) \otimes_{\Q_p} B_{\dR}
\]
by \cite[Theorem 8.4]{ScholzeHodge}.
In order to construct $c_{\dR, X}$, it is enough to show
\[
H^i_{\dR}(X/K) \cong H^i_{\dR}(X^{\ad}/K)
\quad \text{and} \quad
H^i_{\et}(X_{\overline{K}}, \Q_p) \cong H^i_{\et}(X^{\ad}_{C}, \Q_p).
\]
(The analytification induces a morphism of \'etale topoi.)
\begin{itemize}
\item Concerning the isomorphism for de Rham cohomology,
by the Hodge-de Rham spectral sequence,
it is enough to show $H^i(X,\Omega^j) \cong H^i(X^{\ad},\Omega^j)$ for every $i,j$.
It follows from GAGA results in \cite{ConradTemkin}.
\item  For the isomorphism for $p$-adic \'etale cohomology,
we take an \'etale covering
$\mathfrak{U} = \{ U_{\alpha} \to X \}_{\alpha}$
of $X$ which consists of schemes.
We put $\mathfrak{U}_{\overline{K}} := \{ (U_{\alpha})_{\overline{K}} \to X_{\overline{K}} \}_{\alpha}$
and $\mathfrak{U}^{\ad}_{C} := \{ (U_{\alpha})^{\ad}_{C} \to X^{\ad}_{C} \}_{\alpha}$.
The analytification preserves \'etale coverings.
We have the \v{C}ech-to-cohomology spectral sequences
\begin{align*}
E_2^{i,j} = \check{H}^i(\mathfrak{U}_{\overline{K}}, \underline{H}^j(\Z/p^n\Z)) &\Rightarrow
H^{i+j}_{\et}(X_{\overline{K}}, \Z/p^n\Z), \\
E_2^{i,j} = \check{H}^i(\mathfrak{U}^{\ad}_{C}, \underline{H}^j(\Z/p^n\Z)) &\Rightarrow
H^{i+j}_{\et}(X^{\ad}_{C}, \Z/p^n\Z).
\end{align*}
(See \cite[Tag 03OW]{StacksProject}.)
There is a canonical morphism between these spectral sequences.
By Huber's theorem \cite[Theorem 3.8.1]{Huber},
it induces isomorphisms between $E_2$-terms.
Therefore, we have an isomorphism
\[
H^i_{\et}(X_{\overline{K}}, \Z/p^n\Z) \cong
H^i_{\et}(X^{\ad}_C, \Z/p^n\Z)
\]
for every $i$ and $n$. 
Taking the projective limit with respect to $n$
and tensoring with $\Q_p$, the required isomorphism for $p$-adic
\'etale cohomology follows.
\end{itemize}
\end{rem}

\begin{prop}\label{Proposition:de Rham comparison and Chern class}
Let $\mathscr{L}$ be a line bundle on $X$, and denote by $\ch_{\dR}(\mathscr{L})$ and $\ch_p(\mathscr{L})$ the first Chern classes of $\mathscr{L}$ in the de Rham cohomology and the $p$-adic \'etale cohomology respectively. The following equality holds
\[
c_{\dR, X}(\ch_{\dR}(\mathscr{L})) = \ch_p (\mathscr{L}) \in H^2_{\et} (X_{\overline{K}}, \Q_p)(1) \subset H^2_{\et}(X_{\overline{K}}, \Q_p)\otimes_{\Q_p} B_{\dR}.  
\]
Here, $\Q_p(1)$ is naturally embedded into $B_{\dR}$. 
\end{prop}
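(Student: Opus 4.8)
The plan is to reduce the claim to the case of projective space via the standard functoriality and multiplicativity properties of $c_{\dR, X}$, combined with the compatibility of Chern classes with pullback. First I would recall that the first Chern class of a line bundle $\mathscr{L}$ on $X$ in either cohomology theory is pulled back, under a morphism $f\colon X \to Y$, from the first Chern class of a line bundle on $Y$ whenever $\mathscr{L} = f^*\mathscr{M}$; and that both $\ch_{\dR}$ and $\ch_p$ are compatible with tensor products, so it suffices to treat very ample line bundles (every line bundle on a smooth proper variety is a difference of two very ample ones after twisting, and $c_{\dR,X}$ is linear). Given a very ample $\mathscr{L}$, choose a closed immersion $\iota\colon X \hookrightarrow \P^N_K$ with $\mathscr{L} \cong \iota^*\O(1)$. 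Since $c_{\dR, X}$ is functorial with respect to pullback (as noted just before the statement) and Chern classes are compatible with pullback, the equality for $X$ follows from the equality for $\P^N_K$ and the line bundle $\O(1)$.

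Next I would prove the statement for $(\P^N_K, \O(1))$. Here $H^2_{\dR}(\P^N_K/K)$ is one-dimensional over $K$, generated by $\ch_{\dR}(\O(1))$, and $H^2_{\et}(\P^N_{\overline{K}}, \Q_p)(1)$ is one-dimensional over $\Q_p$, generated by $\ch_p(\O(1))$. The compatibility of $c_{\dR, \P^N}$ with cup products, together with the computation of the cohomology ring of projective space in both theories, shows that $c_{\dR, \P^N}$ sends the line $\langle \ch_{\dR}(\O(1))\rangle \otimes_K B_{\dR}$ isomorphically onto $\langle \ch_p(\O(1))\rangle \otimes_{\Q_p} B_{\dR}$, hence $c_{\dR, \P^N}(\ch_{\dR}(\O(1))) = \lambda \cdot \ch_p(\O(1))$ for some $\lambda \in B_{\dR}^\times$. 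It remains to pin down $\lambda = 1$. I would do this by a dimension/degree normalization: the top self-cup-product $\ch_{\dR}(\O(1))^N$ generates $H^{2N}_{\dR}(\P^N_K/K)$ and has trace (integral) equal to $1$ under the de Rham trace map, while $\ch_p(\O(1))^N$ has trace $1$ under the $p$-adic trace map; since $c_{\dR, \P^N}$ is compatible with trace maps (this is part of the construction, and is used elsewhere in the paper, e.g. in Section~\ref{Subsection:The primitive cohomology of quasi-polarized $K3$ surfaces}), applying $c_{\dR, \P^N}$ to $\ch_{\dR}(\O(1))^N$ gives $\lambda^N \ch_p(\O(1))^N$, forcing $\lambda^N = 1$. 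Combined with the fact that $\lambda$ lies in the subring $\Q_p(1)$-component in the appropriate graded sense (both Chern classes live in weight-one pieces, so $\lambda$ has Hodge-Tate weight zero and Galois acts trivially, i.e. $\lambda \in \Q_p^\times$), this gives $\lambda = 1$.

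Alternatively, and perhaps more cleanly, I would invoke the comparison between $c_{\dR, X}$ and the crystalline comparison map $c_{\cris, X}$ via the Berthelot-Ogus isomorphism (\cite[Theorem 13.1]{BMS}), and the well-known compatibility of the crystalline Chern class with the $p$-adic \'etale Chern class (which is part of the standard formalism, following from the case of $\P^N$ and the projective bundle formula); this routes the normalization through crystalline cohomology where the cycle class normalization is standard. Either route reduces to the same elementary normalization on $\P^N$.

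The main obstacle I anticipate is the normalization constant $\lambda$: showing functoriality and multiplicativity is routine given what is already recorded about $c_{\dR, X}$, but verifying that $c_{\dR, X}$ is genuinely normalized so that $\lambda = 1$ (rather than, say, a nonzero power of the cyclotomic period $t$) requires care about which convention for the de Rham comparison and for $\Q_p(1) \hookrightarrow B_{\dR}$ is in force. I would handle this by checking that the trace-map (or Poincar\'e duality) compatibility of $c_{\dR, X}$ is normalized compatibly with the embedding $\Q_p(1)\hookrightarrow B_{\dR}$, which forces $\lambda$ to be a root of unity in $\Q_p$ of the right weight, hence $1$.
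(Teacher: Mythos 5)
Your reduction to $(\P^N,\O(1))$ via very ampleness, pullback functoriality and additivity of $c_1$ is fine as far as it goes, but the crucial normalization step is circular in the context of this paper. You pin down the constant $\lambda$ using the compatibility of $c_{\dR}$ with trace maps, asserting that this "is part of the construction"; it is not. In the paper the trace compatibility is a \emph{corollary} of the present proposition (it is deduced from the cycle class of a point, which in turn is deduced, via the splitting principle, from the Chern class statement you are trying to prove), and it is not otherwise available from \cite{ScholzeHodge}: compatibility of Scholze's comparison with Poincar\'e duality/trace is precisely the kind of normalization statement that cannot be extracted from abstract functoriality and cup-product compatibility alone. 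Your fallback argument does not close the gap either: Galois equivariance only gives $\lambda\in K^{\times}$ (since $B_{\dR}^{\Gal(\overline K/K)}=K$), not $\lambda\in\Q_p^{\times}$, and a relation $\lambda^{N}=1$ (itself resting on the unproved trace compatibility) does not force $\lambda=1$. The alternative route through the crystalline comparison is circular for the same reason: in this paper the compatibility of $c_{\cris,\mathscr{X}}$ with Chern classes (Corollary \ref{Corollary:Crystalline comparison and Chern class}) is deduced from the de Rham statement via the Berthelot-Ogus interpretation, and identifying the Bhatt-Morrow-Scholze map with the classical comparison maps for which cycle-class compatibility is known is a nontrivial statement you cannot simply quote as "standard formalism."

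There is also a scope issue: the proposition is stated for smooth \emph{proper} varieties (and the paper's proof even extends verbatim to smooth proper rigid analytic varieties), whereas writing $\mathscr{L}$ as a difference of very ample line bundles requires projectivity. The paper's actual proof avoids both problems by working at the level of the pro-\'etale site: it rewrites $\ch_{\dR}$ using the sheaf $\mathcal{O}\mathbb{B}_{\dR}^{+}$, the surjection $\theta$ and the logarithm on $1+\Ker\theta$, rewrites $\ch_p$ via the Kummer-type sequence for $\hat{\mathcal{O}}^{\times}_{X^{\flat}}$, and then exhibits an explicit commutative diagram (involving the Teichm\"uller lift) identifying the two constructions; the normalization is thereby built in rather than recovered a posteriori. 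To salvage your approach you would have to establish the normalization on, say, $\P^1$ by opening up the construction of $c_{\dR}$ (an explicit period computation), at which point you are essentially redoing the paper's argument in a special case.
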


\begin{proof}
The proof given below shows an analogous statement for any smooth proper rigid analytic variety over $K$ as well. 

First, we recall the definition of $\ch_{\dR}(\mathscr{L})$. There are an exact sequence of complex of sheaves on $X$
\[
0 \to (1 \overset{1 \mapsto 0}{\longrightarrow} \Omega_X^{\geq 1}) \to
(\mathcal{O}_X^{\times} \overset{d\log}{\longrightarrow} \Omega_X^{\geq 1}) \to
\mathcal{O}_X^{\times} \to 0,
\]
and an isomorphism
\[
(1 \overset{1 \mapsto 0}{\longrightarrow} \Omega_X^{\geq 1}) \cong
(0 \longrightarrow \Omega_X^{\geq 1}). 
\]
These induce the Chern class map
\[
\ch_{\dR}\colon H^1(X, \mathcal{O}_X^{\times}) \to H^2_{\dR}(X/K). 
\]
This construction can be analytified. 

There is a similar construction on the pro-\'etale site $X^{\ad}_{\proet}$. Recall that Scholze \cite{ScholzeHodgeErratum} defined a sheaf $\mathcal{O}\mathbb{B}_{\dR}^+$ of $\mathcal{O}_{X}$-modules on $X^{\ad}_{\proet}$ with a surjection 
$\theta\colon \mathcal{O}\mathbb{B}_{\dR}^+ \to \hat{\mathcal{O}}_X$. 
In the above construction, we will replace $\mathcal{O}_X^{\times} \overset{d\log}{\longrightarrow} \Omega_X^{\geq 1}$ by
\[
\mathcal{O}\mathbb{B}_{\dR}^{+\times} \overset{d\log}{\longrightarrow} \mathcal{O}\mathbb{B}_{\dR}^{+}\otimes_{\mathcal{O}_X} \Omega_X^{\geq 1}. 
\]
Similarly, we replace $(1 \overset{1 \mapsto 0}{\longrightarrow} \Omega_X^{\geq 1})$ by
\[
1 + \Ker \theta \overset{d\log}{\longrightarrow} \mathcal{O}\mathbb{B}_{\dR}^{+}\otimes_{\mathcal{O}_X} \Omega_X^{\geq 1}. 
\]
As $\mathcal{O}\mathbb{B}_{\dR}^+$ is complete with respect to $\Ker \theta$, we have the logarithm map
\[
\log\colon 1 + \Ker \theta \to \Ker \theta. 
\]
This finishes the construction on $X^{\ad}_{\proet}$. 

On the other hand, $\ch_{p}(\mathscr{L})$ is defined at the level of the pro-\'etale site as follows: it is induced by the following exact sequence of sheaves on $X^{\ad}_{\proet}$
\[
0 \to \hat{\Z}_p (1) \to \hat{\mathcal{O}}^{\times}_{X^{\flat}}=\varprojlim_{x \mapsto x^p} \hat{\mathcal{O}}_X^{\times} \to
\hat{\mathcal{O}}_X^{\times} \to 0. 
\]

The following commutative diagram completes the proof:
\[
\xymatrix{
\hat{\Z}_p (1) \ar[d] \ar[r] & 
\hat{\mathcal{O}}^{\times}_{X^{\flat}} \ar[d]^{1\otimes [-]} \ar[r] & 
\hat{\mathcal{O}}_X^{\times} \ar[d] \\
1 + \Ker \theta \ar[r] &
\mathcal{O}\mathbb{B}_{\dR}^{+\times} \ar[r] &
\hat{\mathcal{O}}_X^{\times}, 
}
\]
where $[-]$ denotes the Teichm\"uller lift. 
\end{proof}

\begin{cor}
The de Rham comparison map $c_{\dR, X}$ is compatible with Chern classes of vector bundles, hence classes of algebraic cycles.  
\end{cor}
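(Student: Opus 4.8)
The plan is to bootstrap from Proposition \ref{Proposition:de Rham comparison and Chern class}, which handles first Chern classes of line bundles, via the splitting principle, and then to pass from Chern classes of vector bundles to classes of algebraic cycles by a standard $K$-theoretic reduction.

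First I would treat the Chern classes of a vector bundle $E$ of rank $r$ on $X$. Let $\pi\colon F\to X$ be the associated flag bundle; it is a smooth proper (indeed projective) $K$-scheme, so $c_{\dR,F}$ is defined, and by the projective/flag bundle formula the pullback $\pi^{*}$ is injective on both de Rham and $p$-adic \'etale cohomology, compatibly with the comparison maps by functoriality of $c_{\dR}$ in pullback. On $F$ the bundle $\pi^{*}E$ carries a filtration with line bundle graded pieces $L_1,\dots,L_r$, and in each cohomology theory $\pi^{*}c_i(E)$ is the $i$-th elementary symmetric function in the $c_1(L_j)$. Since $c_{\dR,F}$ is multiplicative and matches the classes $c_1(L_j)$ by Proposition \ref{Proposition:de Rham comparison and Chern class}, it carries $\pi^{*}c_{i,\dR}(E)$ to $\pi^{*}c_{i,p}(E)$; injectivity of $\pi^{*}$ then gives $c_{\dR,X}(c_{i,\dR}(E))=c_{i,p}(E)$ for all $i$. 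In particular $c_{\dR,X}$ is compatible with the Chern character $\ch$ of vector bundles.

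For algebraic cycles I would use the standard $K$-theoretic argument. When $X$ is quasi-projective --- which covers all uses of $c_{\dR,X}$ in this paper, where it is applied to $K3$ surfaces and their self-products --- every coherent sheaf on the regular scheme $X$ admits a finite resolution by vector bundles, so the Chern character induces an isomorphism $K_0(X)_{\Q}\xrightarrow{\ \sim\ }\mathrm{CH}^{\bullet}(X)_{\Q}$ and $\mathrm{CH}^{\bullet}(X)_{\Q}$ is spanned by Chern characters of vector bundles. Concretely, for an integral closed subscheme $Z\subset X$ of codimension $c$, Grothendieck--Riemann--Roch (applied to $\mathcal{O}_Z$, resp.\ after an alteration of $Z$) gives $\ch([\mathcal{O}_Z])=[Z]+(\text{terms of codimension}>c)$ in $\mathrm{CH}^{\bullet}(X)_{\Q}$, whence by descending induction on $c$ every cycle class is a $\Q$-linear combination of Chern characters of vector bundles. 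Since the $p$-adic and de Rham cycle class maps are both compatible with their respective Chern characters by the previous paragraph, and $c_{\dR,X}$ is $\Q_p$-linear, it follows that $c_{\dR,X}$ carries de Rham cycle classes to $p$-adic cycle classes. For a general smooth proper algebraic space $X$ one replaces this by Gillet's axiomatic characterization of cycle class maps, whose input is exactly the compatibility with Chern classes just established together with functoriality and multiplicativity.

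The main obstacle is precisely this last passage from Chern classes to cycle classes: it relies on having enough vector bundles (the resolution property) to express $[\mathcal{O}_Z]$ in $K_0(X)$, which is automatic for quasi-projective $X$ but for arbitrary smooth proper algebraic spaces must be handled either through Gillet's formalism or through alterations together with Poincar\'e duality --- the latter requiring compatibility of $c_{\dR,X}$ with trace maps, which is in any case recorded elsewhere in this section. The splitting-principle step, by contrast, is routine bookkeeping.
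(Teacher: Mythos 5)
Your argument follows the same route as the paper: the splitting principle reduces Chern classes of vector bundles to the line-bundle case handled by Proposition \ref{Proposition:de Rham comparison and Chern class}, and cycle classes are then treated as $\Q$-linear combinations of Chern classes (characters) of vector bundles. Your extra care about the resolution property is reasonable but not an issue here, since a smooth separated variety over a field has enough vector bundles, so the proposal is correct and essentially identical to the paper's proof.
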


\begin{proof}
This follows from the splitting principle and the previous proposition.
(The class of an algebraic cycle is written as a $\Q$-linear combination of Chern classes of vector bundles.)
\end{proof}

\begin{cor}
If $X$ is of equidimension $d$,
the de Rham comparison map $c_{\dR,X}$ is compatible with trace maps.
Therefore, $c_{\dR, X}$ is functorial in $X$ with respect to pushforward. 
\end{cor}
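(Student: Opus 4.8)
The statement to prove is that if $X$ is of equidimension $d$, then the de Rham comparison map $c_{\dR,X}$ is compatible with trace maps, hence functorial with respect to pushforward.

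The plan is to reduce the compatibility with trace maps to the already-established compatibility with cup products (functoriality of $c_{\dR,X}$) and with classes of algebraic cycles (the previous corollary), using the standard characterization of the trace map via Poincar\'e duality. Concretely, for a smooth proper $X$ of equidimension $d$ over $K$, the trace map $\mathrm{tr}_{\dR}\colon H^{2d}_{\dR}(X/K)(d) \to K$ is the unique $K$-linear map such that, when paired against $H^0_{\dR}(X/K)$ via the cup product, it reproduces the canonical duality; equivalently, it is normalized by the condition that the class of a closed point (a zero-cycle of degree $1$) maps to $1$. The analogous statement holds for $p$-adic \'etale cohomology: $\mathrm{tr}_{p}\colon H^{2d}_{\et}(X_{\overline K},\Q_p)(d)\to\Q_p$ is characterized the same way. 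Since $c_{\dR,X}$ sends the de Rham cycle class of a closed point to the $p$-adic \'etale cycle class of that closed point by the previous corollary, and since $c_{\dR,X}$ is compatible with cup products, the two trace maps are intertwined by $c_{\dR,X}$ after base change to $B_{\dR}$; as both trace maps are defined over $K$ (resp.\ $\Q_p$) inside $B_{\dR}$, this is genuine compatibility.

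First I would recall that $H^{2d}_{\dR}(X/K)$ is one-dimensional (using $X$ smooth proper geometrically connected of dimension $d$; the general equidimensional case follows by working componentwise) and that the cup product pairing $H^{2d}_{\dR}(X/K)(d) \times H^0_{\dR}(X/K) \to H^{2d}_{\dR}(X/K)(d) \xrightarrow{\mathrm{tr}_{\dR}} K$ is perfect. Then I would pick a closed point $x \in X$ (after a finite extension of $K$ if necessary, which is harmless since everything is compatible with base field extension and the trace map is insensitive to it), whose de Rham cycle class lies in $H^{2d}_{\dR}(X/K)(d)$ and pairs to $[\kappa(x):K]$ under $\mathrm{tr}_{\dR}$; dividing, we get a canonical generator. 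By the previous corollary, $c_{\dR,X}$ carries this class to the corresponding $p$-adic \'etale cycle class of $x$, which is the analogous generator on the \'etale side. Combining with the cup product compatibility of $c_{\dR,X}$ gives $\mathrm{tr}_p \circ (c_{\dR,X} \otimes \mathrm{id}) = \mathrm{tr}_{\dR} \otimes \mathrm{id}$ on $H^{2d}_{\dR}(X/K)(d)\otimes_K B_{\dR}$. Finally, functoriality with respect to pushforward along a morphism $f\colon X \to Y$ of smooth proper $K$-varieties follows formally: $f_*$ is defined by $\mathrm{tr}_X$, $\mathrm{tr}_Y$, cup products, and the structure morphisms, all of which are compatible with $c_{\dR,-}$, so one expresses $f_*$ in terms of these and concludes.

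The main obstacle is essentially bookkeeping rather than a deep point: one must make sure the \emph{normalization} of the de Rham and \'etale trace maps agree, i.e.\ that both are pinned down by "degree-one zero-cycle $\mapsto 1$" and that the previous corollary's compatibility for zero-cycles is stated with the correct Tate twist (the class of a closed point living in $H^{2d}(d)$, matching the $\Q_p(1)\hookrightarrow B_{\dR}$ convention used in Proposition \ref{Proposition:de Rham comparison and Chern class}). A minor additional subtlety is handling algebraic spaces rather than schemes, but as in Remark \ref{Remark:deRhamComparisonAlgebraicSpace} the de Rham comparison map and its compatibilities are available in that generality, and zero-cycles can still be represented by closed points; alternatively one can note the trace map only depends on a Zariski-open dense subscheme, reducing to the scheme case. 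I expect the proof to occupy only a few lines once these normalizations are spelled out.
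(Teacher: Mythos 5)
Your proposal is correct and follows essentially the same route as the paper, whose entire proof is that the trace compatibility "follows from the compatibility with cycle classes of a point"; you have simply spelled out the normalization and Tate-twist bookkeeping that the paper leaves implicit. No gap to report.
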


\begin{proof}
This follows from the compatibility with cycle classes of a point. 
\end{proof}

\subsection{The crystalline comparison map of Bhatt-Morrow-Scholze}
\label{Subsection:Crystalline comparison map of Bhatt-Morrow-Scholze}

Let $\mathscr{X}$ be a smooth proper algebraic space over $\O_K$
with generic fiber $X$.
We assume that the generic fiber $X$ and the special fiber $\mathscr{X}_k$ are schemes.
For example, it is satisfied for smooth proper curves and surfaces over $\O_K$
because a smooth proper algebraic space of dimension $\leq 2$
over a field is a scheme.

As in the proof of \cite[Theorem 2.4]{ChiarellottoLazdaLiedtke}, the formal completion $\mathfrak{X}$ of $\mathscr{X}$ along the special fiber is a formal scheme (rather than a formal algebraic space), and there is an isomorphism $t(\mathfrak{X})_\eta\cong X^{\ad}$ of adic spaces over $K$, where $t(\mathfrak{X})_\eta$ denotes the adic generic fiber of $\mathfrak{X}$. We remark that the isomorphsim $t(\mathfrak{X})_\eta\cong X^{\ad}$ is functorial in $\mathscr{X}$; this can be seen by an argument similar to \cite[Theorem 2.2.3]{ConradTemkin}. 

We recall the construction of the crystalline comparison map
of Bhatt-Morrow-Scholze in \cite{BMS}.
Let $A_{\cris}$ be the $p$-adic completion of the divided power envelope of $A_{\mathrm{inf}}$ with respect to $\Ker \theta$.

The absolute crystalline comparison theorem in \cite[Theorem 14.3 (iii)]{BMS} (and \cite[Theorem 14.3 (iv)]{BMS}) gives the following isomorphism
\[
H^i_{\cris}(\mathscr{X}_{\O_C /p}/ A_{\cris}) \otimes_{A_\cris}B_{\cris} \overset{\cong}{\longrightarrow} H^i_{\et} (X_{\overline{K}}, \Z_p)\otimes_{\Z_p} B_{\cris}. 
\]
By \cite[Proposition 13.21]{BMS} and the base change of crystalline cohomology, we have an isomorphism
\begin{equation*}
H^i_{\cris}(\mathscr{X}_{\O_C /p}/ A_{\cris})[1/p] \cong H^i_{\cris}(\mathscr{X}_k /W) \otimes_{W} A_{\cris}[1/p].
\end{equation*}
The above two isomorphisms give us the crystalline comparison map
\[
c_{\cris, \mathscr{X}}\colon 
H^i_{\cris}(\mathscr{X}_k /W) \otimes_{W} B_{\cris} \overset{\cong}{\longrightarrow} H^i_{\et} (X_{\overline{K}}, \Z_p)\otimes_{\Z_p} B_{\cris}. 
\]

\begin{prop}\label{Proposition:Comparison with BO}
The crystalline comparison map $c_{\cris, \mathscr{X}}$ is compatible with the de Rham comparison map $c_{\dR, X}$ under an isomorphism
\[ H^i_{\cris} (\mathscr{X}_k /W) \otimes_{W} K \cong H^i_{\dR} (X/ K) \]
of Berthelot-Ogus \cite{BO}.
\end{prop}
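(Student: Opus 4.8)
The statement to prove is Proposition \ref{Proposition:Comparison with BO}: the crystalline comparison map $c_{\cris,\mathscr{X}}$ of Bhatt--Morrow--Scholze is compatible, via the Berthelot--Ogus isomorphism $H^i_{\cris}(\mathscr{X}_k/W)\otimes_W K\cong H^i_{\dR}(X/K)$, with the de Rham comparison map $c_{\dR,X}$ of Scholze. The plan is to trace through the two intermediate isomorphisms used in the construction of $c_{\cris,\mathscr{X}}$ recalled just above, and to match each of them against the corresponding ingredient in the construction of $c_{\dR,X}$, invoking \cite[Theorem 13.1]{BMS} where the genuine comparison work has already been done.

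\textbf{First step.} I would recall that $c_{\cris,\mathscr{X}}$ is built from (a) the absolute crystalline comparison $H^i_{\cris}(\mathscr{X}_{\O_C/p}/A_{\cris})\otimes_{A_{\cris}}B_{\cris}\cong H^i_{\et}(X_{\overline{K}},\Z_p)\otimes_{\Z_p}B_{\cris}$ of \cite[Theorem 14.3(iii)]{BMS}, and (b) the base-change identification $H^i_{\cris}(\mathscr{X}_{\O_C/p}/A_{\cris})[1/p]\cong H^i_{\cris}(\mathscr{X}_k/W)\otimes_W A_{\cris}[1/p]$ from \cite[Proposition 13.21]{BMS}. Extending scalars from $A_{\cris}$ to $B_{\dR}$ along $A_{\cris}\to B_{\dR}$, isomorphism (a) becomes a $B_{\dR}$-linear isomorphism which, by \cite[Theorem 13.1]{BMS} (the statement that the $A_{\mathrm{inf}}$-cohomology recovers both de Rham and crystalline cohomology compatibly), is identified with $c_{\dR,X}\otimes_{B_{\dR}}B_{\dR}$ — here one uses that $X^{\ad}$ is the adic generic fiber of the formal completion $\mathfrak{X}$, as noted above following \cite{ChiarellottoLazdaLiedtke} and \cite{ConradTemkin}, so that Scholze's map and the BMS construction refer to the same geometric object. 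Meanwhile, reducing the base-change isomorphism (b) modulo $\Ker\theta$ (equivalently, specializing $A_{\cris}[1/p]\to C$) recovers precisely the Berthelot--Ogus isomorphism after a further extension $W\to\O_K\to C$; this is the content relating $H^i_{\cris}(\mathscr{X}_k/W)\otimes_W K$ to $H^i_{\dR}(X/K)$, and the fact that (b) is $A_{\cris}[1/p]$-linear and functorial means the two specializations (to $C$ via de Rham, to $C$ via crystalline-then-Berthelot--Ogus) agree. Combining: $c_{\cris,\mathscr{X}}\otimes_{B_{\cris}}B_{\dR}$ equals $c_{\dR,X}$ after identifying source spaces via Berthelot--Ogus, which is exactly the asserted compatibility.

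\textbf{Expected main obstacle.} The delicate point is bookkeeping: making sure that the Berthelot--Ogus isomorphism one gets by specializing BMS's $A_{\cris}$-module structure is literally the classical Berthelot--Ogus isomorphism of \cite{BO}, and not merely some isomorphism of the same two $K$-vector spaces. I would handle this by appealing to \cite[Theorem 13.1]{BMS}, whose proof already identifies the de Rham and crystalline specializations of the $A_{\mathrm{inf}}$-cohomology compatibly with Berthelot--Ogus (cf.\ also \cite[Proposition 5.41, Theorem 6.6]{CK}), rather than re-deriving it; this reduces our Proposition to quoting that theorem together with the functoriality of the adic generic fiber construction. Once that identification is in hand, the remaining verifications — compatibility with the Frobenius on both sides, and with the $\Gal(\overline{K}/K)$-action — follow formally, since all the maps in sight are constructed functorially in \cite{BMS}. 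I would therefore present the proof essentially as a citation: the statement is \cite[Theorem 13.1]{BMS} combined with the identification $t(\mathfrak{X})_\eta\cong X^{\ad}$ recalled above.
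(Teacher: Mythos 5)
Your reduction of the statement to the two constituent isomorphisms of $c_{\cris,\mathscr{X}}$ is the right start, and you correctly isolate the delicate point: showing that the isomorphism $H^i_{\cris}(\mathscr{X}_k/W)\otimes_W K\cong H^i_{\dR}(X/K)$ produced by specializing the BMS constructions is \emph{literally} the Berthelot--Ogus isomorphism of \cite{BO}, not just some Frobenius- and Galois-compatible isomorphism between the same spaces. But your proposed resolution of that point does not work: \cite[Theorem 13.1]{BMS} (and likewise \cite[Proposition 5.41, Theorem 6.6]{CK}) identifies the $B_{\dR}^+$-specialization of the $A_{\mathrm{inf}}$-cohomology with $H^i_{\dR}(X/K)\otimes_K B^+_{\dR}$ and makes Scholze's de Rham comparison appear, but nowhere does it (or its proof) compare anything with the classical Berthelot--Ogus isomorphism over the discretely valued base $\O_K$; that comparison is precisely the content of the Proposition and cannot be obtained by ``quoting that theorem together with the functoriality of the adic generic fiber.'' So the step where you say that reducing \cite[Proposition 13.21]{BMS} modulo $\Ker\theta$ ``recovers precisely the Berthelot--Ogus isomorphism'' is asserted, not proved, and the citation you offer does not close it.

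What is actually needed (and what the paper supplies) is a characterization argument. From the identification $H^i_{\cris}(\mathscr{X}_{\O_C/p}/A_{\cris})\otimes_{A_{\cris}}B^+_{\dR}\cong H^i_{\dR}(X/K)\otimes_K B^+_{\dR}$ (via \cite[Proposition 13.23, Remark 13.20]{BMS}, with compatibility of $c_{\dR,X}$ and $c_{\cris,\mathscr{X}}$ given by \cite[Theorem 14.5(i)]{BMS}) one extracts a section $s_{\dR}$ on $H^i_{\dR}(X/K)$, and from \cite[Proposition 13.21]{BMS} a $\varphi$-equivariant section $s_{\cris}$ on $H^i_{\cris}(\mathscr{X}_k/W)[1/p]$, each uniquely characterized by Galois-equivariance together with its reduction (modulo $\xi$, resp.\ along $A_{\cris}\to W(\overline{k})$); since the two sections have the same image, reduction modulo $\Ker\theta$ yields an isomorphism $\overline{s_{\cris}\otimes K}$ under which $c_{\cris,\mathscr{X}}$ and $c_{\dR,X}$ are compatible. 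The remaining, and essential, step is to show $\overline{s_{\cris}\otimes K}$ equals the Berthelot--Ogus map: one uses the classical interpretation of the Berthelot--Ogus isomorphism as the specialization of the \emph{unique} $\varphi$-equivariant section $s\colon H^i_{\cris}(\mathscr{X}_k/W)[1/p]\to H^i_{\cris}(\mathscr{X}_{\O_K/p}/S_\varpi)[1/p]$ over the PD envelope $S_\varpi$ (cf.\ \cite[Proposition 5.1]{CaisLiu}), and then identifies $s$ with $s_{\cris}$ after base change along $S_\varpi\to A_{\cris}$, $u\mapsto[\varpi^\flat]$, again by uniqueness of the $\varphi$-equivariant section. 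This Dwork-trick/uniqueness argument is the heart of the proof and is absent from your proposal; without it you have only shown compatibility with \emph{some} isomorphism between $H^i_{\cris}(\mathscr{X}_k/W)\otimes_W K$ and $H^i_{\dR}(X/K)$, which is weaker than the statement.
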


\begin{proof}
By \cite[Proposition 13.23, Remark 13.20]{BMS}, we have an isomorphism
\[
H^i_{\cris}(\mathscr{X}_{\O_C /p}/A_{\cris})\otimes_{A_{\cris}}B_{\dR}^+ \cong H^i_{\dR}(X/K)\otimes_K B_{\dR}^+. 
\]
It is already shown in \cite[Theorem 14.5(i)]{BMS} that $c_{\dR, X}$ is compatible with $c_{\cris, \mathscr{X}}$ under this identification. 

The above identification induces a $K$-linear map
\[
s_{\dR}\colon H^i_{\dR}(X/K) \to H^i_{\cris}(\mathscr{X}_{\O_C /p}/A_{\cris})\otimes_{A_{\cris}}B_{\dR}^+
\]
such that the composite of $s_{\dR}$ with the following map obtained by the reduction modulo $\xi$
\begin{align*}
H^i_{\cris}(\mathscr{X}_{\O_C /p}/A_{\cris})\otimes_{A_{\cris}}B_{\dR}^+
&\to H^i_{\cris}(\mathscr{X}_{\O_C /p}/A_{\cris})\otimes_{A_{\cris}} C \\
&\cong H^i_{\dR} (X_C/C)  
\end{align*}
(this has no ambiguity; see the commutative diagram $(6.5.7)$ in \cite{CK})
is equal to the canonical map
\[ H^i_{\dR}(X/K) \to H^i_{\dR} (X_C/C). \]
The $K$-linear map $s_{\dR}$ is characterized as a unique $\Gal(\overline{K}/K)$-equivariant $K$-linear map satisfying
this property.

On the other hand, \cite[Proposition 13.21]{BMS} gives a $W$-linear map
\[
s_{\cris}\colon H^i_{\cris}(\mathscr{X}_k /W)[1/p] \to H^i_{\cris}(\mathscr{X}_{\O_C /p}/A_{\cris})[1/p]
\]
such that the composite of $s_{\cris}$ with the following map obtained by the specialization $A_{\cris}\to W(\overline{k})$
\begin{align*}
H^i_{\cris}(\mathscr{X}_{\O_C /p}/A_{\cris})[1/p]
&\to H^i_{\cris}(\mathscr{X}_{\O_C /p}/A_{\cris})\otimes_{A_{\cris}} W(\overline{k})[1/p] \\
&\cong H^i_{\cris} (\mathscr{X}_{\overline{k}}/W(\overline{k}))[1/p] 
\end{align*}
is equal to the canonical map
\[ 
H^i_{\cris}(\mathscr{X}_k /W)[1/p] \to 
H^i_{\cris} (\mathscr{X}_{\overline{k}}/W(\overline{k}))[1/p]. 
\]
The $W$-linear map $s_{\cris}$ is also characterized as
a unique $\Gal(\overline{K}/K)$-equivariant $W$-linear map
satisfying this property, which is $\varphi$-equivariant.
Since the images of $s_{\dR}$ and ${s_\cris}\otimes K$ coincide, the reduction modulo $\Ker \theta$ induces an isomorphism
\[
\overline{s_{\cris}\otimes K} \colon H^i_{\cris} (\mathscr{X}_k/W)\otimes_{W} K \cong H^i_{\dR}(X/K),
\]
and $c_{\cris,\mathscr{X}}$ and $c_{\dR, X}$ are compatible under $\overline{s_{\cris}\otimes K}$. 
So, we only need to show that $\overline{s_{\cris}\otimes K}$ is equal to the Berthelot-Ogus isomorphism. 

We can finish by recalling the following well-known interpretation of the Berthelot-Ogus isomorphism: fix a uniformizer $\varpi$ and a system $\{ \varpi^{1/{p^n}} \}_{n \geq 0} \subset \overline{K}$ of $p^n$-th roots of $\varpi$ such that
$(\varpi^{1/{p^{n+1}}})^p=\varpi^{1/{p^n}}$, and let $S_\varpi$ be the $p$-adic completion of the PD envelope of the surjection $W[u] \to \O_K$ defined by $u \mapsto \varpi$. Then, there is a unique $\varphi$-equivariant section
\[
s\colon H^i_{\cris}(\mathscr{X}_k/W)[1/p] \to H^i_{\cris}(\mathscr{X}_{\O_K /p}/S_\varpi)[1/p], 
\]
and $s\otimes K$ induces the Berthelot-Ogus isomorphism. The section $s$ can be constructed in a way parallel to \cite{BO} and \cite[Proposition 13.21]{BMS};
see \cite[Proposition 5.1]{CaisLiu} for example.
This implies that $s=s_{\cris}$ under an isomorphism 
\[
H^i_{\cris}(\mathscr{X}_{\O_K /p}/S_\varpi)\otimes_{S_\varpi} A_{\cris}[1/p] \cong H^i_{\cris}(\mathscr{X}_{\O_C /p}/A_{\cris})[1/p]
\]
by base change along the embedding $S_\varpi \to A_{\cris}$ defined by $u \mapsto [\varpi^{\flat}]$, and the specialization of $s$ is indeed the Berthelot-Ogus isomorphism. 
\end{proof}

\begin{cor}\label{Corollary:Crystalline comparison and Chern class}
The crystalline comparison map $c_{\cris, \mathscr{X}}$ is
\begin{itemize}
\item functorial in $\mathscr{X}$ with respect to pullback and pushforward, and
\item compatible with cup products, Chern classes of vector bundles on $\mathscr{X}$, classes of algebraic cycles on $\mathscr{X}$, and trace maps. 
\end{itemize}
\end{cor}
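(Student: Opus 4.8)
The plan is to deduce every assertion from the corresponding property of the de Rham comparison map $c_{\dR, X}$, established in Section \ref{Subsection:Scholze's de Rham comparison map}, together with Proposition \ref{Proposition:Comparison with BO} and the classical fact that the Berthelot-Ogus isomorphism respects all of these structures. Throughout, $\mathscr{X}$ is as in Section \ref{Subsection:Crystalline comparison map of Bhatt-Morrow-Scholze} (so the generic fiber $X$ and the special fiber $\mathscr{X}_k$ are schemes), which is all that is needed to speak of crystalline Chern classes, Gysin maps, and trace maps.

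First I would record a descent principle. The map $c_{\cris, \mathscr{X}}$ is $B_{\cris}$-linear, and by Proposition \ref{Proposition:Comparison with BO} its base change along $B_{\cris}\hookrightarrow B_{\dR}$ coincides with the base change of $c_{\dR, X}$ under the Berthelot-Ogus isomorphism $H^i_{\cris}(\mathscr{X}_k/W)\otimes_W K \cong H^i_{\dR}(X/K)$. Since $B_{\cris}\hookrightarrow B_{\dR}$ is injective and $H^i_{\et}(X_{\overline{K}},\Q_p)$ is a finite-dimensional $\Q_p$-vector space, the map $H^i_{\et}(X_{\overline{K}},\Q_p)\otimes_{\Q_p}B_{\cris}\to H^i_{\et}(X_{\overline{K}},\Q_p)\otimes_{\Q_p}B_{\dR}$ is injective; hence any identity of elements of the source that holds after extension to $B_{\dR}$ already holds over $B_{\cris}$. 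In particular, to verify any of the listed compatibilities for $c_{\cris, \mathscr{X}}$ it suffices to verify the analogous identity for $c_{\dR, X}$ after base change to $B_{\dR}$ together with the corresponding property of the Berthelot-Ogus isomorphism.

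Next, I would treat each structure in turn using: (i) the already-proven compatibility of $c_{\dR, X}$ — functoriality under pullback and pushforward, compatibility with cup products, with first Chern classes (Proposition \ref{Proposition:de Rham comparison and Chern class}), with Chern classes of vector bundles and classes of algebraic cycles, and, for equidimensional $X$, with trace maps (the corollaries following Proposition \ref{Proposition:de Rham comparison and Chern class}); and (ii) the classical compatibility of the Berthelot-Ogus isomorphism with the same structure. For (ii), the Berthelot-Ogus isomorphism is a functorial ring isomorphism compatible with pullback, pushforward and trace maps, and it sends the crystalline Chern class $\ch_{\cris}(\mathscr{E}_k)$ of the special fiber of a vector bundle $\mathscr{E}$ on $\mathscr{X}$ to the de Rham Chern class $\ch_{\dR}(\mathscr{E}_X)$ of its generic fiber; these are standard properties (\cite{BO}), and the Chern-class statement can be extracted directly from the construction of the Berthelot-Ogus map recalled in the proof of Proposition \ref{Proposition:Comparison with BO}, which factors through $H^i_{\dR}(\mathscr{X}/\O_K)$, together with the naturality of crystalline Chern classes with respect to base change. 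Combining (i), (ii) and the descent principle yields, for instance, $c_{\cris, \mathscr{X}}(\ch_{\cris}(\mathscr{E}_k)) = \ch_p(\mathscr{E}_X)$ in $H^{2j}_{\et}(X_{\overline{K}},\Q_p)(j)$; compatibility with classes of algebraic cycles then follows by the splitting principle exactly as in the de Rham case, and compatibility with cup products and trace maps, as well as functoriality under pullback and pushforward, follow identically.

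I expect the only genuinely delicate point to be the Chern-class (equivalently, cycle-class) compatibility of the Berthelot-Ogus isomorphism: one must be careful that the crystalline Chern class of $\mathscr{E}_k$ really reduces to the de Rham Chern class of $\mathscr{E}$ on $\mathscr{X}/\O_K$ — this is where one uses that $\mathscr{E}$ extends over all of $\mathscr{X}$ and the naturality of crystalline Chern class theory — after which restriction to the generic fiber is routine. All the remaining verifications are bookkeeping via the descent principle above.
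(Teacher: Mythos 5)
Your proposal is correct and follows essentially the paper's own route: reduce everything, via Proposition \ref{Proposition:Comparison with BO} (together with the injectivity of $B_{\cris}\hookrightarrow B_{\dR}$, which the paper uses implicitly), to the already-established compatibilities of $c_{\dR, X}$ plus the corresponding compatibilities of the Berthelot-Ogus isomorphism, the latter verified for algebraic spaces from the section-based construction recalled in that proof. The only detail you leave implicit, and which the paper makes precise, is that the key identity $s(\ch_{\cris}(\mathscr{L}))=\ch_{\cris}(\mathscr{L}/S_\varpi)$ comes from the uniqueness characterization of the $\varphi$-equivariant section $s$ (not merely from naturality of crystalline Chern classes), after which reduction modulo $\Ker(S_\varpi\to\O_K)$ yields $\ch_{\dR}(\mathscr{L})$ and the remaining compatibilities (cycle classes via the splitting principle, trace maps, pushforward) are deduced exactly as you indicate.
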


\begin{proof}
The above properties are already checked for the de Rham comparison map $c_{\dR, X}$.
So, it suffices to show that the Berthelot-Ogus isomorphism satisfies the above properties.
This is done in \cite{BO} and \cite[B. Appendix]{GilletMessing} at least when $\mathscr{X}$ is a scheme .

Let us briefly explain a variant using the interpretation of the Berthelot-Ogus isomorphism given in the proof of Proposition \ref{Proposition:Comparison with BO}, which works in the semistable case as well. The functoriality with respect to pullback and the compatibility with cup products are checked (by the same argument as) in \cite[Corollary 4.4.13]{Tsuji}. Next, we check the compatibility with the first Chern classes of line bundles. We freely use the notation from the proof of Proposition \ref{Proposition:Comparison with BO}. Let $\mathscr{L}$ be a line bundle on $\mathscr{X}$. This gives the first Chern class $\ch_{\cris}(\mathscr{L}/ S_\varpi)$ in $H_{\cris}^2 (\mathscr{X}_{\mathscr{O}_K/p}/ S_\varpi)$ lifting $\ch_{\cris}(\mathscr{L})$. By the characterization of $s$, we see that the following equality holds:
\[ s(\ch_{\cris}(\mathscr{L}))=\ch_{\cris}(\mathscr{L}/ S_\varpi). \]
As it is well-known that $\ch_{\cris}(\mathscr{L}/ S_\varpi)$ maps to $\ch_{\dR}(\mathscr{L})$ modulo $\Ker (S_\varpi \to \mathscr{O}_K)$, we conclude that the Berthelot-Ogus isomorphism maps $\ch_{\cris}(\mathscr{L})$ to $\ch_{\dR}(\mathscr{L})$. Now, the other compatibilities and functoriality can be deduced from what we have shown.
\end{proof}

\begin{rem}
In the above proof, we implicitly use that the isomorphism $t(\mathfrak{X})_\eta\cong X^{\ad}$ is compatible with line bundles in the following sense: for a line bundle $\mathscr{L}$ on $\mathscr{X}$,
let $\mathfrak{L}$ be the formal completion of $\mathscr{L}$ along the special fiber,
and $t(\mathfrak{L})_\eta$ the corresponding line bundle on $t(\mathfrak{X})_\eta$. 
Let $\mathscr{L}_K$ be the restriction of $\mathscr{L}$ to the generic fiber $X$,
and $(\mathscr{L}_K)^{\ad}$ its analytification.
Then there is an isomorphism of line bundles
$t(\mathfrak{L})_\eta \cong (\mathscr{L}_K)^{\ad}$
on $t(\mathfrak{X})_\eta\cong X^{\ad}$.

This can be seen as follows: by the construction of $t(\mathfrak{X})_\eta\cong X^{\ad}$ and \'etale descent for line bundles on rigid analytic varieties, the above claim reduces to an analogous statement in the case where $\mathscr{X}$ is a scheme of finite type, but not necessary proper, over $\O_K$: we only have a morphism $t(\mathfrak{X})_\eta\to X^{\ad}$ in this case, and the statement is that the pullback of $(\mathscr{L}_K)^{\ad}$ is isomorphic to $t(\mathfrak{L})_\eta$. 
\end{rem}

\subsection{Remarks on the work of Blasius-Wintenberger on $p$-adic properties of absolute Hodge cycles}
\label{Subsection:Blasius-Wintenberger}

The theory of integral canonical models of Shimura varieties relies on the results of Blasius-Wintenberger \cite{Blasius},
where $p$-adic properties of absolute Hodge cycles were studied.

Technically speaking, one chooses constructions of comparison maps when using their results.
In this paper, we choose the comparison maps $c_{\dR, X}$ and $c_{\cris, \mathscr{X}}$.
Then the main results of \cite{Blasius} hold as all requirements are checked above.
This fact for $c_{\dR, X}$ is also used implicitly in \cite{CS}.   

Moreover, to fill in details of the proofs of \cite[Proposition 5.3, Proposition 5.6 (4)]{MadapusiTateConj}, one should generalize ``Principle B'' in Blasius' paper \cite{Blasius} to smooth algebraic spaces.
(Let $\mathcal{X}_{\widetilde{\mathrm{M}}_{2d,\Q}} \to \widetilde{\mathrm{M}}_{2d,\Q}$ be
the universal family considered in \cite{MadapusiTateConj}.
This morphism is representable by an algebraic space, not a scheme.)
For this purpose, one needs to generalize the arguments in the proof of \cite[Theorem 3.1]{Blasius} allowing $X$ in \emph{loc. cit.} to be an algebraic space. This requires us to check the following:
\begin{itemize}
\item Artin's comparison theorem and GAGA for algebraic spaces over $\C$. (See \cite[Proposition 3.4.1 (iii)]{Sun} for the former. GAGA is well-known and follows from the standard argument using Chow's lemma.)
Artin's comparison theorem for algebraic spaces was also used in the construction of $\alpha_{\ell}$ in \cite[Proposition 5.6 (1)]{MadapusiTateConj}.
\item A functorial construction of pure Hodge structure in the cohomology of smooth proper algebraic spaces over $\C$. (The usual construction works; see \cite[Part I, Section 2.5]{PetersSteenbrink}. Also, we have geometric variations in this setting; see \cite[Part IV, Corollary 10.32]{PetersSteenbrink}.)
\item A generalization of Deligne's theorem on the fixed part \cite[Th\'eor\`eme 4.1.1]{Deligne:HodgeII} to algebraic spaces over $\C$. (The argument in \emph{loc. cit.} works using Chow's lemma for algebraic spaces.)
\item Existence of a smooth compactification of $X$. (Use \cite{ConradLieblichOlsson} and \cite{BierstoneMilman}.)
\item The de Rham comparison theorem for smooth proper algebraic spaces over a $p$-adic field. (See Remark \ref{Remark:deRhamComparisonAlgebraicSpace}.)
\end{itemize}

\subsection{Comparison isomorphisms for $p$-divisible groups}\label{Subsection:Comparison isomorphisms for $p$-divisible groups}
In \cite{Faltings99}, Faltings constructed a comparison map
\[
c_{\mathscr{G}} \colon D_{\cris}((T_p\mathscr{G})^{\vee}[1/p]) \cong \mathbb{D}(\mathscr{G}_k)(W)[1/p]
\]
for a $p$-divisible group $\mathscr{G}$ over $\O_K$.
The purpose of this subsection is to show that
Faltings' comparison map $c_{\mathscr{G}}$ coincides with the comparison map used by Kim-Madapusi Pera in \cite[Theorem 2.12 (2.12.3)]{KimMadapusiIntModel}.
This fact is used implicitly in the proof of \cite[Proposition 3.12]{KimMadapusiIntModel} to apply
Kisin's result \cite[Lemma 1.1.17]{KisinModp}.

Let $\mathscr{G}$ be a $p$-divisible group over $\O_K$.
For the base change $\mathscr{G}_{{\O_K}/p}$ of $\mathscr{G}$,
we have a (contravariant) crystal $\mathbb{D}(\mathscr{G}_{{\O_K}/p})$ over $\mathrm{CRIS}(({\O_K}/p)/\Z_p)$.
Its value
\[
\mathbb{D}(\mathscr{G}_{{\O_K}/p})(A_{\cris}):=\mathbb{D}(\mathscr{G}_{{\O_K}/p})_{A_{\cris} \twoheadrightarrow {\O_C}/p}
\]
in $(\Spec {{\O_C}/p} \hookrightarrow \Spec {A_{\cris}})$
is a free $A_{\cris}$-module of finite rank,
and equipped with a Frobenius endomorphism $\varphi$.
We define a filtration
\[
\Fil^1\mathbb{D}(\mathscr{G}_{{\O_K}/p})(A_{\cris}) \hookrightarrow \mathbb{D}(\mathscr{G}_{{\O_K}/p})(A_{\cris})
\]
by the inverse image of the first piece of the Hodge filtration $\Fil^1\mathbb{D}(\mathscr{G}_{{\O_K}/p})(\O_C)$
of $\mathbb{D}(\mathscr{G}_{{\O_K}/p})(\O_C)$
under the surjection
$\mathbb{D}(\mathscr{G}_{{\O_K}/p})(A_{\cris}) \twoheadrightarrow \mathbb{D}(\mathscr{G}_{{\O_K}/p})(\O_C)$.

In \cite{Faltings99},
Faltings constructed a period map
\[
\mathrm{Per}_{\cris, \mathscr{G}}\colon T_p\mathscr{G} \to \Hom_{\Fil, \varphi}(\mathbb{D}(\mathscr{G}_{{\O_K}/p})(A_{\cris}), A_{\cris}).
\]
Here the right-hand side is the $\Z_p$-module of homomorphisms from $\mathbb{D}(\mathscr{G}_{{\O_K}/p})(A_{\cris})$ to $A_{\cris}$ which commute with Frobenius endomorphisms and map $\Fil^1\mathbb{D}(\mathscr{G}_{{\O_K}/p})(A_{\cris})$ into $\Fil^1(A_{\cris}):=\Ker \theta$.
When $p \neq 2$, the period map $\mathrm{Per}_{\cris, \mathscr{G}}$ is an isomorphism by \cite[Theorem 7]{Faltings99} (see also \cite[Section 5.2]{Fargues}).
When $p = 2$, it is an injection and the cokernel is killed by $2$.

By the rigidity of quasi-isogenies, there is a unique quasi-isogeny
\[
f \in \Hom_{\O_K/p}(\mathscr{G}_k\otimes_{k}\O_K/p, \ \mathscr{G}_{{\O_K}/p})\otimes_{\Z_p}\Q_p
\]
lifting the identity of $\mathscr{G}_k$.
The quasi-isogeny $f$ induces an isomorphism
\[
\mathbb{D}(\mathscr{G}_{{\O_K}/p})(\O_K)[1/p] \cong \mathbb{D}(\mathscr{G}_k)(W)\otimes_{W} K,
\]
and the Hodge filtration on the left-hand side makes $\mathbb{D}(\mathscr{G}_k)(W)[1/p]$ a filtered $\varphi$-module.
This isomorphism is equal to the isomorphism
given by Berthelot-Ogus \cite[Proposition 3.14]{BO}; see Remark \ref{Remark:Berthelot-Ogus map for p-divisible group}.

The quasi-isogeny $f$ also induces an isomorphism
\[
\mathbb{D}(\mathscr{G}_{{\O_K}/p})(A_{\cris})\otimes_{A_{\cris}}B^{+}_{\cris} \cong \mathbb{D}(\mathscr{G}_k)(W)\otimes_{W} B^{+}_{\cris},
\]
which in turn induces an isomorphism
\[
\Hom_{\Fil, \varphi}(\mathbb{D}(\mathscr{G}_{{\O_K}/p})(A_{\cris}), A_{\cris})[1/p] \cong \Hom_{\Fil, \varphi}(\mathbb{D}(\mathscr{G}_k)(W)[1/p], B^{+}_{\cris}).
\]
Here the right-hand side is the $\Q_p$-module of homomorphisms from $\mathbb{D}(\mathscr{G}_k)(W)[1/p]$ to $B^{+}_{\cris}$ which commute with Frobenius endomorphisms and preserving the filtrations after base change to $K$;
we equip $B^{+}_{\cris}\otimes_{W[1/p]}K$ with a filtration
\[
\Ker(\theta \otimes_{W}K) \subset B^{+}_{\cris}\otimes_{W[1/p]}K.
\]
The following composite is $\Gal(\overline{K}/K)$-equivariant
\begin{align*}
	T_p\mathscr{G}[1/p] & \cong \Hom_{\Fil, \varphi}(\mathbb{D}(\mathscr{G}_{{\O_K}/p})(A_{\cris}), A_{\cris})[1/p]  \\
	& \cong \Hom_{\Fil, \varphi}(\mathbb{D}(\mathscr{G}_k)(W)[1/p], B^{+}_{\cris}),
\end{align*}
where the first isomorphism is $\mathrm{Per}_{\cris, \mathscr{G}}$ with $p$ inverted.
It induces an isomorphism of filtered $\varphi$-modules
\[
c_{\mathscr{G}} \colon D_{\cris}((T_p\mathscr{G})^{\vee}[1/p]) \cong \mathbb{D}(\mathscr{G}_k)(W)[1/p].
\]

Next, we shall recall the construction of the comparison map given in \cite[Theorem 2.12 (2.12.2)]{KimMadapusiIntModel}, which is based on the theory of Dieudonn\'e displays developed by Zink and Lau.

We put $\mathfrak{S} := W[[u]]$.
For a Breuil-Kisin module $\M$ (over $\O_K$ with respect to $\{ \varpi^{1/{p^n}} \}_{n \geq 0}$) of height $\leq 1$, there is an $\mathfrak{S}$-linear homomorphism $\phi \colon \M \to \varphi^*\M$ such that $\varphi \circ \phi$ is the multiplication by $pE(u)/E(0)$.
Let $\M^t$ be a Breuil-Kisin module such that its underlying $\mathfrak{S}$-module is
$\M^{\vee} := \Hom_{\mathfrak{S}}(\M, \mathfrak{S})$
and its Frobenius
$\varphi^*(\M^{\vee})=(\varphi^*\M)^{\vee} \to \M^{\vee}$
is given by
$f \mapsto f\circ\phi$.

Lau constructed an equivalence $\M'$ from the category of $p$-divisible groups over $\O_K$ to the category of Breuil-Kisin modules (over $\O_K$ with respect to $\{ \varpi^{1/{p^n}} \}_{n \geq 0}$) of height $\leq 1$; see \cite[Corollary 5.4, Theorem 6.6]{LauDisplay}.
Let $\M^{L}$ be the (Cartier) dual of the equivalence of categories
constructed by Lau.
Namely, we put
\[
\M^{L}(\mathscr{G}):=\M'(\mathscr{G})^t.
\]

By \cite[Proposition 4.1, Proposition 8.5]{LauGalois}, there is an isomorphism
\[
T_p\mathscr{G} \cong \Hom_{\varphi}(\M^{L}(\mathscr{G}), \mathfrak{S}^{\mathrm{nr}}).
\]
For the ring $\mathfrak{S}^{\mathrm{nr}}$, see \cite[Section 7]{LauGalois} for example.
It is equipped with a Frobenius endomorphism such that there are inclusions
\[
\mathfrak{S} \hookrightarrow \mathfrak{S}^{\mathrm{nr}} \hookrightarrow A_{\mathrm{inf}}
\]
commuting with the Frobenius endomorphisms.

Let $\Fil^1(S_{\varpi})$ be the kernel of the surjection $S_\varpi \to \O_K$.
(Recall that $S_{\varpi}$ is the $p$-adic completion of the PD envelope of the surjection $W[u] \to \O_K$ defined by $u \mapsto \varpi$.)
Let $\mathbb{D}(\mathscr{G}_{{\O_K}/p})(S_{\varpi})$ be the value in $(\Spec \O_K/p  \hookrightarrow \Spec S_{\varpi})$.
It is equipped with a Frobenius endomorphism $\varphi$ and a filtration
\[
\Fil^1\mathbb{D}(\mathscr{G}_{{\O_K}/p})(S_{\varpi}) \hookrightarrow \mathbb{D}(\mathscr{G}_{{\O_K}/p})(S_{\varpi})
\]
defined by the inverse image of the first piece of the Hodge filtration $\Fil^1\mathbb{D}(\mathscr{G}_{{\O_K}/p})(\O_K)$
of $\mathbb{D}(\mathscr{G}_{{\O_K}/p})(\O_K)$
under the surjection
\[ \mathbb{D}(\mathscr{G}_{{\O_K}/p})(S_\varpi) \twoheadrightarrow \mathbb{D}(\mathscr{G}_{{\O_K}/p})(\O_K). \]

Let $c' \in S_\varpi$ be a unique unit which maps to $1 \in W$ with
\[ c'\varphi(c'^{-1})=\varphi(E(u)/E(0)) \in S_\varpi. \]
(For the element $\lambda \in S_\varpi[1/p]$ in \cite[Section 1.1.1]{KisinCris}, we have $c'=\varphi(\lambda)$.)
By \cite[Proposition 7.1]{LauDisplay}, there is a canonical isomorphism
\[
\varphi^*\M'(\mathscr{G})\otimes_{\mathfrak{S}} S_{\varpi} \cong \mathbb{D}(\mathscr{G}_{{\O_K}/p})(S_{\varpi})^{\vee}.
\]
The dual of this isomorphism multiplied by
$c'$
is an isomorphism
\[
\mathbb{D}(\mathscr{G}_{{\O_K}/p})(S_{\varpi}) \cong \varphi^*\M^{L}(\mathscr{G})\otimes_{\mathfrak{S}} S_{\varpi}
\]
which is compatible with Frobenius endomorphisms and maps $\Fil^1\mathbb{D}(\mathscr{G}_{{\O_K}/p})(S_{\varpi})$ onto
\[
\{\, x \in \varphi^*\M^{L}(\mathscr{G})\otimes_{\mathfrak{S}} S_{\varpi} \mid (1 \otimes \varphi)(x) \in \Fil^1(S_{\varpi})(\M^{L}(\mathscr{G})\otimes_{\mathfrak{S}} S_{\varpi}) \,\}.
\]

Let $\M((T_p\mathscr{G})^{\vee})$ be the Breuil-Kisin module associated with the dual $(T_p\mathscr{G})^{\vee}$
of the $p$-adic Tate module of $\mathscr{G}$ defined by Kisin; see Section \ref{Subsection:Breuil-Kisin modules and crystalline Galois representations}.
By construction, there is a canonical isomorphism
\[
T_p\mathscr{G} \cong \Hom_{\varphi}(\M((T_p\mathscr{G})^{\vee}), \mathfrak{S}^{\mathrm{nr}}).
\]
By \cite[Theorem 2.12 (2.12.2)]{KimMadapusiIntModel} and its proof,
there is an isomorphism of Breuil-Kisin modules
\[
\M^{L}(\mathscr{G}) \cong \M((T_p\mathscr{G})^{\vee})
\]
such that, under this isomorphism, the isomorphism
\[
T_p\mathscr{G} \cong \Hom_{\varphi}(\M^L(\mathscr{G}), \mathfrak{S}^{\mathrm{nr}})
\]
is compatible with the above isomorphism.

The comparison isomorphism used by Kim-Madapusi Pera in \cite[Theorem 2.12 (2.12.3)]{KimMadapusiIntModel} is defined as the composite of the following isomorphisms:
\begin{align*}
D_{\cris}((T_p\mathscr{G})^{\vee}[1/p]) &\cong \Mcris((T_p\mathscr{G})^{\vee})[1/p] \\
&\cong \varphi^*\M^{L}(\mathscr{G})\otimes_{\mathfrak{S}}W[1/p]\\ 
&\cong \mathbb{D}(\mathscr{G}_{{\O_K}/p})(S_{\varpi})\otimes_{S_{\varpi}}S_{\varpi}/uS_{\varpi}[1/p]\\ &\cong \mathbb{D}(\mathscr{G}_k)(W)[1/p],
\end{align*}
where the last isomorphism is provided by the crystalline property of $\mathbb{D}(\mathscr{G}_{{\O_K}/p})$. 
It is an isomorphism of filtered $\varphi$-modules, and
we denote it by $c^{L}_{\mathscr{G}}$.

\begin{prop}\label{Proposition:Compare Kim-MP with Faltings}
$c^{L}_{\mathscr{G}}$ coincides with $c_{\mathscr{G}}$.
\end{prop}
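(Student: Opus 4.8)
The plan is to compare the two comparison isomorphisms $c_{\mathscr{G}}$ and $c^{L}_{\mathscr{G}}$ by tracing each through its construction and showing that both are induced, after inverting $p$, by the same underlying period pairing valued in the ring $A_{\cris}$ (or $B^{+}_{\cris}$). Both isomorphisms are by construction $\Gal(\overline{K}/K)$-equivariant isomorphisms of filtered $\varphi$-modules
\[
D_{\cris}((T_p\mathscr{G})^{\vee}[1/p]) \cong \mathbb{D}(\mathscr{G}_k)(W)[1/p],
\]
so the key point is to identify the two maps $T_p\mathscr{G}[1/p] \to \Hom(\mathbb{D}(\mathscr{G}_k)(W)[1/p], B^{+}_{\cris})$ from which they are derived. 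On the side of $c^{L}_{\mathscr{G}}$, the period pairing is the one coming from Lau's display-theoretic identification $T_p\mathscr{G} \cong \Hom_{\varphi}(\M^{L}(\mathscr{G}), \mathfrak{S}^{\mathrm{nr}})$, composed with the canonical inclusion $\mathfrak{S}^{\mathrm{nr}} \hookrightarrow A_{\mathrm{inf}} \hookrightarrow A_{\cris}$ and the isomorphism $\varphi^*\M^{L}(\mathscr{G})\otimes_{\mathfrak{S}}S_{\varpi}\cong \mathbb{D}(\mathscr{G}_{\O_K/p})(S_{\varpi})^\vee$ of \cite[Proposition 7.1]{LauDisplay}, multiplied by the unit $c'$. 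On the side of $c_{\mathscr{G}}$, the period pairing is Faltings' $\mathrm{Per}_{\cris, \mathscr{G}}$, a map into $\Hom_{\Fil, \varphi}(\mathbb{D}(\mathscr{G}_{\O_K/p})(A_{\cris}), A_{\cris})$.

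\textbf{Key steps.} First I would invoke \cite[Proposition 6.2]{LauGalois}, which states precisely that Lau's period morphism in display theory is compatible with Faltings' comparison map for $p$-divisible groups over $\O_K$; this is the technical heart and is already available in the literature. Concretely, this says that under the identification $\M^{L}(\mathscr{G}) \cong \M((T_p\mathscr{G})^{\vee})$ (which is \cite[Theorem 2.12 (2.12.2)]{KimMadapusiIntModel}), the pairing $T_p\mathscr{G}\cong\Hom_{\varphi}(\M^{L}(\mathscr{G}),\mathfrak{S}^{\mathrm{nr}})$ base-changed to $A_{\cris}$ matches $\mathrm{Per}_{\cris,\mathscr{G}}$ up to the comparison isomorphism $\varphi^*\M^{L}(\mathscr{G})\otimes_{\mathfrak{S}}S_{\varpi} \cong \mathbb{D}(\mathscr{G}_{\O_K/p})(S_{\varpi})^{\vee}$ and the normalizing unit $c'$. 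Second, I would check that the de Rham/Hodge filtration appearing in $c^{L}_{\mathscr{G}}$, namely the one transported from $\mathbb{D}(\mathscr{G}_{\O_K/p})(\O_K)$ via the Berthelot-Ogus identification, coincides with the filtration used in Faltings' setup; this follows from the compatibility between the Berthelot-Ogus isomorphism $\mathbb{D}(\mathscr{G}_{\O_K/p})(\O_K)[1/p]\cong\mathbb{D}(\mathscr{G}_k)(W)\otimes_W K$ and the rigidity of quasi-isogenies (already noted in the excerpt, with reference to \cite[Proposition 3.14]{BO} and the forthcoming Remark \ref{Remark:Berthelot-Ogus map for p-divisible group}). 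Third, I would verify that the specializations agree: both $c_{\mathscr{G}}$ and $c^{L}_{\mathscr{G}}$ are recovered from the $A_{\cris}$-valued (resp.\ $B^{+}_{\cris}$-valued) comparison by the same two-step reduction — reduce modulo $u$ (equivalently, restrict along $\mathfrak{S}\to W$, $u\mapsto 0$) and then use the crystalline property of $\mathbb{D}(\mathscr{G}_{\O_K/p})$ to pass from $S_{\varpi}/uS_{\varpi}$ to $W$. Since both maps are $\varphi$-equivariant and the source $D_{\cris}((T_p\mathscr{G})^{\vee}[1/p])$ is a $\varphi$-module over $K_0$ that is "generated" in an appropriate sense by its image under both constructions, agreement of the two period pairings forces agreement of $c_{\mathscr{G}}$ and $c^{L}_{\mathscr{G}}$.

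\textbf{Main obstacle.} The principal difficulty is bookkeeping rather than a conceptual leap: one must carefully track the various normalizing units and twists (the unit $c'\in S_{\varpi}$ with $c'\varphi(c'^{-1})=\varphi(E(u)/E(0))$, the factor $pE(u)/E(0)$ appearing in the definition of $\M^t$, and the relation $c'=\varphi(\lambda)$ with Kisin's element $\lambda$) and confirm that they cancel correctly so that the two $\varphi$-equivariant isomorphisms land on the nose rather than differing by a scalar. A secondary subtlety is the case $p=2$: there $\mathrm{Per}_{\cris,\mathscr{G}}$ is only injective with cokernel killed by $2$, so one cannot directly say it is an isomorphism integrally; however, after inverting $p$ it becomes an isomorphism, which is all that is needed since both $c_{\mathscr{G}}$ and $c^{L}_{\mathscr{G}}$ are isomorphisms of $K_0$-vector spaces (filtered $\varphi$-modules), so the comparison can be carried out rationally throughout. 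I expect the proof to reduce, modulo these normalization checks, to a direct citation of \cite[Proposition 6.2]{LauGalois} together with the identification \cite[Theorem 2.12]{KimMadapusiIntModel}, unwinding both sides to the common $A_{\cris}$-valued period map.
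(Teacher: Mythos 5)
Your proposal follows essentially the same route as the paper's proof: unwind $c^{L}_{\mathscr{G}}$ to an $A_{\cris}$-valued period pairing (using the unique Frobenius-equivariant section lifting the identity and the identification of $\M^{L}(\mathscr{G})$ with Kisin's module) and reduce the comparison with Faltings' $\mathrm{Per}_{\cris,\mathscr{G}}$ to Lau's compatibility result \cite[Proposition 6.2]{LauGalois}, with the remaining work being the bookkeeping of the normalizing units. The paper carries out exactly this plan, the unit bookkeeping amounting to the relation $\iota(c)c_0=\varkappa_{\cris}(c')$ in the frame/window formalism, so your approach is correct and matches the paper's.
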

\begin{proof}
We put
	$\M^{L}_{\cris}(\mathscr{G}):=\varphi^*\M^{L}(\mathscr{G})\otimes_{\mathfrak{S}}W$.
There is an $S_{\varpi}$-linear homomorphism
\[
f_{\M^{L}(\mathscr{G})} \colon \M^{L}_{\cris}(\mathscr{G})\otimes_{W}S_{\varpi}[1/p] \cong \varphi^{*}\M^{L}(\mathscr{G})\otimes_{\mathfrak{S}} S_{\varpi}[1/p]
\]
which commutes with Frobenius endomorphisms and lifts the identity of $\M^{L}_{\cris}(\mathscr{G})$.
In fact, it is characterized by such properties.
Translating the construction of the isomorphism
\[ D_{\cris}((T_p\mathscr{G})^{\vee}[1/p]) \cong \Mcris((T_p\mathscr{G})^{\vee})[1/p] \]
in \cite[Proposition 2.1.5]{KisinCris} in terms of $\Gal(\overline{K}/K)$-representations, we see that the isomorphism $c^{L}_{\mathscr{G}}$ corresponds to the composite of the following isomorphisms:
	\begin{align*}
	    T_p\mathscr{G}[1/p] &\cong \Hom_{\varphi}(\M^L(\mathscr{G}), \mathfrak{S}^{\mathrm{nr}})[1/p]\\
		&  \cong \Hom_{\Fil, \varphi}(\varphi^{*}\M^L(\mathscr{G}), A_{\cris})[1/p]\\
		& \cong \Hom_{\Fil, \varphi}(\M^{L}_{\cris}(\mathscr{G})[1/p], B^{+}_{\cris}) \\
		& \cong \Hom_{\Fil, \varphi}(\mathbb{D}(\mathscr{G}_k)(W)[1/p], B^{+}_{\cris}).
	\end{align*}
Here, the second isomorphism is induced by the Frobenius of $\M^L(\mathscr{G})$ and the injection $\mathfrak{S}^{\mathrm{nr}} \hookrightarrow A_{\cris}$, and the third isomorphism is induced by $f_{\M^L(\mathscr{G})}$.
Using the uniqueness of the isomorphism $f_{\M^{L}(\mathscr{G})}$, we may replace the composite of the last two isomorphisms by the following composite:
\begin{align*}
    \Hom_{\Fil, \varphi}(\varphi^{*}\M^L(\mathscr{G}), A_{\cris})[1/p]
		& \cong \Hom_{\Fil, \varphi}(\mathbb{D}(\mathscr{G}_{\O_K/p})(S_\varpi), A_{\cris})[1/p] \\
		& \cong \Hom_{\Fil, \varphi}(\mathbb{D}(\mathscr{G}_{\O_K/p})(A_{\cris}), A_{\cris})[1/p] \\
		& \cong \Hom_{\Fil, \varphi}(\mathbb{D}(\mathscr{G}_k)(W)[1/p], B^{+}_{\cris}).
\end{align*}
Here, the second isomorphism is induced by the base change
\[
\mathbb{D}(\mathscr{G}_{\O_K/p})(S_{\varpi})\otimes_{S_{\varpi}}A_{\cris} \cong \mathbb{D}(\mathscr{G}_{\O_K/p})(A_{\cris}),
\]
and the third isomorphism is induced by the quasi-isogeny $f$.

Therefore, the assertion follows from the following Proposition \ref{Proposition:Comparison Lau with Faltings}, which was essentially proved by Lau in \cite{LauGalois}.
\end{proof}

\begin{prop}[Lau]\label{Proposition:Comparison Lau with Faltings}
Let $\mathrm{Per}'$ be the following composite:
\begin{align*}
	    T_p\mathscr{G} &\cong \Hom_{\varphi}(\M^L(\mathscr{G}), \mathfrak{S}^{\mathrm{nr}})\\
		&  \to \Hom_{\Fil, \varphi}(\varphi^{*}\M^L(\mathscr{G}), A_{\cris})\\
		& \cong \Hom_{\Fil, \varphi}(\mathbb{D}(\mathscr{G}_{\O_K/p})(S_\varpi), A_{\cris}) \\
		& \cong \Hom_{\Fil, \varphi}(\mathbb{D}(\mathscr{G}_{\O_K/p})(A_{\cris}), A_{\cris}).
\end{align*}
Then, Faltings' period map $\mathrm{Per}_{\cris, \mathscr{G}}$ coincides with $\mathrm{Per}'$.
\end{prop}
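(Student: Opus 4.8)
\textbf{Proof proposal for Proposition \ref{Proposition:Comparison Lau with Faltings}.}
The plan is to reduce the statement to results already in Lau's paper \cite{LauGalois}, tracking carefully how the two period maps are built from the same ingredients. Both $\mathrm{Per}_{\cris,\mathscr{G}}$ and $\mathrm{Per}'$ are $\Gal(\overline{K}/K)$-equivariant $\Z_p$-linear maps
\[
T_p\mathscr{G} \to \Hom_{\Fil,\varphi}(\mathbb{D}(\mathscr{G}_{\O_K/p})(A_{\cris}), A_{\cris}),
\]
so the first step is to note that it suffices to prove the equality after inverting $p$, since both sides are $p$-torsion-free and, when $p\neq 2$, both maps are already isomorphisms while for $p=2$ both have cokernel killed by $2$; in any case the source has no $p$-torsion. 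After inverting $p$ one may freely pass between $\mathfrak{S}$, $S_\varpi$, $\mathfrak{S}^{\mathrm{nr}}$, $A_{\mathrm{inf}}$, and $A_{\cris}$ using the compatible Frobenius-equivariant inclusions recalled in Section \ref{Subsection:Comparison isomorphisms for $p$-divisible groups}.

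Next I would unwind the construction of $\mathrm{Per}'$. By \cite[Proposition 4.1, Proposition 8.5]{LauGalois} the $p$-adic Tate module is computed as $T_p\mathscr{G}\cong \Hom_\varphi(\M^L(\mathscr{G}), \mathfrak{S}^{\mathrm{nr}})$, and Lau himself compares this description with Faltings' crystalline period morphism: this is precisely the content of \cite[Proposition 6.2]{LauGalois} (together with the display-theoretic identifications in \cite[Section 8]{LauGalois}). So the key step is to match, term by term, the chain of isomorphisms defining $\mathrm{Per}'$ with the chain appearing in Lau's comparison. Concretely: the passage $\Hom_\varphi(\M^L(\mathscr{G}),\mathfrak{S}^{\mathrm{nr}}) \to \Hom_{\Fil,\varphi}(\varphi^*\M^L(\mathscr{G}), A_{\cris})$ is induced by the Frobenius $1\otimes\varphi$ of $\M^L(\mathscr{G})$ and the inclusion $\mathfrak{S}^{\mathrm{nr}}\hookrightarrow A_{\cris}$; the identification $\varphi^*\M^L(\mathscr{G})\otimes_{\mathfrak{S}} S_\varpi \cong \mathbb{D}(\mathscr{G}_{\O_K/p})(S_\varpi)$ multiplied by the unit $c'$ is the one from \cite[Proposition 7.1]{LauDisplay}, and I must check that it matches the normalization of the Dieudonné crystal used by Faltings. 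Finally the base change $\mathbb{D}(\mathscr{G}_{\O_K/p})(S_\varpi)\otimes_{S_\varpi} A_{\cris}\cong \mathbb{D}(\mathscr{G}_{\O_K/p})(A_{\cris})$ is the crystal property, and the filtration on $\mathbb{D}(\mathscr{G}_{\O_K/p})(A_{\cris})$ defined via the inverse image of $\Fil^1\mathbb{D}(\mathscr{G}_{\O_K/p})(\O_C)$ is the same one appearing in Faltings' period map by the compatibility of Hodge filtrations under base change $S_\varpi\to A_{\cris}\to \O_C$.

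Having identified the two chains, I would then invoke \cite[Proposition 6.2]{LauGalois}, which states exactly that the display-theoretic period morphism obtained from $\Hom_\varphi(\M^L(\mathscr{G}),\mathfrak{S}^{\mathrm{nr}})$ agrees with Faltings' $\mathrm{Per}_{\cris,\mathscr{G}}$ after the appropriate identifications; the bookkeeping above shows that $\mathrm{Per}'$ is literally that morphism written out through the intermediate ring $S_\varpi$. One subtlety to handle explicitly is the unit $c'$ (equivalently Kisin's $\lambda$, with $c'=\varphi(\lambda)$): one must verify that the twist by $c'$ in \cite[Proposition 7.1]{LauDisplay} is exactly what makes the Frobenius-equivariance work out so that $\mathrm{Per}'$ lands in the $\Fil$-preserving, $\varphi$-commuting Hom as opposed to a naive variant; this is a short computation with $\varphi(E(u)/E(0)) = c'\varphi(c'^{-1})$. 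I expect the main obstacle to be precisely this normalization matching — making sure that the conventions for $\M^L$ (the Cartier dual of Lau's equivalence), for the sign/twist in the isomorphism $\varphi^*\M'(\mathscr{G})\otimes S_\varpi\cong \mathbb{D}(\mathscr{G}_{\O_K/p})(S_\varpi)^\vee$, and for Faltings' crystal $\mathbb{D}(\mathscr{G}_{\O_K/p})$ (contravariant, value at $A_{\cris}\twoheadrightarrow \O_C/p$) are all mutually consistent, so that no stray unit or Tate twist survives. Once these conventions are pinned down, the rest is a direct appeal to \cite[Proposition 6.2]{LauGalois} and the crystalline base-change compatibilities, with no further essential content.
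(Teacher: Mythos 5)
Your overall route is the same as the paper's: reduce everything to Lau's results, with \cite[Proposition 6.2]{LauGalois} as the pivot and \cite[Proposition 7.1]{LauDisplay} supplying the $c'$-twisted identification. However, there is a gap at exactly the point you defer as ``a short computation.'' Your assertion that \cite[Proposition 6.2]{LauGalois} ``states exactly that the display-theoretic period morphism obtained from $\Hom_{\varphi}(\M^L(\mathscr{G}),\mathfrak{S}^{\mathrm{nr}})$ agrees with Faltings' $\mathrm{Per}_{\cris,\mathscr{G}}$'' is not accurate: that proposition compares $\mathrm{Per}_{\cris,\mathscr{G}}$ with Lau's period map $\mathrm{Per}_{\mathscr{G}}$ for the window over the Witt-vector frame $\hat{\mathscr{D}}^{+}_{\widetilde{\O}_K}$ (via the frame homomorphisms $\varkappa_{\cris}$ and $\iota$), not with the map $\mathrm{Per}'$ built through $\mathfrak{S}^{\mathrm{nr}}$, $S_\varpi$ and $A_{\cris}$. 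So a term-by-term matching of your chain against Proposition 6.2 does not close the argument by itself; you still have to cross from the $A_{\cris}$-side to the Witt-vector side.

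Concretely, what is missing is the bookkeeping of \emph{three} units, not just $c'$: the $u$-homomorphism $\varkappa^{\mathrm{nr}}\colon\mathscr{B}^{\mathrm{nr}}\to\hat{\mathscr{D}}_{\widetilde{\O}_K}$ and the $u_0$-homomorphism $\iota\colon\hat{\mathscr{D}}_{\widetilde{\O}_K}\to\hat{\mathscr{D}}^{+}_{\widetilde{\O}_K}$ come with canonical units $c$ and $c_0$ (with $c\varphi(c^{-1})=u$, $c_0\varphi(c_0^{-1})=u_0$), and one needs \cite[Proposition 8.5]{LauGalois} to identify $T(\M'(\mathscr{G})^{\mathrm{nr}})$ with $T(\varkappa^{\mathrm{nr}}_*\M'(\mathscr{G})^{\mathrm{nr}})$ via $x\mapsto c\otimes x$, the normalization of $\mathrm{Per}_{\mathscr{G}}$ from \cite[Proposition 4.1, Remark 4.2]{LauGalois}, and crucially the identity $\iota(c)\,c_0=\varkappa_{\cris}(c')$ tying these units to the $c'$-twist in $\mathrm{Per}'$. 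With that identity one gets $\tau\circ\mathrm{Per}_{\cris,\mathscr{G}}=\tau\circ\mathrm{Per}'$, where $\tau$ is the base-change map $x\mapsto 1\otimes x$ into $T(\iota_*\varkappa^{\mathrm{nr}}_*\M'(\mathscr{G})^{\mathrm{nr}})$, and one concludes because $\tau$ is an isomorphism (again Proposition 6.2). This unit identity and the cancellation of $\tau$ are the actual content of the proposition; your proposal correctly locates the difficulty in ``normalization matching'' but does not carry it out, and as phrased it frames the issue as matching $c'$ against Faltings' conventions rather than as the reconciliation of $c$, $c_0$, $c'$ across Lau's frames. (Your preliminary reduction to the statement after inverting $p$ is harmless but unnecessary: the paper's argument proves the integral equality directly.)
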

\begin{proof}
We shall explain how the equality
$\mathrm{Per}_{\cris, \mathscr{G}} = \mathrm{Per}'$
follows from Lau's result \cite[Proposition 6.2]{LauGalois}.
We freely use the notion of a frame and a window from
Lau's papers \cite{LauDisplay, LauGalois}.

We have the following commutative diagram of homomorphisms of rings:
\[
\xymatrix{ \mathfrak{S} \ar^-{}[r]&\mathfrak{S}^{\mathrm{nr}} \ar^-{\varkappa^{\mathrm{nr}}}[r] \ar_-{}[d]& \hat{\mathbb{W}}(\widetilde{\O}_K) \ar^-{\iota}[d] & \\
&A_{\cris} \ar^-{\varkappa_{\cris}}[r]& \hat{\mathbb{W}}^{+}(\widetilde{\O}_K).
}
\]
The above diagram induces the following commutative diagram of homomorphisms of frames:
\[
\xymatrix{ \mathscr{B} \ar^-{}[r] &\mathscr{B}^{\mathrm{nr}} \ar^-{\varkappa^{\mathrm{nr}}}[r] \ar_-{}[d]& \hat{\mathscr{D}}_{\widetilde{\O}_K} \ar^-{\iota}[d] & \\
& \mathcal{A}_{\cris} \ar^-{\varkappa_{\cris}}[r]& \hat{\mathscr{D}}^{+}_{\widetilde{\O}_K}.
}
\]
(For the above two commutative diagrams, see \cite{LauGalois}.)
The homomorphism $\varkappa^{\mathrm{nr}} \colon \mathscr{B}^{\mathrm{nr}} \to \hat{\mathscr{D}}_{\widetilde{\O}_K}$ (resp.\ $\iota \colon \hat{\mathscr{D}}_{\widetilde{\O}_K} \to \hat{\mathscr{D}}^+_{\widetilde{\O}_K}$) is a $u$-homomorphism (resp.\ $u_0$-homomorphism) of frames for a unit $u \in \hat{\mathbb{W}}(\widetilde{\O}_K)$ (resp.\ $u_0 \in \hat{\mathbb{W}}^+(\widetilde{\O}_K)$).
There is a unique unit $c \in \hat{\mathbb{W}}(\widetilde{\O}_K)$ (resp.\ $c_0 \in \hat{\mathbb{W}}^+(\widetilde{\O}_K)$) which maps to $1 \in W$ with $c\varphi(c^{-1})=u$ (resp.\ $c_0\varphi(c^{-1}_0)=u_0$).

The Breuil-Kisin module $\M'(\mathscr{G})$ corresponds to a window over $\mathscr{B}$, and let $\M'(\mathscr{G})^{\mathrm{nr}}$ be the window over $\mathscr{B}^{\mathrm{nr}}$ obtained by base change.
There is a window over $\mathcal{A}_{\cris}$ associated with $\mathbb{D}(\mathscr{G}_{\O_K/p})(A_{\cris})$, which will be denoted by the same notation; see \cite[Section 6]{LauGalois}.
The base change of $\M'(\mathscr{G})^{\mathrm{nr}}$ to $\mathcal{A}_{\cris}$ is identified with the dual $\mathbb{D}(\mathscr{G}_{\O_K/p})(A_{\cris})^t$ by \cite[Proposition 7.1]{LauDisplay}.
(For the dual of a window, see \cite[Section 2A]{LauDisplay}.)

For a window $\mathscr{P}$, let $T(\mathscr{P})$ be the module of invariants; see \cite[Section 3]{LauGalois}.
There are natural isomorphisms
\begin{align*}
    T(\M'(\mathscr{G})^{\mathrm{nr}}) &\cong \Hom_{\varphi}(\M^{L}(\mathscr{G}), \mathfrak{S}^{\mathrm{nr}}), \\
 T(\mathbb{D}(\mathscr{G}_{\O_K/p})(A_{\cris})^t) &\cong \Hom_{\Fil, \varphi}(\mathbb{D}(\mathscr{G}_{\O_K/p})(A_{\cris}), A_{\cris}).
\end{align*}
Under the above isomorphisms, the isomorphism $\mathrm{Per}'$ is identified with
the following composite:
\[
\xymatrix{
T_p\mathscr{G} \ar[r]^-{\cong}&  T(\M'(\mathscr{G})^{\mathrm{nr}}) \ar[r]^-{x \mapsto c' \otimes x}&
T(\mathbb{D}(\mathscr{G}_{\O_K/p})(A_{\cris})^t).}
\]
Here we denote the image of $c' \in S_\varpi$ in $A_{\cris}$ by the same letter.

In \cite[Proposition 4.1]{LauGalois}, Lau constructed a period isomorphism
\[
\mathrm{Per}_{\mathscr{G}} \colon T_p\mathscr{G} \cong T(\varkappa^{\mathrm{nr}}_*\M'(\mathscr{G})^{\mathrm{nr}}).
\]
Here we normalize this period map as in \cite[Remark 4.2]{LauGalois}.
By \cite[Proposition 8.5]{LauGalois}, the homomorphism
\[
T(\M'(\mathscr{G})^{\mathrm{nr}}) \to T(\varkappa^{\mathrm{nr}}_*\M'(\mathscr{G})^{\mathrm{nr}})
\]
defined by $x \mapsto c \otimes x$ is an isomorphism.
The composite of $\mathrm{Per}_{\mathscr{G}}$ with the inverse of the above isomorphism is identified with the isomorphism
$T_p\mathscr{G} \cong T(\M'(\mathscr{G})^{\mathrm{nr}})$
by the definition of $T_p\mathscr{G} \cong \Hom_{\varphi}(\M^L(\mathscr{G}), \mathfrak{S}^{\mathrm{nr}})$.

By \cite[Proposition 6.2]{LauGalois}, the following diagram commutes:
\[
\xymatrix{ T_p\mathscr{G} \ar^-{\mathrm{Per}_{\cris, \mathscr{G}}}[r] \ar_-{\cong}[d] \ar@/^3pc/[dd]^-{\mathrm{Per}_{\mathscr{G}}}& T(\mathbb{D}(\mathscr{G}_{\O_K/p})(A_{\cris})^t)  \ar^-{x \mapsto 1 \otimes x}[dd] \\ T(\M'(\mathscr{G})^{\mathrm{nr}}) \ar_-{x \mapsto c \otimes x}[d] &   \\
T(\varkappa^{\mathrm{nr}}_*\M'(\mathscr{G})^{\mathrm{nr}}) \ar^-{x \mapsto c_0 \otimes x}[r] &  T(\iota_*\varkappa^{\mathrm{nr}}_*\M'(\mathscr{G})^{\mathrm{nr}}).
}
\]
We denote by $\tau$ the right vertical homomorphism, which is an isomorphism; see \cite[Proposition 6.2]{LauGalois}.
Using $\iota(c)c_0=\varkappa_{\cris}(c')$,
we see that the composite of $\mathrm{Per}_{\mathscr{G}}$ and the bottom horizontal arrow is equal to the composite of $\mathrm{Per}'$ and $\tau$.
Therefore, we have
\[
 \tau \circ \mathrm{Per}_{\cris, \mathscr{G}} = \tau \circ \mathrm{Per}'.
\]
Since $\tau$ is an isomorphism, the equality
$\mathrm{Per}_{\cris, \mathscr{G}} = \mathrm{Per}'$
is proved.

The proof of Proposition \ref{Proposition:Comparison Lau with Faltings} is complete.
\end{proof}

\begin{rem}\label{Remark:Berthelot-Ogus map for p-divisible group}
The isomorphism
\[
\mathbb{D}(\mathscr{G}_{{\O_K}/p})(\O_K)[1/p] \cong \mathbb{D}(\mathscr{G}_k)(W)\otimes_{W} K
\]
induced by the quasi-isogeny $f$ is equal to the isomorphism
given by Berthelot-Ogus \cite[Proposition 3.14]{BO}.
This follows from the fact that
there is a unique $\varphi$-equivariant isomorphism
\[
\mathbb{D}(\mathscr{G}_{{\O_K}/p})(S_{\varpi})\otimes_{S_{\varpi}}S_{\varpi}[1/p] \cong \mathbb{D}(\mathscr{G}_k)(W)\otimes_{W} S_{\varpi}[1/p]
\]
which lifts the Berthelot-Ogus isomorphism.
This isomorphism can be constructed in a similar way to \cite[Proposition 3.14]{BO}.
\end{rem}

\begin{rem}\label{Remark:Faltings integral refinement}
Given the construction of $c^L_{\mathscr{G}}$ and Proposition \ref{Proposition:Compare Kim-MP with Faltings}, we can restate \cite[Theorem 2.12 (2.12.3), (2.12.4)]{KimMadapusiIntModel} using Faltings' comparison map $c_{\mathscr{G}}$ as follows.
\begin{enumerate}
    \item The composite
\[\Mcris((T_{p}\mathscr{G})^{\vee})[1/p] \cong D_{\cris}((T_{p}\mathscr{G})^{\vee}[1/p])  \overset{c_{\mathscr{G}}}{\cong} \mathbb{D}(\mathscr{G}_k)(W)[1/p]
	\]
	maps $\Mcris((T_{p}\mathscr{G})^{\vee})$ onto $\mathbb{D}(\mathscr{G}_k)(W)$.
    \item The composite
	\[
	\MdR((T_{p}\mathscr{G})^{\vee})[1/p] \cong D_{\dR}((T_{p}\mathscr{G})^{\vee}[1/p]) 
	\overset{c_{\mathscr{G}}}{\cong} \mathbb{D}(\mathscr{G}_{{\O_K/p}})(\O_K)\otimes_{\O_K}K
	\]
	maps $\MdR((T_{p}\mathscr{G})^{\vee})$ onto $\mathbb{D}(\mathscr{G}_{{\O_K/p}})(\O_K)$, and maps $\Fil^1(\MdR((T_{p}\mathscr{G})^{\vee}))$ onto $\Fil^1\mathbb{D}(\mathscr{G}_{{\O_K/p}})(\O_K)$.
\end{enumerate}
\end{rem}

Assume $\mathscr{G}$ is the  $p$-divisible group $\mathcal{B}[p^{\infty}]$ associated with an abelian scheme $\mathcal{B}$ over $\O_K$.
By \cite[Th\'eor\`eme 2.5.6, Proposition 3.3.7]{BBM}, there is a natural isomorphism
\[
\mathbb{D}(\mathscr{G}_{k})(W)\cong H^1_{\cris}(\mathcal{B}_{k}/W).
\]

\begin{prop}
\label{Proposition:Comparison abelian variety and p-divisible group}
Let $\mathcal{B}$ be an abelian scheme over $\O_K$,
and $\mathscr{G} := \mathcal{B}[p^{\infty}]$ the $p$-divisible group associated with $\mathcal{B}$.
Let $T_p \mathscr{G}$ be the $p$-adic Tate module of $\mathscr{G}$.
Under the isomorphisms
	\begin{align*}		(T_p\mathscr{G})^{\vee} &\cong H^1_{\mathrm{\acute{e}t}}(\mathcal{B}_{\overline{K}}, \Z_p),\\
		\mathbb{D}(\mathscr{G}_{k})(W)&\cong H^1_{\cris}(\mathcal{B}_{k}/W),
	\end{align*}
	the isomorphism $c_{\mathscr{G}}$ is compatible with the crystalline comparison map $c_{\cris, \mathcal{B}}$.
\end{prop}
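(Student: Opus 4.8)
The plan is to compare the two comparison isomorphisms $c_{\mathscr{G}}$ and $c_{\cris,\mathcal{B}}$ by a dévissage to the $A_{\mathrm{inf}}$-cohomology of Bhatt--Morrow--Scholze, exploiting the functoriality statements already established. First I would recall that by \cite[Theorem 14.3]{BMS} the $A_{\mathrm{inf}}$-cohomology $H^1_{A_{\mathrm{inf}}}(\mathcal{B})$ is a Breuil--Kisin--Fargues module from which both $H^1_{\cris}(\mathcal{B}_k/W)\otimes_W A_{\cris}$ (after inverting $p$, via the isomorphism of \cite[Proposition 13.21]{BMS}) and $H^1_{\et}(\mathcal{B}_{\overline{K}},\Z_p)\otimes_{\Z_p}B_{\cris}$ are recovered, and that by \cite[Theorem 14.6]{BMS} these recoveries are compatible with the crystalline comparison map $c_{\cris,\mathcal{B}}$ defined in Section \ref{Subsection:Crystalline comparison map of Bhatt-Morrow-Scholze}. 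On the other side, I would invoke \cite[Proposition 14.8.3]{Scholze} (cited in the preamble to this section), which identifies the Bhatt--Morrow--Scholze construction for abelian schemes with Faltings' crystalline period map for the associated $p$-divisible group, via the natural isomorphisms $(T_p\mathscr{G})^{\vee}\cong H^1_{\et}(\mathcal{B}_{\overline{K}},\Z_p)$ and $\mathbb{D}(\mathscr{G}_k)(W)\cong H^1_{\cris}(\mathcal{B}_k/W)$ of \cite[Th\'eor\`eme 2.5.6, Proposition 3.3.7]{BBM}.

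Concretely, I would argue as follows. Both $c_{\mathscr{G}}$ and $c_{\cris,\mathcal{B}}$ are, after the canonical identifications, isomorphisms
\[
D_{\cris}(H^1_{\et}(\mathcal{B}_{\overline{K}},\Z_p)[1/p])\cong H^1_{\cris}(\mathcal{B}_k/W)[1/p]
\]
of filtered $\varphi$-modules. By the definition of $c_{\cris,\mathcal{B}}$ recalled in Section \ref{Subsection:Crystalline comparison map of Bhatt-Morrow-Scholze}, it is obtained by specializing the $B_{\cris}$-linear comparison isomorphism
\[
H^1_{\cris}(\mathcal{B}_k/W)\otimes_W B_{\cris}\cong H^1_{\et}(\mathcal{B}_{\overline{K}},\Z_p)\otimes_{\Z_p}B_{\cris}
\]
that underlies the $A_{\mathrm{inf}}$-cohomology. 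I would then show that Faltings' period map $\mathrm{Per}_{\cris,\mathscr{G}}$ for $\mathscr{G}=\mathcal{B}[p^\infty]$, after inverting $p$ and passing through the quasi-isogeny $f$ as in Section \ref{Subsection:Comparison isomorphisms for $p$-divisible groups}, also globalizes to the \emph{same} $B_{\cris}$-linear comparison isomorphism. This is precisely the content of \cite[Proposition 14.8.3]{Scholze}; alternatively, it can be deduced from the compatibility between Faltings' construction and Lau's period morphism (Proposition \ref{Proposition:Comparison Lau with Faltings}) together with the identification of Lau's construction with the Breuil--Kisin module $\M((T_p\mathscr{G})^\vee)$, which in turn is compatible with the BMS formalism by Theorem \ref{Theorem:p-adic Hodge theorem} applied to $\mathscr{X}=\mathcal{B}$ (so that $\Mcris(H^1_{\et}(\mathcal{B}_{\overline{K}},\Z_p))\cong H^1_{\cris}(\mathcal{B}_k/W)$ via the BMS map, which we have just identified with $c_{\mathscr{G}}$ by Proposition \ref{Proposition:Compare Kim-MP with Faltings}).

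The main obstacle, and the step requiring care, is the precise matching of normalizations: one must check that the isomorphism $\mathbb{D}(\mathscr{G}_k)(W)\cong H^1_{\cris}(\mathcal{B}_k/W)$ coming from \cite[Th\'eor\`eme 2.5.6, Proposition 3.3.7]{BBM} is compatible — including signs and twists, and the Hodge filtrations after base change to $\O_K$ — with the identification of $F$-crystals induced by the functoriality of the $A_{\mathrm{inf}}$-cohomology under the structure map of the abelian scheme (for instance, that it intertwines the Berthelot--Ogus isomorphisms on both sides, which by Proposition \ref{Proposition:Comparison with BO} and Remark \ref{Remark:Berthelot-Ogus map for p-divisible group} are the ones implicitly used). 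Once this bookkeeping is in place, the two $B_{\cris}$-linear globalizations agree, hence so do their specializations $c_{\mathscr{G}}$ and $c_{\cris,\mathcal{B}}$, and the proposition follows. I would phrase the final write-up as an appeal to \cite[Proposition 14.8.3]{Scholze} for the substance, with the role of Proposition \ref{Proposition:Compare Kim-MP with Faltings} being to identify $c_{\mathscr{G}}$ with the map there; if a self-contained argument is preferred, I would run the Breuil--Kisin module comparison in the previous paragraph, using that $\M((T_p\mathscr{G})^\vee)\cong \M(H^1_{\et}(\mathcal{B}_{\overline{K}},\Z_p))$ compatibly with all structures, which is again a consequence of the functoriality of Kisin's functor $\M(-)$ and Theorem \ref{Theorem:p-adic Hodge theorem}.
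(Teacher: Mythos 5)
Your proposal is correct and follows essentially the same route as the paper: the substance is the appeal to \cite[Proposition 14.8.3]{Scholze}, preceded by identifying the BBM isomorphism $\mathbb{D}(\mathscr{G}_{\O_K/p})(A_{\cris})\cong H^1_{\cris}(\mathcal{B}_{\O_K/p}/A_{\cris})$ (after inverting $p$) with the base change of $\mathbb{D}(\mathscr{G}_k)(W)\cong H^1_{\cris}(\mathcal{B}_k/W)$ along $W\to A_{\cris}[1/p]$, which is exactly the normalization bookkeeping you flag and which the paper settles via the uniqueness characterization of the Frobenius-equivariant section $s_{\cris}$ from the proof of Proposition \ref{Proposition:Comparison with BO}. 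Your alternative Breuil--Kisin route is not needed (and would require an $H^1$-analogue of Theorem \ref{Theorem:p-adic Hodge theorem} not literally stated in the paper), but the main argument matches the paper's proof.
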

\begin{proof}
By \cite[Th\'eor\`eme 2.5.6, Proposition 3.3.7]{BBM}, there is a natural isomorphism
\[
\mathbb{D}(\mathscr{G}_{\O_K/p})(A_{\cris})\cong H^1_{\cris}(\mathcal{B}_{\O_K/p}/A_{\cris}).
\]
After inverting $p$, this isomorphism is identified with the base change of the map
$
\mathbb{D}(\mathscr{G}_{k})(W) \cong H^1_{\cris}(\mathcal{B}_{k}/W)
$
along $W \to A_{\cris}[1/p]$ 
under the isomorphism
\[
\mathbb{D}(\mathscr{G}_{{\O_K}/p})(A_{\cris})[1/p] \cong \mathbb{D}(\mathscr{G}_k)(W)\otimes_{W} A_{\cris}[1/p]
\]
induced by the quasi-isogeny $f$ and the isomorphism
\begin{equation*}
H^1_{\cris}(\mathcal{B}_{\O_K /p}/ A_{\cris})[1/p] \cong H^1_{\cris}(\mathcal{B}_k /W) \otimes_{W} A_{\cris}[1/p] 
\end{equation*}
in Section \ref{Subsection:Crystalline comparison map of Bhatt-Morrow-Scholze}.
This follows from the characterization of the $W$-linear map $s_{\cris}$ in the proof of Proposition \ref{Proposition:Comparison with BO}.
Now, the assertion follows from \cite[Proposition 14.8.3]{Scholze}.
(Alternatively, one can use the Hodge-Tate version \cite[Proposition 4.15]{ScholzeSurvey} by checking a certain compatibility, but we omit the details.)
\end{proof}

\subsection*{Acknowledgements}

The authors would like to thank Keerthi Madapusi Pera for e-mail correspondences on the proof of the \'etaleness of the Kuga-Satake morphism.
The work of the first author was supported by JSPS Research Fellowships for Young Scientists KAKENHI Grant Number 18J22191.
The work of the second author 
was supported by JSPS KAKENHI Grant Number 20674001 and 26800013.

\end{document}